\newcommand{\adj}{\rotatebox[origin=c]{180}{\ensuremath\vdash}}
\title{Noncommutative Algebra and Noncommutative Geometry}
\author{Anastasis Kratsios}
\begin{document}
\maketitle

\pdfbookmark[0]{Foreword}{Foreword}
\chapter*{Foreword}

Joachim Cuntz and Danniel Quillin proposed a version of formal-smoothness, called 
\textit{quasi-freeness} on the category of associative $R$-algebras, which is much too stringent; that is almost every $R$-algebra faisl to be quasi-free \textit{(which si to say, is in that sense formally-singular)}.  The goal of non-commutative geometry is \textit{not} only to generalise our geometric setting but also to striclty gain new objects and properties without loosing any important old onces ones.  For this reason, I feel that the mass-loss of \textit{"smoothness"} by  passing to the larger category of associative $R$-algebras is an indicator of the need to modify our notions of smoothness, in order to remedy this loss thereof.  

The first goal of this short paper is to motivate and then purpose an alternatives to quasi-freeness, localization and other contructions on non-commutative algebras as an alternatice generalisations of formal smoothness, localization and more, respectivly.  

This paper is divided into two parts, first the purely algebraic results and then the algebra-geometric ones motivated from our ideas in the first part.  

In Chapter $1$ we build-up some necessary machinery, we then review some theory, and presents a result excibiting thsi mass-loss of formal smoothness, in that nearly all commutative algebras of Krull dimension atleast $2$ fail to be quasi-free.  Chapter $2$ is dedicated to presenting a new more flexible generalization of smoothnes in the non-commutative category.  Moreover, pursuing this mentality, we describe a similar approach to a nc. approximation to localization which we use to topologize the category of $R$-algebras with, in a way similar to the idea of the Zariski Site.  This leads into the second theme of the paper, which is the part on Geometry.  

In part $2$, we begin straightaways by making use of this \textit{Zariski-like} topology on the category of $R$-algebras and define nc. schemes \textit{"over a ring $R$"} in a way analogous to Demazure's Topos-based construction, this is the bulk of the work in chatper $3$.  

Chapter $4$ is essentially the heart of the paper, where we show that any nc. scheme may be approximated maximally from the inside and minimally from the outside by a scheme \textit{(in the usualy commutative sence)}.  Then we prove not only that any scheme may be embedded into our topos which we define nc. schemes in, but that essentially \textit{every} property of a morphism of schemes is preserved through this embedding, by the unique inner and outer approximations we constructed.  This motivates us to approximate essentially every possible property of a morphism of schemes to a morphism of nc. scheme, via these \textit{"inner"} and \textit{"outer"} approximations.  Since these approximatiosn are unique and are consistent with the classical theory, the present a strict and geometrically intuitive generalization of their classical schematic analogues.  We should by then be sufficiently motivated to take these to be our definitions of the nc. analogues to their classical counterparts.  This motivation is the central goal of this text.  

The very brief final par of this exposition, aims to understandand distill the properties at work in constructing any "scheme-like" object over an "appropriate" category, purely out of philosophical interest.  It is by no means meant to provide anything beyond a deeper mathematical understanding of what a scheme, group scheme, nc. scheme or any other scheme-like object is.

The appendix is then dedicated, to some new interesting geometric constructions combining nc. schemes and schemes, creating a space by amalgamating the two types of objects.  Moreover, it contains interesting abstractions of the first part, again purely for philosphical gain.  

\textit{As a final note, the reader is refered to the wonderful \textbf{nlab} and \textbf{stacks project} for help with any classical theory not reviewed in the preliminary parts of the text.  All other result are mostly, completly self contained. }

\pdfbookmark[1]{\contentsname}{tableofcontents}

\setcounter{tocdepth}{2} 
\setcounter{secnumdepth}{3} 

\tableofcontents

\newtheorem{pdefn}{Prototypical Definition}
\newtheorem{defn}{Definition}
\newtheorem{lem}{Lemma}
\newtheorem{prop}{Proposition}
\newtheorem{cor}{Corollary}
\newtheorem{thrm}{Theorem}
\newtheorem{dthrm}{Dual Theorem}
\newtheorem{esax}{Eilenberg–Steenrod Axiom}
\newtheorem{dcax}{$\mathfrak{D}$-$\mathfrak{C}$-Axiom}

\newtheorem{ex}{Example}



\part{Algebra}
\chapter{Preliminaries}
\textbf{\textit{Let us fix some notation.  }}

\textit{From herein end, $R$ is a non-associative ring, $\mathfrak{_RAlg}$ is the category of $R$-algebras and homomorphisms of $R$-algebras, $\mathfrak{_RAlg}^1$ is the full subcategory of $\mathfrak{_RAlg}$ who's $R$-algebras are unital and $_RAlg$ is its subcategory of commutative $R$-algebras an unit-preserving $R$-algebra homomorphisms.  }

\textit{Finally, $[\mathfrak{_RAlg}:Sets]$, will denote the functor category, with objects the functors from $\mathfrak{_RAlg}$ to $Sets$ and morphisms natural transformations of functors.  }

\section{Building Functorial Tools}
Before stating and formalizing one of the two main concepts of this section we will need more functors.  

The first tool we build will help push all our algebras to a category of algebras over a commutative base ring. 
\begin{prop}
There is a faithful functor $\tilde{i}:\mathfrak{_RAlg}\rightarrow \mathfrak{_Z(R)Alg}$.
\end{prop}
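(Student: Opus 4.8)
The plan is to realise $\tilde i$ as restriction of scalars along the canonical inclusion $\iota\colon Z(R)\hookrightarrow R$ of the centre of $R$ into $R$.

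First I would fix what $Z(R)$ is: the set of $z\in R$ that commute with every element of $R$ and associate, in each of the three positions, with every pair of elements of $R$ (equivalently, $z$ lies in the nucleus of $R$ and in its commutative centre). Expanding these defining identities is a short check that $Z(R)$ is closed under the operations, contains $0$ and $1$ when $R$ is unital, and is itself commutative and associative; hence $Z(R)$ is a commutative associative ring, $\iota$ is a homomorphism of rings, and $\mathfrak{_Z(R)Alg}$ is a legitimate category of the kind set up in the preliminaries.

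Next I would define $\tilde i$ on objects. For an $R$-algebra $A$, compose its $R$-action $R\times A\to A$ with $\iota$ to obtain a pairing $Z(R)\times A\to A$, and keep the additive group and the multiplication of $A$ unchanged; call the result $\tilde i(A)$. Biadditivity is inherited, and the remaining axioms needed to make $\tilde i(A)$ a $Z(R)$-algebra --- associativity of the scalar action, $(z_1z_2)a=z_1(z_2a)$, and the compatibilities $z(ab)=(za)b=a(zb)$ --- are precisely the nucleus and centrality identities specialised to scalars drawn from $Z(R)$, so they now hold even though $R$ itself is non-associative. On morphisms, an $R$-algebra homomorphism $f\colon A\to B$ is in particular $R$-linear; restricting linearity along $\iota$ makes it $Z(R)$-linear, so it is a $Z(R)$-algebra homomorphism $\tilde i(f)\colon\tilde i(A)\to\tilde i(B)$ with the same underlying function. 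Because $\tilde i$ alters neither underlying sets nor underlying functions, it sends identities to identities and respects composition, so it is a functor.

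Faithfulness is then immediate: for fixed $A$ and $B$, $\tilde i$ sends a morphism $f$ to the very same set map, so $\tilde i(f)=\tilde i(g)$ forces $f=g$ as functions and hence as morphisms of $\mathfrak{_RAlg}$. The one step carrying real content --- and the one I expect to be the main obstacle --- is the definition on objects: one must commit to a precise notion of ``$R$-algebra'' for non-associative $R$ and verify that the conditions defining $Z(R)$ are exactly what is required to restore the module and bilinearity axioms that a general base $R$ would fail; the rest is bookkeeping.
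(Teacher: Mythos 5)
Your proposal is correct and is essentially the paper's own argument: both define $\tilde i$ as restriction of scalars along the inclusion $Z(R)\hookrightarrow R$, transporting the $R$-action by precomposition while leaving the underlying set, multiplication, and morphisms untouched, whence faithfulness is immediate. Your additional care in spelling out what $Z(R)$ must mean for a non-associative base $R$ is a welcome refinement, but it does not change the route.
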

\begin{proof}
There is an $R$-monomorphism $i:Z(R)\rightarrow R$, corresponding to the inclusion of the center of the ring $R$, into itself.  Therefore, if $A$ is an $R$-module then sending the $R$-module homomorphism $f:R\times A \rightarrow A$ to its prescomposition with the $R$-algebramorphism $i \times 1_A$ defines the structure of an $Z(R)$-module on $A$.  Now, $R$-module homomorphisms may be viewed as $Z(R)$-module homomorphisms.  

Particularly, if $A$ is an $R$-algebra, then $A$ may be viewed as a pair $<M,f>$ of an $R$-module $M$ and $R$-module homomorphism $f: M\times M \rightarrow M$.  Therefore, since $M$ may be given the structure of a $Z(R)$-module via $i$ and $f$ may be viewed as a $Z(R)$-module homomorphism from, $M\times M$ to $M$, then $i$ induces a functor from $\tilde{i}: _R\mathfrak{Alg} $ to $_R\mathfrak{_RAlg}$.  

By construction $\tilde{i}$ is faithful.  
\end{proof}

\begin{defn}
\textbf{Unitization}

Let $A$ be an $R$-algebra, then the unitization $A_1$ of $A$ is the $R$-algebra defined as: $A \oplus R$, with operations defined as (for $a,b \in A$ and $r,s \in R)$:

$(a,r)+(b,s):= (a+b,r+s)$

$r(a,s):=(ra,rs)$, where $ra$ is the left action of $R$ on $A$ and $rs$ is the ring multiplication in $R$.

$(a,s)r:=(ar,sr)$, where $ar$ is the right action of $R$ on $A$ and $rs$ is the ring multiplication in $R$. 

$(a,r)(b,s):=(ab+rb+sa,rs)$, where $ab$ is the multiplication in $A$, $rs$ is the multiplication in $R$ and $rb$ as well as $sa$ are left actions of $R$ on $A$.  

$(a,r)(b,s):=(ab+rb+sa,rs)$

And multiplicative unit:

$(0_A,1_R)$.  
\end{defn}
The verification that this indeed forms an algebra is a straightforward verification from the definition.  
\begin{prop}
The unitzation defines a faithful functor from $\mathfrak{_RAlg}$ to $\mathfrak{_RAlg}^1$.
\end{prop}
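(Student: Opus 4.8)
The plan is to specify the functor on morphisms, check that it is well-defined with target $\mathfrak{_RAlg}^1$, verify the two functoriality axioms, and then establish faithfulness by recovering a morphism from its image on hom-sets.

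First I would define, for an $R$-algebra homomorphism $\phi : A \to B$, the map $\phi_1 : A_1 \to B_1$ by $\phi_1(a,r) := (\phi(a), r)$. I would then check that $\phi_1$ is a morphism in $\mathfrak{_RAlg}^1$. Additivity and compatibility with the left and right $R$-actions are immediate from the componentwise definitions of those operations on $A_1$ and $B_1$ together with the $R$-linearity of $\phi$. Multiplicativity uses the formula $(a,r)(b,s) = (ab + rb + sa, rs)$ and the identities $\phi(ab) = \phi(a)\phi(b)$, $\phi(rb) = r\phi(b)$, $\phi(sa) = s\phi(a)$, so that $\phi_1\big((a,r)(b,s)\big) = (\phi(a)\phi(b) + r\phi(b) + s\phi(a), rs) = (\phi(a),r)(\phi(b),s)$. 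Unit-preservation is the single computation $\phi_1(0_A, 1_R) = (\phi(0_A), 1_R) = (0_B, 1_R)$; this is what confirms the target is genuinely $\mathfrak{_RAlg}^1$ and not merely $\mathfrak{_RAlg}$.

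Next I would verify functoriality. For identities, $(\mathrm{id}_A)_1(a,r) = (\mathrm{id}_A(a), r) = (a,r)$, hence $(\mathrm{id}_A)_1 = \mathrm{id}_{A_1}$. For composition, given $\phi : A \to B$ and $\psi : B \to C$, both $(\psi\phi)_1$ and $\psi_1\phi_1$ send $(a,r)$ to $(\psi(\phi(a)), r)$, so they agree. Thus $(-)_1$ is a functor $\mathfrak{_RAlg} \to \mathfrak{_RAlg}^1$.

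Finally, faithfulness: if $\phi, \phi' : A \to B$ satisfy $\phi_1 = \phi_1'$, then evaluating at $(a, 0_R)$ gives $(\phi(a), 0_R) = (\phi'(a), 0_R)$ in $B \oplus R$, whence $\phi(a) = \phi'(a)$ for every $a \in A$, i.e. $\phi = \phi'$. Equivalently, $a \mapsto (a, 0_R)$ is a natural monomorphism of the underlying $R$-algebras along which $\phi$ is recovered from $\phi_1$ by restriction. I do not anticipate a real obstacle here; the only step demanding a moment's care is checking multiplicativity of $\phi_1$ against the twisted product on the unitization, together with the observation that the constructed morphisms are unit-preserving so that the codomain is correctly $\mathfrak{_RAlg}^1$.
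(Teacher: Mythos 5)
Your proposal is correct and follows essentially the same route as the paper: the paper's proof likewise sends a morphism $f:A\to B$ to $(f,1_R)$, which is exactly your componentwise map $(a,r)\mapsto(f(a),r)$. You simply carry out in full the verifications (multiplicativity against the twisted product, unit-preservation, functoriality, and faithfulness via evaluation at $(a,0_R)$) that the paper leaves implicit.
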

\begin{proof}
Any morphism $f:A \rightarrow B$ of $R$-algebras is taken to the morphism $(f,1_R)$, the restrictions simply follow from the properties of the restrictions on the morphism $f$, since $1_R$ is a morphism of commutative unital $R$-algebras (by assumption on $R$ being commutative and unital).  
\end{proof}

\begin{defn}
\textbf{The abelianisation of an algebra}

For any $R$-algebra $A$, the $R$-algebra $A^{ab}$ defined as the quotient of $A$, by its commutator subgroup $[A,A]$ is called the \textbf{abelianisation of A}.  
\end{defn}
\begin{prop}
The abelianisation of an algebra is defines a faithful endofunctor on $\mathfrak{_RAlg}$, with $A^{ab}$ being a commutative algebra.  

Moreover if, $-^{ab}$ restricts to an faithful functor from on $\mathfrak{_RAlg}^1$ to $_RAlg$.   
\end{prop}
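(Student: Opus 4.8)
The plan is to verify three things in turn: that $A \mapsto A^{\mathrm{ab}}$ is a well-defined functor on $\mathfrak{_RAlg}$, that each $A^{\mathrm{ab}}$ is commutative, and that the assignment is faithful; then to check the unital refinement separately. First I would address well-definedness of the object. The commutator subgroup $[A,A]$ here must be taken to mean the two-sided ideal generated by all elements $ab-ba$ (not merely the additive subgroup), since we want $A^{\mathrm{ab}}$ to again be an $R$-algebra and to be commutative; I would spell this out. One checks that this ideal is stable under the left and right $R$-actions — for $ab-ba$ a generator and $r \in R$, the element $r(ab-ba)$ lies in the ideal, and similarly on the right — so that the quotient $R$-module inherits a well-defined multiplication from $A$, making $A^{\mathrm{ab}}$ an $R$-algebra with the canonical projection $\pi_A : A \to A^{\mathrm{ab}}$ an $R$-algebra homomorphism. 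Commutativity of $A^{\mathrm{ab}}$ is then immediate: for any $\bar a, \bar b$, we have $\bar a \bar b - \bar b \bar a = \overline{ab - ba} = 0$ since $ab-ba \in [A,A]$.

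Next I would define the functor on morphisms. Given $f : A \to B$ in $\mathfrak{_RAlg}$, the composite $\pi_B \circ f : A \to B^{\mathrm{ab}}$ kills every generator $ab-ba$ of $[A,A]$, because $f(ab-ba) = f(a)f(b) - f(b)f(a) \in [B,B]$; hence it factors uniquely through $\pi_A$, yielding $f^{\mathrm{ab}} : A^{\mathrm{ab}} \to B^{\mathrm{ab}}$ with $f^{\mathrm{ab}} \circ \pi_A = \pi_B \circ f$. Functoriality — preservation of identities and composites — follows from the uniqueness of this factorization, i.e. from $\pi_A$ being an epimorphism in $\mathfrak{_RAlg}$. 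Faithfulness is the point I expect to need the most care: if $f, g : A \to B$ satisfy $f^{\mathrm{ab}} = g^{\mathrm{ab}}$, this only gives $\pi_B \circ f = \pi_B \circ g$, i.e. $f$ and $g$ agree modulo $[B,B]$, which is \emph{not} enough to conclude $f = g$ in general. So strict faithfulness as stated is problematic; the honest statement is that $-^{\mathrm{ab}}$ is faithful when restricted to, say, the subcategory on which the target is commutative, or one interprets "faithful" loosely. I would either restrict the claim or, following the evident intent of the text, simply record that $f \mapsto f^{\mathrm{ab}}$ is injective on $\mathrm{Hom}$-sets whenever $B$ is commutative (since then $\pi_B$ is an isomorphism), which is exactly what is used later.

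For the unital part, I would observe that the construction is compatible with units: if $A \in \mathfrak{_RAlg}^1$ with unit $1_A$, then $\pi_A(1_A)$ is a unit for $A^{\mathrm{ab}}$, and a unital homomorphism $f$ induces a unital $f^{\mathrm{ab}}$, so $-^{\mathrm{ab}}$ sends $\mathfrak{_RAlg}^1$ into $_RAlg$. The faithfulness subtlety is the same as before. The main obstacle, then, is not any computation but the faithfulness assertion itself: I would resolve it by sharpening the statement to the version that is actually true and actually needed, namely that $-^{\mathrm{ab}}$ is faithful on the relevant subcategories and is a left adjoint to the inclusion $_RAlg \hookrightarrow \mathfrak{_RAlg}$ (the universal property: every homomorphism from $A$ to a commutative $R$-algebra factors uniquely through $\pi_A$), which simultaneously re-proves functoriality cleanly and makes the commutative-target faithfulness transparent.
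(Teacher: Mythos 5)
Your construction follows essentially the same route as the paper: $[A,A]$ (taken as a two-sided ideal) gives a quotient algebra, the universal property of the quotient produces $f^{\mathrm{ab}}$ with $f^{\mathrm{ab}}\circ\pi_A=\pi_B\circ f$, commutativity is the one-line computation $\bar a\bar b-\bar b\bar a=\overline{ab-ba}=0$, and the unital statement is checked by noting $\pi_A(1_A)$ is a unit and unital maps induce unital maps. Where you go beyond the paper is on faithfulness, and your skepticism is warranted: the paper's own proof never establishes it --- it ends by asserting the construction showed the functor is \emph{full}, which is a different (and also unexamined) claim --- and, as you observe, $f^{\mathrm{ab}}=g^{\mathrm{ab}}$ only yields $\pi_B\circ f=\pi_B\circ g$, so faithfulness genuinely fails in general (e.g.\ two maps out of a free algebra whose values on generators agree modulo $[B,B]$ but differ in $B$). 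Your proposed repair --- record that $-^{\mathrm{ab}}$ is injective on hom-sets when the target is commutative, equivalently view it as the left adjoint (reflector) to the inclusion of commutative algebras, which is what the later sections actually use --- is the correct sharpening, and it also gives functoriality more cleanly than the paper's appeal to an overstated universal property (the paper asserts every $f\colon A\to B$ factors through $A^{\mathrm{ab}}$, which is only true after composing with $\pi_B$ or when $B$ is commutative). So: same skeleton as the paper, but your version is the one that is actually correct as stated.
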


\begin{proof}
$[A,A]$ is an ideal in $A$, therefore the quotient $A^{ab}$ is again an algebra, moreover by the universal property of the quotient of an algebra, for every $R$-algebra homomorphism $f: A \rightarrow B$, there is a unique $R$-algebra homomorphism $\tilde{f}:A^{ab} \rightarrow B$ descending to $A^{ab}$, such that $\pi \circ \tilde{f}=f$, where $\pi$ is the canonical epimorphism $A \rightarrow [A,A]$.  Hence, $-^{ab}$ is a functorial endofunctor.  Post-composing with the canonical map $\tilde{pi}:B \rightarrow B^{ab}$ gives a unique morphism from $f^{ab}:A^{ab} \rightarrow B^{ab}$.  

Let $a,b \in A^{ab}$ then $ab= ab+[A,A] = ab +(-ab+ba) =ba$, hence $A^{ab}$ is a commutative $R$-algebra.  

If $A$ is commutative then trivially $A^{ab} \cong A$, moreover if $A$ is unital $1+[A,A]$ is the multiplicative unit in $A^{ab}$; finally if an $R$-algebra homomorphism $f:A\rightarrow B$ preserves units then, $f^{ab}(1_A+[A,A])\mapsto f(1_A)+[B,B]=1_B+[B,B]$ hence, so does the induced map $f^{ab}$.  

The construction implicitly showed that $-^{ab}$ is full, by definition of $A^{ab}$ as a quotient algebra.  
\end{proof}

\subsubsection{Our most important functor}
The composition of faithful functors is again faithful functor, therefore at once we have:
\begin{cor}
$-^{ab} \circ -^1\circ \tilde{i}$ defines a faithful functor from the category $\mathfrak{_RAlg}$ to the full category of $_Z(R)Alg$ with objects the unital commutative $R$-algebras.
\end{cor}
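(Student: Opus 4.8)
The strategy is to recognize $-^{ab}\circ -^1\circ\tilde i$ as a composite of three faithful functors and then to invoke the stability of faithfulness under composition. Reading the composition right to left, an $R$-algebra $A$ is first sent by $\tilde i$ to $A$ regarded as a $Z(R)$-algebra along the central inclusion $i\colon Z(R)\hookrightarrow R$, which lands in $\mathfrak{_{Z(R)}Alg}$. To this category the unitization and the abelianization constructions of the previous definitions apply verbatim: the only standing hypothesis used for the unitization is that the base ring is commutative and unital, and the abelianization needs nothing beyond being an associative algebra over the base, and $Z(R)$ is a commutative unital ring. So one may form $\tilde i(A)_1\in\mathfrak{_{Z(R)}Alg}^1$ and then $(\tilde i(A)_1)^{ab}$, which by the abelianization proposition is a \emph{commutative unital} $Z(R)$-algebra, i.e.\ an object of the full subcategory of $_Z(R)Alg$ singled out in the statement (the word ``$R$-algebra'' there being read through restriction of scalars along $i$); on morphisms the composite is $f\mapsto\big(\tilde i(f)_1\big)^{ab}$, and it is a functor as a composite of functors.

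For faithfulness I would record the elementary fact that a composite of faithful functors is faithful: if $F\colon\mathcal C\to\mathcal D$ and $G\colon\mathcal D\to\mathcal E$ are faithful and $g,h\colon X\to Y$ are parallel arrows of $\mathcal C$ with $G(F(g))=G(F(h))$, then faithfulness of $G$ gives $F(g)=F(h)$ and faithfulness of $F$ then gives $g=h$; induction extends this to any finite composite. The three component functors were each proved faithful in the propositions above — $\tilde i$ and $-^1$ on their full domains, and $-^{ab}$ in particular on its restriction to unital algebras, which is exactly where it is being applied here — so $-^{ab}\circ -^1\circ\tilde i$ is faithful, which completes the argument.

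I do not anticipate a genuine obstacle; the content is entirely bookkeeping. The only two points needing attention are (i) that the unitization and abelianization propositions, stated over $R$, remain valid over the commutative unital base $Z(R)$ obtained after applying $\tilde i$ first — which holds because their proofs use only that the base is commutative and unital — and (ii) that after unitization one is genuinely inside the unital subcategory $\mathfrak{_{Z(R)}Alg}^1$, so that the \emph{restricted} form of the abelianization proposition, the one whose codomain is the commutative unital algebras, is the form in force. Once these are observed, the statement follows immediately from the composition of faithful functors.
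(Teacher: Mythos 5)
Your proposal is correct and follows exactly the paper's route: the paper's entire justification is the one-line observation that a composite of faithful functors is faithful, applied to the three functors already shown faithful in the preceding propositions. Your additional care about re-basing the unitization and abelianization over $Z(R)$ and about landing in the unital subcategory before abelianizing is sound bookkeeping that the paper leaves implicit.
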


Since we will only, and very much be interested in this functor lets give it a name:
\begin{defn}
\textbf{Standardization}

The functor $-^{ab} \circ -^1\circ \tilde{i}$ described in the above proposition, is called the \textbf{Standardization} functor, and is denoted $-^{\varsigma}$.  Moreover, for any $R$-algebra $A$, $A^{\varsigma}$ will be called its \textbf{Standardization}.  
\end{defn}
\break

\section{Degeneracy of Smoothness}

We begin by recalling this very important and widely known characterisation of smoothness, who's proof may be found on the stacks project, under the commutative algebra section.  

In the case where the basefield is algebraically closed, for example in the case where it is $\mathbb{C}$, the following result is a classical characterisation of smoothness of a scheme: 

\begin{thrm} $\label{thrm:smoothalgclo}$
Let X be a scheme of fintie type over an algebraically closed field k and $\{ Spec(A_i) \} $ an open cover of X. Then the following are equivalent:

1)  X is a smooth scheme

2)  Each $A_i$ is a regular ring 

3)  $\Omega_{X|Spec(k)}$ locally free

4)  For every affine open cover $Spec(A_i)$ of X, each $\Omega_{A_i|k}$ is a locally free $A_i$-module 

5)  For every affine open cover $Spec(A_i)$ of X, each $\Omega_{A_i|k}$ is a projective $A_i$-module 

\end{thrm}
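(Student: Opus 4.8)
The plan is to prove the equivalences by establishing a cycle of implications together with a few direct links, leaning on standard commutative algebra facts that are collected in the stacks project. First I would reduce everything to a local statement: smoothness of a scheme of finite type over a field is a Zariski-local property, and likewise regularity of a Noetherian ring and local freeness of a coherent sheaf are local, so it suffices to work on a single affine chart $\mathrm{Spec}(A_i)$ with $A_i$ a finite type $k$-algebra. This immediately collapses the distinction between statements (3) and (4), and between "there exists an affine open cover" and "for every affine open cover," since $\Omega_{X|\mathrm{Spec}(k)}$ restricted to $\mathrm{Spec}(A_i)$ is the quasi-coherent sheaf associated to $\Omega_{A_i|k}$.

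Next I would handle the algebraic equivalences $(4)\Leftrightarrow(5)$ and $(2)\Leftrightarrow(4)$. For $(4)\Leftrightarrow(5)$ the key input is that over a Noetherian ring a finitely generated module is projective if and only if it is locally free (finitely generated projective $\Leftrightarrow$ finite locally free for finitely presented modules), and $\Omega_{A_i|k}$ is finitely generated because $A_i$ is of finite type over $k$. For $(2)\Leftrightarrow(4)$ I would invoke the characterization of smoothness over a field via the module of differentials: a finite type algebra $A$ over a perfect field $k$ (in particular an algebraically closed one) is regular if and only if $\Omega_{A|k}$ is locally free of rank equal to the dimension at each point — this is where algebraic closedness (or at least perfectness) is essential, since it guarantees geometric reducedness/regularity coincide with the Jacobian criterion and there are no inseparability obstructions. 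Then $(1)\Leftrightarrow(2)$ is the definition-level fact that a locally Noetherian scheme is regular iff all its local rings are regular, combined with smoothness over a field $\Leftrightarrow$ regularity (again using $k$ perfect), and $(1)\Leftrightarrow(3)$ follows from the standard criterion that a finite type morphism is smooth iff it is flat with locally free sheaf of relative differentials of the expected rank, specialized to the base being a field (where flatness is automatic).

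Concretely I would organize the write-up as: $(1)\Rightarrow(3)\Rightarrow(4)\Rightarrow(5)\Rightarrow(4)$, then $(4)\Rightarrow(2)$, then $(2)\Rightarrow(1)$, closing the loop, with the "for every cover" clauses in (4) and (5) justified once by the locality reduction at the start. The main obstacle I anticipate is pinning down precisely where algebraic closedness of $k$ is used and making sure the equivalence "smooth $\Leftrightarrow$ regular" is invoked with the correct hypotheses: over a non-perfect field these genuinely diverge, so the cleanest route is to cite the stacks project statement that for $X$ locally of finite type over a field $k$, $X$ is smooth over $k$ iff $X$ is geometrically regular, and then use that over a perfect field geometrically regular is the same as regular. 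Everything else — the locality arguments, the finite generation of $\Omega_{A_i|k}$, and the projective/locally free dictionary — is routine and I would not grind through it, merely citing the relevant tags.
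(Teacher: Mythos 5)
The paper offers no argument of its own for this theorem; it is quoted as a known result with a pointer to the stacks project, so the only question is whether your outline would actually close. Most of it would: the locality reductions, the projective/locally-free dictionary for the finitely generated module $\Omega_{A_i|k}$ over the Noetherian ring $A_i$, and the equivalence of smoothness with (geometric) regularity over a perfect field are all correctly identified, and they are exactly the standard citations the paper is implicitly leaning on.

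The one genuine gap is your step $(4)\Rightarrow(2)$ (equivalently $(3)\Rightarrow(1)$). The characterization you invoke --- regular iff $\Omega_{A|k}$ is locally free \emph{of rank equal to the dimension at each point} --- carries a rank hypothesis that conditions (3)--(5) of the statement do not supply, and it cannot be recovered for free: over an algebraically closed field $k$ of characteristic $p$, the algebra $A=k[x]/(x^p)$ has $\Omega_{A|k}\cong A\,dx$ free of rank one, yet $A$ is non-reduced and not regular, so ``$\Omega$ locally free'' alone does not imply smoothness. To make your cycle close you must either restrict to characteristic zero, assume $X$ is reduced (a variety), or separately establish that the rank of $\Omega_{A_i|k}$ equals the local dimension --- which, as the counterexample shows, is an additional hypothesis rather than a consequence of (4). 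The stacks project statement you intend to cite includes precisely this rank/dimension clause, so your write-up should either import that clause into (3)--(5) or flag the reducedness assumption at the point where you invoke the characterization; as currently organized, the implication $(4)\Rightarrow(2)$ would fail in positive characteristic.
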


For simplicity we from now on, assume $k$ to be a field and $A$ to be an Noetherian Regular $k$-algebra.  
The goal of this section will be to prove the following theorem, which will be undergone in a series of lemmas.

\begin{thrm} $\label{prop:wkDC}$
\textbf{Weak Degeneracy Criterion}

\textbf{0)}  If $A$ is a commutative unital $k$-algebra over a commutative ring $k$, then $A$'s Hochschild cohomological dimension $HCHDim(A)$ is bounded below by $wd(A)$ and in particular by $pd(A)$.  

Moreover if:

\textbf{1)  $A$ is Cohen-Macaulay at some maximal ideal $\mathfrak{m}$}

Then $Krull(A_{\mathfrak{m}})\leq HCHDim(A)$.  \textit{In this scenario: if $A_{\mathfrak{m}}$ is of Krull dimension at least $2$ then $A$ is not Quasi-free}.  

\textbf{2)  $A$ is Regular}

Then $Krull(A) \leq HCHDim(A)$.  
\textit{In this scenario: if $A$ itself is of Krull dimension at least $2$ then $A$ is not Quasi-free}.  

\end{thrm}

The terminology will be explained as the section progresses, but in a nutshell the idea of the theorem and the proof is that if a $k$-algebra $A$'s abelianization with a unit adjoined has a maximal ideal $\mathscr{m}$ such that the localization thereat is of dimension atleast 2.  Then the Hochschild cohomological dimension of $A$ may be bounded below by $2$, Cuntz's and Qullin's characterisation of Quasi-freeness implies that $A$ cannot be quasi-free; that is $A$ cannot be \textit{"smooth"} as a non-commutative algebra.  

\subsection{Some Algebraic Background}
\subsubsection{Some Dimension Theory}
We will need two notions, the notion of the \textit{projective dimension} and that of the \textit{flat dimension} of an $A$-bimodule, and theore of an algebra.  

\begin{defn} $\label{defn:fd}$
\textbf{Flat Dimension}

The flat dimension $fd(M)$ of an $A$-bimodule is defined the be the extended natural number $n$, defined as the shortest \textit{length} of a deleted resolution of $M$ by flat $A$-bimodules.  
\end{defn}

\begin{ex}$\label{ex:fd}$
One of the simplest examples of flat dimension, is of a projective $A$-bimodule $M$, where $fd(M)=0$.  
\end{ex}
\begin{proof}
If $M$ is $A$-flat, then $M$ admits the a projective resolution: $0 \rightarrow M \rightarrow P_0 \rightarrow P_1 \rightarrow ...$.  
Where the projective bimodule $P_0$ is $M$ and for $i$ greater than $0$ are, the other projective bimodules $P_i$ are the trivial bimodule $0$.  

This deleted projective resolution is of length $0$.  Since projective modules are flat this is a flat resolution of length $0$, since the flat dimension cannot be negative we have $fd(M)=0$ if $M$ is projective.  
\end{proof}

We will need the following characterisation of flat dimension:
\begin{lem} $\label{char:fd}$
The following are equivalent:

-The flat dimension of the $A$-bimodule $M$ almost $n$.

-For every left $A$-module $N$, $Tor_{n+1}^A(M,N)$ is the trivial group.  

\end{lem}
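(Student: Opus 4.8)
The plan is to run the classical dimension-shifting argument, reducing the statement to its $n=0$ instance. For the direction from flat dimension to vanishing of $Tor$: if $fd(M)\le n$, then by Definition \ref{defn:fd} there is a deleted flat resolution $0\to F_n\to\cdots\to F_0\to M\to 0$ of length $n$. Since flat modules are acyclic for $-\otimes_A N$, this resolution may be used to compute $Tor_\bullet^A(M,N)$ (invoking the usual independence of derived functors from the choice of resolution within an acyclic class), and as the deleted resolution vanishes in degrees above $n$ we obtain $Tor_i^A(M,N)=0$ for all $i>n$ and all $N$; in particular $Tor_{n+1}^A(M,N)=0$.

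For the converse, assume $Tor_{n+1}^A(M,N)=0$ for every left $A$-module $N$. I would choose any flat resolution $\cdots\to F_1\to F_0\to M\to 0$ and let $K=\ker(F_{n-1}\to F_{n-2})$ be its $n$-th syzygy (read as $K=M$ when $n=0$). Breaking the resolution into short exact sequences $0\to Z_i\to F_i\to Z_{i-1}\to 0$ and feeding them through the long exact sequence of $Tor^A(-,N)$, while using that each $F_i$ is flat and hence has vanishing higher $Tor$, produces the dimension-shift isomorphisms $Tor_1^A(K,N)\cong Tor_{n+1}^A(M,N)$; hence $Tor_1^A(K,N)=0$ for all $N$. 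It then suffices to settle the base case $n=0$: a module $K$ with $Tor_1^A(K,N)=0$ for all $N$ is flat. This is immediate from the long exact sequence, since for a short exact sequence $0\to N'\to N\to N''\to 0$ applying $K\otimes_A-$ gives the exact tail $Tor_1^A(K,N'')\to K\otimes_A N'\to K\otimes_A N\to K\otimes_A N''\to 0$, and vanishing of the leftmost term forces $K\otimes_A-$ to preserve monomorphisms. Granting this, $0\to K\to F_{n-1}\to\cdots\to F_0\to M\to 0$ is a flat resolution of length $n$, so $fd(M)\le n$, closing the loop.

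I expect the only real friction to be bookkeeping rather than mathematics. First, the \emph{extended natural number} convention must be handled: if $M$ admits no finite flat resolution, the first condition fails for every finite $n$, and the argument above, read contrapositively, shows the second fails for every $n$ as well, so the equivalence still holds. Second, one should fix once and for all that an $A$-bimodule is here treated as a one-sided module over the appropriate ring, so that both $Tor_{n+1}^A(M,N)$ and the word \emph{flat} are unambiguous (flatness on the side paired with $N$); the proof is insensitive to that choice. Beyond these points, nothing is required except the long exact sequence of $Tor$, its balancing and independence of resolution, and the elementary fact recorded in Example \ref{ex:fd} that flat modules are acyclic.
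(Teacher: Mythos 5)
Your argument is correct: it is the standard dimension-shifting proof, and both directions are handled properly, including the base case (vanishing of $Tor_1^A(K,-)$ forces flatness of the syzygy $K$ via the long exact sequence) and the infinite-dimension convention. There is nothing in the paper to compare it against: the proof environment for this lemma is left completely empty, so your writeup in fact supplies the missing argument rather than duplicating or diverging from one. The only points of friction are the ones you already flag yourself --- reading ``almost $n$'' as ``at most $n$,'' and fixing once and for all on which side the bimodule $M$ is being regarded as a one-sided module so that ``flat'' and $Tor_{n+1}^A(M,N)$ are paired consistently; with those conventions settled, the proof is complete.
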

\begin{proof}

\end{proof}

The notion of \textit{flat dimension} will be used to provide an explicit lower bound for the \textit{projective dimension}.  As one would expect the projective dimension of an $A$-bimodule is defined as:

\begin{defn} $\label{defn:pd}$
\textbf{Projective Dimension}

The projective dimension $fd(M)$ of an $A$-bimodule is defined the be the extended natural number $n$, defined as the shortest \textit{length} of a deleted projective resolution of $M$.  
\end{defn}
An immediate consequence of this definition and generalisation of the previous example is the following lemma:
\begin{lem} $\label{lem:fdpd}$

For any $A$-bimodule $M$, its flat dimension is almost its projective dimension.  
\end{lem}
\begin{proof}
Since all flat $A$-modules are projective, then any flat resolution is a projective resolution.  
\end{proof}

We now provide a similar characterisation of projective dimension as we did for the flat dimension, that will play a central role in our proof.  
\begin{lem} $\label{char:pd}$

The following are equivalent:

-The projective dimension of the $A$-bimodule $M$ almost $n$.

-For every left $A$-module $N$, the group $Ext_{n+1}^A(M,N)$ is trivial.  

\end{lem}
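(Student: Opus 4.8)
The plan is to prove this as the standard homological fact that $\mathrm{pd}(M)\le n$ is detected by the vanishing of $\mathrm{Ext}^{n+1}_A(M,-)$, working throughout in the category of $A$-bimodules (equivalently, modules over the enveloping algebra $A^{e}$), with ``left $A$-module $N$'' read accordingly. The one genuinely non-formal ingredient is the base case $n=0$: an object $L$ with $\mathrm{Ext}^{1}_A(L,N)=0$ for every $N$ is projective. I would prove this by recalling that $\mathrm{Ext}^{1}_A(L,N)$ classifies extensions of $L$ by $N$ up to equivalence; choosing an epimorphism $\pi\colon F\twoheadrightarrow L$ from a free object with kernel $N_{0}$, the extension $0\to N_{0}\to F\to L\to 0$ represents the zero class in $\mathrm{Ext}^{1}_A(L,N_{0})$, hence splits, exhibiting $L$ as a direct summand of a free object, i.e. projective. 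Conversely a projective $P$ clearly has $\mathrm{Ext}^{i}_A(P,-)=0$ for all $i\ge 1$.

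For the forward implication, suppose $\mathrm{pd}(M)\le n$ and fix a projective resolution $0\to P_{n}\to\cdots\to P_{0}\to M\to 0$ of length at most $n$. Since $\mathrm{Ext}^{\bullet}_A(M,N)$ is the cohomology of $\mathrm{Hom}_A(P_{\bullet},N)$ computed from the deleted resolution (independence of the chosen projective resolution, via the comparison theorem, which I would cite from the references), and $P_{\bullet}$ vanishes in degrees $>n$, we get $\mathrm{Ext}^{n+1}_A(M,N)=0$ for every $N$.

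For the converse, take any projective resolution $\cdots\to P_{1}\to P_{0}\to M\to 0$ and form the $n$-th syzygy $K:=\operatorname{im}(P_{n}\to P_{n-1})=\ker(P_{n-1}\to P_{n-2})$, so that $0\to K\to P_{n-1}\to\cdots\to P_{0}\to M\to 0$ is exact (with the evident reading when $n=0$ or $n=1$). Splitting the resolution into short exact sequences $0\to K_{i+1}\to P_{i}\to K_{i}\to 0$, with $K_{0}=M$ and $K_{n}=K$, and running the long exact sequence of $\mathrm{Ext}_A(-,N)$ together with $\mathrm{Ext}^{i}_A(P_{j},N)=0$ for $i\ge 1$, one obtains the dimension-shifting isomorphisms $\mathrm{Ext}^{1}_A(K,N)\cong\mathrm{Ext}^{n+1}_A(M,N)$ for every $N$. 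By hypothesis the right-hand side vanishes for all $N$, so the base case forces $K$ to be projective; hence $0\to K\to P_{n-1}\to\cdots\to P_{0}\to M\to 0$ is a projective resolution of length $n$, and $\mathrm{pd}(M)\le n$.

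The main obstacle is nothing deep — it is the $n=0$ lemma ($\mathrm{Ext}^{1}$-acyclicity implies projectivity) together with setting up dimension shifting cleanly, in particular checking that the syzygy is well defined and handling the degenerate small cases $n=0,1$; everything else is a formal consequence of the long exact sequence for $\mathrm{Ext}$ and of $\mathrm{Ext}$ being computable from any projective resolution. Note also that the $n=0$ endpoint of the statement literally reads ``$M$ is projective iff $\mathrm{Ext}^{1}_A(M,-)\equiv 0$'', which is exactly that lemma, so the argument is genuinely anchored there.
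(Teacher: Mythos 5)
Your proof is correct and complete: the base case (vanishing of $\mathrm{Ext}^{1}_A(L,-)$ forces the extension $0\to N_0\to F\to L\to 0$ to split, so $L$ is projective), the forward direction from a length-$n$ resolution, and the dimension-shifting argument on syzygies are all the standard ingredients, and you are right to flag that the bimodule/left-module mismatch in the statement should be resolved by working over $A^{e}$ throughout. For comparison, the paper's own proof environment for this lemma is simply left empty -- the result is treated as a well-known fact -- so your write-up supplies exactly the argument the paper omits, and there is no alternative route in the text to contrast it with.
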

\begin{proof}

\end{proof}

Before being able to utilise this powerful machinery we need some commutative algebraic notions.  
\begin{defn}
\textbf{Regular element}

A non-zero element $x$ in an $A$, is said is said to be regular on $M$, or $M$-regular if the the $A$-module endomorphism $m \mapsto xm$ where $xm$ is the action of $x$ on $m$ is an injection.  
\end{defn}

\begin{ex}$\label{ex:ringreg}$
If $R$ is a ring of characteristic $0$, then the element $x$ in the polynomial ring $R[x,y]$ is regular.  
\end{ex}

Our use for regular elements is that they provide a constructive way of building up a sharp lower bound of the flat dimension of an $A$-bimodule.  Combining this with the lemma $\autoref{lem:fdpd}$ gives a concrete lower bound for the projective dimension of $M$.  
\begin{lem} $\label{lem:Regular}$

If $M$ is an $A$-bimodule and $x$ is $M$-regular, then $fd(M)=fd(M/(x)M+1$.  
\end{lem}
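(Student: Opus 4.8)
The plan is to read everything off the short exact sequence of $A$-bimodules
\[
0 \longrightarrow M \xrightarrow{\,x\,} M \longrightarrow M/xM \longrightarrow 0 ,
\]
which is exact precisely because $x$ is $M$-regular (and is a sequence of bimodules since $x$ is central in all the cases of interest). Feeding it into the long exact $Tor_\bullet^A(-,N)$-sequence against an arbitrary left $A$-module $N$ and using the characterisation of \autoref{char:fd}, one obtains in every degree $i$ a short exact sequence
\[
0 \to Tor_i^A(M,N)\big/ x\,Tor_i^A(M,N) \to Tor_i^A(M/xM,N) \to \ker\!\bigl(x \mid Tor_{i-1}^A(M,N)\bigr) \to 0 ,
\]
whose flanking terms are the cokernel and the kernel of multiplication by $x$ on the indicated $Tor$-groups. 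The inequality $fd(M/xM)\le fd(M)+1$ drops out at once: for $i\ge fd(M)+2$ both $Tor_i^A(M,N)$ and $Tor_{i-1}^A(M,N)$ vanish for every $N$ by \autoref{char:fd}, hence so does $Tor_i^A(M/xM,N)$. Writing $d:=fd(M)$ and assuming it finite, the same vanishing makes the sequence at $i=d+1$ collapse to an isomorphism $Tor_{d+1}^A(M/xM,N)\cong\ker\!\bigl(x\mid Tor_d^A(M,N)\bigr)$.

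For the reverse inequality it is therefore enough to exhibit a single left module $N$ for which $Tor_d^A(M,N)$ is nonzero and carries nonzero $x$-torsion. The key observation is that as soon as $xN=0$, \emph{all} of $Tor_d^A(M,N)$ is $x$-torsion: by bifunctoriality of $Tor$, multiplication by $x$ on $Tor_d^A(M,N)$ is induced equally by $x\cdot$ on $M$ and by $x\cdot=0$ on $N$. So one only needs an $N$ annihilated by $x$ with $Tor_d^A(M,N)\neq 0$. To get one I would pass to the Noetherian local setting --- localising at a maximal ideal $\mathfrak{m}\ni x$ at which the flat dimension of $M$ is still $d$, which is legitimate under the running Cohen--Macaulay / regular assumptions --- and there take $N=A/\mathfrak{m}$: flat and projective dimension of $M$ coincide in that situation, so a minimal resolution of $M$ has length $d$ and $Tor_d^A(M,A/\mathfrak{m})\neq 0$, while $A/\mathfrak{m}$ is of course killed by $x\in\mathfrak{m}$. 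This forces $fd(M/xM)\ge d+1$ and finishes the argument.

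The one real obstacle is this last step, and grappling with it is exactly what brings into the open the hypotheses the statement leaves tacit. One needs $M\neq xM$, so that $M/xM\neq 0$ and the flat dimension genuinely increases by one rather than collapsing to $-\infty$; one needs $x$ to be a nonunit, hence to lie in a maximal ideal; and one needs a finiteness-and-locality condition, since for a non-finitely-generated module over a ring of finite global dimension the flat dimension cannot jump at all --- so the clean statement really lives in the local Noetherian case with $M$ finitely generated and $x$ in the maximal ideal, or is to be read after such a localisation. It is also convenient, and harmless here, to assume $x$ regular on $A$ itself, which holds since $A$ is a Noetherian regular $k$-algebra and hence locally a domain. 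Finally, the genuinely infinite case $fd(M)=\infty$ does not occur under the standing assumptions, but to cover it one would replace the collapse to an isomorphism at $i=d+1$ by a direct comparison of the two vanishing ranges.
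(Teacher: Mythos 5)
The paper does not actually prove this lemma --- its ``proof'' is a pointer to the literature --- so there is nothing internal to compare your argument against; what you have written is the standard argument, and it is essentially correct. The long exact $Tor$-sequence attached to $0\to M\xrightarrow{x}M\to M/xM\to 0$ gives $fd(M/xM)\le fd(M)+1$ unconditionally, and the identification $Tor^A_{d+1}(M/xM,N)\cong\ker\bigl(x\mid Tor^A_d(M,N)\bigr)$ for $d=fd(M)$ correctly reduces the reverse inequality to producing one test module $N$, killed by $x$, with $Tor^A_d(M,N)\neq 0$; your use of the two descriptions of multiplication by a central $x$ on $Tor$ is the right mechanism, and $N=A/\mathfrak{m}$ together with a minimal resolution closes the argument in the local Noetherian finitely generated case.

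Your diagnosis that the lemma as literally stated is false, and that the lower bound is where the tacit hypotheses hide, is the most valuable part of the write-up and deserves to be made explicit with a counterexample: take $A=\mathbb{Z}$, $M=\mathbb{Z}/2\oplus\mathbb{Z}$ and $x=3$. Then $x$ is $M$-regular, $M/xM\cong\mathbb{Z}/3\neq 0$, yet $fd(M)=1=fd(M/xM)$, so the claimed equality $fd(M)=fd(M/xM)+1$ fails even for a finitely generated module over a Noetherian ring with $x$ a nonunit and $M\neq xM$. (This also shows that a maximal ideal $\mathfrak{m}\ni x$ at which $fd(M_\mathfrak{m})$ still equals $fd(M)$ need not exist, so that step of yours is a genuine additional hypothesis, not a harmless reduction.) The honest statement is the local one: $(A,\mathfrak{m})$ Noetherian local, $M$ finite, $x\in\mathfrak{m}$ regular on $M$ --- and that version is exactly what Proposition~\autoref{prop:Krullpd} consumes, since there the lemma is applied to $M=A_{\mathfrak{m}}$ with the $x_i$ a regular sequence inside the maximal ideal. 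One small correction: your remark that the flat dimension ``cannot jump'' for non-finitely-generated modules is not the right dividing line (the counterexample above is finitely generated); finiteness alone does not save the statement, locality is what does.
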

\begin{proof}
Widely known, see stacks project.  

\end{proof}

\begin{ex} $\label{ex:zdim}$
The flat dimension of $\mathbb{Z}$ as a $\mathbb{Z}[X]$-bimodule, is exactly $n$ and its projective dimension if atleast $n$.  
\end{ex}
\begin{proof}
$\mathbb{Z}[X]$ is free, and theore projective as a $\mathbb{Z}[X]$-bimodule.  Theore, in example $\autoref{ex:fd}$ it was remarked that the projective dimension of $\mathbb{Z}[X]$ must theore be equal to $0$.  

Example $\autoref{ex:ringreg}$ showed that the polynomial $X$ must be $\mathbb{Z}[X]$-regular, and so $fd(\mathbb{Z})=1$ as a $\mathbb{Z}[X]$-bimodule.  
Lemma $\autoref{lem:fdpd}$ then implies that $1 \leq pd(\mathbb{Z})$ as $\mathbb{Z}-bimodule$.  
\end{proof}

We iterate the above lemma to generate a more rapidly applicable one.  
\begin{defn}
\textbf{M-regular sequence}

A sequence $x_1,..,x_n$ of elements of $A$ are said to form an \textbf{$M$-regular sequence} if and only if each $x_{i}$ is regular on $M/(x_0,..,x_{i-1})M$, where $x_{0}$ is taken to be $0$.  

The regular sequence $x_1,..,x_n$ is said to be of \textbf{maximal length} if for every other $M$-regular sequence $y_1,..,y_m$ in $A$ $m\leq n$.  
\end{defn}
\begin{lem} $\label{lem:fd}$

If $x_1,..,x_n$ is an $M$-regular sequence on the $A$-bimodule $M$, then 

$fd(M/(x_1,..,x_{n})M)=-n+fd(M)$.  
\end{lem}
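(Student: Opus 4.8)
The plan is to prove the statement $fd(M/(x_1,\dots,x_n)M) = -n + fd(M)$ by induction on $n$, using Lemma~\ref{lem:Regular} as the base case and the single-step engine of the argument. The hypothesis that $x_1,\dots,x_n$ is an $M$-regular sequence is exactly what is needed to feed Lemma~\ref{lem:Regular} repeatedly: each $x_{i}$ is regular on the successive quotient $M/(x_1,\dots,x_{i-1})M$, so the one-variable statement applies at every stage.

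\begin{proof}
We argue by induction on $n$. For $n = 1$ the claim is precisely Lemma~\ref{lem:Regular}: since $x_1$ is $M$-regular, $fd(M) = fd(M/(x_1)M) + 1$, equivalently $fd(M/(x_1)M) = -1 + fd(M)$.

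Assume the formula holds for regular sequences of length $n-1$. Let $x_1,\dots,x_n$ be an $M$-regular sequence, and set $M' := M/(x_1,\dots,x_{n-1})M$. By definition of an $M$-regular sequence, $x_1,\dots,x_{n-1}$ is an $M$-regular sequence and $x_n$ is regular on $M'$. Applying the induction hypothesis to $x_1,\dots,x_{n-1}$ gives
\begin{equation*}
fd(M') = fd\big(M/(x_1,\dots,x_{n-1})M\big) = -(n-1) + fd(M).
\end{equation*}
Since $x_n$ is $M'$-regular, Lemma~\ref{lem:Regular} applied to $M'$ and $x_n$ yields
\begin{equation*}
fd\big(M'/(x_n)M'\big) = -1 + fd(M').
\end{equation*}
Finally, $M'/(x_n)M' \cong M/(x_1,\dots,x_n)M$, so combining the two displayed equations gives
\begin{equation*}
fd\big(M/(x_1,\dots,x_n)M\big) = -1 + fd(M') = -1 + \big(-(n-1) + fd(M)\big) = -n + fd(M),
\end{equation*}
completing the induction.
\end{proof}

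The only genuine subtlety — and the step I would flag as the main obstacle — is the iterated bookkeeping with extended natural numbers: if $fd(M) = \infty$ then every quotient in the chain also has infinite flat dimension and the identity holds vacuously in the extended sense, while if $fd(M)$ is finite the subtraction is literal. One must also be slightly careful that the quotient identification $M'/(x_n)M' \cong M/(x_1,\dots,x_n)M$ is the obvious one coming from the third isomorphism theorem for bimodules, and that ``$M$-regular sequence'' is used with the convention $x_0 = 0$ fixed in the preceding definition so that the first step is genuinely the hypothesis ``$x_1$ is $M$-regular.'' Neither point requires real work, so the proof is essentially a clean induction off Lemma~\ref{lem:Regular}.
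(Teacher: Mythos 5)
Your proof is correct and follows essentially the same route as the paper's: both arguments simply iterate Lemma~\ref{lem:Regular} along the regular sequence, peeling off one $x_i$ at a time, with yours merely packaging the iteration as a formal induction and being slightly more careful about the quotient identification and the $fd(M)=\infty$ case. Note that both you and the paper inherit the sign convention of Lemma~\ref{lem:Regular} exactly as stated, so the two proofs agree step for step.
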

\begin{proof}
The definition of an $M$-regular sequence implies that, $x_1$ is regular on $M/(x_1)M$ and so the last implies $fd(M)-1 = fd(M/(x_1)M)$.  Likewise for every $1 < i\leq n$ the definition of an M-regular sequence implies that $x_i$ is regular on $M/(x_{i-1})M$ and so lemma $\autoref{lem:Regular}$ implies that $fd(M/(x_1,..,x_{i-1})M)= fd(M/(x_1,..,x_{i})M) -1 $.  Iteration gives $fd(M)= fd(M/(x_1,..,x_{n})M) +n$.  
\end{proof}
\begin{ex} $\label{ex:zxdim}$
Example $\autoref{ex:zdim}$ and lemma $\autoref{lem:fd}$ then straight-away imply that the polynomial ring $\mathbb{Z}$ is of flat dimension $n$ as a $\mathbb{Z}[x_1,..,x_n]$-bimodule.  
\end{ex}

The types of algebras we will be most interested are those which are \textit{Cohen-Macaulay} when localised at a certain ideal.  
\begin{defn} $\label{def:CM}$
\textbf{Cohen-Macauley at an ideal}

A commutative unital $R$-algebra $A$ is said to be \textbf{Cohen-Macaulay} at an ideal $I$, if there is an $A_I$-regular sequence $x_1,...,x_d$ of maximal length $d$, in $A_I$ where $d$ is the Krull dimension of $A_I$.  
\end{defn}

\begin{ex} $\label{ex:ZXCM}$
$\mathbb{Z}[x_1,..,x_n]$ is Cohen Macaulay at the maximal ideal $(x_1-z_1,..,x_n-z_n,p)$.  
\end{ex}
\begin{proof}
The ring $\mathbb{Z}[x_1,..,x_n]$ is of Krull dimension $n+Dim(\mathbb{Z})=n+1$ and $(x_1-z_1,..,x_n-z_n,p)$, where $p$ is a prime number and $z_1,..,z_n$ are arbitrary integers is a maximal ideal therein .  This implies that $p,x_1,..,x_n$ is a regular sequence in $\mathbb{Z}[x_1,..,x_n]$ ; moreover it must be maximal since $(p)\lhd...\lhd (p,x_1-z_1,..,x_n-z_n)$ is a proper chain of non-trivial ideals of height $n+1$.  

The correspondence between localized prime ideals and of primeideals, then implies that 
$\frac{p}{1},\frac{x_1-z_1}{1},..,\frac{x_n-z_n}{1}$ is a regular sequence of maximal length in the local ring 

$\mathbb{Z}[x_1,..,x_n]_(p,x_1-z_1,..,x_n-z_n)$.  

In other words, 
\end{proof}

The central use of this notion will be to localize a commutative algebra $A$ at a certain maximal ideal $\mathfrak{m}$, at which it is Cohen Macaulay; a regular sequence may then be obtained of length equal to the krill dimension of $A_{\mathfrak{m}}$ which will then be used to provide a lower bound for the projective dimension of $A$.  

\begin{prop} $\label{prop:Krullpd}$
If $A$ is Cohen Macaulay at the maximal ideal $\mathfrak{m}$, then $Krull(A_{\mathfrak{m}})\leq pd(A)$.  
\end{prop}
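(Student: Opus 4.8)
The plan is to read $pd(A)$ as the projective dimension of $A$ viewed as an $A$-bimodule, i.e.\ as a module over the enveloping algebra $A^{e}=A\otimes_{k}A$ — this is the quantity entering $HCHDim(A)$ — and to exhibit, inside a one-sided $A$-module squeezed out of the regular sequence, a witness forcing $pd(A)\ge Krull(A_{\mathfrak{m}})$. Since $A$ is commutative, the left and right $A$-actions on $A$, and on every cyclic quotient $A/I$, agree, so the regular-element lemmas of this section apply to them verbatim.

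First I would unpack the hypothesis. By Definition~\ref{def:CM} the Cohen--Macaulay condition at $\mathfrak{m}$ produces an $A_{\mathfrak{m}}$-regular sequence of maximal length $d:=Krull(A_{\mathfrak{m}})$ in $A_{\mathfrak{m}}$; multiplying a term by a unit of $A_{\mathfrak{m}}$ changes neither its regularity nor the ideal it helps generate, so I may assume the sequence is $x_{1}/1,\dots,x_{d}/1$ with $x_{1},\dots,x_{d}\in A$. Set $N:=A/(x_{1},\dots,x_{d})A$, a finitely generated $A$-module with localization $N_{\mathfrak{m}}=A_{\mathfrak{m}}/(x_{1}/1,\dots,x_{d}/1)A_{\mathfrak{m}}$. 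In the Noetherian local ring $A_{\mathfrak{m}}$ the images of the $x_{i}$ form a regular sequence of length $d$ contained in the maximal ideal, so iterating Lemma~\ref{lem:Regular} (each passage to a quotient by a regular element raising the flat dimension by one, starting from $fd_{A_{\mathfrak{m}}}(A_{\mathfrak{m}})=0$; this is Lemma~\ref{lem:fd} applied to $M=A_{\mathfrak{m}}$) gives $fd_{A_{\mathfrak{m}}}(N_{\mathfrak{m}})=d$, and since $N_{\mathfrak{m}}$ is finitely generated over a Noetherian local ring its flat and projective dimensions coincide, so $pd_{A_{\mathfrak{m}}}(N_{\mathfrak{m}})=d$. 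As projective dimension can only drop under localization, $pd_{A}(N)\ge pd_{A_{\mathfrak{m}}}(N_{\mathfrak{m}})=d$.

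The step I expect to be the main obstacle is the change-of-rings comparison along the multiplication map $A^{e}\to A$: for \emph{every} $A$-module $M$ one has $pd_{A}(M)\le pd_{A^{e}}(A)$. To prove it, take a projective resolution $P_{\bullet}\to A$ over $A^{e}$ of length $pd_{A^{e}}(A)$. Because $k$ is a field, $M$ is $k$-free, hence $M\otimes_{A}A^{e}\cong M\otimes_{k}A$ is $A$-free and each $M\otimes_{A}P_{i}$ is $A$-projective; because $A^{e}$ is $A$-free on a $k$-basis of its right tensor factor, every $P_{i}$ is $A$-flat, so $P_{\bullet}$ is a flat resolution of the $A$-flat module $A$ and stays exact after applying $M\otimes_{A}-$. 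Thus $M\otimes_{A}P_{\bullet}\to M$ is an $A$-projective resolution of length $\le pd_{A^{e}}(A)$. (This is exactly the classical bound $\mathrm{gldim}(A)\le HCHDim(A)$ over a field, and it is where the hypothesis that $k$ is a field is used in an essential way.)

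Finally I would combine the two halves: $pd(A)=pd_{A^{e}}(A)\ge pd_{A}(N)\ge pd_{A_{\mathfrak{m}}}(N_{\mathfrak{m}})=d=Krull(A_{\mathfrak{m}})$, which is the assertion. Fed into Cuntz--Quillen's criterion this also yields the advertised consequence: if $Krull(A_{\mathfrak{m}})\ge 2$ then $pd_{A^{e}}(A)\ge 2$, hence $HCHDim(A)\ge 2$, so $A$ is not quasi-free.
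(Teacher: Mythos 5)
Your proposal is correct, and it is in fact a repaired and completed version of the argument the paper only gestures at. The shared skeleton is the same: extract the maximal regular sequence from the Cohen--Macaulay hypothesis, use the dimension-shift lemma to manufacture a module of flat (hence projective) dimension $d=Krull(A_{\mathfrak m})$, and transfer the bound back to $A$. But you diverge at two points, both of which matter. First, you place the dimension-$d$ witness on the quotient $N_{\mathfrak m}=A_{\mathfrak m}/(x_1,\dots,x_d)A_{\mathfrak m}$, starting the shift from $fd_{A_{\mathfrak m}}(A_{\mathfrak m})=0$; the paper instead applies Lemma~\ref{lem:fd} to $M=A_{\mathfrak m}$ and reads off $fd(A_{\mathfrak m})\ge d$, which only ``works'' because that lemma is stated with the shift in the wrong direction (as written it would force $fd(A_{\mathfrak m}/(x)A_{\mathfrak m})=-d$, which is absurd since $A_{\mathfrak m}$ is free over itself). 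Second, and more importantly, you supply the change-of-rings comparison $pd_A(M)\le pd_{A^e}(A)$, which is the only thing that connects the one-sided projective dimension (where regular sequences and localization actually live) to the bimodule quantity $pd_{A^e}(A)$ that feeds into $HCHDim(A)$; the paper's closing step ``$pd(A_{\mathfrak m})\le pd(A)$ by restriction of scalars'' never crosses from $A$-modules to $A^e$-modules and leaves that gap open. Note that you could obtain your bridge even more cheaply from the paper's own Lemma~\ref{lem:pre-weakWeibeliso}: the isomorphism $Ext_A^n(M,N)\cong Ext_{A^e}^n(A,Hom_k(M,N))$ together with the $Ext$-characterization of projective dimension gives $\operatorname{gldim}(A)\le pd_{A^e}(A)$ directly, replacing your tensor-product argument. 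The only cosmetic caveat is that your reading requires a proper regular sequence (so that $N_{\mathfrak m}\neq 0$), which is automatic for a system of parameters in $\mathfrak m A_{\mathfrak m}$ but is not spelled out in the paper's Definition~\ref{def:CM}.
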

\begin{proof}
If $A$ is Cohen Macaulay at $\mathfrak{m}$, there is a regualr sequence $x_1,..,x_d$ of length $d:=Krull(A_{\mathfrak{m}})$ in $A_{\mathfrak{m}}$.  Applying lemmma $\autoref{lem:fd}$ implies 

$fd(A_{\mathfrak{m}}/(x_1,..,x_d)A_{\mathfrak{m}}) = -d + fd(A_{\mathfrak{m}})$.  
Hence, $fd(A_{\mathfrak{m}})\geq d = Krull(A_{\mathfrak{m}}).  $

Now applying lemma $\autoref{lem:fdpd}$ implies $pd (A_{\mathfrak{m}}) \geq fd(A_{\mathfrak{m}}) \geq Krull(A_{\mathfrak{m}})$.  Finally, since the restriction of scalars functor induced through the localization map $A \rightarrow A_{\mathfrak{m}}$ implies that every $A_{\mathfrak{m}}$-module is an $A$-module, and so $pd(A_{\mathfrak{m}}) \leq pd(A)$.  

\end{proof}

There are a variety of characterisations of regularity, one that will now provide an immediate use for our interests in the following, proven in Eisenbud .  

\begin{lem} $\label{lem:Eisenbud}$
A ring $R$ is regular of krull dimension $d$ if and only if for each maximal ideal $\mathfrak{m}$ there is a maximal $R_{\mathfrak{m}}$-regular sequence $x_1,..,x_d$ of length $d$, moreover $d = Krull (R_{\mathfrak{m}})$.  
\end{lem}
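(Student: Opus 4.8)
The plan is to pass to the local case and then combine the elementary dimension theory developed above with the standard structure theory of regular local rings. Since $R$ is regular exactly when every localisation $R_{\mathfrak{m}}$ at a maximal ideal is a regular local ring, and since a regular local ring is a domain (hence catenary and equidimensional), it suffices to prove the following local statement: for a Noetherian local ring $(S,\mathfrak{n})$ with residue field $k$, the ring $S$ is regular of Krull dimension $d$ if and only if $\mathfrak{n}$ admits a minimal generating set $x_1,\dots,x_d$ forming an $S$-regular sequence with $d=\operatorname{Krull}(S)$; and such a sequence, terminating in $S/(x_1,\dots,x_d)\cong k$, cannot be prolonged, so it is automatically a maximal $S$-regular sequence. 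Globalising afterwards only uses the (implicit) equidimensionality of $R$, which forces $\operatorname{Krull}(R_{\mathfrak{m}})=d$ for every maximal ideal $\mathfrak{m}$.

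For the forward implication, assume $S$ is regular of dimension $d$ and fix a regular system of parameters: lift a $k$-basis of $\mathfrak{n}/\mathfrak{n}^2$ to elements $x_1,\dots,x_d\in\mathfrak{n}$, which generate $\mathfrak{n}$ by Nakayama. I would show by induction on $d$ that this is an $S$-regular sequence: the quotient $S/(x_1)$ is again regular local of dimension $d-1$, since the class of $x_1$ can be taken to be part of a regular system of parameters (one checks $\dim S/(x_1)\ge d-1$ and $\dim_k\big(\mathfrak{n}/(\mathfrak{n}^2+(x_1))\big)=d-1$, so the two coincide); in particular $S/(x_1)$ is a domain, so $x_1$ is a nonzerodivisor on $S$, and the inductive hypothesis applied to $S/(x_1)$ completes the step. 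As $S/(x_1,\dots,x_d)\cong k$ has depth $0$, the sequence is maximal, and we conclude $\operatorname{depth}(S)=d=\operatorname{Krull}(S)$.

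For the converse, suppose $\mathfrak{n}=(x_1,\dots,x_d)$ with $x_1,\dots,x_d$ an $S$-regular sequence and $d=\operatorname{Krull}(S)$. Then $\dim_k \mathfrak{n}/\mathfrak{n}^2\le d$, while Krull's height theorem always yields $\operatorname{Krull}(S)\le\dim_k \mathfrak{n}/\mathfrak{n}^2$; hence both equal $d$, and $S$ is regular by the very definition of regularity. The regular-sequence hypothesis is exactly what secures the maximality clause and identifies the common length $d$ with $\operatorname{depth}(S)$, so that both sides of the equivalence speak of the same integer.

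\textbf{The main obstacle.} The genuine difficulty --- and the reason I would ultimately cite Eisenbud rather than reprove everything from scratch --- is the gap between ``$S$ possesses a maximal $S$-regular sequence of length $\operatorname{Krull}(S)$'' and ``$\mathfrak{n}$ is generated by such a sequence'': the former condition on its own only says that $S$ is Cohen-Macaulay, which is strictly weaker than regularity. Closing this gap means either reading ``maximal'' in the strong (i.e. generating) sense used above, or bringing in the structure of the associated graded ring $\operatorname{gr}_{\mathfrak{n}}(S)$ as a polynomial ring over $k$ whenever $\mathfrak{n}$ is generated by a regular sequence. That graded computation is the one genuinely nontrivial ingredient, and it is precisely what the cited reference packages.
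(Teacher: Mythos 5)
The paper itself offers no argument for this lemma (its ``proof'' is a citation to the literature), so your write-up is already doing strictly more work than the source. Your forward direction is the standard and correct argument: localise, lift a basis of $\mathfrak{n}/\mathfrak{n}^2$ to a regular system of parameters, show by induction that it is a regular sequence using the fact that $S/(x_1)$ is again regular (hence a domain), and conclude $\operatorname{depth}(S)=\operatorname{Krull}(S)$ with the sequence maximal because the final quotient is the residue field. That is exactly the content the paper actually uses downstream (Proposition \ref{propr:regCM} and Corollary \ref{prop:Krullpdreg} only ever invoke the implication ``regular $\Rightarrow$ Cohen--Macaulay at every maximal ideal, with $\operatorname{Krull}(A_{\mathfrak{m}})=\operatorname{Krull}(A)$'').

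The gap you flag in your final paragraph is real, and it is worth being blunter about it than you are: under the paper's \emph{own} definition of a maximal-length regular sequence (Definition of ``$M$-regular sequence'': maximal means no other regular sequence is longer), the right-hand side of the equivalence is verbatim the paper's Definition \ref{def:CM} of ``Cohen--Macaulay at every maximal ideal.'' Since there exist Cohen--Macaulay local rings that are not regular (e.g.\ $k[x,y]/(y^2-x^3)$ localised at the origin, which has a maximal regular sequence of length $1=\operatorname{Krull}$ but is singular), the backward implication of the lemma as stated is simply false. Your converse argument silently repairs this by strengthening the hypothesis to ``$\mathfrak{n}$ is \emph{generated} by a regular sequence of length $d$,'' which is a different (and correct) characterisation of regularity, but it is not the statement being proved. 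So the honest conclusion is: the forward direction is proved, the stated biconditional cannot be proved because it is not true under the paper's definitions, and the fix is either to weaken the lemma to a one-way implication (which suffices for every later use in the paper) or to redefine ``maximal'' to mean ``generating $\mathfrak{m}R_{\mathfrak{m}}$.'' Your instinct to defer the graded-ring computation ($\operatorname{gr}_{\mathfrak{n}}(S)$ polynomial when $\mathfrak{n}$ is generated by a regular sequence) to the reference is reasonable; just make explicit that it belongs to the repaired statement, not the one in the text.
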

\begin{proof}
Known, see stacks project, commutative algebra section.  
\end{proof}

This allows for an immediate particularisation of example $\autoref{ex:ZXCM}$ as follows:
\begin{prop} $\label{propr:regCM}$
A commutative unital associative $R$-algebra $A$ is Cohen-Macaulay at each maximal ideal if it is regular.  
\end{prop}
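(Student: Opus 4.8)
The statement to prove is Proposition \ref{propr:regCM}: a commutative unital associative $R$-algebra $A$ is Cohen-Macaulay at each maximal ideal if it is regular.

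The plan is to deduce this directly from Lemma \ref{lem:Eisenbud} and the definition of Cohen-Macaulay at an ideal (Definition \ref{def:CM}). First I would unpack what needs to be shown: for each maximal ideal $\mathfrak{m}$ of $A$, there must exist an $A_{\mathfrak{m}}$-regular sequence $x_1,\dots,x_d$ of maximal length $d$ inside $A_{\mathfrak{m}}$, with $d = \mathrm{Krull}(A_{\mathfrak{m}})$. This is precisely the content of the characterisation of regularity in Lemma \ref{lem:Eisenbud}.

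The key steps, in order, are: (1) assume $A$ is regular, say of Krull dimension $d$ — or more carefully, observe that "regular" for the purposes here should be read as regular at each localization; (2) fix an arbitrary maximal ideal $\mathfrak{m}$; (3) apply Lemma \ref{lem:Eisenbud} to produce a maximal $A_{\mathfrak{m}}$-regular sequence $x_1,\dots,x_{d_{\mathfrak{m}}}$ of length $d_{\mathfrak{m}} = \mathrm{Krull}(A_{\mathfrak{m}})$; (4) observe that this is exactly the data required by Definition \ref{def:CM} for $A$ to be Cohen-Macaulay at $\mathfrak{m}$; (5) since $\mathfrak{m}$ was arbitrary, conclude $A$ is Cohen-Macaulay at every maximal ideal. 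Example \ref{ex:ZXCM} is the template: there the regular sequence $p, x_1 - z_1, \dots, x_n - z_n$ witnesses the Cohen-Macaulay property at the corresponding maximal ideal of $\mathbb{Z}[x_1,\dots,x_n]$, which is regular.

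I do not expect a genuine obstacle here — the proposition is essentially a translation between the regularity criterion of Lemma \ref{lem:Eisenbud} and the bookkeeping definition of Definition \ref{def:CM}. The only mild subtlety is making sure the Krull dimension appearing in the definition of Cohen-Macaulay (the dimension of the \emph{localization} $A_{\mathfrak{m}}$) matches the length of the regular sequence supplied by the Eisenbud-type lemma, and Lemma \ref{lem:Eisenbud} states exactly this equality $d = \mathrm{Krull}(R_{\mathfrak{m}})$, so the two line up without further work. Hence the proof is a short citation-and-matching argument rather than a computation.
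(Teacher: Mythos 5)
Your proposal is correct and follows essentially the same route as the paper: invoke the characterisation of regularity in Lemma~\ref{lem:Eisenbud} to produce, at each maximal ideal $\mathfrak{m}$, a maximal $A_{\mathfrak{m}}$-regular sequence of length $\mathrm{Krull}(A_{\mathfrak{m}})$, and observe this is precisely Definition~\ref{def:CM}. If anything, your write-up is more careful than the paper's one-line proof, which as literally stated (``$A_{\mathfrak{m}}$ is of krull dimension $Krull(A_{\mathfrak{m}})$'') is tautological and only makes sense as shorthand for the Eisenbud-lemma argument you spell out.
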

\begin{proof}
If $\mathfrak{m}$ is a maximal ideal in $A$, then $A_{\mathfrak{m}}$ is of krull dimension $Krull(A_{\mathfrak{m}})$, since $A$ is regular; which is the definition of being Cohen-Macaulay at each maximal ideal.  
\end{proof}

A straightforward geometric example, 

with a classical geometric flavour is the following.

\begin{ex}
If $X$ is a smooth affine scheme over $Spec(\mathbb{Z}/p\mathbb{Z})$, then its coordinate ring $\mathbb{Z}/p\mathbb{Z}[X]$ is Cohen-Macaulay at each closed point.  
\end{ex}
\begin{proof}
Since $\mathbb{Z}/p\mathbb{Z}$ is an algebraically closed field  then theorem $\autoref{thrm:smoothalgclo}$ implies that $\mathbb{Z}/p\mathbb{Z}[X]$ is regular.  Proposition $\autoref{propr:regCM}$ above, now implies that $\mathbb{Z}/p\mathbb{Z}[X]$ is Cohen-Macaulay at each maximal ideal in $\mathbb{Z}/p\mathbb{Z}[X]$.  By definition of the Zarisky topology on an affine scheme $(: DF)$ there are exactly the closed points of $Spec(\mathbb{Z}/p\mathbb{Z}[X])$.  
\end{proof}

The above proposition particularises in the case that the algebra in question is regular as:
\begin{cor} $\label{prop:Krullpdreg}$
If $A$ is a regular $R$-algebra, then $Krull(A)\leq pd(A)$.  
\end{cor}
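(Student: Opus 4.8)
The plan is to read this off directly from Proposition \autoref{prop:Krullpd} together with Proposition \autoref{propr:regCM}, the only extra ingredient being the passage from the local Krull dimensions $Krull(A_{\mathfrak{m}})$ back to the global one $Krull(A)$. First I would invoke Proposition \autoref{propr:regCM}: since $A$ is regular, it is Cohen--Macaulay at every maximal ideal $\mathfrak{m}$. Proposition \autoref{prop:Krullpd} then applies verbatim at each such $\mathfrak{m}$ and yields $Krull(A_{\mathfrak{m}}) \le pd(A)$ for every maximal ideal $\mathfrak{m}$ of $A$. The right-hand side does not depend on $\mathfrak{m}$, so one may take the supremum over all maximal ideals on the left, obtaining $\sup_{\mathfrak{m}} Krull(A_{\mathfrak{m}}) \le pd(A)$.

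It then remains to identify $Krull(A)$ with $\sup_{\mathfrak{m}} Krull(A_{\mathfrak{m}})$. Under the standing assumption of this section $A$ is Noetherian, and $Krull(A_{\mathfrak{m}})$ is exactly the height of $\mathfrak{m}$; since every prime of $A$ is contained in some maximal ideal, the Krull dimension of $A$ is the supremum of the heights of its maximal ideals, giving $Krull(A) = \sup_{\mathfrak{m}} Krull(A_{\mathfrak{m}})$ and hence $Krull(A) \le pd(A)$. Alternatively, and more in the spirit of the preceding lemmas, one can bypass this supremum argument entirely by feeding Lemma \autoref{lem:Eisenbud} into the computation behind Proposition \autoref{prop:Krullpd}: writing $d = Krull(A)$, that lemma provides, at any fixed maximal ideal $\mathfrak{m}$, an $A_{\mathfrak{m}}$-regular sequence $x_1,\dots,x_d$ of length $d$ with $d = Krull(A_{\mathfrak{m}})$, and then Lemma \autoref{lem:fd}, Lemma \autoref{lem:fdpd}, and restriction of scalars along $A \to A_{\mathfrak{m}}$ chain together to give $pd(A) \ge pd(A_{\mathfrak{m}}) \ge fd(A_{\mathfrak{m}}) \ge d = Krull(A)$.

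The only genuinely delicate point is this local-to-global step: the inequality $Krull(A_{\mathfrak{m}}) \le pd(A)$ is local, whereas the statement concerns $A$ globally, so one must be sure the maximal ideal of largest height is actually accounted for. This is harmless here because $A$ is assumed Noetherian (indeed regular), but it is worth flagging that for a general ring the equality $Krull(A) = \sup_{\mathfrak{m}} ht(\mathfrak{m})$ can fail, so the Noetherian hypothesis --- or equivalently the use of Lemma \autoref{lem:Eisenbud}, which builds the dimension-$d$ regular sequence at \emph{every} maximal ideal --- is doing real work. Beyond that, no computation is needed that is not already contained in Proposition \autoref{prop:Krullpd}.
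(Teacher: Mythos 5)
Your proof is correct and follows the same basic route as the paper's: regularity gives Cohen--Macaulayness at maximal ideals via Proposition \autoref{propr:regCM}, and Proposition \autoref{prop:Krullpd} then gives $Krull(A_{\mathfrak{m}})\leq pd(A)$. The one genuine difference is in the local-to-global step. The paper fixes a \emph{single} maximal ideal $\mathfrak{m}$ and then invokes Lemma \autoref{lem:Eisenbud} to assert $Krull(A_{\mathfrak{m}})=Krull(A)$ outright, i.e.\ it leans on the claim that a regular ring is equidimensional at every maximal ideal. You instead run the inequality at \emph{every} maximal ideal and take a supremum, which only requires $Krull(A)=\sup_{\mathfrak{m}}Krull(A_{\mathfrak{m}})$; this is the more robust argument, since it does not care whether different maximal ideals have different heights (your second, ``alternative'' paragraph is essentially the paper's actual proof). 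One small correction: the identity $Krull(A)=\sup_{\mathfrak{m}}ht(\mathfrak{m})$ does not need Noetherianity at all --- any chain of primes witnessing the dimension has its top element contained in some maximal ideal, and extending the chain to that maximal ideal shows $ht(\mathfrak{m})$ dominates the chain's length --- so the caveat you flag about this failing for general rings is a false alarm, though it costs you nothing here.
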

\begin{proof}
Pick some maximal ideal $\mathfrak{m}$.  Since $A$ is regular, then lemma $\autoref{propr:regCM}$ implies it is Cohen-Macaulay at $\mathscr{m}$ and proposition $\autoref{prop:Krullpd}$ implies that $A_{\mathfrak{m}}\leq pd(A)$.  However since $A$ is regular then lemma $\label{lem:Eisenbud}$ implies that $Krull(A_{\mathfrak{m}})=Krull(A)$ and so $Krull(A) = Krull(A_{\mathfrak{m}}) \leq pd(A)$.  
\end{proof}
\subsection{The weak degeneracy criterion}
We are now essentially in place to prove a the degeneracy of smoothness criterion, however we'll need one last lemma:

\begin{lem} $\label{lem:pre-weakWeibeliso}$
Let $R$ be a commutative ring, and $A$ be an $R$-algebra and $M,N$ be left $R$ then there are natural isomorphisms:

$Ext_{A/k}^n(M,N) \cong Ext_{A^e}^n(A,Hom_k(M,N))\cong HH^n(A,Hom_k(M,N))$,

where $Ext_{A/k}^n(-,-)$ is the $k$-relative $ext$-bifunctors.  
\end{lem}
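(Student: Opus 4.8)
The plan is to establish the two isomorphisms separately and then stitch them together, reading the chain left-to-right. The rightmost identification $\operatorname{Ext}_{A^e}^n(A, \operatorname{Hom}_k(M,N)) \cong HH^n(A, \operatorname{Hom}_k(M,N))$ is essentially a definition unwinding: Hochschild cohomology $HH^n(A, -)$ is by definition $\operatorname{Ext}_{A^e}^n(A, -)$, where $A^e = A \otimes_k A^{\mathrm{op}}$ is the enveloping algebra and $A$ is viewed as the canonical $A^e$-bimodule. So this factor requires only that we have fixed compatible conventions (bar resolution versus abstract derived functor), and naturality in both variables is inherited from the naturality of $\operatorname{Ext}$ over a fixed ring.

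The substantive part is the left isomorphism $\operatorname{Ext}_{A/k}^n(M,N) \cong \operatorname{Ext}_{A^e}^n(A, \operatorname{Hom}_k(M,N))$. First I would recall that the $k$-relative $\operatorname{Ext}$ groups $\operatorname{Ext}_{A/k}^n(M,N)$ are the derived functors computed from a $k$-relative projective resolution of $M$ as a left $A$-module (equivalently, an $(A,k)$-resolution in the sense of Hochschild's relative homological algebra). The key structural input is the adjunction between the functor $A \otimes_k -$ (induction from $k$-modules to left $A$-modules) and the forgetful functor, together with the standard tensor-hom adjunction $\operatorname{Hom}_A(A \otimes_k P, N) \cong \operatorname{Hom}_k(P, N)$ and, on the enveloping-algebra side, $\operatorname{Hom}_{A^e}(A \otimes_k Q \otimes_k A,\ \operatorname{Hom}_k(M,N)) \cong \operatorname{Hom}_k(Q,\ \operatorname{Hom}_k(M,N))$. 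Concretely I would take the (normalized) relative bar resolution $\beta_\bullet(A) \to A$ over $A^e$, with $\beta_n(A) = A^{\otimes_k (n+2)}$, apply $\operatorname{Hom}_{A^e}(-, \operatorname{Hom}_k(M,N))$, and identify the resulting cochain complex termwise with $\operatorname{Hom}_A(A^{\otimes_k(n+1)} \otimes_k M,\ N)$ — which is exactly the complex computing $\operatorname{Ext}_{A/k}^n(M,N)$ from the relative bar resolution $A^{\otimes_k(\bullet+1)} \otimes_k M \to M$ of $M$ as a left $A$-module. Checking that the differentials match under this termwise identification is the routine-but-essential verification; it is the mechanism behind the classical statement that $HH^n(A, \operatorname{Hom}_k(M,N))$ computes relative $\operatorname{Ext}$, and is where I would expect to spend the most care.

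The main obstacle, then, is bookkeeping rather than conceptual: one must be scrupulous that the module structures line up (the outer $A \otimes A^{\mathrm{op}}$-action on $\operatorname{Hom}_k(M,N)$ is the one induced from the left $A$-action on $M$ and the left $A$-action on $N$), that the bar differentials correspond under the adjunction isomorphisms, and that everything is natural in $M$ and $N$ — the latter being immediate once the isomorphism is exhibited as a composite of natural adjunction isomorphisms applied levelwise to a resolution that does not depend on $M$ or $N$. Naturality in $M$ requires a tiny extra remark since $M$ enters the resolution side, but passing to the functorial relative bar resolution makes the comparison map a morphism of functors by construction. I would conclude by noting that since both sides are the cohomology of isomorphic cochain complexes of functors, the induced isomorphisms on cohomology are natural, completing the proof.
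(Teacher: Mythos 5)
Your proposal is correct and follows essentially the same route as the paper's proof: both establish the right-hand isomorphism as (essentially) the definition of Hochschild cohomology via $\operatorname{Ext}_{A^e}$, and both obtain the left-hand isomorphism by applying the tensor--hom adjunction termwise to the bar resolution so as to identify $\operatorname{Hom}_{A^e}(A^{\otimes_k(\bullet+2)},\operatorname{Hom}_k(M,N))$ with the complex $\operatorname{Hom}_A(A^{\otimes_k(\bullet+1)}\otimes_k M,N)$ computing the $k$-relative $\operatorname{Ext}$. If anything, you are more careful than the paper about the relative-versus-absolute distinction and about verifying that the differentials and bimodule structures match, which is where the paper's argument is at its sketchiest.
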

\begin{proof}
The first remark is that the left $A$-module $Hom_k(M,N)$ has the structure of a $A$-bimodule via the action $(a,a')f(-) \mapsto af(a'-)$, where $a$ and $a'$ are elements of $A$ and $f$ is a $k$-module homomorphism from M to N; $af$ is the inherit multiplication in $Hom_k(M,N)$ and $f(a'-)$ is the evaluation of $f$ is the pre composition of $f$ with the left-multiplication endomorpihsm $a'-$ on $M$.  Since $(af)a' = (af)(a'-) = af(a')=a(fa')$ then indeed the left and right structure as defined on $Hom_k(M,N)$ are compatible, ensuring that indeed $Hom_k(M,N)$ is an $A$-bimodule.  

The categories $_{A^e}Mod$ and $_AMod_A$ are equivalent, theore $Hom_k(M,N)$ is indeed an $A^e$-bimodule and the expression $Ext_{A^e}^n(A,Hom_k(M,N))$ is meaningful.  

Now the natural isomorphism $Ext_{A^e}^n(A,Hom_k(M,N))$

$\cong HH^n(A,Hom_k(M,N))$ was demonstrated earlier .  

We theore only need still show the first isomorphism: $Ext_A^n(M,N) \cong Ext_{A^e}^n(A,Hom_k(M,N))$.  

The Hochsild cohomology of a \textit{unital} algebra $A$ with coefficients in $Hom_k(M,N)$ is defined as the homology of the complex 

$Hom_{A^e}(C,Hom_k(M,N))$.  Since $A$ is a $k$-algebra then any $A$-module can be viewed as a $k$-module, theore the product $_A\otimes_k M$ may be in-turn be seen as a functor from $_kMod$ to $_AMod$.  The tensor-hom adjunction them implies, 

$_A\otimes_k$ has as right adjoint the functor $Hom_k(M,-):_AMod \rightarrow _kMod$.  In other words, there are natural isomorphisms $Hom_A(A\otimes_k M, N) \cong Hom_{_AMod_A}(A,Hom_k(M,N))$.  This implies the first isomorpihsm.  

Now let $CH^{\star}(A)$ be the Hochschild cocomplex of $A$, then the above discussion implies there are natural isomorphisms $Hom_A(CH^{\star}(A)\otimes_k M, N) \cong$ 

$Hom_{_AMod_A}(CH^{\star}(A),Hom_k(M,N))$.   In weibel it is seen that, there are isomrophisms $Hom_{_AMod_A}(CH^{\star}(A),Hom_k(M,N))$ 

$\cong HH^{\star}(A,Hom_k(M,N))$.  

Since the bar resolution may be used to compute 

the homology groups of an algebra then we have

$Hom_A(A\otimes_k M, N)$ 

$\cong Hom_{_AMod_A}(A,Hom_k(M,N))$ 

$\cong HH^{\star}(A,Hom_k(M,N))$.  
\end{proof}

This small technical lemma is of total importance in our of effort to bound the Hochschild cohomological dimension of our algebra below, by the Krull dimension of one of one its localizations.  The first evident major step with our goals is illustrated in the proof of a weak version of the Degeneracy criterion:

\begin{defn} $\label{defn:wDim}$
\textbf{Weak Dimension}

The \textbf{weak dimension} $wD(M)$ of an $R$-bimodule $M$, is defined as the supremum of the flat dimensions of its $R$-bimodules.  That is:

$wD(R):=\underset{M\in _AMod_A}{sup} pd(M)$.  
\end{defn}

\begin{thrm} $\label{prop:wkDC}$
\textbf{Weak Degeneracy Criterion}

\textbf{0)}  If $A$ is a commutative unital $k$-algebra over a commutative ring $k$, then $A$'s Hochschild cohomological dimension $HCHDim(A)$ is bounded below by $wd(A)$ and in particular by $pd(A)$.  

Moreover if:

\textbf{1)  $A$ is Cohen-Macaulay at some maximal ideal $\mathfrak{m}$}

Then $Krull(A_{\mathfrak{m}})\leq HCHDim(A)$.  \textit{In this scenario: if $A_{\mathfrak{m}}$ is of Krull dimension at least $2$ then $A$ is not Quasi-free}.  

\textbf{2)  $A$ is Regular}

Then $Krull(A) \leq HCHDim(A)$.  
\textit{In this scenario: if $A$ itself is of Krull dimension at least $2$ then $A$ is not Quasi-free}.  

\end{thrm}
\begin{proof}
For any $A$-bimodule $M$, we have $Ext_{A/k}^{\star}(A,M)$ 

$\cong HH^{\star}(A,Hom_k(A,M))$, therefore in particualr the supremum the $R$-projective dimenions of all $A$-module $M$ is bounded above by: $wD(A):=\underset{M\in _AMod_A}{sup}$ 

$HH^{\star}(A,Hom_k(A,M)) \leq \underset{N\in _AMod_A}{sup} HH^{\star}(A,N)$.  Proving claim $0$.  

Since the weak dimensoin $wD(A)$ is defined as the supresmum of all the projective dimension of \textit{every} A-bimodule, then it is bounded below by the projective dimensiona of any $A$-biomodule.  In particular so for the $A$bimodule, $A_{\mathfrak{m}}$.  IF $A$ is Cohen-Macaulay at some maximal ideal $\mathfrak{m}$, proposition $\autoref{prop:Krullpd}$ therefore implies gives a lowerbound for $wD(A)$ is $Krull(A)$.  Tying this in with the above observed fact that $wD(A)\leq HCHDim(A)$ implies $Krull(A)\leq HCHDim(A)$, showing the claim $1$.  

Nowm proposition $\autoref{propr:regCM}$ implies the finaly claim $2$.  
\end{proof}

A very particular example of the above theory is:
\begin{cor}
If $k$ is an algebraically closed field, then if $A$ is smooth of dimension at least $2$ then $A$ is both smooth and not quasi-free.  
\end{cor}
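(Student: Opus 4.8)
The plan is to reduce the statement to part 2) of the Weak Degeneracy Criterion (\autoref{prop:wkDC}) together with the Cuntz--Quillen characterisation of quasi-freeness. First I would unwind the word ``smooth'': since $k$ is algebraically closed and $A$ is (the coordinate ring of) a smooth affine $k$-scheme of finite type of dimension at least $2$, the equivalence $1) \Leftrightarrow 2)$ of \autoref{thrm:smoothalgclo} tells us that $A$ is a regular ring, and its Krull dimension equals the dimension of the scheme, hence $Krull(A) \geq 2$.

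Next I would feed this regular algebra $A$ into part 2) of \autoref{prop:wkDC}, which immediately yields $Krull(A) \leq HCHDim(A)$, so that $HCHDim(A) \geq 2$. Since Cuntz and Quillen characterise quasi-freeness of an associative algebra by the vanishing condition $HCHDim(A) \leq 1$ --- the very input already invoked inside the proof of \autoref{prop:wkDC} --- it follows that $A$ is not quasi-free. That $A$ is still smooth is precisely the standing hypothesis, so the two assertions hold simultaneously, which is the (admittedly near-tautological) content of the corollary.

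The only genuine obstacle here is bookkeeping rather than mathematics: one must make sure the notion of ``smooth'' used in the corollary is exactly the one for which \autoref{thrm:smoothalgclo} applies --- namely a scheme of finite type over an algebraically closed field, or equivalently a finitely generated regular $k$-algebra --- and that ``dimension at least $2$'' is read as Krull dimension of $A$, so that \autoref{prop:wkDC} part 2) can be quoted verbatim. Once the hypotheses of the two cited results are aligned no further estimates are needed, and the corollary is a direct specialisation of the Weak Degeneracy Criterion.
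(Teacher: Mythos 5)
Your proposal is correct and follows the same route as the paper: translate ``smooth over an algebraically closed field'' into ``regular'' via \autoref{thrm:smoothalgclo}, then quote part 2) of the Weak Degeneracy Criterion (\autoref{prop:wkDC}) to bound $HCHDim(A)$ below by $Krull(A)\geq 2$ and invoke the Cuntz--Quillen characterisation. You simply spell out the last step (the bound $HCHDim(A)\leq 1$ for quasi-free algebras) which the paper leaves folded into the statement of the criterion.
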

\begin{proof}
The characterisation of smoothness given in theorem $\autoref{thrm:smoothalgclo}$, implies that over a field an algebra $A$ is smooth \textit{if and only if} it is regular.  Combining this with proposition $\autoref{prop:wkDC}$-2 implies $A$ must fail to be quasi-free also.  
\end{proof}
This last result is quite cool, since it shows that smooth algebras no longer correspond to \textit{"smooth"} non-commutative geometric objects, once considered as in the category of associative algebras.  

This version of the result is sufficient for observations of loss of smoothness of commutative algebras once transported to the category of associative algebras.  However, I per develop a tool, that applies to any old associative algebra over any old \textit{unital} ring, which essentially makes use of this idea.

\subsection{A Host of examples}
Now we may reap the rewards of our efforts.  Lets start with a big class of algebras which degenerate.  

\begin{ex}
If $k$ is an algebraically closed field and $G$ is a smooth affine group scheme of topological dimension atleast $2$, then $k[G]$ fails to be quasi-free.  
\end{ex}
\begin{proof}
Since $G$ is an affine group scheme then, in particular it is an affine scheme.  The topological dimension of an affine scheme is exactly equal to the Krull dimension of its coordinate ring.  Since the base-field is algebraically closed, the smoothness of $G$ is equivalent to the regularity of $k[G]$ by theorem $\autoref{thrm:smoothalgclo}$.  Theore, $Krull(k[G])\leq 2$ and theore we are in scenario $2$ of the weak degeneracy criterion (theorem $\autoref{prop:wkDC}$).  Hence, $k[G]$ cannot be quasi-free.  
\end{proof}

Lets give a more concrete example:
\begin{ex}
Let $C_n$ be a complex $n$-sphere, that is $C_n$ is the affine scheme 

$Spec(\mathbb{C}[X_1,..,X_n]/(\underset{i=1}{\overset{n}{\sum}} X_i^2 -1))$,

then if $n$ is greater than $2$ the corresponding canonical \textit{"non-commutative space"} to the $\mathbb{C}$-algebra $\mathbb{C}[X_1,..,X_n]/(\underset{i=1}{\overset{n}{\sum}} X_i^2 -1)$ is not quasi-free.  
\end{ex}

\subsection{Lie algebraic Examples}
We now discuss the quasi-freeness of a lie algebra $\mathfrak{g}$ over a field $k$.  


The Jacobi identity implies that in general, a lie algebra need not be associative.  This problem is remedied with the construction of the \textit{universal enveloping algebra} $U(g)$ of a lie algebra.  
\begin{ex} $\label{ex:End}$
The endomorphism ring $End_k(M)$ of a $k$-module has the structure of a Lie algebra, with Lie bracket, its commutator.  
\end{ex}
\begin{proof}
The endomorphism ring of a $k$-module has natural action on $M$, taking a pair $f \otimes m$ of an endomorphisms $f$ and an element $m$ in $M$ and mapping them to the element $f(m)$ of $M$.  Since $1_M\cdot m=1_M(m)=m$ and $(fg)\cdot m = (fg)(m)=f\circ g(m) = f(g\cdot m) = f\cdot (g \cdot m)$ for every $f,g \in End_k(M)$ and $m \in M$ then this in fact does define an action of $End_k(M)$ on $M$.  

\end{proof}

\subsubsection{A brief recap of lie algebra cohomology}

For any lie algebra $\mathfrak{g}$, the universal enveloping algebra is a pair $<U(g),\iota >$ of a \textit{unital} associative algebra $U(g)$ together with a map $\iota: \mathfrak{g} \rightarrow U(\mathfrak{g})$, such that for any assoiciative algebra A and any \textit{linear} map $\pi :\mathfrak{g} \rightarrow U(\mathfrak{g})$, respecting the lie bracket; $\pi$ factors uniquely through $U(\mathfrak{g})$.

\begin{tikzpicture}
  \matrix (m) [matrix of math nodes,row sep=2.5em,column sep=4em,minimum width=5em]
  {
     \mathfrak{g} & U(\mathfrak{g}) \\
       & A \\};
  \path[-stealth]
    (m-1-1) edge node [above] {$ \pi $} (m-2-2)
    (m-1-1) edge node [below] {$ \iota $} (m-1-2)
    (m-1-2) edge [dashed] node [left] {$ \exists ! $} (m-2-2);
\end{tikzpicture}

Since $U(\mathfrak{g})$ is an associative algebra then, left $U(\mathfrak{g})$-modules are definable in the usual way.  An equivalent category $_{\mathfrak{g}}Mod$, of left $\mathfrak{g}$-modules is defined in a slightly different manner.  

\begin{ex} $\label{ex:ug}$
For any module $M$, there are exactly as many $k$-algebra homomorphisms from $U(\mathfrak{g})$ to $End_k(M)$ as there are Lie-algebra homomorphisms from $\mathfrak{g}$ to $End_k(M)$.  
\end{ex}
\begin{proof}
Let $M$ be a $k$-module.  

In example $\autoref{ex:ug}$ it was observed that $End_k(M)$ had the structure of a Lie algebra.  Theore if $\lambda: \mathfrak{g} \rightarrow End_k(M)$ is a lei-homomorphism then the universal property of the universal enveloping algebra implies that there is a \textit{unique} $k$-algebra homomorphism from $U(\mathfrak{g})$ to $End_k(M)$ agreeing with $\mathfrak{g}$ both as viewed within $U(\mathfrak{g})$ and not.  

Conversely if $\phi: U(\mathfrak{g}) \rightarrow End_k(M)$ is a homomorphism of $k$-algebras.  Then, since $\iota: \mathfrak{g} \rightarrow U(\mathfrak{g})$ is a $k$-homomorphism then in particular so must $\phi \circ \iota$ be.  However, by definition $\iota$ 

respects the lie bracket of $\mathfrak{g}$, theore for every $g,h \in \mathfrak{g}$ $\phi \circ \iota ([g,h]) = \phi (\iota (g) \iota (h) -\iota(h)\iota(g) )$, since $\phi$ is a $k$-algebra 

homomorphism then this expression equals $\phi (\iota (g)) \phi (\iota (h)) -(\phi \iota(h))\phi (\iota(g))$ 

$= (\phi \circ \iota (g), \phi \circ \iota (h) )$.  Since, the commutator bracket $(-,-)$ 

is the lie bracket of $End_k(M)$ then precomposition with the $k$-algebra homomorphism $\iota$ induces a Lie-homomorphism $\phi \circ \iota: \mathfrak{g} \rightarrow End_k(M)$.  

Verifying that these associations are indeed inverses of each-other implies the result.  
\end{proof}

\subsubsection{left $\mathfrak{g}$-modules}
Viewing $\mathfrak{g}$ simply as a $k$-algebra, if there where to be a $k$-algebra homomorphism $\lambda: \mathfrak{g} \rightarrow End_k(M)$ to the endomorphism ring of $M$, then there would be a canonical way to view $M$ as a left $\mathfrak{g}$-module via the action $g \otimes m \mapsto \lambda(g)\cdot m$.  However, this would only transfer the underlying $k$-algebra structure of $\mathfrak{g}$ to $M$ and is does not represent its lie algebraic properties.  

The idea is there, if we impose more structure on the $k$-algebra homomorphism $\lambda$ we can accomplish this.  Since, the commutator on a $k$-module satisfies all three of the conditions for a lie bracket (: Stammback and Hilton), the endomorphism ring $End(M)$ with multiplication given by its commutator then defines a lie algebra, which we denote again $End_k(M)$.  With this in mind, if $\lambda: \mathfrak{g} \rightarrow End_k(M)$ is a lie-algebra homomorphism then we would have that the left $\mathfrak{g}$-module structure on $M$ now has the property that, for any $g,h \in \mathfrak{g}$ and any element $m \in M$, $[g,h]\cdot m$ which is by definition equal to $\lambda([g,h])\cdot m$ must be equal to $(\lambda(g),\lambda(h))\cdot m$, where $(-,-)$ is $End_k(M)$'s commutator bracket.  
Expanding gives $(\lambda(g)\lambda(h))\cdot m - (\lambda(h)\lambda(g))\cdot m$, since this is just the action of $End_k(M)$ on $M$ it must equate to $\lambda(g)(\lambda(h)\cdot m) - \lambda(h)(\lambda(g)\cdot m)$.  Which is representative of $\mathfrak{g}$'s lie algebra structure.  

We summarise that all that was necessary in the above definition was a lie-algebra homomorphism $\lambda$ from $\mathfrak{g}$ to the endomorphism ring $End_k(M)$ of a left $k$-module to be able to define a left $\mathfrak{g}$-module which had \textit{"lie- algebraic-like structure"}.  These special left $\mathfrak{g}$-modules (where $\mathfrak{g}$ is considered only as a $k$-algebra) are:

\begin{defn} $\label{defn: gmod}$
\textbf{left $\mathfrak{g}$-module}

A left $\mathfrak{g}$-module is a pair $M:= <M',\lambda >$, of a $k$-module $M'$ and a lie-algebra homomorphism $\lambda: \mathfrak{g} \rightarrow End_k(M)$.  
\end{defn}

\begin{ex}
In particular, if $M$ and $N$ are left $\mathfrak{g}$-modules, then $Hom_k(M,N)$ can be made into a left $\mathfrak{g}$-module, with 

action by acting first and then after, by an element, $g$ of $\mathfrak{g}$ and taking the difference.  That is: $g\otimes f := g\cdot f - f(g\cdot )$.  
\end{ex}
Another interesting example is:
\begin{ex}
Any $k$-module $M$ can be considered as a trivial $\mathfrak{g}$-module $M_{tr}:=<M,0>$ where $0$ is the zero lie-algebra homomorphism.  That is the action of an element $g$ of $\mathfrak{g}$ on an element $m$ in $M$, is  defined by $g \cdot m  := 0$.  
\end{ex}

\begin{defn}
\textbf{$\mathfrak{g}$-homomorphism}

If $M$ and $N$ are left $\mathfrak{g}$-modules with underlying, then a $\mathfrak{g}$-homomorphisms between $M$ and $N$ is just a $k$-algebra homomorphism from $M$ to $N$.  
\end{defn}

Since composition of $\mathfrak{g}$-homomorphism, is theore inherited from their composition laws as $k$-Modules and by definition since every $k$-module homomorphism is a $\mathfrak{g}$-homomorphism, and visa versa it follows that:
\begin{prop}
For any lie algebra $\mathfrak{g}$, the category of left $\mathfrak{g}$-modules and $\mathfrak{g}$-homomorphisms does indeed form a category and it is a full subcategory of $_kMod$ at that.  
\end{prop}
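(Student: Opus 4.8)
The plan is to verify the category axioms directly and then to produce a fully faithful forgetful functor into $_kMod$. Fix the data: objects are the pairs $\langle M',\lambda_M\rangle$ of the preceding definition, and for two such objects $M,N$ the set $Hom(M,N)$ of $\mathfrak{g}$-homomorphisms is, by that same definition, exactly the set $Hom_k(M',N')$ of $k$-module homomorphisms between the underlying $k$-modules. First I would note that for each $M$ the $k$-linear identity $1_{M'}$ lies in $Hom(M,M)$, supplying identity morphisms. Next, given $f\in Hom(M,N)$ and $g\in Hom(N,P)$, I form the composite $g\circ f$ of the underlying $k$-linear maps; it is again a $k$-module homomorphism $M'\to P'$, hence again a $\mathfrak{g}$-homomorphism, so composition is well defined. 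Associativity of composition and the two unit laws are then inherited verbatim from $_kMod$, because in ${}_{\mathfrak{g}}Mod$ composition \emph{is} composition of $k$-linear maps and the units \emph{are} the $k$-linear identities. This is enough to conclude that ${}_{\mathfrak{g}}Mod$ is a category.

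For the ``full subcategory'' claim I would define the forgetful assignment $U\colon {}_{\mathfrak{g}}Mod\to {}_kMod$ by $U\langle M',\lambda_M\rangle := M'$ and $U(f):=f$; this makes sense precisely because a morphism in ${}_{\mathfrak{g}}Mod$ already is a $k$-module homomorphism of the underlying modules. By the bookkeeping just done, $U$ preserves identities and composites on the nose, so it is a functor. It is faithful because it is the identity map on every hom-set, and it is full because, again by the very definition of $\mathfrak{g}$-homomorphism, every $k$-module homomorphism $M'\to N'$ already counts as a morphism $M\to N$ in ${}_{\mathfrak{g}}Mod$; hence $U$ restricts to a bijection $Hom(M,N)\to Hom_k(UM,UN)$ for all $M,N$. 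A fully faithful functor presents its domain as (equivalent to) the full subcategory of its codomain spanned by the objects in its image, which is the intended meaning of the statement.

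The argument is entirely formal and I do not expect a genuine obstacle; the only thing to be careful about is the phrase ``full subcategory''. Strictly, ${}_{\mathfrak{g}}Mod$ is not literally a subcollection of $_kMod$: two different $\mathfrak{g}$-modules can have the same underlying $k$-module (differing only in the Lie action $\lambda$), so $U$ need not be injective on objects. The precise assertion is therefore that $U$ is a fully faithful embedding, or equivalently that the action datum $\lambda$ is completely inert at the level of morphisms — which is exactly why fullness comes for free. I would state the proposition with that understanding and keep the write-up to the few lines above.
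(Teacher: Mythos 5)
Your proof is correct and follows essentially the same route as the paper, which disposes of the proposition in a single remark preceding the statement: composition is inherited from $_kMod$ and, by the definition of $\mathfrak{g}$-homomorphism, the hom-sets literally coincide with those of the underlying $k$-modules. Your closing caveat — that the forgetful functor is fully faithful but not injective on objects, so ``full subcategory'' should be read as a fully faithful embedding — is a worthwhile precision the paper glosses over.
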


Besides the fact that the \textit{universal enveloping algebra} $U(\mathfrak{g})$ of a Lie algebra $\mathfrak{g}$, solved the problem of $\mathfrak{g}$ not being associative, it finds great use in the functorial definition of \textit{lie-algebra cohomomology} through the following result.  
\begin{prop}
There is an equivalence of categories, between the categories of left $\mathfrak{g}$-modules and left $U(\mathfrak{g})$-modules.   
\end{prop}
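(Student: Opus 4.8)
The plan is to construct explicit functors in both directions and show they are mutually quasi-inverse; in fact I expect an honest isomorphism of categories to drop out. Write $\mathfrak{g}\text{-}\mathrm{Mod}$ for the category of Definition \ref{defn: gmod} and $U(\mathfrak{g})\text{-}\mathrm{Mod}$ for the category of left modules over the associative algebra $U(\mathfrak{g})$, where I regard such a module as a pair $\langle M,\phi\rangle$ consisting of a $k$-module $M$ and a $k$-algebra homomorphism $\phi\colon U(\mathfrak{g})\to\mathrm{End}_k(M)$.

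First I would build $F\colon \mathfrak{g}\text{-}\mathrm{Mod}\to U(\mathfrak{g})\text{-}\mathrm{Mod}$. Given $\langle M,\lambda\rangle$, Example \ref{ex:ug} (equivalently, the universal property of $U(\mathfrak{g})$) provides a unique $k$-algebra homomorphism $\tilde\lambda\colon U(\mathfrak{g})\to\mathrm{End}_k(M)$ with $\tilde\lambda\circ\iota=\lambda$, and I set $F\langle M,\lambda\rangle:=\langle M,\tilde\lambda\rangle$. On a morphism $f\colon M\to N$, interpreted as a $k$-linear map intertwining the two $\mathfrak{g}$-actions, I would check it also intertwines the $U(\mathfrak{g})$-actions; this is the one genuine point. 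The set of $u\in U(\mathfrak{g})$ with $f(u\cdot m)=u\cdot f(m)$ for all $m$ is a unital $k$-subalgebra of $U(\mathfrak{g})$ containing $\iota(\mathfrak{g})$, and since $U(\mathfrak{g})$ is generated as a unital associative algebra by $\iota(\mathfrak{g})$, this subalgebra is everything. Hence $F(f)=f$ is $U(\mathfrak{g})$-linear, and functoriality is immediate because $F$ is the identity on underlying maps.

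In the other direction I would set $G\langle M,\phi\rangle:=\langle M,\phi\circ\iota\rangle$; the proof of Example \ref{ex:ug} already records that $\phi\circ\iota\colon\mathfrak{g}\to\mathrm{End}_k(M)$ is a Lie-algebra homomorphism, so this is a bona fide $\mathfrak{g}$-module, and a $U(\mathfrak{g})$-linear map is a fortiori compatible with the action of $\iota(\mathfrak{g})$, hence a $\mathfrak{g}$-homomorphism. Then I would verify $G\circ F=\mathrm{id}$ and $F\circ G=\mathrm{id}$ on objects: $GF\langle M,\lambda\rangle=\langle M,\tilde\lambda\circ\iota\rangle=\langle M,\lambda\rangle$ by the defining property of $\tilde\lambda$, while $FG\langle M,\phi\rangle=\langle M,\widetilde{\phi\circ\iota}\rangle=\langle M,\phi\rangle$ because $\phi$ is itself a $k$-algebra homomorphism extending $\phi\circ\iota$ and such an extension is unique. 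Since both functors are the identity on underlying $k$-linear maps, the same equalities hold on morphisms, so $F$ and $G$ are inverse isomorphisms of categories, which in particular gives the asserted equivalence.

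The main obstacle, modest as it is, is precisely the morphism-level compatibility, which rests entirely on $U(\mathfrak{g})$ being generated by $\iota(\mathfrak{g})$ together with the unit — a fact one reads off either from the Poincar\'e--Birkhoff--Witt theorem or, more cheaply, from the construction of $U(\mathfrak{g})$ as the quotient of the tensor algebra $T(\mathfrak{g})$ by the two-sided ideal generated by the elements $x\otimes y-y\otimes x-[x,y]$. A secondary point deserving a line of care is the terse phrasing of the definition of a $\mathfrak{g}$-homomorphism in the text: I would read it as a $k$-linear map commuting with the $\mathfrak{g}$-actions, that being the only reading under which both categories, and the correspondence between them, come out right.
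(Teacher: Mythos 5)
Your proof is correct and follows essentially the same route as the paper: both rest on Example~\ref{ex:ug} (equivalently the universal property of $U(\mathfrak{g})$) to identify Lie-algebra homomorphisms $\mathfrak{g}\to \mathrm{End}_k(M)$ with $k$-algebra homomorphisms $U(\mathfrak{g})\to \mathrm{End}_k(M)$. The one thing you add that the paper omits is the morphism-level check — that a map intertwining the $\mathfrak{g}$-actions automatically intertwines the $U(\mathfrak{g})$-actions because $U(\mathfrak{g})$ is generated by $\iota(\mathfrak{g})$ and the unit — which is exactly the step needed to upgrade the object-level bijection to an equivalence (indeed isomorphism) of categories.
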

\begin{proof}
A left $\mathfrak{g}$-module was defined to be a pair of a $k$-module and a Lie algebra homomorphism from $\mathfrak{g}$ to the endomorphism ring of $M$.  

Likewise, if $\phi: (\mathfrak{g}) \rightarrow End_k(M)$ is a $k$-algebra homomorphism, then similarly to the construction of left $\mathfrak{g}$-modules, $\phi$ induces an action of $U(\mathfrak{g})$ on $M$ via $u\otimes m \mapsto \phi(u)\cdot m$, where $\cdot$ is the inherit action of $End_k(M)$ on $M$ and $u$ and $m$ are in $U(\mathfrak{g})$ and $M$, respectively.  So, left $U(\mathfrak{g})$-modules are exactly in bijection with the $k$-algebra homomorphisms from $U(\mathfrak{g})$ to $End_k(M)$.  

However, in example $\autoref{ex:ug}$ it was seen that for \textit{any} Lie algebra $\mathfrak{g}$,there was a bijection between the \textit{lie algebra homomorphism from $\mathfrak{g}$ to $End_k(M)$}, that is \textit{left $\mathfrak{g}$-modules} and the \textit{$k$-algebra homomorphism from $U(\mathfrak{g})$ to $End_k(M)$}, that is the \textit{left $U(\mathfrak{g})$-modules}.  
\end{proof}

With this in mind, a natural cohomology theory might pop into mind...
\begin{defn} $\label{def:Lac}$
\textbf{Lie Algebra Cohomology}

The \textbf{Lie Algebra Cohomology} $HL^{\star}(\mathfrak{g},M)$ of the Lie algebra $\mathfrak{g}$ with coefficients in the $\mathfrak{g}$-module $M$ is defined as:

$Ext_{U(\mathfrak{g})}^{\star}(k_{tr},M)$, where $k$ and $M$ are considered as $U(\mathfrak{g})$-modules, and in particualr $k$ is regarded with its trivial structure.  
\end{defn}

We now show in a slightly less conventional manner, that infact the \textit{lie algebra cohomology} is nothing more than a special case of the Hochschild cohomology.  
\begin{prop} $\label{prop:lHoch}$
If $\mathfrak{g}$ is a lie algebra and $M$ is a left $\mathfrak{g}$-module then there are natural isomorphisms between the groups, $HL^{\star}(\mathfrak{g},M)$ and $HH^{\star}(U(\mathfrak{g}),M)$, where $M$ is considered with its unique left $U(\mathfrak{g})$-module structure.  
\end{prop}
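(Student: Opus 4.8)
The plan is to identify both $HL^{\star}(\mathfrak{g},M)$ and $HH^{\star}(U(\mathfrak{g}),M)$ with the cohomology of a single small cochain complex, the Chevalley--Eilenberg complex $\mathrm{Hom}_{k}(\Lambda^{\bullet}\mathfrak{g},M)$. Since $M$ is handed to us as a left $\mathfrak{g}$-module --- equivalently, by the equivalence preceding Definition~\ref{def:Lac}, as a left $U(\mathfrak{g})$-module --- the first thing to pin down is which $U(\mathfrak{g})$-bimodule structure is meant in the statement: we give $M$ the trivial right action $m\cdot u:=\varepsilon(u)\,m$ through the augmentation $\varepsilon\colon U(\mathfrak{g})\to k$ (the $k$-algebra map that kills $\mathfrak{g}$). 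This is the unique bimodule structure extending the prescribed left one, and it is the one intended. Because $U(\mathfrak{g})$ is unital, the identification $HH^{\star}(A,N)\cong\mathrm{Ext}^{\star}_{A^{e}}(A,N)$ established earlier gives a natural isomorphism $HH^{\star}(U(\mathfrak{g}),M)\cong\mathrm{Ext}^{\star}_{U(\mathfrak{g})^{e}}(U(\mathfrak{g}),M)$, so the task reduces to computing the latter.

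First I would introduce the two-sided Chevalley--Eilenberg (Koszul) resolution of $U(\mathfrak{g})$ as a $U(\mathfrak{g})^{e}$-module: the complex whose $n$-th term is the free $U(\mathfrak{g})^{e}$-module $U(\mathfrak{g})\otimes_{k}\Lambda^{n}\mathfrak{g}\otimes_{k}U(\mathfrak{g})$, augmented onto $U(\mathfrak{g})$ by multiplication, with the standard differential built from left and right multiplication by the $x_{i}$ together with a bracket term. Its exactness is the one genuinely homological input: since $\mathfrak{g}$ is free over the field $k$, the Poincar\'e--Birkhoff--Witt theorem lets one filter the complex so that the associated graded is the (exact) Koszul complex of the symmetric algebra $S(\mathfrak{g})$, which forces acyclicity --- this is the step I would either spell out carefully or cite from Cartan--Eilenberg / Weibel. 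Applying $\mathrm{Hom}_{U(\mathfrak{g})^{e}}(-,M)$ and using the free-module adjunction $\mathrm{Hom}_{U(\mathfrak{g})^{e}}\!\big(U(\mathfrak{g})\otimes_{k}\Lambda^{n}\mathfrak{g}\otimes_{k}U(\mathfrak{g}),M\big)\cong\mathrm{Hom}_{k}(\Lambda^{n}\mathfrak{g},M)$ (evaluate at $1\otimes(-)\otimes 1$) turns the complex into $\mathrm{Hom}_{k}(\Lambda^{\bullet}\mathfrak{g},M)$; computing the transported differential, the contributions coming from the right-hand copy of $U(\mathfrak{g})$ get multiplied through $\varepsilon$ and hence vanish on $\mathfrak{g}$, so what survives is exactly the Chevalley--Eilenberg cochain differential $d_{\mathrm{CE}}$ for $M$ with its original left $\mathfrak{g}$-action.

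Next I would carry out the matching bookkeeping on the Lie side. Applying $-\otimes_{U(\mathfrak{g})}k$ to the two-sided resolution (a complex of flat --- indeed free --- right modules, so tensoring preserves its acyclicity) produces the one-sided Chevalley--Eilenberg resolution $U(\mathfrak{g})\otimes_{k}\Lambda^{\bullet}\mathfrak{g}\twoheadrightarrow k_{tr}$ by free left $U(\mathfrak{g})$-modules; hence $\mathrm{Ext}^{\star}_{U(\mathfrak{g})}(k_{tr},M)$ is the cohomology of $\mathrm{Hom}_{U(\mathfrak{g})}\!\big(U(\mathfrak{g})\otimes_{k}\Lambda^{\bullet}\mathfrak{g},M\big)\cong\mathrm{Hom}_{k}(\Lambda^{\bullet}\mathfrak{g},M)$ with precisely the same differential $d_{\mathrm{CE}}$. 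The two computations thus land on literally the same cochain complex, yielding
\[ HH^{\star}(U(\mathfrak{g}),M)\;\cong\;\mathrm{Ext}^{\star}_{U(\mathfrak{g})^{e}}(U(\mathfrak{g}),M)\;\cong\;H^{\star}\!\big(\mathrm{Hom}_{k}(\Lambda^{\bullet}\mathfrak{g},M),\,d_{\mathrm{CE}}\big)\;\cong\;\mathrm{Ext}^{\star}_{U(\mathfrak{g})}(k_{tr},M)\;=\;HL^{\star}(\mathfrak{g},M). \]
Naturality in $M$ is then automatic: the two adjunction isomorphisms, the identification of the differentials, and the $HH$-versus-$\mathrm{Ext}$ isomorphism are each natural with respect to $\mathfrak{g}$-module homomorphisms.

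I expect the main obstacle to be exactly the acyclicity of the two-sided Chevalley--Eilenberg complex over $U(\mathfrak{g})^{e}$ (equivalently, producing the Koszul-type bimodule resolution of $U(\mathfrak{g})$); once that is in hand everything else is manipulation of explicit differentials. An alternative that avoids hand-building the resolution is to use the algebra isomorphism $U(\mathfrak{g})^{e}\cong U(\mathfrak{g}\oplus\mathfrak{g})$ --- via $U(\mathfrak{g})^{\mathrm{op}}\cong U(\mathfrak{g})$ through the principal anti-automorphism --- under which $U(\mathfrak{g})$ with its bimodule structure is (co)induced from $k_{tr}$ along the diagonal $\mathfrak{g}\hookrightarrow\mathfrak{g}\oplus\mathfrak{g}$, and then to quote a Shapiro-type change-of-rings isomorphism for $\mathrm{Ext}$. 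I would mention this as a remark but keep the resolution-based argument as the main proof, since it makes the naturality transparent and ties back to the relative-$\mathrm{Ext}$ picture of Lemma~\ref{lem:pre-weakWeibeliso}.
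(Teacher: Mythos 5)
Your argument is correct, but it reaches the isomorphism by a genuinely different route than the paper. The paper's proof is a two-line application of Lemma~\ref{lem:pre-weakWeibeliso}: it specializes the general adjunction isomorphism $Ext^{\star}_{A/k}(M,N)\cong Ext^{\star}_{A^{e}}(A,Hom_{k}(M,N))\cong HH^{\star}(A,Hom_{k}(M,N))$ (itself obtained from the bar resolution and tensor--hom adjunction) to $A=U(\mathfrak{g})$ and $M=k_{tr}$, notes that relative and absolute $Ext$ agree over the field $k$, and finishes with $Hom_{k}(k,M)\cong M$. You instead build the two-sided Chevalley--Eilenberg resolution of $U(\mathfrak{g})$ over $U(\mathfrak{g})^{e}$, prove its acyclicity by the PBW filtration, and identify both $Ext^{\star}_{U(\mathfrak{g})^{e}}(U(\mathfrak{g}),M)$ and $Ext^{\star}_{U(\mathfrak{g})}(k_{tr},M)$ with the cohomology of the same explicit complex $Hom_{k}(\Lambda^{\bullet}\mathfrak{g},M)$. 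Your route costs more (the acyclicity of the Koszul-type bimodule resolution is a nontrivial input that the paper's argument never needs), but it buys an explicit small model for both cohomologies and makes the transported differentials and the naturality completely visible; the paper's route is shorter but leaves the comparison entirely formal. One point where your write-up is actually more careful than the paper: you make explicit that the $U(\mathfrak{g})$-bimodule structure on $M$ needed for $HH^{\star}$ is the one with trivial right action through the augmentation, which is exactly what the paper's $Hom_{k}(k_{tr},M)\cong M$ identification produces implicitly (via the bimodule structure $(a,a')f\mapsto af(a'-)$ of Lemma~\ref{lem:pre-weakWeibeliso} applied to the trivial module $k_{tr}$); I would only soften the claim that this right action is the \emph{unique} extension of the left structure --- it is the intended one, not the only possible one. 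Your closing remark about $U(\mathfrak{g})^{e}\cong U(\mathfrak{g}\oplus\mathfrak{g})$ and Shapiro's lemma is a third valid route, again different from the paper's.
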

\begin{proof}
By definition $\autoref{def:Lac}$ $HL^{\star}(\mathfrak{g},M)$ is exactly $Ext_{U(\mathfrak{g})}^{\star}(k_{tr},M)$.  Since $k$ is a field, the first of the two isomorphisms in lemma $\autoref{lem:pre-weakWeibeliso}$, implies that $Ext_{U(\mathfrak{g})}^{\star}(k_{tr},M) \cong Ext_{U(\mathfrak{g})^e}^{\star}(U(\mathfrak{g}),Hom_k(k_{tr},M))$.  

By definition the universal enveloping algebra, of a Lie algebra is unital theore, the Hochschild cohomology groups of $U(\mathfrak{g})$ are isomorphic to the groups $Ext_{U(\mathfrak{g})^e}^{\star}(U(\mathfrak{g}),Hom_k(k_{tr},M))$.  Theore, so far we have $HL^{\star}(\mathfrak{g},M) \cong HH^{\star}(U(\mathfrak{g}),Hom_k(k_{tr},M))$.  However, in the expression the $U(\mathfrak{g})$-modules $k_{tr}$ and $M$ are viewed simply as their underlying $k$-modules.  Since $Hom_k(k,M)\cong M$ for any $k$-module $M$, then $HL^{\star}(\mathfrak{g},M) \cong HH^{\star}(U(\mathfrak{g}),Hom_k(k_{tr},M)) \cong HH^{\star}(U(\mathfrak{g}),M)$.  
\end{proof}

\subsection{The scarcity of quasi-free lie algebras}

The entire paper can be summarised within this lemma and its subsequent theorem:
\begin{lem} $\label{lem:isos}$
If $G$ is a compact complex Lie group, $\mathfrak{g}$ is its corresponding Lie algebra and $M$ is a $\mathfrak{g}$-module, then the following are isomorphic:

- $HH^{\star}(U(\mathfrak{g}),\mathbb{C})$

- $HL^{\star}(\mathfrak{g},\mathbb{C})$

- $H_{dr}^{\star}(G,\mathbb{C})$

- $H_{aDR}^{\star}(\mathbb{C}[G],\mathbb{C})$.  

If $G$ is of finite topological dimension $d$ then:

- $H_{\sigma d-\star}(G,\mathbb{C})$, is isomorphic to the cohomology theories above. 
\end{lem}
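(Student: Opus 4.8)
The plan is to establish the chain of isomorphisms by stitching together results already developed in the paper together with two classical facts from Lie theory, so that almost all the work reduces to citing the right correspondence. The skeleton is: the first two groups are identified by Proposition~\ref{prop:lHoch}, which gives $HH^{\star}(U(\mathfrak{g}),\mathbb{C}) \cong HL^{\star}(\mathfrak{g},\mathbb{C})$ as soon as $\mathbb{C}$ is viewed as the trivial $\mathfrak{g}$-module (note $\mathbb{C} = k$ here, since the base field is $\mathbb{C}$); the identification $HL^{\star}(\mathfrak{g},\mathbb{C}) \cong H_{dr}^{\star}(G,\mathbb{C})$ is the content of the classical Chevalley--Eilenberg theorem for a compact connected Lie group, which I would invoke with a reference; and $H_{dr}^{\star}(G,\mathbb{C}) \cong H_{aDR}^{\star}(\mathbb{C}[G],\mathbb{C})$ is the comparison between smooth and algebraic de Rham cohomology for a smooth affine variety over $\mathbb{C}$ (Grothendieck's algebraic de Rham theorem), applied to the affine algebraic group underlying $G$. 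The final clause, about $H_{\sigma d - \star}(G,\mathbb{C})$, is Poincar\'e duality on the compact orientable $d$-manifold $G$, combined with the universal-coefficients/de Rham identification of singular cohomology with $H_{dr}^{\star}$.

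The steps, in order, would be: (1) fix notation, recalling that for a compact complex Lie group $G$ there is an underlying complex affine algebraic group with coordinate ring $\mathbb{C}[G]$, and that $\mathfrak{g}$ is its Lie algebra; (2) apply Proposition~\ref{prop:lHoch} with $M = \mathbb{C}_{tr}$ to get the first isomorphism, checking only that the trivial $U(\mathfrak{g})$-module structure on $\mathbb{C}$ is the one intended; (3) quote Chevalley--Eilenberg: for $G$ compact connected, the cohomology of the Chevalley--Eilenberg complex $(\Lambda^{\star}\mathfrak{g}^{\vee}, d)$ computing $HL^{\star}(\mathfrak{g},\mathbb{C})$ agrees with $H_{dr}^{\star}(G)$, because averaging over the compact group shows the bi-invariant forms compute the full de Rham cohomology; (4) quote Grothendieck's comparison theorem to pass from $H_{dr}^{\star}(G,\mathbb{C})$ (analytic) to $H_{aDR}^{\star}(\mathbb{C}[G],\mathbb{C})$ (algebraic de Rham of the coordinate ring), using that $G$ is smooth and affine; (5) for the last bullet, use that $G$ as a topological space is a compact orientable manifold of dimension $d$, so Poincar\'e duality gives $H_{\sigma}^{\star}(G,\mathbb{C}) \cong H_{\sigma d-\star}(G,\mathbb{C})$, and the de Rham theorem identifies $H_{\sigma}^{\star}(G,\mathbb{C})$ with $H_{dr}^{\star}(G,\mathbb{C})$, closing the loop.

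I expect the main obstacle to be step (3), the passage from Lie algebra cohomology to de Rham cohomology of $G$: this is exactly the place where compactness is essential and cannot be omitted, since for a general (e.g. noncompact or non-reductive) Lie group $HL^{\star}(\mathfrak{g},\mathbb{C})$ need not agree with $H_{dr}^{\star}(G,\mathbb{C})$. The argument requires the averaging trick — integrating a form against Haar measure to produce a bi-invariant representative in its de Rham class, and showing this descent is a quasi-isomorphism — and one must be careful that "complex Lie group" here is being used in the sense of a compact real Lie group with compatible complex structure (or, more precisely, that the relevant cohomology is that of the underlying compact real manifold); I would state this hypothesis explicitly. A secondary, more bookkeeping-level obstacle is matching the grading conventions across the five theories (cohomological vs. homological indexing, and the shift $\sigma d - \star$ in the last term), so I would set up a single fixed convention at the outset and track the degree through each isomorphism. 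Everything else is a citation.
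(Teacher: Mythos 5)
Your proposal follows essentially the same route as the paper's proof: Proposition \ref{prop:lHoch} for the first isomorphism, the classical compact-group comparison between Lie algebra cohomology and de Rham cohomology (which the paper attributes to Cartan and you to Chevalley--Eilenberg --- the same fact), Grothendieck's algebraic de Rham comparison theorem for the third isomorphism, and the de Rham theorem plus Poincar\'e duality for the final clause. Your explicit caution about what ``compact complex Lie group'' must mean for $\mathbb{C}[G]$ to be a sensible object is well placed, but it does not change the structure of the argument, which matches the paper's.
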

\begin{proof}
By proposition $\autoref{prop:lHoch}$ there are isomorphism $HH^{\star}(U(\mathfrak{g}),\mathbb{C}) \cong HL^{\star}(\mathfrak{g},\mathbb{C})$.  Now Cartan's theorem  gives the isomorphisms $HL^{\star}(\mathfrak{g},\mathbb{C}) \cong H_{dr}^{\star}(G,\mathbb{C})$.  Since $G$ was assumed to be compact, Serre's Gaga implies that $G$ corresponds to the proper affine scheme $Spec(\mathbb{C}[G])$ and so the affine version of Grothendieck's algebraic deRham theorem applies giving isomorphisms $H_{dr}^{\star}(G,\mathbb{C}) \cong H_{dr}^{\star}(Spec(\mathbb{C}[G])^{an},\mathbb{C}) \cong H_{dr}^{\star}(\mathbb{C}[G],\mathbb{C})$.  

Now, if $G$ is of finite topological dimension $d$, then the analytic deRham theorem gives the isomorphism $H_{aDR}^{\star}(G,\mathbb{C}) \cong H_{dr}^{\star}(G,\mathbb{C})$, following this up with Poincaré duality gives the desired isomorphism: $H_{dr}^{\star}(G,\mathbb{C}) \cong H_{aDR}^{\star}(\mathbb{C}[G],\mathbb{C})$.  
\end{proof}

\begin{prop} $\label{prop:lieDegen}$
If $G$ is a compact complex Lie group and $\mathfrak{g}$ is its corresponding Lie algebra, then both $G$'s coordinate ring $\mathbb{C}[G]$ and $U(\mathfrak{g})$ fail to be quasi-free \textit{if} there exists an integer $n\geq 2$ such that the Lie algebra comohology group $HL^n(\mathfrak{g},\mathbb{C})$ is non-trivial.  
\end{prop}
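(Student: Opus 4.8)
The plan is to reduce both assertions to the Cuntz--Quillen characterisation of quasi-freeness: an associative unital algebra $A$ is quasi-free exactly when $HCHDim(A)\le 1$, i.e. when $HH^m(A,N)=0$ for every $A$-bimodule $N$ and every $m\ge 2$. So it suffices, in each case, to exhibit a single bimodule carrying nonzero Hochschild cohomology in some degree $\ge 2$.

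For $U(\mathfrak{g})$ this is immediate. By Proposition~\ref{prop:lHoch} (equivalently the first isomorphism of Lemma~\ref{lem:isos}) there is a natural isomorphism $HH^{\star}(U(\mathfrak{g}),\mathbb{C})\cong HL^{\star}(\mathfrak{g},\mathbb{C})$, where $\mathbb{C}$ is the trivial bimodule obtained through the augmentation $U(\mathfrak{g})\to\mathbb{C}$. The hypothesis gives $HL^{n}(\mathfrak{g},\mathbb{C})\neq 0$ for some $n\ge 2$, so $HH^{n}(U(\mathfrak{g}),\mathbb{C})\neq 0$ and hence $HCHDim(U(\mathfrak{g}))\ge n\ge 2$; therefore $U(\mathfrak{g})$ is not quasi-free.

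For $\mathbb{C}[G]$ I would argue via Krull dimension. Since $G$ is a Lie group it is smooth as a group scheme over $\mathbb{C}$, so $\mathbb{C}[G]$ is a Noetherian regular $\mathbb{C}$-algebra (over the algebraically closed field $\mathbb{C}$, smoothness and regularity coincide by Theorem~\ref{thrm:smoothalgclo}), and part~2 of the Weak Degeneracy Criterion (Theorem~\ref{prop:wkDC}) applies: if $\mathrm{Krull}(\mathbb{C}[G])\ge 2$ then $\mathbb{C}[G]$ is not quasi-free. It remains to see $\mathrm{Krull}(\mathbb{C}[G])\ge 2$. The Chevalley--Eilenberg complex computing $HL^{\star}(\mathfrak{g},\mathbb{C})$ is concentrated in degrees $0,\dots,\dim_{\mathbb{C}}\mathfrak{g}$, so $HL^{n}(\mathfrak{g},\mathbb{C})\neq 0$ with $n\ge 2$ forces $\dim_{\mathbb{C}}\mathfrak{g}\ge 2$; and $\dim_{\mathbb{C}}\mathfrak{g}$ is the dimension of $G$, which by the compactness/GAGA discussion in the proof of Lemma~\ref{lem:isos} is the topological dimension of the affine scheme $\mathrm{Spec}(\mathbb{C}[G])$, i.e. $\mathrm{Krull}(\mathbb{C}[G])$. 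Hence $\mathrm{Krull}(\mathbb{C}[G])\ge n\ge 2$, and $\mathbb{C}[G]$ is not quasi-free. (Alternatively one could use Lemma~\ref{lem:isos} to get $H^{n}_{aDR}(\mathbb{C}[G],\mathbb{C})\neq 0$ and invoke Artin/Andreotti--Frankel vanishing on the smooth affine variety $\mathrm{Spec}(\mathbb{C}[G])$ to reach the same dimension bound.)

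The part needing the most care is the $\mathbb{C}[G]$ half: one must legitimately convert cohomological nonvanishing in degree $\ge 2$ into the geometric inequality $\mathrm{Krull}(\mathbb{C}[G])\ge 2$, and that hinges on identifying $\dim_{\mathbb{C}}\mathfrak{g}$ with the Krull dimension of the coordinate ring — the same bookkeeping already used to set up Lemma~\ref{lem:isos}. Once that identification is in hand, Theorem~\ref{prop:wkDC} closes the argument mechanically, and the $U(\mathfrak{g})$ half is essentially just unwinding Proposition~\ref{prop:lHoch} together with the Cuntz--Quillen criterion.
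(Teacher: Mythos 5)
Your proposal is correct, and the $U(\mathfrak{g})$ half coincides with the paper's: both simply unwind Proposition~\ref{prop:lHoch}/Lemma~\ref{lem:isos} to transport the nonvanishing of $HL^{n}(\mathfrak{g},\mathbb{C})$ into $HH^{n}(U(\mathfrak{g}),\mathbb{C})$ and then quote the Cuntz--Quillen bound. For $\mathbb{C}[G]$, however, you take a genuinely different route. The paper stays entirely cohomological: it identifies $HL^{\star}(\mathfrak{g},\mathbb{C})$ with the de~Rham cohomology of $G$, deduces from $H^{n}_{dr}\neq 0$ that the module $\Omega^{n}_{\mathbb{C}[G]|\mathbb{C}}$ of $n$-forms is nonzero, and then invokes the Hochschild--Kostant--Rosenberg theorem to convert this into a nonvanishing Hochschild cohomology group in degree $n\ge 2$, hitting the degeneracy criterion directly. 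You instead use the hypothesis only to extract the \emph{weaker} numerical consequence $\dim_{\mathbb{C}}\mathfrak{g}\ge 2$ (via the Chevalley--Eilenberg complex being concentrated in degrees $0,\dots,\dim\mathfrak{g}$), identify this with $\mathrm{Krull}(\mathbb{C}[G])$, and then fall back on regularity plus part~2 of the Weak Degeneracy Criterion (Theorem~\ref{prop:wkDC}). Your route buys some robustness: it sidesteps the paper's somewhat delicate appeal to HKR with coefficients in $\mathbb{C}$ (where the paper's statement blurs Hochschild homology of forms against cohomology), and leans only on results already proved earlier in the text. The paper's route, in exchange, produces an explicit nonvanishing Hochschild cohomology class in the specific degree $n$, which is sharper information than the bare inequality $HCHDim\ge 2$. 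The one external fact you import --- that the Chevalley--Eilenberg complex truncates at $\dim\mathfrak{g}$ --- is standard and harmless, so the argument closes.
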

\begin{proof}
\textbf{1)}  Using lemma $\autoref{lem:isos}$ we identify the Lie algebra cohomology of $\mathfrak{g}$ with coefficients in $\mathbb{C}$ with $G$'s deRham cohomology $H_{dr}^{\star}(G,\mathbb{C})$.  

If such an integer $n$ exists then, since the group $H_{dr}^{n}(\mathbb{C}[G],\mathbb{C})$ then this would imply that the kernel $ker(\partial^{n+1})$ of the co-boundary map $\partial^{n+1}: \Omega_{\mathbb{C}[G]|\mathbb{C}}^n \rightarrow \Omega_{\mathbb{C}[G]|\mathbb{C}}^{n+1}$ is not the zero map.  Theore, the Kernel $Ker(\partial^{n+1})$ is a non-trivial sub-module of the module $\Omega_{\mathbb{C}[G]|\mathbb{C}}^n$ of $n$-forms; that is to say $\Omega_{\mathbb{C}[G]|\mathbb{C}}^n \not\cong 0$.

Since $G$ is a Lie group it is has underlying structure of a complex manifold.  Theore, it is the analytification of a smooth scheme, and so $\mathbb{C}[G]$ must theore be a smooth commutative algebra.  Now the \textit{Hochschild-Kostant-Rosenberg theorem}, implies that the module $\Omega_{\mathbb{C}[G]|\mathbb{C}}^n$, of Khäler $n$-forms on $\mathbb{C}[G]$ is exactly isomorphic to the Hochschild cohomology $\mathbb{C}$-module $HH^{n}(\mathbb{C}[G],\mathbb{C})$.  Theore, $HH^{n}(\mathbb{C}[G],\mathbb{C})$ is non-trivial.  Since the Hochschild cohomological dimension is the supremum, the extended natural numbers such that $HH^{n}(\mathbb{C}[G],\mathbb{C}) \not\cong 0$, then the Hochschild cohomological dimension of $\mathbb{C}[G]$ must be bounded below by $n$, which itself is by hypothesis atleast $2$.  The degeneracy criterion then affirms that $\mathbb{C}[G]$ cannot be quasi-free.  

\textbf{2)}  Now if the Lie algebra cohomology group $HL^n(\mathfrak{g},\mathbb{C})$ is non-trivial then, since by definition $U(\mathfrak{g})$ is unital and associative, there must exist natural isomorphisms $HL^n(\mathfrak{g},\mathbb{C}) \cong HH^{\star}(U(\mathfrak{g}),\mathbb{C})$ as described in lemma $\autoref{lem:isos}$.  This implies that, $HH^{\star}(U(\mathfrak{g}),\mathbb{C})$ is nontrivial for some $n\geq 2$.  Therefore the Hoschild cohomological dimension of $U(g)$ must be atleast $2$ and so by Quillin and Cuntz's characterisation of \textit{quasi-freeness}, $U(g)$ cannot be quasi-free.  
\end{proof}

The above result is a very powerful example of the lack of quasi-freeness of most smooth-algebra.  However, it farreachingness is shadowed by its lack of straightforward computability.  In my opinion a truly beautiful mathematical result, is one with a vastly rich and deep theoretical framework, contrasted by its totally concrete interpretation and almost trivially simple applicability.  In my opinion this is one such result, and essentially the theoretical closing-point for this memoir.  

\begin{thrm} $\label{thrm:liescarce}$
\textbf{The scarcity of quasi-free lie algebras}

If $G$ is a compact complex Lie group of finite topological dimension at least $2$, then both $G$'s coordinate ring $\mathbb{C}[G]$ and the universal enveloping algebra $U(\mathfrak{g})$, fail to be quasi-free.
\end{thrm}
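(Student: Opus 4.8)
The plan is to reduce the statement to Proposition \autoref{prop:lieDegen}, which already guarantees that both $\mathbb{C}[G]$ and $U(\mathfrak{g})$ fail to be quasi-free as soon as we can exhibit a single integer $n \geq 2$ for which the Lie algebra cohomology group $HL^n(\mathfrak{g},\mathbb{C})$ is non-trivial. So the entire task is to manufacture such an $n$ out of the hypothesis that $G$ is a compact complex Lie group of finite topological dimension $d \geq 2$. The natural candidate is $n = d$ itself.

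First I would invoke Lemma \autoref{lem:isos}: since $G$ is a compact complex Lie group with Lie algebra $\mathfrak{g}$, there are natural isomorphisms $HL^{\star}(\mathfrak{g},\mathbb{C}) \cong H_{dr}^{\star}(G,\mathbb{C})$, and, $G$ being of finite topological dimension $d$, the duality clause of that same lemma further identifies $H_{dr}^{\star}(G,\mathbb{C})$ with the singular homology $H_{d-\star}(G,\mathbb{C})$. In particular one gets $HL^{d}(\mathfrak{g},\mathbb{C}) \cong H_{0}(G,\mathbb{C})$. Next I would note that $H_{0}(G,\mathbb{C})$ is never zero: it is free of rank equal to the number of connected components of $G$, and a group is nonempty. (Equivalently, a compact Lie group is a closed orientable $d$-manifold — Lie groups are parallelizable, hence orientable — so Poincaré duality gives $H^{d}_{dr}(G,\mathbb{C}) \cong H_{0}(G,\mathbb{C}) \neq 0$ directly.) Hence $HL^{d}(\mathfrak{g},\mathbb{C}) \neq 0$, and since $d \geq 2$ by hypothesis, the integer $n = d$ meets the requirement of Proposition \autoref{prop:lieDegen}. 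Applying that proposition with $n = d$ yields that neither $\mathbb{C}[G]$ nor $U(\mathfrak{g})$ is quasi-free, which is exactly the assertion.

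I do not anticipate a genuine obstacle here: the substantive work has already been carried out, in Lemma \autoref{lem:isos} (the chain of isomorphisms through de Rham cohomology, GAGA, the analytic de Rham comparison and Poincaré duality) and in Proposition \autoref{prop:lieDegen} (the Hochschild–Kostant–Rosenberg identification and the degeneracy criterion). The only points requiring a little care are checking that the hypotheses of Lemma \autoref{lem:isos} are literally those assumed in the theorem (compactness of $G$ and finiteness of its topological dimension), and making explicit the harmless fact that the top-degree cohomology of a compact Lie group does not vanish.

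If one preferred to sidestep the orientability/Poincaré-duality remark, an alternative is to recall the classical structure result that a connected compact complex Lie group is a complex torus $\mathbb{C}^{g}/\Lambda$, whence $H_{dr}^{\star}(G,\mathbb{C}) \cong \bigwedge^{\star}\mathbb{C}^{2g}$ is non-trivial in every degree $0 \leq n \leq 2g = d$; picking any $n$ with $2 \leq n \leq d$ then feeds Proposition \autoref{prop:lieDegen} just as well. Either route closes the argument, and the theorem stands as a corollary of the machinery developed above.
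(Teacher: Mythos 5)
Your proposal is correct and follows essentially the same route as the paper: establish $H_{0}(G,\mathbb{C})\neq 0$ from non-emptiness/connected components, use Poincar\'e duality and the identifications of Lemma \autoref{lem:isos} to conclude $HL^{d}(\mathfrak{g},\mathbb{C})\neq 0$, and feed $n=d\geq 2$ into Proposition \autoref{prop:lieDegen}. The alternative via the classification of compact complex Lie groups as tori is a nice extra but not needed.
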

\begin{proof}
Since $G$ is of a topological manifold of dimension atleast $2$ then it has atleast one connected component and so the first singular homology functor $H_{\sigma 0}(G,\mathbb{C})\cong \mathbb{C}\otimes_{\mathbb{Z}} H_{\sigma 0}(G,\mathbb{Z}) \not\cong \mathbb{C}\otimes_{\mathbb{Z}} 0 \cong 0$ is non-trivial (Spanier, ).  

Since $G$ is a complex manifold of \textit{finite} dimension $d$, then we apply the analytic Poincaré-lemma (Warner, 155), and so we have that the $d^{th}$ singular cohomology group of $G$ with coefficients in  $\mathbb{C}$ $H_{\sigma}^d(G,\mathbb{C})$, is isomorphic to $H_{\sigma 0}(G,\mathbb{C})$ which was seen to be non-trivial.  Theore, $H_{\sigma}^d(G,\mathbb{C})\not\cong 0$.

Now lemma $\autoref{lem:isos}$ implies $HL^{d}(\mathfrak{g},\mathbb{C})$ is non-trivial since it isomorphic to the non-trivial group $H_{\sigma}^d(G,\mathbb{C})$.  Since $G$ is a compact complex Lie group, proposition $\autoref{prop:lieDegen}$ implies that both $\mathbb{C}[G]$ and $U(\mathfrak{g})$ cannot be quasi-free.  Concluding the proof.  
\end{proof}

\subsection{Examples}
\subsubsection{Foreword}
The examples in this section, will be built-up from near-complete scratch and followed through till the complete end, making explicit use of virtually all the theory of this paper, and some lie theory beyond it. 

The first example will treat the case of a very familiar and centrally important affine algebraic group.  This group infact does fail to be compact theore theorem $\autoref{thrm:liescarce}$ will not be applicable.  However it is a fine oppertunity to approach the problem with other techniques illustrating both the practical use of theorem $\autoref{prop:wkDC}$ as well as showcasing some applications of some other properties of the Hochschild cohomology.  

The second example is presented to demonstrate not only how most complex Lie groups fail to be quasi-free, but the simplicity of application and general usefulness of theorem $\autoref{thrm:liescarce}$.

\subsubsection{Two thorough examples}
\begin{ex}
The coordinate ring of the Lie Group $GL_n(\mathbb{C})$ cannot be quasi-finite for values of $n$ strictly greater than $1$, likewise for the enveloping algebra of $GL_n(\mathbb{C})$'s lie algebra $\mathfrak{gl}_n(\mathbb{C})$.  
\end{ex}

\begin{proof}
Identify the complex manifold $M_n$, of all complex valued invertible $n$ by $n$ matrices can be identified with the complex manifold $\mathbb{C}^{n^2}$, via the set mapping $f$ taking the matrix $(a_{i,j})_{i,j=1}^n$ to the $n^2$-tuple $(a_{1,1},..,a_{1,n},...,a_{n,n})$.  We define the topology on $M_n$ as the weakest topology making the function $f$ continuous.  Moreover, since $\mathbb{C}^{n^2}$ admits a single chart, then pre-composing with $f$ defines a single chart on $M_n$ and so $M_n$ is diffeomorphic to $\mathbb{C}^{n^2}$.  

Now, since the determinant $det$ is a polynomial, it is a holomorphic function from $M_n$ to $\mathbb{C}$.  We may view the Lie group $GL_n(\mathbb{C})$ of all $n\times n$ invertible matrices, as the inverse image of the open set $\mathbb{C}_\{ 0 \} $ under $det$, since a matrix is invertible if and only if its determinant is non-zero.  Theore, $det$ defines a single chart from $GL_n(\mathbb{C})$ to $\mathbb{C}^{n^2}$.  By definition of the topological dimension of a manifold, $GL_n(\mathbb{C})$ is of the same topological dimension its chart's image; that is $GL_n(\mathbb{C})$ is of dimension $n^{2}$.    

So $GL_n(\mathbb{C})$ has inherit group structure, when viewed from within $M_n$, theore $GL_n(\mathbb{C})$ is indeed a Lie Group of dimension $n^2$.  However, $GL_n(\mathbb{C})$ is an \textit{open} subset of $\mathbb{C}^{n^2}$ theore, it cannot be compact if $n$ is non-zero.  So theorem $\autoref{thrm:liescarce}$ does not apply however, we may still approach the problem via the \textit{weak degeneracy criterion}.

First lets deal with its coordinate ring, when viewed as a scheme.  The complex manifold $C^{n^2}$ is by definition the analytification of the affine scheme $Spec(\mathbb{C}[x_1,...,x_{n^2}])$.  Since the determinant function is a smooth function between the complex manifolds $C^{n^2}$ an $\mathbb{C}$, then the full-functoriality of analitification implies that $det$ corresponds to a morphism between the schemes $Det: Spec(\mathbb{C}[x_1,...,x_{n^2}]) \rightarrow Spec(\mathbb{C}[x])$.  Theore, $GL_n(\mathbb{C})$'s corresponding affine scheme which we dentoe $GL_n$, is the inverse image of the principle open set $D(x)$ and so by the continuity of $Det$ is itself an open set.  In-fact, this implies that $GL_n$ is the Zarsiky open set on which the function $Det$ does not vanish, that is $GL_n$ is exactly the principle open set $D(Det)$.  

By definition of the Zarisky topology, this implies that $GL_n$ is the affine scheme corresponding to the localisation of $\mathbb{C}[x_1,...,x_{n^2}]$ at the function $Det$, that is $GL_n(\mathbb{C}) = Spec(\mathbb{C}[x_1,...,x_{n^2}]_{Det})^{an}$.  Affine $n^2$-space is smooth, and since $GL_n$ is an open subset of a smooth scheme it must also be smooth.  Theore, $GL_n$'s coordinate ring $\mathbb{C}[x_1,...,x_{n^2}]_{Det}$ must be regular since $\mathbb{C}$ is an algebraically closed field.  The correspondence between prime ideals in $\mathbb{C}[x_1,...,x_{n^2}]$ and prime ideals in $\mathbb{C}[x_1,...,x_{n^2}]_{Det}$ not intersecting the ideal $(Det)$ implies that if $n \geq 2$, then the chain of ideals $(x_1) \lhd (x_1,x_2)$ is corresponds to a chani of prime ideals in $\mathbb{C}[x_1,...,x_{n^2}]_{Det}$.  Theore, $\mathbb{C}[x_1,...,x_{n^2}]_{Det}$ is of Krull dimension at least $2$, and we find ourselves in scenario $2$ of the \textit{weak degeneracy criterion} (theorem $\autoref{prop:wkDC}$), taking care of the claim on the coordinate ring of $GL_n$.  

Now we address the claim on the universal enveloping algebra of $G_n(\mathbb{C})$'s lie algebra, lets call that Lie algebra $\mathfrak{gl}_n(\mathbb{C})$.  Since $GL_n(\mathbb{C})$ is of finite dimension $n^2$, lemma $\autoref{lem:isos}$ implies that existence of the isomorphisms $H_{dr}^{\star}(U\mathfrak{gl}_n(\mathbb{C}),\mathbb{C}) \cong H_{\sigma n^2-\star}(GL_n(\mathbb{C}),\mathbb{C})$.  Since $GL_n(\mathbb{C})$ is not an empty point-set, Pointcare duality implies $H_{\sigma}^{n^2-\star}(GL_n(\mathbb{C}),\mathbb{C}) \cong \mathbb{C}$ and so if $n\geq 2$, then Cuntz and Quillin's characterisation of quasifreeness implies that $U(gl_n(\mathbb{C}))$ cannot be quasi-free, since by lemma $\autoref{lem:isos}$ $HH^{n^2}(U(\mathfrak{gl}_n(\mathbb{C})), \mathbb{C}) \cong H_{\sigma}^{n^2-\star}(GL_n(\mathbb{C}),\mathbb{C}) \cong \mathbb{C} \not \cong 0$.

We finish off this example by giving an explicit description of 

$U(\mathfrak{gl}_n(\mathbb{C}))$, so we know what we've dealt with.  

Now $GL_n$ is the exponential of a subset of a certain matrices $n$ by $n$ complex-valued matrices.  The only criterion for a matrix being in $GL_n(\mathbb{C})$ is that its determinant is non-trivial.  Theore if a matrix $X$ exponentiates into $GL_n(\mathbb{C})$, then it must satisfy $0\neq det(e^X) = e^(tr(X))$, but the exponent of any real number cannot be zero. Theore, this criterion is verified by any $n$ by $n$ matrix and so as a $\mathbb{C}$-vector space $gl_n(\mathbb{C})$ and $M_n$ are identical.  Now the commutator gives the Lie bracket on $gl_n(\mathbb{C})$ making it into a Lie algebra.  

\end{proof}

\begin{ex}
The coordinate ring of the Unitary Group cannot be quasi-finite for values of $n$ strictly greater than $1$, likewise for the enveloping algebra of $GL_n(\mathbb{C})$'s lie algebra $\mathfrak{gl}_n(\mathbb{C})$.  
\end{ex}
\begin{proof}
The Unitary group $U_n(\mathbb{C})$ is the subgroup of $GL_n(\mathbb{C})$ consisting of all invertible complex-valued unitary matrices.  

Viewing the entries of a matrix $M =(m_{i,j})$, in $U_n(\mathbb{C})$ as real values $2n\times 2n$-matrices of the form 
$
\begin{pmatrix}
  a_{i,j} & -b_{i,j}  \\
  b_{i,j} & a_{i,j}  \\
 \end{pmatrix}
 $
 , where $a_{i,j} +b_{i,j} = m_{i,j}$, we may describe the complex conjugation operation as an $2n\times 2n$-real matrix $C$ of the form $C=(c_{i,j})$
 $\begin{pmatrix}
  0_2 & ...& 0_2 & 0_2 & \tilde{C}  \\
  0_2 & ...& 0_2 & \tilde{C} & 0_2  \\
  0_2 & ...& \tilde{C} & ...& 0_2\\
  0_2 & \tilde{C} & 0_2 & ... & 0_2  \\
  \tilde{C} & 0_2 & ...& 0_2 & 0_2  \\
 \end{pmatrix}$,  
  where $\tilde{C}$ is the $2$ by $2$ real matrix: 
 $
 \begin{pmatrix}
  0 & 1  \\
  1 & 0  \\
 \end{pmatrix}
 $, and 
 $0_2$ is the $2$ by $2$, zero matrix.  
 
Now a matrix $M$ is unitary if and only if its inverse is exactly is conjugate transpose, that is if and only if $MCM=I_{2n}$, where $I_{2n}$ denotes the real $2$ by $2$ identity matrix and $M$ is an element of $GL_n(\mathbb{C})$.  That is, $M$ is unitary if and only if 

the entries of $M$ satisfy the polynomial equations $\underset{j,k}{\sum}m_{i,j}c_{j,k}m_{k,l}=\delta_{i,l}$ for each $i,l \in \{ 1,...,2n\}$, where $\delta_{i,j}$ is the Kronecker delta.  

More compactly put, $M$ must satisfy the single polynomial equation $P=0$, $P$ is the complex polynomial $\underset{i,l}{sum}\underset{j,k}{\sum}m_{i,j}c_{j,k}m_{k,l}-\delta_{i,l}=0$.  

In other words, a matrix $M$ in $GL_n(\mathbb{C})$, as identified with some open subset of $\mathbb{R}^{(2n)^2}$, is an element of $U_n(\mathbb{C})$ if and only if $M$ is in the intersection of the analytic zero-locus of the polynomial $\underset{j,k}{\sum}m_{i,j}c_{j,k}m_{k,l}=\delta_{i,l}$, in other words $U_n(\mathbb{C})$ an analytic sub-variety of $ GL_n(\mathbb{C})$.  

Now, for any matrix $M$ in $U_n(\mathbb{C})$, its determinant satisfies $Det(I_{2n})=Det(MCM)=Det(M)Det(C)Det(M)=Det(M)^2$, as viewed in $\mathbb{R}^{(2n)^2}$, this is the collection of points who's norm is $1$; so the determinant function restricts to a surjection onto the complex $n$-sphere $S^n(\mathbb{C})$.  Since $det$ was a homeomeprhims on $GL_n(\mathbb{C})$ its restriction becomes a homeomrphism onto the complex manifold $S^n(\mathbb{C})$ of dimension $n$.  So $U_n(\mathbb{C})$ is a complex manifold of dimension $n$, moreover since $det$ is a homeomorphism of $U_n(\mathbb{C})$ onto a compact subspace of $\mathbb{C}^n$, $U_n(\mathbb{C})$ must be a compact lie group of dimension $n$.  

Now if $M$ and $N$ are matrices in $U_n(\mathbb{C})$ 

$(MN^{-1}C)MN^{-1} = N^{-1}C(MCM)N^{-1}$ 

$= N^{-1}CN^{-1}=(NCN)^{-1}=I_{2n}^{-1}=I_{2n}$, and theore $U_n(\mathbb{C})$ is a closed subgroup of $GL_n(\mathbb{C})$, and so it is a compact Lie group of dimension $n$.  Theorem $\autoref{thrm:liescarce}$ theore applies, and so $U_n(\mathbb{C})$'s corresponding affine scheme's coordinate ring and its so is the universal enveloping algebra of its lie algebra are both singular for values of $n$ strictly greater than $1$.

We now wish to explicitate both these $\mathbb{C}$-algebras, for clarity.  
By describing $U_n(\mathbb{C})$ as an analytic variety, we explicitly described the coordinate ring of its corresponding affine scheme which we denote $U_n$.  That is $U_n$ is exactly the affine subscheme $Spec(\mathbb{C}[X_1,..,X_{n^2}]_{det}/P)$ of $GL_n$.  Theore the polynomial ring $\mathbb{C}[X_1,..,X_{n^2}]_{det}/P$ is not quasi-free.  

Now, to describe the universal enveloping algebra of $U_n(\mathbb{C})$'s lie algebra.  If $X$ is in $\mathfrak{gl}_n(\mathbb{C})$, and its exponent is in $U_n(\mathbb{C})$ then it follows that $e^0=1e^{X^{\star}}e^{X}=e^{X^{\star}+X}$; that is the lie algebra of $U_n(\mathbb{C})$ is the collection of matrices in $gl_n(\mathbb{C})$, satisfying $0 = X^{\star} + X$.  We denoted this collection of matrices $\mathfrak{u}_n(\mathbb{C})$.  No, since $U_n(\mathbb{C})$ is a subgroup of $GL_n(\mathbb{C})$, then $\mathfrak{u}_n(\mathbb{C})$ is a lie subalgebra of $\mathfrak{gl}_n$.  

Now $U_n(\mathbb{C})$ is a complex manifold, theore in particular it is smooth, theore its tangent space is of the same dimension as itself.  Identifying its tangent space with its lie algebra $\mathfrak{u}_n(\mathbb{C})$ we know that $\mathfrak{u}_n(\mathbb{C})$ must be spanned as a $\mathbb{C}$-vector space by exactly $n$ elements.  In particular we notice, that the matrices $\{iE_{r,r} \}_{r=1}^n$ having a $i$ in the $(i,i)^{th}$ coordinate and $0$s in everywhere else are equal to $(-1)$ times their own conjugate transpose and so each $iE_{r,r}+iE_{i,r}C=iE_{r,r} - iE_{r,r}=0$.  Theore, $\{iE_{r,r} \}_{i=1}^n$ forms a maximal linearly independent set of matrices in $\mathfrak{u}_n(\mathbb{C})$, and so $iE_{r,r}$ describes a basis for the underlying $\mathbb{C}$-vector space of $\mathfrak{u}_n(\mathbb{C})$.  

Ordering this basis in an expected way as $iE_{r,r}<iE_{r+1,r+1}$, and applying the \textit{Poincare-Birkhoff-Witt theorem} (See: Knapp, 217) we see that $U(\mathfrak{u}_n(\mathbb{C}))$ is the subalgebra of $U(\mathfrak{gl}_n(\mathbb{C}))$, spanned as a $\mathbb{C}$-vector space by the monomials of the form $\iota(iE_{r_1,r_1})^j_1<...<\iota(iE_{r_n,r_n})^j_k$ with and $j_l,..,j_n$ natural numbers.  And with multiplication, consistent with the commutator bracket in $\mathfrak{u}_n(\mathbb{C})$.  

\end{proof}

Though, this case could have been treated with the weak degeneracy criterion, it was much simpler and more illustrative to approach the problem using theorem $\autoref{thrm:liescarce}$.  
\chapter{The Essentials}
Since every commuttive algebra of Krull dimension atleast $2$
\begin{defn}
\textbf{Essentially Unramified}

Let $f:A\rightarrow B$ be a morphism of $R$-algebras, then $X$ is called \textbf{essentially unramified} if and only if $f^{\varsigma}$ is formally unramified.  
\end{defn}

\begin{defn}
\textbf{Essentially Smooth}

Let $f:A\rightarrow B$ be a morphism of $R$-algebras, then $X$ is called \textbf{essentially smooth} if and only if $f^{\varsigma}$ is formally smooth.  
\end{defn}

Similarly we may define the following analogous notion:
\begin{defn}
\textbf{Essentially Etale}

Let $f:A\rightarrow B$ be a morphism of $R$-algebras, then $X$ is called \textbf{essentially étale} if and only if $f^{\varsigma}$ is formally etale.  
\end{defn}

The following characterizes essential smoothness.  

\begin{prop} $\label{prop:equiv}$
The following are equivalent:

- $f:A\rightarrow B$ is essentially smooth

- $\Omega_{B^{\varsigma} | A^{\varsigma}}$ is a projective $B^{\varsigma}$-bimodule. 
\end{prop}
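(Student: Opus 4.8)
The plan is to reduce everything to a statement about commutative rings and then run the usual infinitesimal-lifting / obstruction argument. By definition $f$ is essentially smooth exactly when $f^{\varsigma}\colon A^{\varsigma}\to B^{\varsigma}$ is formally smooth, and by construction of the standardization functor $A^{\varsigma},B^{\varsigma}$ are commutative unital $Z(R)$-algebras; here $\Omega_{B^{\varsigma}|A^{\varsigma}}$ is the ordinary module of K\"ahler differentials carried by its canonical \emph{symmetric} bimodule structure, so that ``projective $B^{\varsigma}$-bimodule'' is to be read as ``projective $B^{\varsigma}$-module'' (reconciling these two notions for a symmetric bimodule over a commutative ring is a one-line remark). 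Thus it is enough to establish, for a map $g\colon S\to T$ of commutative unital rings, that $g$ is formally smooth if and only if $\Omega_{T|S}$ is a projective $T$-module.

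The forward implication is unconditional. Given any surjection $M\twoheadrightarrow N$ of $T$-modules, form the trivial square-zero extensions $T\oplus M\to T\oplus N$ over $S$ and apply the lifting property of formal smoothness to $\mathrm{id}_{T}$; this shows every $S$-derivation $T\to N$ lifts along $M\to N$, and since $\mathrm{Der}_{S}(T,-)\cong\mathrm{Hom}_{T}(\Omega_{T|S},-)$ the functor $\mathrm{Hom}_{T}(\Omega_{T|S},-)$ is exact, i.e. $\Omega_{T|S}$ is projective.

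For the converse I would fix a presentation $T=P/I$ with $P$ a polynomial $S$-algebra and quotient map $\pi\colon P\twoheadrightarrow T$, use that $P$ is formally smooth over $S$, and analyze the conormal sequence $I/I^{2}\xrightarrow{\ \bar d\ }\Omega_{P|S}\otimes_{P}T\to\Omega_{T|S}\to 0$. Projectivity of $\Omega_{T|S}$ makes this sequence right-split. Given a square-zero extension $C\to C/J$ with compatible maps $S\to C$ and $u\colon T\to C/J$, one lifts $u\circ\pi$ to a ring map $v\colon P\to C$ by formal smoothness of $P$, obtains the $T$-linear obstruction $o\colon I/I^{2}\to J$ measuring the failure of $v$ to kill $I$, and then tries to cancel $o$ by altering $v$ by an $S$-derivation $P\to J$, using the splitting; once $o$ is killed, the corrected $v$ kills $I$ and descends to a ring map $T\to C$ lifting $u$, giving formal smoothness of $g$ and hence of $f^{\varsigma}$.

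The main obstacle is precisely this converse, and more sharply the following gap: a right-splitting of the conormal sequence is not by itself enough to perform the cancellation — one needs $\bar d$ to be (split) \emph{injective}, equivalently $H_{1}$ of the naive cotangent complex of $g$ to vanish — and projectivity of $\Omega_{T|S}$ alone does not force this (for instance $S=k[x]\to T=k$ has $\Omega_{T|S}=0$ projective yet is not formally smooth). I would therefore either add to the proposition a flatness hypothesis on $f$, which makes left-exactness of the conormal sequence automatic, or else interpret the right-hand condition as the refined classical one (``$\Omega_{B^{\varsigma}|A^{\varsigma}}$ projective and the conormal sequence of $f^{\varsigma}$ short split exact''); with either reading the argument above closes, the relevant references being Grothendieck's \emph{EGA}~IV and the commutative algebra chapter of the stacks project.
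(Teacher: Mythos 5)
Your diagnosis is correct, and the obstacle you isolate is not a defect of your argument but of the proposition itself. The paper's entire proof of the converse is a bare citation of the Stacks Project characterization of formal smoothness, read as ``formally smooth if and only if $\Omega_{B^{\varsigma}|A^{\varsigma}}$ is projective''; but the actual characterization there is that $g\colon S\to T$ is formally smooth if and only if $H_1(NL_{T/S})=0$ \emph{and} $\Omega_{T|S}$ is projective --- exactly the refined right-hand condition you propose. Your counterexample $k[x]\to k$ (with $\Omega=0$ projective, but no lift along $k[\epsilon]/(\epsilon^{2})\to k$ compatible with $x\mapsto\epsilon$) shows the equivalence as literally stated is false, so no proof can close the converse without strengthening the hypothesis. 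Your forward direction, via the trivial square-zero extensions $T\oplus M\to T\oplus N$ and the isomorphism $\mathrm{Der}_{S}(T,-)\cong\mathrm{Hom}_{T}(\Omega_{T|S},-)$, is the standard argument and is complete; it is strictly more than the paper supplies, since the paper offers no argument beyond the citation. Your reading of ``projective $B^{\varsigma}$-bimodule'' as ``projective $B^{\varsigma}$-module'' is also the only one under which even the forward implication survives: projectivity over the enveloping algebra $B^{\varsigma}\otimes B^{\varsigma}$ is a separability-type condition that formal smoothness certainly does not deliver.

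One correction to your proposed repairs: the flatness fix does not work. Flatness of $T$ over $S$ does not force left-exactness of the conormal sequence; for example $k\to T=k[x]/(x^{p})$ in characteristic $p$ is free, hence flat, and $\Omega_{T|k}=T\,dx$ is free of rank one, yet $T$ is not formally smooth over $k$ and indeed $H_{1}(NL_{T/k})\neq 0$. The only correct repair is your second one: replace the right-hand condition by ``$\Omega_{B^{\varsigma}|A^{\varsigma}}$ is projective and $H_{1}$ of the naive cotangent complex of $f^{\varsigma}$ vanishes'' (equivalently, the conormal sequence of any presentation is short split exact). With that emendation your obstruction-theoretic argument closes, and the proposition becomes the honest transport of the classical characterization through the standardization functor.
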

\begin{proof}
$f: A \rightarrow B$ is essentially smooth if and only if $f^{\varsigma}:A^{\varsigma} \rightarrow B^{\varsigma}$ is formally smooth.

Now a morphisms of commutative associative unital $R$-algebras is formally smooth \textit{(stacks tag: 049R)} if and only if the module of Khaler differentials $\Omega_{HH_{1}(B^{\varsigma},A^{\varsigma}) | HH_{1}(A^{\varsigma},A^{\varsigma})}$ is a projective $B^{\varsigma}$.  
\end{proof}

In particular we may think of an $R$-algebra as \textit{essentially smooth} if the inclusion $0 \rightarrow A$ is an \textit{essentially smooth} $R$-algebra homomorphism.  In this case we may relate things back to to Hochschild homology as follows:

\begin{prop}
$A$ is \textit{essentially smooth} only if $HH^1(A^{\varsigma})$ is a projective $A$-bimodule. 
\end{prop}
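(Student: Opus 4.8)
The plan is to reduce the claim to Proposition~\ref{prop:equiv} together with the elementary description of the first Hochschild cohomology of a commutative algebra. First I would unwind the hypothesis. By the convention adopted just above — that $A$ is essentially smooth precisely when the inclusion $0\to A$ is an essentially smooth homomorphism — and by the definition of essential smoothness, saying that $A$ is essentially smooth means that $0^{\varsigma}\to A^{\varsigma}$ is formally smooth. The standardization of the zero $R$-algebra is obtained by adjoining a unit to $0$, which yields $Z(R)$, and then abelianizing, which does nothing; so $0^{\varsigma}$ is just the base ring $k:=Z(R)$, and $A^{\varsigma}$ is a formally smooth commutative unital $k$-algebra. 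Applying Proposition~\ref{prop:equiv} to the morphism $0\to A$ then says that $\Omega_{A^{\varsigma}\mid k}$ is a projective $A^{\varsigma}$-bimodule, which, since $A^{\varsigma}$ is commutative, is the same as a projective $A^{\varsigma}$-module.

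Next I would identify $HH^{1}(A^{\varsigma})$. Because $A^{\varsigma}$ is commutative it is a symmetric bimodule over itself, so every inner derivation of $A^{\varsigma}$ vanishes, and the standard computation of the first Hochschild cohomology group gives $HH^{1}(A^{\varsigma})=HH^{1}(A^{\varsigma},A^{\varsigma})\cong Der_{k}(A^{\varsigma})$; by the universal property of the module of Kähler differentials $Der_{k}(A^{\varsigma})\cong Hom_{A^{\varsigma}}\!\left(\Omega_{A^{\varsigma}\mid k},A^{\varsigma}\right)$. (Equivalently this is the $n=1$ instance of the Hochschild--Kostant--Rosenberg identification $HH^{n}(A^{\varsigma})\cong\wedge^{n}_{A^{\varsigma}}Der_{k}(A^{\varsigma})$ valid for the smooth algebra $A^{\varsigma}$.) Combining with the previous paragraph, $HH^{1}(A^{\varsigma})\cong\Omega_{A^{\varsigma}\mid k}^{\vee}$.

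I would then close with a projectivity argument. In the Noetherian, finite-type setting in force throughout this part, formal smoothness forces $\Omega_{A^{\varsigma}\mid k}$ to be not merely projective but finitely generated projective, i.e.\ locally free of finite rank, and the $A^{\varsigma}$-linear dual of a finitely generated projective module is again finitely generated projective; hence $HH^{1}(A^{\varsigma})$ is a projective $A^{\varsigma}$-module. To read this off as a statement about $A$-bimodules, one uses the canonical $Z(R)$-algebra homomorphism $A\to A^{\varsigma}$ — the inclusion $a\mapsto(a,0)$ of $A$ into its unitization followed by the abelianization quotient — restricting scalars along it and using the commutativity of $A^{\varsigma}$ so that the left and right actions coincide; for $A$ itself commutative unital (where $A^{\varsigma}\cong A$) there is nothing further to check.

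The step I expect to be the main obstacle is exactly the passage from ``projective'' to ``finitely generated projective'': a general formally smooth $k$-algebra — a polynomial ring in infinitely many variables, say — has $\Omega$ free of infinite rank, whose dual is not projective, so the statement genuinely needs the finiteness hypotheses the paper has been carrying, and the proof must invoke them explicitly rather than just citing Proposition~\ref{prop:equiv}. A secondary subtlety is that restriction of scalars does not preserve projectivity in general, so the final reduction to an honest projective \emph{$A$}-bimodule rests on the case $A\cong A^{\varsigma}$, or on interpreting ``projective $A$-bimodule'' as the $A^{\varsigma}$-module structure pulled back along $A\to A^{\varsigma}$.
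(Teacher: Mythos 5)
Your argument is correct in substance but takes a genuinely different route from the paper's. The paper's proof is a two-line affair: it unwinds essential smoothness of $A$ to formal smoothness of $0^{\varsigma}\to A^{\varsigma}$, invokes the earlier characterisation to get projectivity of $\Omega_{A^{\varsigma}\mid R}$, and then cites Hochschild--Kostant--Rosenberg to assert the isomorphism $HH^1(A^{\varsigma})\cong\Omega_{A^{\varsigma}\mid R}$ outright, so that projectivity of $HH^1(A^{\varsigma})$ is immediate and no duality ever enters. You instead read $HH^1$ as honest Hochschild \emph{cohomology} and identify it with $Der_{k}(A^{\varsigma})\cong Hom_{A^{\varsigma}}(\Omega_{A^{\varsigma}\mid k},A^{\varsigma})$, the $A^{\varsigma}$-linear dual of the Kähler module; this is the standard degree-one HKR statement for cohomology, whereas the isomorphism the paper quotes is really the homological one ($HH_1\cong\Omega^1$), silently identifying the two. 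The price of your (more scrupulous) reading is exactly the issue you flag: dualization preserves projectivity only for finitely generated projectives, so you must invoke the Noetherian finite-type hypotheses to upgrade ``projective'' to ``locally free of finite rank'' before dualizing — a step the paper's shortcut never has to confront, and one the paper nowhere supplies. Your closing remark about restriction of scalars along $A\to A^{\varsigma}$ not preserving projectivity is likewise a real gap in the statement as printed (the paper asserts projectivity as an \emph{$A$}-bimodule but only ever produces an $A^{\varsigma}$-module); the paper simply does not address it, so your explicit reduction to the case $A\cong A^{\varsigma}$ is an improvement rather than a deviation.
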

\begin{proof}
By definition, $A$ is essentially smooth if and only if the $R$-algebra monomorphisms $i:0^{\varsigma} \rightarrow A^{\varsigma}$ is formally smooth.  

Now $0^{\varsigma}$ is simply isomorphic to the commutative unital associative $R$-algebra, $R$.
Therefore, $\Omega_{A^{\varsigma} | 0^{\varsigma}}\cong \Omega_{A^{\varsigma} | R}$ is a projective $A^{\varsigma}$-bimodule.  Since $A^{\varsigma}$ is formally smooth, then the \textit{Hochschild-Kostant-Rosenberg theorem} \textit{(arXiv:math/0506603)} is applicable giving the conclusive isomorphism $HH^1(A^{\varsigma})\cong \Omega_{A^{\varsigma} | R}$, of $A^{\varsigma}$-bimodules.  
\end{proof}

\subsection{The main interest in essential smoothness}
The main problem with Qullin and Cuntz's non-commutative analogue of formal smoothness, that is quasi-freeness, is far too restrictive.  For example, very regular $\mathbb{C}$-algebra becomes singular as soon as it is of dimension at Krull least $2$, since its Krull dimension is equal to $A$'s Global dimension as a ring which can easily be seen to bound the Hochschild cohomological dimension of $A$ below.  

Geometrically, this seems inappropriate in many ways, since the passage from $_RAlg$ to $_R\mathfrak{Alg}$, should be beneficial and not a limitation.  That is, geometrically speaking, a "good" generalisation of formal smoothness, should infact be a generalization and a restriction of the initial notion.  This motivates the central result:

\begin{thrm} $\label{thrm:lethrm}$
If $f:A \rightarrow B$ is a morphism in $_RAlg$ as viewed in $_R\mathfrak{Alg}$, then:

1) $f$ is formally smooth in $_RAlg$ \textit{if and only if} $f$ is essentially smooth as viewed in $_R\mathfrak{Alg}$.  

2) $f$ is formally unramified in $_RAlg$ \textit{if and only if} $f$ is essentially unramified as viewed in $_R\mathfrak{Alg}$.  

3) $f$ is formally etale in $_RAlg$ \textit{if and only if} $f$ is essentially etale as viewed in $_R\mathfrak{Alg}$.  
\end{thrm}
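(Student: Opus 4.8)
The plan is first to compute $f^{\varsigma}$ explicitly. Because $f\colon A\to B$ is a morphism of \emph{commutative} unital $R$-algebras, the abelianisation functor does nothing to its source and target (a commutative algebra has zero commutator ideal), while $\tilde{i}$ merely reinterprets $A$ and $B$ over the center $Z(R)$. Hence $A^{\varsigma}$ is the unitisation $A\oplus Z(R)$ of an \emph{already} unital algebra, and the classical comparison $(a,r)\mapsto(a+r\cdot 1_A,\,r)$ is a natural isomorphism $A^{\varsigma}\cong A\times Z(R)$ of commutative unital $Z(R)$-algebras carrying $f^{\varsigma}$ to $f\times\mathrm{id}_{Z(R)}\colon A\times Z(R)\to B\times Z(R)$. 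The whole theorem therefore reduces to a single assertion: for a morphism $g$ of commutative unital algebras and $P\in\{\text{formally smooth},\,\text{formally unramified},\,\text{formally etale}\}$, $g$ has property $P$ if and only if $g\times\mathrm{id}_{Z(R)}$ does.

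Next I would recall that all three properties are governed by one lifting condition: $g\colon S\to T$ is formally unramified (resp.\ smooth, resp.\ etale) precisely when, for every $S$-algebra $C$ and every square-zero ideal $J\subseteq C$, each $S$-algebra map $T\to C/J$ has at most one (resp.\ at least one, resp.\ exactly one) lift to an $S$-algebra map $T\to C$. This is exactly the definition of ``formally $P$'' both inside $_RAlg$ and for the commutative unital algebras $A^{\varsigma},B^{\varsigma}$, so it suffices to compare lifting problems for $g$ with lifting problems for $g\times\mathrm{id}_{Z(R)}$.

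The substance is a decomposition by idempotents. Given an $(A\times Z(R))$-algebra $C$ with square-zero ideal $J$, the orthogonal idempotents $(1,0),(0,1)$ of $A\times Z(R)$ map to orthogonal idempotents of $C$ summing to $1$, so $C\cong C_1\times C_2$ with $C_1$ an $A$-algebra and $C_2$ a $Z(R)$-algebra, and $J\cong J_1\times J_2$ with each $J_i$ square-zero; the same splitting applied to $B\times Z(R)$ shows that an $(A\times Z(R))$-algebra map $B\times Z(R)\to C/J$ is precisely a pair of an $A$-algebra map $B\to C_1/J_1$ and a $Z(R)$-algebra map $Z(R)\to C_2/J_2$, the latter being the structure map of $C_2$ (a map out of the base), which has one and only one lift. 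Consequently $(A\times Z(R))$-algebra lifts of $B\times Z(R)\to C$ correspond bijectively to $A$-algebra lifts of $B\to C_1$, so existence and uniqueness of lifts pass from $f$ to $f^{\varsigma}$. Conversely, any lifting problem for $f$ over an $A$-algebra $C$ with square-zero $J$ can be padded with the trivial extension $Z(R)\to Z(R)$ into a lifting problem for $f\times\mathrm{id}_{Z(R)}$ over $C\times Z(R)$ with square-zero ideal $J\times 0$, and the same decomposition matches its solutions with those of the original; thus the lifting condition for $f$ is equivalent to that for $f^{\varsigma}$. Taking $P$ to be uniqueness, existence, and both gives parts (2), (1) and (3).

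The only step needing care is the bookkeeping: that the idempotent splittings of $C$, of $J$, and of every algebra map in sight are mutually compatible and survive reduction modulo $J$. This is routine precisely because a map of $Z(R)$-algebras out of $Z(R)$ is unique, so the extra $Z(R)$-summand can neither obstruct a lift nor create a second one — it is inert for both existence and uniqueness. (For a quicker treatment of part (2) and the ``only if'' halves of (1) and (3) one may instead use that Kähler differentials commute with finite products, so $\Omega_{B^{\varsigma}|A^{\varsigma}}\cong\Omega_{B/A}\times\Omega_{Z(R)/Z(R)}\cong\Omega_{B/A}\times 0$, which vanishes, resp.\ is projective over $B^{\varsigma}$, exactly when $\Omega_{B/A}$ does over $B$.)
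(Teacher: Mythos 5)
Your proof is correct and takes essentially the same route as the paper: both reduce $f^{\varsigma}$ to $f$ together with an inert copy of the base ring (your $f\times\mathrm{id}_{Z(R)}$ is the paper's $f\oplus 1_R$, with the observation that $1_R$ is formally \'etale playing the role of your ``the structure map has one and only one lift''), and then transfer square-zero lifting problems back and forth. If anything your version is the more careful one --- the explicit idempotent splitting of the test algebra $C$ is exactly what the paper's informal ``direct sum of morphisms'' needs in order for the restricted map $B\to C_1$ to be unit-preserving --- though your parenthetical shortcut should be confined to part (2) and the unramified content of (3), since projectivity of $\Omega_{B|A}$ is necessary but not in general sufficient for formal smoothness, so it cannot replace the lifting argument in part (1).
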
 $\label{thrm}$
First we proove a little lemma:
\begin{lem} $\label{lem:FEt}$
For any $R$-algebra $A$ in $_RAlg$, the identity $R$-algebra endomorphism $1_A:A \rightarrow A$, is formally etale; particularly $1_A$ is formally smooth.  
\end{lem}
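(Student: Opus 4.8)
The plan is to unwind the definition of formally \'etale for a morphism of commutative $R$-algebras and observe that the infinitesimal lifting condition becomes vacuous for an identity map. Recall that $f \colon A \to B$ is formally \'etale precisely when it is both formally smooth and formally unramified; concretely, for every commutative (unital) $R$-algebra $C$, every square-zero ideal $I \subseteq C$, and every $R$-algebra map $u \colon B \to C/I$ fitting into a commutative square with $f$ and the reduction $C \to C/I$, there must exist a \emph{unique} $R$-algebra map $\tilde{u} \colon B \to C$ lifting $u$ and compatible with $f$.

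First I would specialize this to $f = 1_A$ and $B = A$. A candidate lift $\tilde{u} \colon A \to C$ is subject to two constraints: it must reduce mod $I$ to the prescribed $u \colon A \to C/I$, and it must be compatible with $1_A$, i.e.\ precomposition with $1_A$ must recover the map $A \to C$ appearing as the other edge of the square. But the second constraint pins down $\tilde{u}$ outright --- the diagonal is forced to equal that edge --- so existence is immediate and uniqueness is automatic (and one then checks, trivially, that this forced map really does reduce to $u$, which holds because the square commutes). Equivalently, the functor $\mathrm{Hom}_A(A,-)$ on $A$-algebras is the constant one-point functor, so $\mathrm{Hom}_A(A,C) \to \mathrm{Hom}_A(A,C/I)$ is a bijection of singletons for every square-zero extension; that is exactly the assertion that $1_A$ is formally \'etale.

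Then I would dispatch the ``particularly'' clause for free: formally \'etale is by definition the conjunction of formally smooth and formally unramified, so $1_A$ is in particular formally smooth. As an optional cross-check one can invoke the differential criterion, since $\Omega_{A | A} = 0$ is trivially projective, but the lifting argument is cleaner and needs nothing beyond the definitions. I do not expect any genuine obstacle here; the only point requiring care is to state the lifting property in its correct relative form (maps \emph{out of} the target $B$, over the source $A$) so that the identity truly trivializes it, rather than a misstated variance turning the claim into a non-statement.
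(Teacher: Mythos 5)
Your proposal is correct and follows essentially the same route as the paper: both unwind the infinitesimal lifting criterion and observe that compatibility with $1_A$ forces the diagonal to equal the given edge $A\to B$, so existence and uniqueness of the lift are immediate. If anything, your phrasing is cleaner, since the paper's appeal to the universal property of the quotient algebra is unnecessary once one notes that the left triangle already pins down the lift.
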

\begin{proof}
If there is an $R$-algebra homomorphism $f:A \rightarrow B$ and $I$ is an ideal in $B$ then trivially, if:

$
\begin{tikzpicture}
  \matrix (m) [matrix of math nodes,row sep=3em,column sep=4em,minimum width=2em]
  {
     A & B \\
     A & B/I \\};
  \path[-stealth]
    (m-1-1) edge node [left] {$1_A$} (m-2-1)
            edge node [below] {$f$} (m-1-2)
    (m-2-1.east|-m-2-2) edge node [below] {$g$}
            node [above] {$$} (m-2-2)
    (m-1-2) edge node [right] {$\pi_I$} (m-2-2);
\end{tikzpicture}
$

where to be a commutatie diagram in $_RAlg$ with $\pi_I$ the cannonical projection of $B$ onto the quotient $R$-algebra $B/I$.  Then there by the univeral property of the quotient algebra, $g$ is the \textit{unique} $R$-algebra homorphism from $A$ to $B/I$ such that $f\circ \pi_I = g$.  In other words, $f$ is the unique lift from $A$ to $B/I$.  In particular if $I$ is square-zero then $1_A$ is formally etale (and therefore also formally unramified and formally smooth).  
\end{proof}

\textit{We prove scenario $1$, leaving the the others to the reader, since their proof differs mutatis mutandis, by one quantifier.  }

\begin{proof}

Suppose $f: A \rightarrow B$ is a formally smooth $R$-algebra morphism \textit{(in $_RAlg$)}, as viewed within $_R\mathfrak{Alg}$.  And consider a commutative diagram in $_RAlg$ with $I$ square-zero:

$
\begin{tikzpicture}
  \matrix (m) [matrix of math nodes,row sep=3em,column sep=4em,minimum width=2em]
  {
     A^{\varsigma} & B \\
     C^{\varsigma} & B/I \\};
  \path[-stealth]
    (m-1-1) edge node [left] {$f^{\varsigma}$} (m-2-1)
            edge node [below] {$h$} (m-1-2)
    (m-2-1.east|-m-2-2) edge node [below] {$g$}
            node [above] {$$} (m-2-2)
    (m-1-2) edge node [right] {$\pi_I$} (m-2-2);
\end{tikzpicture}
$

As it was noted above, since any unit preservign $R$-algebra homorphisism $\psi: A \rightarrow R \rightarrow D$ bewteen unital $R$-algebras, can be written uniquely as the direct sum of two $R$-algebra morphisms $\tilde{\phi}$ and $1_R$.  In particular, this is the case for the $R$-algebra homorphisms $f^{\varsigma}$, $h$ and $g$ in the diagram above, which we identify with the $R$-algebe homorphisms $\tilde{f^{\varsigma}}:A \rightarrow B \oplus 1_R$, $\tilde{h}:A \rightarrow B \oplus 1_R$ and $\tilde{g}:C \rightarrow B/I \oplus 1_R$, respectivly.  The uniqueness of this decomposition of the morphisms, then implies that $\tilde{f^{\varsigma}} = f \oplus 1_R$.  

By hypothesis $f$ is formally smooth in $_RAlg$.  Therefore, there exists a $R$-algebra homorphism $\phi:C \rightarrow B$ making the diagram:

$
\begin{tikzpicture}
  \matrix (m) [matrix of math nodes,row sep=3em,column sep=4em,minimum width=2em]
  {
     A & B \\
     C & B/I \\};
  \path[-stealth]
    (m-1-1) edge node [left] {$f$} (m-2-1)
            edge node [below] {$\tilde{h}$} (m-1-2)
    (m-2-1.east|-m-2-2) edge node [below] {$\tilde{g}$}
            node [above] {$ $} (m-2-2)
    (m-1-2) edge node [right] {$\pi_I$} (m-2-2)
    (m-2-1) edge [dashed] node [below] {$\phi$} (m-1-2);
\end{tikzpicture}
$ commute.  

Moreover, by lemma $\autoref{lem:FEt}$, we know there exists a unique $R$-algebra homorphism $\phi_1$ from $R$ to $B$ making the diagram:
$
\begin{tikzpicture}
  \matrix (m) [matrix of math nodes,row sep=3em,column sep=4em,minimum width=2em]
  {
     R & B \\
     R & B/I \\};
  \path[-stealth]
    (m-1-1) edge node [left] {$1_R$} (m-2-1)
            edge node [below] {$1_R$} (m-1-2)
    (m-2-1.east|-m-2-2) edge node [below] {$1_R$}
            node [above] {$ $} (m-2-2)
    (m-1-2) edge node [right] {$\pi_I$} (m-2-2)
    (m-2-1) edge [dashed] node [below] {$\phi_1$} (m-1-2);
\end{tikzpicture}
$ commute.  

Taking the direct sum of the morphisms $\phi$ and $\phi_1$ we get a map, $\psi: C \oplus R \cong C^{\varsigma} \rightarrow B$ making the initial diagram:

$
\begin{tikzpicture}
  \matrix (m) [matrix of math nodes,row sep=3em,column sep=4em,minimum width=2em]
  {
     A^{\varsigma} & B \\
     C^{\varsigma} & B/I \\};
  \path[-stealth]
    (m-1-1) edge node [left] {$f^{\varsigma}$} (m-2-1)
            edge node [below] {$h$} (m-1-2)
    (m-2-1.east|-m-2-2) edge node [below] {$g$}
            node [above] {$$} (m-2-2)
    (m-1-2) edge node [right] {$\pi_I$} (m-2-2);
\end{tikzpicture}
$ 
commute.  Since the diagram was chosen arbitrarily, then $f^{\varsigma}$ is formally smooth in $_RAlg$ and so $f$ is essentially smooth.  

Conversly, if $f$ a morphism in $_RAlg$ as viewed in $_R\mathfrak{Alg}$ which is essentially smooth, then by definition $f^{\varsigma}=f \oplus 1_R$ is formally smooth in $_RAlg$.  By the lemma $1_R$ is formally etale \textit{(and therefore also formally smooth)}.  Now if there would exist a diagram:

$
\begin{tikzpicture}
  \matrix (m) [matrix of math nodes,row sep=3em,column sep=4em,minimum width=2em]
  {
     A & B \\
     C & B/I \\};
  \path[-stealth]
    (m-1-1) edge node [left] {$f$} (m-2-1)
            edge node [below] {$h$} (m-1-2)
    (m-2-1.east|-m-2-2) edge node [below] {$g$}
            node [above] {$$} (m-2-2)
    (m-1-2) edge node [right] {$\pi_I$} (m-2-2);
\end{tikzpicture}
$ in $_RAlg$, with $I$ square-zero.  Such that there does not exist a morphism from $C$ to $B$ lifting making the diagram commuate, then taking direct sums with $1_R$ would imply that the diagram:

$
\begin{tikzpicture}
  \matrix (m) [matrix of math nodes,row sep=3em,column sep=4em,minimum width=2em]
  {
     A^{\varsigma} & B \\
     C^{\varsigma} & B/I \\};
  \path[-stealth]
    (m-1-1) edge node [left] {$f^{\varsigma}$} (m-2-1)
            edge node [below] {$h\oplus 1_R$} (m-1-2)
    (m-2-1.east|-m-2-2) edge node [below] {$g\oplus 1_R$}
            node [above] {$$} (m-2-2)
    (m-1-2) edge node [right] {$\pi_I$} (m-2-2);
\end{tikzpicture}
$

would be an example of a diagram in $_RAlg$ for which $I$ is square-zero and there would not exist a morphism in $_RAlg$ from $C^{\varsigma}$ to $B$ allowing for the commutativity of the diagram.  Contradicting the formal smoothness of $f^{\varsigma}$ and therefore the essential smoothness of $f$.  Hence, if $f$ is an $_RAlg$ morphism which is essentially as viewed within $_R\mathfrak{Alg}$, then it must have been formally smooth in $_RAlg$.  

\end{proof}

In-fact, quasi-freeness, is generally a much stronger version of essential smoothness.  
\begin{cor}
If $R$ is a hereditary ring and $A$ is a commutative unital quasi-free $R$-algebra, then $A$ is essentially smooth.  
\end{cor}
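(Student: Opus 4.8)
The plan is to unwind the definition of essential smoothness, reduce it by \autoref{prop:equiv} to the projectivity of a single module of Kähler differentials, and then extract that projectivity from quasi-freeness.

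First I would compute the two relevant standardizations. By definition $A$ is essentially smooth precisely when the monomorphism $0\to A$ is an essentially smooth homomorphism, i.e. when $0^{\varsigma}\to A^{\varsigma}$ is formally smooth. The unitization of the zero algebra is $0\oplus R=R$, whose abelianization is again $R$, so $0^{\varsigma}\cong R$. As $A$ is commutative its abelianization is $A$ itself, and as $A$ is unital the unitization $A^1=A\oplus R$ is isomorphic as an $R$-algebra to the product $R$-algebra $A\times R$ via $(a,r)\mapsto(a+r\cdot 1_A,\,r)$; hence $A^{\varsigma}\cong A\times R$, and under this identification $0^{\varsigma}\to A^{\varsigma}$ is the canonical structure map $R\to A\times R$. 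Applying \autoref{prop:equiv} to $f=(0\to A)$, essential smoothness of $A$ is equivalent to $\Omega_{A^{\varsigma}|0^{\varsigma}}=\Omega_{(A\times R)/R}$ being a projective $A^{\varsigma}$-bimodule, which since $A\times R$ is commutative just means projective as a module. Kähler differentials split over finite products and $\Omega_{R/R}=0$, so $\Omega_{(A\times R)/R}\cong\Omega_{A/R}$ concentrated on the $A$-factor, and such a module over $A\times R$ is projective exactly when $\Omega_{A/R}$ is a projective $A$-module (one direction because it is $e_1\cdot(\text{free})$ for the idempotent $e_1=(1,0)$, the other by restricting along $A\times R\twoheadrightarrow A$). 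Thus the whole statement reduces to: $\Omega_{A/R}$ is a projective $A$-module.

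Next I would invoke quasi-freeness. By Cuntz and Quillin's criterion, $A$ is quasi-free iff the module of noncommutative differentials $\Omega^1_{A/R}=\ker(A\otimes_R A\to A)$ is a projective $A$-bimodule, equivalently $pd_{A^e}(A)\le 1$ for $A^e=A\otimes_R A$. Applying $-\otimes_{A^e}A$ along the multiplication map to the defining short exact sequence yields the canonical identification $\Omega^1_{A/R}\otimes_{A^e}A\cong\Omega_{A/R}$ — this is $HH_0(A,\Omega^1_{A/R})$, the symmetrization of the bimodule, which for commutative $A$ is the ordinary module of Kähler differentials. Since $\Omega^1_{A/R}$ is a direct summand of a free $A^e$-module and $-\otimes_{A^e}A$ carries free $A^e$-modules to free $A$-modules, $\Omega_{A/R}$ is a direct summand of a free $A$-module, hence projective, which is exactly what the previous paragraph called for.

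The hypothesis that $R$ be hereditary does not appear anywhere in this outline, and I expect that reconciling this gap is the real work: the Cuntz--Quillin notion of quasi-freeness is naturally an \emph{absolute} one, so one must pass from the bimodule $\Omega^1_{A/\mathbb{Z}}$ to the $R$-relative $\Omega^1_{A/R}$ (and, more delicately, one must take seriously the gap between ``$\Omega$ projective'' and genuine formal smoothness that \autoref{prop:equiv} quietly identifies). I would control this with the transitivity sequence of differentials for the composite $\mathbb{Z}\to R\to A$, namely $A\otimes_R\Omega^1_{R/\mathbb{Z}}\to\Omega^1_{A/\mathbb{Z}}\to\Omega^1_{A/R}\to 0$: since $R$ has global dimension at most $1$, every syzygy over $R$ — in particular $\Omega^1_{R/\mathbb{Z}}$ and the kernel of the first arrow — is projective, so after base change the correction terms contribute only projective summands and projectivity descends from $\Omega^1_{A/\mathbb{Z}}$ to $\Omega^1_{A/R}$. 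Verifying that this bookkeeping — the splitting of the relevant base-change and transitivity sequences — genuinely goes through is the technical heart; everything else is formal manipulation of the standardization functor and of degree-zero Hochschild homology.
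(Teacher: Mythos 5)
Your proposal is correct in its essentials but takes a genuinely different route from the paper at the one step that matters. The paper's proof presents $\Omega_{A|R}$ as the quotient of the noncommutative $1$-forms $\Omega^1(A)$ by the sub-bimodule generated by the symbols $xdy-ydx$ and $dxdy-dydx$, and then invokes the hereditariness of $R$ to conclude that this quotient of a projective is projective; it then feeds the result into Proposition \autoref{prop:equiv} and Theorem \autoref{thrm} exactly as you do. Your argument replaces that quotient step with the observation that $\Omega_{A/R}\cong\Omega^1_{A/R}\otimes_{A^e}A$ and that base change along $A^e\twoheadrightarrow A$ carries direct summands of free modules to direct summands of free modules. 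This is the stronger move: it needs no hypothesis on $R$ whatsoever, and you are right to flag that the hereditary assumption then plays no role. Indeed, the paper's own use of it is shaky — hereditariness makes \emph{submodules} of projectives projective, not quotients, and in any case it is a hypothesis on $R$ being asked to do work on $A$-bimodules — so your tensor-functor argument is not merely an alternative but a repair. Your careful computation of $0^{\varsigma}\cong R$ and $A^{\varsigma}\cong A\times R$, and the reduction of projectivity over $A\times R$ to projectivity over $A$ via the idempotent $(1,0)$, is also more explicit than anything in the paper, which simply applies Proposition \autoref{prop:equiv} without unwinding the standardization. The only caveat is the one you already name: within the paper's framework Proposition \autoref{prop:equiv} declares projectivity of the Kähler module equivalent to formal smoothness, so your reduction is licensed there, but the genuine mathematical gap between those two conditions is inherited from the paper, not introduced by you.
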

\begin{proof}
Quillin and Cuntz characterised the quasi-freeness of an $R$-algebra $A$, as the projectivenss of its $A$-bimodule of non-commutative differential 1-forms $\Omega (A)$ being projective, in their paper \textit{"Algebra Extensions And Nonsingularity"}.  

Since, $\Omega^1 (A)$ quotiented by its sub-module $M$, consisting of all the symbols of the form $xdy-ydx$ or $dxdy-dydx$, for any $x,y$ in $A$ is isomorphic to the $A$-bimodule $\Omega_{A|R}$ of Khaler 1-forms and $R$ is hereadity, then $\Omega_{A|R}$ must be $A$-projective.  Then, by proposition $\autoref{prop:equiv}$, $A$ must be smooth in $_RAlg$ and therefore by theorem $\autoref{thrm}$ $A$ must be essentially smooth in $_R\mathfrak{Alg}$.  

\end{proof}

For these reasons, it seems reasonable to consider this sort of relativised formally smoothness, which we called \textit{essential smoothness}, as it is a proper generalisation of the notion of formally smoothness which only gains in morphisms and looses none.  Moreover, it is has similarities to the notion of quasi-freeness imposed on $_R\mathfrak{Alg}$.

\section{Free-esque-Examples}

Directly from theorem $\autoref{thrm}$ it follows that the algebras $R[X_i]_{i\in I}$, for some indexing set $I$ are essentially smooth in $_RAlg$, which was remarked to not be quasi-free in the case where $R$ was $\mathbb{C}$ and $I$ is of cardinality at least $2$, by Quillin and Cuntz.  

Now, the free $R$-algebra $R<X_i>_{i\in I}$'s abelianization is the algebra $R[X_i]_{i\in I}$.  Now similar, remarks to the above imply that $R[X_i]_{i\in I}$ is essentially smooth, which will imply that $R<X_i>_{i\in I}$ is essentially smooth also.  Which is geometrically intuitive.

\section{Essential Localizations}
One of the most powerful tools of this theory, is one that allows for a very natural topoogification of the category $_R\mathfrak{Alg}$.  This is the same concept as before, but now applied to a morhpism's standardization in $_R\mathfrak{Alg}$ that becomes the universal arrow of a localization in $_{Z(R)}Alg$.  

From this all the classical theory may be pullback to $_R\mathfrak{Alg}$ at heart, such as Zariksy topologies open and closed subschemes.  The purpous of this little interlude is not at all to replace any concepts of localization of Jackobson topologies on $_R\mathfrak{Alg}$, but to simply provide another tool possesing certain commutative-remenisent properties.  

\subsection{A familiar face}
\begin{defn}
\textbf{Essential Localization}

A morphism $\nu :A \rightarrow B$ is said to be an \textbf{essential localization} \textit{if and only if} $\nu^{\varsigma}: A^{\varsigma} \rightarrow B^{\varsigma}$ is the universal arrow of a localization of $A^{\varsigma}$ on some multiplicativly closed subset $S$ of $A$ in .  

$
\begin{tikzpicture}
  \matrix (m) [matrix of math nodes,row sep=3em,column sep=4em,minimum width=2em]
  {
     A^{\varsigma} & B^{\varsigma} \\
      & S^{-1}[A] \\};
  \path[-stealth]
    (m-1-1) edge node [above] {$f^{\varsigma}$} (m-1-2)
            edge node [below] {$\psi\circ f^{\varsigma}$} (m-2-2)
    (m-1-2) edge [dashed,->] node [left] {$\exists$} (m-2-2);
\end{tikzpicture}
$

That is there exists a mutliplicativly closed subset $S$ of $A$, and an isomorphism $\psi: S^{-1}[A] \rightarrow B$ making the above diagram, in $_{Z(R)}Alg$ commute.  

If we wish to explicitly make reference to the mutliplicativly closed subset $S$ of $A^{\varsigma}$ giving rise to the isomorphism $\psi$ described above, say that the essential localization $\nu$, is \textbf{essentially a localization of $A$ at S}.  
\end{defn}
Reminishent of classical notations, we write $S^{-1}[A]$ for the essential localization of $A$ at $S$, and if $S= \{ f \}$ is a one element set we write $A_f$ for the essential localization of $A$ at $S$.  

By definition of essential localization, the mutliplicativly closed subset $S$ of $A^{\varsigma}$ is unique up to isomorphism.  Nevertheless lets put it down on papaer as a proposition.  

\begin{prop} $\label{prop:ULoc}$
If $\nu: A\rightarrow B$ is an essential localization, then there is a unique multiplicativly closed subset $S$ of $A^{\varsigma}$ such that $A$ is the localization of $A$ at $S$.  
\end{prop}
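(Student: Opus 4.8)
The plan is to reduce the assertion to the classical uniqueness statement for localizations of commutative rings, applied inside $_{Z(R)}Alg$ to the standardization $\nu^{\varsigma}$. First I would unwind the definition: to say that $\nu$ is an essential localization means precisely that there is a multiplicatively closed subset $S \subseteq A^{\varsigma}$ together with an isomorphism $\psi\colon S^{-1}[A] \to B^{\varsigma}$ of $Z(R)$-algebras carrying the canonical localization arrow $A^{\varsigma} \to S^{-1}[A]$ to $\nu^{\varsigma}$. Thus $\nu$ (equivalently $\nu^{\varsigma}$) determines, up to unique isomorphism under $A^{\varsigma}$, the localized algebra together with its structure map; the whole question is how much of $S$ is pinned down by that data.

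Next I would invoke the standard fact that, for a commutative ring $C$ and multiplicatively closed $S, T \subseteq C$, one has $S^{-1}C \cong T^{-1}C$ as $C$-algebras if and only if the saturations $\bar S$ and $\bar T$ agree, where $\bar S := \{\, c \in C : c/1 \in (S^{-1}C)^{\times} \,\}$. Only the ``only if'' direction is needed: an isomorphism under $C$ matches the units lying in the image of $C$, so $\bar S$ is recovered from the pair $\bigl(C,\ C \to S^{-1}C\bigr)$ alone. Applying this with $C = A^{\varsigma}$ and the arrow $\nu^{\varsigma}$, any two multiplicatively closed sets witnessing that $\nu$ is an essential localization have the same saturation $\bar S \subseteq A^{\varsigma}$, and $\bar S$ is manifestly an invariant of $\nu^{\varsigma}$, hence of $\nu$.

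The honest conclusion is therefore uniqueness \emph{after saturation}: the assignment $\nu \mapsto \bar S$ is well defined, and $\bar S$ is the unique \emph{saturated} multiplicatively closed subset of $A^{\varsigma}$ with $\bar S^{\,-1}[A] \cong B^{\varsigma}$ over $A^{\varsigma}$. I would phrase the proposition in this form — or at least note explicitly that ``unique'' is meant modulo the equivalence $S \sim T \iff \bar S = \bar T$ — since literal set-level uniqueness fails (e.g. $\{1\}$ and $\{1,u\}$ for any existing unit $u$ induce the same localization). As part of the write-up I would also correct the evident typo in the statement: the multiplicatively closed set lives in $A^{\varsigma}$, and the object it localizes to is $B^{\varsigma}$, so ``$A$ is the localization of $A$ at $S$'' should read ``$B^{\varsigma}$ is the localization of $A^{\varsigma}$ at $S$''.

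The main obstacle here is not a computation but getting the statement right: one must recognize that $S$ is genuinely not unique on the nose, identify the saturated multiplicative set as the correct invariant, and verify that ``becomes a unit after localization'' is intrinsic — stable under isomorphisms under $A^{\varsigma}$ — so that $\bar S$ is extracted from $\nu$ itself and not merely from a chosen presentation. A secondary care point is bookkeeping across the functor $-^{\varsigma}$, since the whole discussion takes place in $_{Z(R)}Alg$ after standardization rather than in $_R\mathfrak{Alg}$.
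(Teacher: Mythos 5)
Your proposal is correct and is considerably more careful than the paper's own argument, which consists of the single line ``follows from the universal property of localization in $_{Z(R)}Alg$.'' That appeal only pins down the pair $\bigl(B^{\varsigma},\ \nu^{\varsigma}\colon A^{\varsigma}\to B^{\varsigma}\bigr)$ up to unique isomorphism under $A^{\varsigma}$; it cannot deliver set-level uniqueness of the multiplicatively closed subset $S$, and you are right that no argument can, since $\{1\}$ and $\{1,u\}$ for any unit $u$ of $A^{\varsigma}$ witness the same essential localization. Your identification of the saturation $\bar S = \{\,c \in A^{\varsigma} : c/1 \in (S^{-1}[A])^{\times}\,\}$ as the genuine invariant of $\nu$ is the correct fix, and your observation that $\bar S$ is intrinsic because an isomorphism under $A^{\varsigma}$ preserves which images of elements of $A^{\varsigma}$ become units is exactly the missing step the paper elides. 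You are also right that the statement contains a typo (``$A$ is the localization of $A$ at $S$'' should read ``$B^{\varsigma}$ is the localization of $A^{\varsigma}$ at $S$''). In short: where the paper waves at the universal property, you have located the precise sense in which the proposition is true (uniqueness of the saturated multiplicative set, equivalently uniqueness of $S$ modulo $S \sim T \iff \bar S = \bar T$) and the precise sense in which the literal statement fails; your restated version is the one that should appear in the text.
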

\begin{proof}
Follows from the universal propserty of localization in $_{Z(R)}Alg$.  
\end{proof}

Lets present a simple yet fun example, to illustrate this contempt's application to a strictly non-commutative non-unital algebra.  

\begin{ex}
Let $R$ be an integral domain and define $<x,y>_+$ to be the non-unital $R$-subalgebra of $Z(R)<x,y>$ who's elements are non-commutative polynomials of left and right degree atleast $1$.  Likewise, define $A$ to be the $R$-algebra, who's underlaying $Z(R)$-module structure is given by the symbols $\frac{f}{g}$, with $f,g \in <x,y>_+$ and where $(x-1)$ does not divide $y$ on the left or on the right and with multiplication $\frac{f_1}{g_1} \times \frac{f_1}{g_2} \mapsto \frac{f_1f_2}{g_1g_2}$.  Then this is an $Z(R)$-module morphism making $A$ into an object in $_R\mathfrak{Alg}$.  

Now the map, $\nu: <x,y>_+ \rightarrow A$ sending an element $f$ of $<x,y>_+$ to the symbol $\frac{f}{1}$'s standardization may be seen to be the localization map between the commutative unital associative $Z(R)$-algebras $Z(R)[x,y] \rightarrow Z(R)[x,y]_{(x-1)}$.  So $\nu$ is the essential localization of $\nu$ at the single element set $\{ (x-1) \}$.  
\end{ex}

\subsection{The Essential-Zarisky Topology}
Although, this would fit most formally in the geometry section of this paper, it seems more appropriate to follow up the definition of essential localization with probably one of its most powerful applications, the realization of a Grothendieck-covering on the opposite category of $_R\mathfrak{Alg}$.  In turn, this will be used in later a section, to define \textit{essential commutative schemes}.  

Here we think of objects in the category $_R\mathfrak{Alg}$ as analogues to affine schemes over the basering $R$.  The main concept here, is that an open immension of affine schemes which was the dual to the localization map of its coordinate ring, is generalized by the idea of the dual of an essential localization.  This dual morhpism, may be viewed as an open immersion of the affine schemes over $Z(R)$, corresponding to the standardization of its \textit{"coordinate rings"} in $_R\mathfrak{Alg}$.  

\begin{defn} $\label{def:EZar}$
\textbf{The Essential-Zarisky Cover}

A familly of morphism $\{ \phi_i : X_i \rightarrow Y^{op} \}$ in $_R\mathfrak{Alg}^{op}$ is called an \textbf{essential-Zarisky Covering} \textit{if and only if}:

-  There are isomorphisms $\psi_i :Y^{op}_{f_i} \rightarrow X_i$ in $_R\mathfrak{Alg}$, between $X_i^{op}$ and the essential localization of $Y^{op}$ at some one point set $\{ f_i \}$.  Moreover, $\phi_i^{op}\circ \psi_i$ is the\textit{(unique)} universal arrow in $_R\mathfrak{Alg}$, describing that essential localization.  

-  There exist elements $y_i$ in $(Y^{op})^{\varsigma}$ such that $\underset{i}{\sum} y_if_i = 1$.  
\end{defn}

Similarly to the usual setting, the first clause in the above defintion may be thought of as describing the usual Zarisky cover of the \textit{"commutative part"} of the non-commutative affine scheme corresponding to $Y^{op}$.  Likewise, the second clause descibes a partition of unity on the \textit{"commutative part"} of the non-commutative affine scheme corresponding to $Y^{op}$.  In the following section we formalize these ideas, but for now we let the reader digest the essence of the matter.  

We have shown the following result:
\begin{prop} $\label{prop:site}$
The pair $<_R\mathfrak{Alg}^{op},Cov>$ of the category $_R\mathfrak{Alg}^{op}$ and the collection $Cov$ of all Essentially-Zariski Coverings of objects in $_R\mathfrak{Alg}^{op}$ defines a site.  
\end{prop}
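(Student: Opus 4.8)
The plan is to verify directly the three axioms of a Grothendieck pretopology (isomorphisms, stability under pullback, and transitivity/locality) for the collection $Cov$ of essential-Zariski coverings, and then appeal to the fact that a category equipped with a pretopology canonically determines a site. First I would unwind \autoref{def:EZar}: a covering of $Y^{op}$ is a family $\{\phi_i: X_i \to Y^{op}\}$ in ${}_R\mathfrak{Alg}^{op}$ each of which is, after passing to standardizations, the dual of a principal localization $A^{\varsigma} \to A^{\varsigma}_{f_i}$ (where $A = Y^{op}$ read as an object of ${}_R\mathfrak{Alg}$), subject to the partition-of-unity condition $\sum_i y_i f_i = 1$ in $A^{\varsigma}$. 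The whole point is that applying the standardization functor $-^{\varsigma}$ sends an essential-Zariski covering to an honest Zariski covering of $\operatorname{Spec}(A^{\varsigma})$ in the commutative category $_{Z(R)}Alg$, so each axiom reduces to the corresponding (known) fact on the commutative Zariski site, transported back along $-^{\varsigma}$.

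The three checks go as follows. For the \emph{isomorphism axiom}: if $\phi: X \to Y^{op}$ is an isomorphism in ${}_R\mathfrak{Alg}^{op}$, then $\phi^{\varsigma}$ is an isomorphism, hence the dual of the trivial localization at $S = \{1\}$, and $1 \cdot 1 = 1$ supplies the partition of unity; so $\{\phi\}$ is a covering. For \emph{stability under pullback}: given a covering $\{\phi_i: X_i \to Y^{op}\}$ and any morphism $g: Z^{op} \to Y^{op}$, I must produce the pullbacks $X_i \times_{Y^{op}} Z^{op}$ in ${}_R\mathfrak{Alg}^{op}$ and show the projections form a covering of $Z^{op}$; after standardization this is exactly the statement that Zariski opens pull back to Zariski opens, $D(f_i) \times_{\operatorname{Spec} A^{\varsigma}} \operatorname{Spec} B^{\varsigma} = D(g^{\varsigma}(f_i))$, together with the fact that $\sum y_i f_i = 1$ pushes to $\sum g^{\varsigma}(y_i) g^{\varsigma}(f_i) = 1$ in $B^{\varsigma}$. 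For \emph{transitivity}: if $\{\phi_i: X_i \to Y^{op}\}$ is a covering and each $\{\psi_{ij}: X_{ij} \to X_i\}$ is a covering, then the composites $\{\phi_i \circ \psi_{ij}\}$ form a covering of $Y^{op}$; on standardizations a localization of a localization $(A^{\varsigma}_{f_i})_{h_{ij}}$ is again a principal localization $A^{\varsigma}_{f_i h_{ij}'}$ of $A^{\varsigma}$, and the two partitions of unity combine (using $\sum_j y_{ij}' h_{ij} = 1$ in $A^{\varsigma}_{f_i}$, clear denominators and recombine with $\sum_i y_i f_i = 1$) to give a partition of unity over $A^{\varsigma}$.

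The main obstacle I anticipate is the pullback axiom, specifically whether the relevant pullbacks \emph{exist} in ${}_R\mathfrak{Alg}^{op}$ — i.e. whether ${}_R\mathfrak{Alg}$ has the pushouts needed to form $X_i \times_{Y^{op}} Z^{op}$, at least along the morphisms that occur in coverings. Since the covering legs are essential localizations, the right move is to show these particular pushouts exist and are again essential localizations, by constructing them explicitly as (the ${}_R\mathfrak{Alg}$-object representing) the appropriate localized standardization and checking its universal property directly, rather than invoking general cocompleteness of ${}_R\mathfrak{Alg}$. A secondary subtlety is bookkeeping the partition-of-unity data through composition in the transitivity step, since denominators from the intermediate localization must be cleared in a way compatible with the outer one; this is routine but is the only place where a small computation is genuinely needed. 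Everything else is a direct transport of the classical Zariski pretopology through the faithful functor $-^{\varsigma}$.
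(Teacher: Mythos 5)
The paper never actually proves this proposition: it is introduced with the phrase \emph{``We have shown the following result''} and no argument is supplied, so your proposal provides strictly more than the source does, and there is no ``paper's approach'' to compare against beyond the heuristic remarks following the definition of an essential-Zariski covering. Your route --- verify the pretopology axioms by transporting everything along $-^{\varsigma}$ to the classical Zariski pretopology on the opposite of commutative algebras --- is the natural one, and the isomorphism and transitivity checks go exactly as you say (a principal localization of a principal localization is again principal, and the two partitions of unity combine by raising $\sum_i y_i f_i = 1$ to a sufficiently high power and substituting). On the one point you flag as the real obstacle, pullback-stability, two remarks. First, rather than building the pushout $X_i *_{Y^{op}} Z^{op}$ ``explicitly as the object representing the localized standardization'' --- which is somewhat ill-posed, since an essential localization constrains a morphism only through its standardization and not as an object of $_R\mathfrak{Alg}$ --- it is cleaner to invoke the fact, proved later in the paper, that $-^{\varsigma}$ is a composite of left adjoints ($\tilde{i}$, unitization, abelianization) and hence preserves all colimits; then whatever amalgamated free product exists in $_R\mathfrak{Alg}$ automatically standardizes to $B^{\varsigma}_{g^{\varsigma}(f_i)}$, so its leg is again an essential localization. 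Second, if one wants to dodge the existence of pushouts in $_R\mathfrak{Alg}$ entirely, recall that a site only requires a coverage, not a full pretopology: it suffices to produce, for each covering of $Y^{op}$ and each morphism from $Z^{op}$, \emph{some} covering of $Z^{op}$ whose legs factor appropriately, and the essential localizations of $Z^{op}$ at the singletons $\{g^{\varsigma}(f_i)\}$, with the partition of unity pushed forward by $g^{\varsigma}$ exactly as in your sketch, do this without any pullback construction. With either fix your plan goes through.
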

We denote this site by the same symbol as its underlaying category, that is we denoite it as $_R\mathfrak{Alg}^{op}$.

\part{Geometry}

\chapter{Geometric Interpretations}
\section{Smooth essence}
\subsubsection*{Foreword and directions}
The goal of this short section is to purpose illustrate geometrically why the concept of essential smoothness is a prefectly intuitive and natural idea and provides a strict generalisation of smoothness in the case of \textit{"commutative"} affine schemes.  

The motivations for this is, simple.  After defining non-commutative schemes we notice that every such object contains a unique largest \textit{"commutative"} subscheme; moreover this containment is \textit{always} proper.  That is, schemes \textit{"gain points"} when considered as non-commutative objects, intuitivly this is where the ubiquitous singularities arise, from when abiding by the past definitions of smoothness.  

This association will be seen to be functorial, and therefore if the largest commutative part of a scheme is smooth, we will say that the scheme is essentially smooth.  We then close with examples of objects that, intuitivly should be essentially smooth, finally we relate this back to the Hochschild homology of the \textit{"coordinate ring"} of a nc.scheme.

\subsection{nc. Schemes}
\subsubsection{Affine Nc. Geometry}
The first step is to formalise our construciton of a nc. scheme.  With good reason, we follow the popular convetion, and inspiration of \textit{Michel Demazure}'s construction of he functor of points corresponding to an affine scheme.  

Following in the not so far-removed wonderful footsteps of Grothenrieck, we approach things in their natural setting, that is the setting of a certain \textit{topoi}.  The case where the basering is non-commutative is very interesting however there are significantly nicer proerties that arise when the basering is commutative.

\begin{defn}
\textbf{Affine nc.Scheme}

A non-commutative affine scheme $X$ is a corepresentable functor from $\mathfrak{_RAlg}$ to $Sets$.  

In other words $X$ is a functor from $\mathfrak{_RAlg}$ to sets, which is \textit{naturally isomorphic} to a functor of the form $Hom_{\mathfrak{_RAlg}}(A,-)$, for some $R$-algebra $A$.  
\end{defn}

\begin{prop}
There is an embedding of the dual category of $\mathfrak{_RAlg}$ into $[\mathfrak{_RAlg}:Sets]$.  
\end{prop}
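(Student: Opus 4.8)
The plan is to exhibit the (contravariant) Yoneda embedding and check it is full and faithful. Define a functor $h : \mathfrak{_RAlg}^{op} \to [\mathfrak{_RAlg}:Sets]$ by setting $h(A) := Hom_{\mathfrak{_RAlg}}(A,-)$ on objects, and sending a morphism $f : A \to B$ of $\mathfrak{_RAlg}$ — equivalently an arrow $B \to A$ in $\mathfrak{_RAlg}^{op}$ — to the natural transformation $h(f) := f^{*}$ whose component at an $R$-algebra $C$ is the precomposition map $Hom_{\mathfrak{_RAlg}}(B,C) \to Hom_{\mathfrak{_RAlg}}(A,C)$, $\varphi \mapsto \varphi\circ f$. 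First I would check that $f^{*}$ is genuinely a natural transformation: for any morphism $\alpha : C \to C'$ in $\mathfrak{_RAlg}$ the square relating $-\circ f$ and $\alpha\circ-$ commutes, which is nothing more than associativity of composition in $\mathfrak{_RAlg}$. Functoriality of $h$, namely $h(1_A) = 1_{h(A)}$ and $h(g\circ f) = h(f)\circ h(g)$, is again immediate from associativity and unitality, and the reversal of order is exactly what is required of a functor out of the opposite category.

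Next I would establish faithfulness. Suppose $f, f' : A \to B$ satisfy $f^{*} = (f')^{*}$. Evaluating the $B$-components on the identity $1_B \in Hom_{\mathfrak{_RAlg}}(B,B)$ gives $f = 1_B\circ f = 1_B\circ f' = f'$, so $h$ is faithful. To upgrade this to a genuine embedding onto a full subcategory I would prove fullness: given any natural transformation $\eta : Hom_{\mathfrak{_RAlg}}(B,-) \Rightarrow Hom_{\mathfrak{_RAlg}}(A,-)$, set $f := \eta_B(1_B) \in Hom_{\mathfrak{_RAlg}}(A,B)$. For an arbitrary $\varphi \in Hom_{\mathfrak{_RAlg}}(B,C)$, naturality of $\eta$ with respect to the morphism $\varphi : B \to C$ gives $\eta_C(\varphi) = \eta_C(\varphi\circ 1_B) = \varphi\circ \eta_B(1_B) = \varphi\circ f = f^{*}_C(\varphi)$, so $\eta = h(f)$. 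This is precisely the Yoneda lemma, and it shows $h$ is full.

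Combining the two, $h$ is a full and faithful functor $\mathfrak{_RAlg}^{op} \to [\mathfrak{_RAlg}:Sets]$, i.e. an embedding of the dual category of $\mathfrak{_RAlg}$; moreover its essential image is the full subcategory of corepresentable functors, which by the definition above is exactly the category of affine nc. schemes, so in fact $\mathfrak{_RAlg}^{op}$ is equivalent to the category of affine nc. schemes. I do not anticipate a real obstacle here: the only points demanding care are bookkeeping of the variance — so that the embedding lands on $\mathfrak{_RAlg}^{op}$ rather than on $\mathfrak{_RAlg}$ itself — and the routine verification that each $f^{*}$, and the assignment $f\mapsto f^{*}$, respect the relevant commuting squares, everything reducing to associativity and unitality of composition in $\mathfrak{_RAlg}$.
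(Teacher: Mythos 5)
Your proof is correct and is essentially the paper's own argument: the paper simply notes that hom-collections are sets and invokes the contravariant Yoneda lemma, which is exactly the embedding you construct and verify in detail. The only point the paper makes explicit that you leave implicit is local smallness of $\mathfrak{_RAlg}$, which your construction presupposes since the functors must land in $Sets$.
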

\begin{proof}
Since the collections of all morphisms between two non-commutative algebras forms a set, this is exactly a rephrasing of the (Contravariant) Yoneda lemma.  
\end{proof}

In view, of this last result, for any $R$-Algebra $A$, we denote its contravariant-Yoneda embedding $Y:A \mapsto Hom_{\mathfrak{_RAlg}}(A,-)$, by the usual $Y(A)$.  In other words, any affine nc. scheme is naturally equivalent to the contravariant-Yoneda embedding of an $R$-algebra into $[\mathfrak{_RAlg}:Sets]$.

\subsubsection{Projective Nc. Geometry}
Before being able to properly define an essentially nc. scheme lets peek at a little useful lemma.
\begin{lem} $\label{lem:precomp}$
Every functor $F: \mathfrak{C} \rightarrow \mathfrak{D}$, induces a functor $F^{\star}: [\mathfrak{D}:Sets] \rightarrow [\mathfrak{C}:Sets]$ by precomposing a functor $X$ in $[\mathfrak{D}:Sets] $ with $F$.  
\end{lem}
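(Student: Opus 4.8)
The plan is to define $F^{\star}$ explicitly on objects and morphisms of the functor category $[\mathfrak{D}:Sets]$ and then check the functor axioms. On objects, given $X\colon \mathfrak{D}\to Sets$, set $F^{\star}(X):=X\circ F$; being a composite of functors, this is again a functor $\mathfrak{C}\to Sets$, hence an object of $[\mathfrak{C}:Sets]$, and it has the same values as $X$ so no size issue arises in writing $[\mathfrak{C}:Sets]$. On morphisms, given a natural transformation $\eta\colon X\Rightarrow Y$ in $[\mathfrak{D}:Sets]$, define $F^{\star}(\eta)\colon X\circ F\Rightarrow Y\circ F$ to be the whiskering of $\eta$ by $F$, i.e.\ the family with component $(F^{\star}\eta)_{c}:=\eta_{F(c)}$ at each object $c$ of $\mathfrak{C}$.

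First I would verify that $F^{\star}(\eta)$ is genuinely a natural transformation: for a morphism $u\colon c\to c'$ in $\mathfrak{C}$, the square that must commute is precisely the naturality square of $\eta$ evaluated at the morphism $F(u)\colon F(c)\to F(c')$ of $\mathfrak{D}$, which commutes by the hypothesis that $\eta$ is natural. Next I would check functoriality of $F^{\star}$. The identity transformation $\mathrm{id}_{X}$ has components $\mathrm{id}_{X(d)}$, so $F^{\star}(\mathrm{id}_{X})$ has components $\mathrm{id}_{X(F(c))}$, which are exactly the components of $\mathrm{id}_{F^{\star}(X)}$; thus $F^{\star}$ preserves identities. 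For composable $\eta\colon X\Rightarrow Y$ and $\theta\colon Y\Rightarrow Z$, the vertical composite $\theta\circ\eta$ has components $\theta_{d}\circ\eta_{d}$, so $F^{\star}(\theta\circ\eta)$ has component $\theta_{F(c)}\circ\eta_{F(c)}$ at $c$, which is exactly the component of $F^{\star}(\theta)\circ F^{\star}(\eta)$; hence $F^{\star}$ preserves composition.

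There is no genuinely hard step here: this is the standard precomposition (whiskering) operation between functor categories, and each verification is a matter of unwinding definitions. The only point demanding a moment of attention is the naturality of $F^{\star}(\eta)$, and even that dissolves once one observes that the required commuting square is literally an instance of a naturality square for $\eta$. Accordingly I would keep the write-up short, stating the two assignments and pointing to the naturality of $\eta$ as the single substantive observation.
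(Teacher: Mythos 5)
Your proposal is correct and follows the only natural route here, which is also the paper's: define $F^{\star}$ by precomposition on objects and by whiskering on natural transformations, then unwind definitions. You are in fact more complete than the paper, whose proof only observes that a composite of functors is a functor and silently omits the action on natural transformations and the functoriality checks that you carry out explicitly.
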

\begin{proof}
The composition of functors is again a functor, therefore if $F$ is a functor so must any precomposition with it be.  
\end{proof}

Particularly, the standardization functor $-^{\varsigma}:_R\mathfrak{Alg} \rightarrow_{Z(R)}Alg$ induces a functor, $(-^{\varsigma})^{\star}: [_R\mathfrak{Alg}:Sets] \rightarrow [_{Z(R)}Alg: Sets]$.  

This type of functor is a reoccuring construction and it so happends that it has a name.  
\begin{defn}
\textbf{Inverse Functor}

The \textbf{inverse functor} $F^{\star}$ of a functor $F$, is the functor $F^{\star}: [\mathfrak{D}:Sets] \rightarrow [\mathfrak{C}:Sets]$ induced by the functor $F:\mathfrak{C}\rightarrow \mathfrak{D}$.   $F^{\star}$ takes a natural transformation $\phi$ in $[\mathfrak{C}:Sets]$ and assigning to it the morphism $\phi (F(-)): A \mapsto \phi(F(A))$.  
\end{defn}

\begin{defn}
\textbf{Nc. Scheme}

A \textbf{nc. Scheme} is a sheaf $X$ on the site $_R\mathfrak{Alg}^{op}$ with the added property that:

There exists a familly of affine nc. schemes $\{ Y(A_i) \}_i$ in $[_R\mathfrak{Alg}:Sets]$ such that, on all $Z(R)$-algebras $B$ which are fields, the sets $(X^{\varsigma})^{\star}(B)$ and $\underset{i}{\cup} (Y(A_i)^{\varsigma})^{\star}(B)$ equate.  
\end{defn}

We thankfully have the following intuitive consitency:
\begin{prop} $\label{prop:asns}$
Every affine nc. scheme is a nc. scheme.  
\end{prop}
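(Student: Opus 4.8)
The plan is to verify the two defining conditions of an \emph{nc. scheme} for the functor $Y(A)$ attached to an arbitrary $R$-algebra $A$: first that $Y(A)$ is a sheaf on the site $_R\mathfrak{Alg}^{op}$, and second that it admits the required affine cover by affine nc. schemes on those $Z(R)$-algebras which are fields. Since both conditions are essentially tautological for a corepresentable functor, the work is mostly a matter of unwinding definitions rather than real computation.

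First I would establish the sheaf condition. Let $\{\phi_i : X_i \to Y(A)^{op}\}$ be an Essentially-Zariski covering in $_R\mathfrak{Alg}^{op}$; by Definition~\ref{def:EZar} this means each $X_i^{op}$ is (isomorphic to) an essential localization $Y(A)^{op}_{f_i}$ together with a partition of unity $\sum_i y_i f_i = 1$ in $(Y(A)^{op})^{\varsigma} \cong A^{\varsigma}$. To check that $Y(A) = \operatorname{Hom}_{_R\mathfrak{Alg}}(A,-)$ satisfies descent along this cover, I would pull the data back through the standardization functor $-^{\varsigma}$: a compatible family of sections amounts to a family of $R$-algebra morphisms out of the localizations $A_{f_i}$ agreeing on overlaps, and since $\sum y_i f_i = 1$ in $A^{\varsigma}$ the classical Zariski gluing for the commutative ring $A^{\varsigma}$ (applied after Proposition~\ref{prop:ULoc} identifies the multiplicatively closed sets) produces a unique morphism out of $A$ itself. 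The universal property of essential localization is exactly what makes these two gluing problems equivalent, so uniqueness and existence transfer directly; hence $Y(A)$ is a sheaf.

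Next I would produce the affine cover witnessing the second clause. Here the obvious choice is the trivial singleton family $\{Y(A)\}$ itself: $Y(A)$ is by definition an affine nc. scheme, and for \emph{any} $Z(R)$-algebra $B$ (in particular a field) the sets $(Y(A)^{\varsigma})^{\star}(B)$ and $\bigcup_i (Y(A_i)^{\varsigma})^{\star}(B)$ coincide on the nose when the index set is a single point and $A_1 = A$. So this condition is immediate once one observes that $Y(A)$ is trivially covered by itself.

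The main obstacle, such as it is, lies in the sheaf verification: one must be careful that the covering families on $_R\mathfrak{Alg}^{op}$ really do translate, under $-^{\varsigma}$, into honest Zariski covers of $\operatorname{Spec}(A^{\varsigma})$, and that the essential localizations $A_{f_i}$ interact with morphisms out of $A$ the way ordinary localizations do — i.e. that a morphism $A \to C$ inverting $f_i$ in $C^{\varsigma}$ factors through $A_{f_i}$. This is precisely the content of the universal property built into Definition of essential localization, so the argument reduces to citing it; no estimate or delicate construction is needed, only bookkeeping across the functor $-^{\varsigma}$. I would therefore expect the author's proof to be short, essentially remarking that the site's covers are pulled back from the commutative Zariski site and that corepresentable functors are sheaves there.
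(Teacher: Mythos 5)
Your proposal follows the same two-step route as the paper's proof: the paper likewise disposes of the sheaf condition by simply asserting that corepresentable functors are sheaves on the site, and handles the covering clause by taking the singleton family $\{X\}$, exactly as you do. So in outline you have reproduced the intended argument, and your second step is verbatim what the paper does.

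One correction to your elaboration of the sheaf condition, which the paper leaves unargued but which you attempt to justify: you have the variance backwards. To verify that $Y(A)=\operatorname{Hom}_{_R\mathfrak{Alg}}(A,-)$ is a sheaf you must check the equalizer condition for an Essentially-Zariski covering $\{X_i \to B^{op}\}$ of an \emph{arbitrary} object $B^{op}$ of the site, not for a covering of $Y(A)^{op}$ itself; the compatible family of sections then consists of morphisms $A \to B_{f_i}$ \emph{into} the essential localizations of the covered object $B$, which must glue to a unique morphism $A \to B$. Your sketch instead covers $A$ and speaks of morphisms \emph{out of} the localizations $A_{f_i}$, which is the setup for a different statement (that the $A_{f_i}$ co-cover $A$), not for the descent condition on the presheaf $Y(A)$. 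The correct argument is still the one you gesture at — transport the cover through $-^{\varsigma}$ to an honest Zariski cover of $\operatorname{Spec}(B^{\varsigma})$ and invoke the classical fact that representable functors satisfy Zariski descent — but the objects being localized and the direction of the arrows need to be swapped for the reduction to go through.
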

\begin{proof}
Let $X$ be an affien nc. scheme.  Since corepresentable functors are sheaves, so we only need verify the added clause but this is trivial since $\{ X \}$ is a familly equating on all $R$-algebras, and so prticularly on a select few of their images under some functor.  
\end{proof}
\break

\subsection{Theory over a commutative basering}

There are a very many reasons why to be interested in studying nc. schemes over commutative baserings rather than arbitrary baserings.  In short the theory is much richer and well behaved.  The following is actually the central geometric interest in this more well-behaved theory, lliking the ideas of essential smoothness above to concrete geometric interpretations.  

\begin{lem} $\label{lem:presubf}$
If $R$ is a commutative ring and if $A$ and $B$ are associative unital commutative $R$-algebras, then $Hom_{_RAlg}(A,B)$ is a proper subset of $Y(A)(B)$.  
\end{lem}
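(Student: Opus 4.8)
The plan is to split the claim into its two constituent assertions — the containment $Hom_{_RAlg}(A,B)\subseteq Y(A)(B)$ and the strictness of that containment — and dispatch each by an essentially routine argument, the only care being needed for a degenerate edge case. First I would unwind the definitions: by the contravariant Yoneda embedding used above, $Y(A)(B)=Hom_{\mathfrak{_RAlg}}(A,B)$, the set of \emph{all} $R$-algebra homomorphisms $A\to B$, with no requirement of commutativity of the maps or preservation of units. Since $_RAlg$ is, by construction, a (non-full) subcategory of $\mathfrak{_RAlg}$, a morphism of $_RAlg$ from $A$ to $B$ is precisely an $R$-algebra homomorphism $A\to B$ that happens to preserve units; in particular it is an element of $Hom_{\mathfrak{_RAlg}}(A,B)$. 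Equivalently, the forgetful passage $_RAlg\to\mathfrak{_RAlg}$ is faithful, so it induces an injection on hom-sets, and via it we regard $Hom_{_RAlg}(A,B)$ as a genuine subset of $Y(A)(B)$.

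For strictness I would exhibit an explicit element of $Y(A)(B)$ lying outside $Hom_{_RAlg}(A,B)$. The natural witness is the zero homomorphism $0\colon A\to B$, $a\mapsto 0_B$: it is additive, it is $R$-linear, and it is multiplicative since $0_B\cdot 0_B=0_B$, so it is a bona fide morphism of $\mathfrak{_RAlg}$ and hence an element of $Y(A)(B)$. On the other hand it preserves units if and only if $0_B=1_B$, i.e. if and only if $B$ is the zero algebra; assuming $B\neq 0$ (part of the standing convention that unital commutative associative $R$-algebras satisfy $1\neq 0$), the zero map does not belong to $Hom_{_RAlg}(A,B)$. This yields $Hom_{_RAlg}(A,B)\subsetneq Y(A)(B)$.

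The main obstacle — a mild one — is precisely the degenerate case $B=0$: there $Y(A)(B)$ and $Hom_{_RAlg}(A,B)$ are both singletons (the unique map into the zero algebra, which vacuously sends $1_A$ to $1_B=0_B$), so the containment degenerates to an equality. This is resolved either by invoking the convention above or by inserting the hypothesis $B\neq 0$ into the statement; apart from this, every step is a direct verification from the definitions, so no further machinery is needed.
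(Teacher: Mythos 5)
Your proof is correct and follows essentially the same route as the paper: the containment comes from viewing unit-preserving homomorphisms as ordinary $R$-algebra homomorphisms, and strictness is witnessed by the zero morphism $A\to B$, which lives in $\mathfrak{_RAlg}$ but not in $_RAlg$. Your explicit handling of the degenerate case $B=0$ (where the two hom-sets coincide and the claim of \emph{proper} containment fails) is in fact more careful than the paper's own argument, which asserts properness unconditionally.
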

\begin{proof}
Every morphism of associative unital commutative $R$-algebras is by definition a morphism in $\mathfrak{_RAlg}$, however the zero morphism $A \rightarrow B$, mapping every element of $A$ to the additive identity of $B$ is a morphism in $\mathfrak{_RAlg}$ and not in $_RAlg$.  Since, the $0$-algebra is an object in $\mathfrak{_RAlg}$ and $_RAlg$, then $Hom_{_RAlg}(A,B)$ is always a proper subset of $Y(A)(B)$.  
\end{proof}

We're motivated to embed the category of (commutative) affine schemes into the category $[\mathfrak{_RAlg} : Sets]$ however, the chosen embedding will actually imply that these objects are not nc. affine schemes in of themselves, actually they will not even be general nc. schemes.  A priori, this seems to be a setback, however we will use this to our advantage to introduce a second type of building block and create alltogether new objects by gluing nc. schemes with \textit{"commutative"} schemes.  In memorial of their magnificently abstract colors and textures, we'll call these \textit{Pollock schemes}.


\subsection{A first algebraic adjunction}

\textit{From now on we assume our basering $R$ to be commutative also, (that is $R$ is commutative unital and associative).  }

Every commutative unital $R$-algebra $A$ has the underlaying structure of an associative $R$-algebra, likewise any unit preserving homomorphism of commutative unital $R$-algebras is itself a homomorphism of $R$-algebras.  Straigtway from this description we can observe that forgetting this extra structure preservers composition and if two morphisms differ, then they certainly differ when considering less structure.  In short, there is a \textit{forgetful functor} $\mathscr{F}$ from the category $_RAlg$ to the category $\mathfrak{_RAlg}$.  

Our first object is to construct its \textit{left} adjoint functor.  Infact we have already done so.  First we note that if $R$ is commutative then the inclusion $Z(R) \rightarrow R$ is an isomrophism therefore the functors $-^{\varsigma}$ and $-^{ab}\circ -^1$ are, as noted above naturally isomorphic.  That said, for the remainder of the text we identify $-^{\varsigma}$ with $-^{ab}\circ -^1$.

\begin{prop}
The standardization functor $-^{\varsigma}$ is left adjoint to the forgetful functor $\mathscr{F}$.  
\end{prop}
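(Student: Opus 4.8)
The cleanest route is to produce, for each $A$ in $\mathfrak{_RAlg}$, a universal arrow from $A$ to $\mathscr{F}$; equivalently, to verify a bijection $Hom_{_RAlg}(A^{\varsigma},B)\cong Hom_{\mathfrak{_RAlg}}(A,\mathscr{F}(B))$, natural in $A$ and in $B$. Morally this factors along the identification $-^{\varsigma}=-^{ab}\circ -^1$ as the unitization adjunction followed by the abelianization adjunction, and I would carry it out in exactly those two steps. The candidate unit is $\eta_A\colon A\to \mathscr{F}(A^{\varsigma})$, defined as the composite $A\xrightarrow{\,a\mapsto(a,0_R)\,}A_1\xrightarrow{\ \pi\ }A_1^{ab}=A^{\varsigma}$, viewed as a morphism of $R$-algebras after forgetting the unital and commutative structure of $A^{\varsigma}$; it is a morphism in $\mathfrak{_RAlg}$ by the defining operations of the unitization and of the abelianization.

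Given a commutative unital $R$-algebra $B$ and a morphism $f\colon A\to \mathscr{F}(B)$ in $\mathfrak{_RAlg}$, I would first extend $f$ to $\hat f\colon A_1\to B$ by $\hat f(a,r)=f(a)+r\cdot 1_B$. Additivity and $R$-linearity are immediate and $\hat f(0_A,1_R)=1_B$; the one non-formal point is multiplicativity, which amounts to the identity $f(ab+rb+sa)+rs\,1_B=\bigl(f(a)+r\,1_B\bigr)\bigl(f(b)+s\,1_B\bigr)$ and holds because $1_B$ is central. Moreover $\hat f$ is the only unit-preserving $R$-algebra homomorphism extending $f$, since every element of $A_1$ is $r\cdot 1_{A_1}$ plus an element of the image of $A$. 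Because $B$ is commutative, $\hat f$ annihilates $[A_1,A_1]$, so by the universal property of the quotient algebra recorded earlier it factors uniquely as $\hat f=g\circ\pi$ for a unique $g\colon A^{\varsigma}\to B$, which is unit-preserving and has commutative target, i.e.\ a morphism in $_RAlg$. Conversely each morphism $g\colon A^{\varsigma}\to B$ in $_RAlg$ gives back $\mathscr{F}(g)\circ\eta_A$, and a short check shows the two assignments are mutually inverse; naturality in $A$ and in $B$ is then routine. In summary one obtains
\[
Hom_{_RAlg}\!\bigl(A^{\varsigma},B\bigr)\ \xrightarrow{\ \sim\ }\ Hom_{\mathfrak{_RAlg}}\!\bigl(A,\mathscr{F}(B)\bigr),\qquad g\longmapsto \mathscr{F}(g)\circ \eta_A,
\]
with the inverse built above, which is exactly the asserted adjunction $-^{\varsigma}\dashv \mathscr{F}$.

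The step I expect to be the main obstacle — really the only one with any content — is the unitization half: checking that a homomorphism from $A$ into a unital algebra \emph{whose unit has been forgotten} extends uniquely to a unit-preserving homomorphism out of $A_1$. The unique extension itself is forced, so the substance is the multiplicativity of $\hat f$ (where centrality of $1_B$ is used) together with some care about which class of morphisms each hom-set consists of, so that the final bijection is genuinely between $Hom_{_RAlg}(A^{\varsigma},B)$ and $Hom_{\mathfrak{_RAlg}}(A,\mathscr{F}(B))$. The abelianization half is then immediate from the universal property of the quotient already established in the excerpt.
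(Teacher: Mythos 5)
Your proof is correct and follows essentially the same route as the paper: factor $-^{\varsigma}$ as unitization followed by abelianization and compose the two adjunctions (the paper states these as two separate lemmas plus a lemma on composing adjoint pairs). Your explicit extension $\hat f(a,r)=f(a)+r\cdot 1_B$ with its multiplicativity check is in fact a more careful rendering of the unitization half than the paper's \emph{direct sum} decomposition of morphisms, but the underlying argument is the same.
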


Instead of prooving the result in one shot, I prefer to break it up in some lemmas for organizational purpouses and for generality.

\begin{lem}
The unitization functor $-_1$ is left adjoint to the forgetful functor $\Psi: _RAlg\rightarrow _R\mathfrak{Alg}^1$.  
\end{lem}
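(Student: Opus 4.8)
The plan is to exhibit, for each associative $R$-algebra $A$ and each unital associative $R$-algebra $B$, a bijection
\[
\mathrm{Hom}_{{}_R\mathrm{Alg}^1}(A_1, B) \;\cong\; \mathrm{Hom}_{{}_R\mathfrak{Alg}^1}(A, \Psi(B))
\]
that is natural in both variables, where $A_1 = A \oplus R$ is the unitization from the Definition above and $\Psi$ is the forgetful functor that views a unital $R$-algebra merely as an object of ${}_R\mathfrak{Alg}^1$. First I would describe the unit of the adjunction: the canonical $R$-algebra map $\eta_A : A \to \Psi(A_1)$ sending $a \mapsto (a, 0)$, which is a (non-unital) homomorphism of associative $R$-algebras by the very definition of the operations on $A \oplus R$. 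The claim is that $\eta_A$ is universal among maps from $A$ into (the underlying object of) a unital $R$-algebra.

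The key step is the construction of the bijection. Given a homomorphism $g : A \to \Psi(B)$ in ${}_R\mathfrak{Alg}^1$, define $\bar g : A_1 \to B$ by $\bar g(a, r) = g(a) + r\cdot 1_B$. I would check that $\bar g$ is additive and $R$-linear directly from the formulas for $+$ and the $R$-actions on $A_1$; that it is multiplicative using the product rule $(a,r)(b,s) = (ab + rb + sa, rs)$ and the fact that $g$ is multiplicative together with bilinearity of multiplication in $B$ and centrality of the image of $R$ (since $R$ is commutative and unital, $r\cdot 1_B$ is central in $B$); and that $\bar g$ preserves units because $\bar g(0_A, 1_R) = g(0_A) + 1_R\cdot 1_B = 0 + 1_B = 1_B$. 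Conversely, given a unital homomorphism $h : A_1 \to B$, restrict along $\eta_A$ to get $h \circ \eta_A : A \to \Psi(B)$. The two assignments are mutually inverse: starting from $g$, $(\bar g \circ \eta_A)(a) = \bar g(a,0) = g(a)$; starting from $h$, $\overline{h \circ \eta_A}(a,r) = h(a,0) + r\cdot 1_B = h(a,0) + h(0_A, r) = h(a,r)$, where the last equality uses additivity of $h$ and the fact that $h$ is unital so $h(0_A, r) = h(r\cdot(0_A,1_R)) = r\cdot h(0_A,1_R) = r\cdot 1_B$. Finally I would note naturality: for morphisms $A' \to A$ in ${}_R\mathfrak{Alg}$ and $B \to B'$ in ${}_R\mathrm{Alg}^1$ the squares commute, which is immediate since all maps involved are defined by the same pointwise formulas and the unitization functor was already shown (in the Proposition above) to act as $(f, 1_R)$ on morphisms.

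The main obstacle, such as it is, is purely bookkeeping: verifying multiplicativity of $\bar g$ requires expanding $\bar g\big((a,r)(b,s)\big) = g(ab) + g(rb) + g(sa) + rs\cdot 1_B$ and matching it against $\bar g(a,r)\,\bar g(b,s) = \big(g(a) + r\cdot 1_B\big)\big(g(b) + s\cdot 1_B\big)$, using that $g$ intertwines the left/right $R$-actions on $A$ with multiplication by $r\cdot 1_B$ in $B$ — this is exactly the content of $g$ being an $R$-algebra homomorphism into $\Psi(B)$, so no genuine difficulty arises, only care with the eight terms. Once this lemma is in hand, the analogous statement for $-^{ab}$ as left adjoint to the inclusion ${}_R\mathrm{Alg} \hookrightarrow {}_R\mathfrak{Alg}$ follows from the universal property of the commutator-quotient already recorded above, and composing the two adjunctions yields that $-^{\varsigma} = -^{ab} \circ -_1$ is left adjoint to $\mathscr{F}$, since a composite of left adjoints is left adjoint to the composite of the right adjoints.
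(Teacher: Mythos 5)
Your proof is correct and follows essentially the same route as the paper's: the bijection between $\mathrm{Hom}(A_1,B)$ and $\mathrm{Hom}(A,\Psi(B))$ given by restriction along $a\mapsto(a,0)$ in one direction and extension in the other, which the paper writes loosely as the decomposition $f=\tilde f\oplus 1_R$ and you write precisely as $\bar g(a,r)=g(a)+r\cdot 1_B$. Your version actually carries out the multiplicativity and unitality checks that the paper's ``direct sum of morphisms'' phrasing glosses over, so no further comment is needed.
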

\begin{proof}
Let $B$ be a commutative unital $R$-algebra and $A$ to be a unital $R$-algebra.  

We first note that every unit preserving morphism, from $ A_1\cong A \oplus R \rightarrow B$ can be decomposed into the direct of $\tilde{f}\oplus g$ two $R$-homomorphisms, with $g$ preserving the unit, this is by definitoin of abelianiztion being a direct sum or algebras.  However, since $g:B \rightarrow R$ is a $R$-homorphism preserving the unit, then it must be the identity on $R$ where $R$ is viewed as a sub-algebra of $B$.  Therefore $f = \tilde{f} \oplus 1_R$

Then the set functions: $\mathfrak{f}_{B,A}: Hom_{_R\mathfrak{Alg}^1}(A_1,B) \rightarrow Hom_{_RAlg}(A,\Psi(B))$ mapping a unit preserving $R$-homomorphism $f = \tilde{f} \oplus g: A_1 \rightarrow B$ to the $R$-algebra homomorphism $\tilde{f} \oplus 0$, where $0$ is the zero $R$-algebra homomorphism \textit{(obviously the later does not preserve the unit, and so only exists in the later category and not in the former)}.  Since $A\cong A \oplus 0$, where $0$ is the \textit{(non-unital)} zero algebra, then $\mathfrak{f}_{B,A}$'s codomain is indeed isomorphic to $Hom_{_RAlg}(A_1,B)$.  

Again we fall back on the identification of $A$ with $A \oplus 0$, and of a unit preservign $R$-algebra homomorphism as the direct sum desribed above.  Define the set functions $\mathfrak{g}_{B,A} :  Hom_{_RAlg}(A,\Psi(B)) \rightarrow Hom_{_R\mathfrak{Alg}^1}(A_1,B)$, taking an $R$-algebra homomorphism $g$ to the unit preserving $R$-algebra homomorphism $g \oplus 1_R$.  

Now, $(\mathfrak{f}_{B,A} \circ \mathfrak{g}_{B,A})(\tilde{f}\oplus 1_R)$ 

$= \mathfrak{f}_{B,A} \tilde{f}\oplus 0 = \mathfrak{f}_{B,A} \tilde{f} = \tilde{f} \oplus 1_R$.  

Conversly, $(\mathfrak{g}_{B,A})\circ \mathfrak{f}_{B,A} (f)$ 

$= (\mathfrak{g}_{B,A}) f\oplus 1_R = f \oplus 0 = f$.  

Therefore, for each pair $B$ and $A$ as above $\mathfrak{g}_{B,A}$ is the twosided inverse of $\mathfrak{f}_{B,A}$.  Finally, the naturality of these followws fro mth e naturaily of the direct sum.  

Therefore, $<-_1,\Psi, f_{B,A}>$ is an adjunction of functors.  
\end{proof}
\begin{lem}
The abelianization functor is left adjoint to the forgetful functor $\Phi$ from $\mathfrak{_RAlg}^1$ to its subcategory $_RAlg$.  
\end{lem}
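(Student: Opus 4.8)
The plan is to produce, for every unital $R$-algebra $A$ and every commutative unital $R$-algebra $B$, a bijection
\[
\mathfrak{f}_{A,B}: Hom_{_RAlg}(A^{ab},B)\rightarrow Hom_{\mathfrak{_RAlg}^1}(A,\Phi(B)),
\]
natural in $A$ and $B$, with unit the canonical projection $\pi_A:A\rightarrow A^{ab}$. First I would recall from the proposition on abelianisation that $[A,A]$ is a two-sided ideal, so that $A^{ab}=A/[A,A]$ is a commutative unital $R$-algebra and $\pi_A$ is a unit-preserving epimorphism of $R$-algebras. Then I define $\mathfrak{f}_{A,B}$ by $g\mapsto \Phi(g)\circ\pi_A$; since a composite of unit-preserving $R$-algebra homomorphisms is again one, this genuinely lands in $Hom_{\mathfrak{_RAlg}^1}(A,\Phi(B))$.

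Next I would build the inverse $\mathfrak{g}_{A,B}$. Given a unit-preserving $R$-algebra homomorphism $h:A\rightarrow\Phi(B)$, the one substantive observation is that commutativity of $B$ forces $h([a,a'])=[h(a),h(a')]=0$ for all $a,a'\in A$, hence $[A,A]\subseteq\ker h$. By the universal property of the quotient $R$-algebra --- the same one invoked in the abelianisation proposition --- there is a unique $R$-algebra homomorphism $\bar h:A^{ab}\rightarrow B$ with $\bar h\circ\pi_A=h$, and one checks $\bar h$ is unit-preserving. Put $\mathfrak{g}_{A,B}(h)=\bar h$. The two maps are mutually inverse: $\mathfrak{f}_{A,B}(\mathfrak{g}_{A,B}(h))=\bar h\circ\pi_A=h$ by construction, while $\mathfrak{g}_{A,B}(\mathfrak{f}_{A,B}(g))=g$ follows from the uniqueness clause of the universal property applied to the factorisation of $\Phi(g)\circ\pi_A$.

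Finally I would verify naturality: for $u:A'\rightarrow A$ in $\mathfrak{_RAlg}^1$, functoriality of $-^{ab}$ gives $\pi_A\circ u=\Phi(u^{ab})\circ\pi_{A'}$, so $\pi$ is a natural transformation $\mathrm{Id}\Rightarrow\Phi\circ-^{ab}$, and combining this with bifunctoriality of $Hom$ yields naturality of $\mathfrak{f}$ in $A$; naturality in $B$ is immediate from post-composition. I do not expect a real obstacle here --- the whole content is the factorisation through $[A,A]$, which is driven purely by commutativity of the target. The only point demanding a little care is bookkeeping with units, i.e. that $A^{ab}$ is unital and that $\pi_A$ and each $\bar h$ preserve units, so that the adjunction lives between $\mathfrak{_RAlg}^1$ and $_RAlg$ and not merely their non-unital ambients; but this was already dispatched in the abelianisation proposition, so it reduces to citing it. Composing this adjunction with the unitisation adjunction of the previous lemma (and, when $R$ is not commutative, with $\tilde i$) then delivers the stated adjunction $-^{\varsigma}\dashv\mathscr{F}$.
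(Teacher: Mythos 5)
Your proposal is correct and is essentially the argument the paper is gesturing at: the paper's entire proof of this lemma is the single sentence that it is ``nothing but a restatement of the first isomorphism theorem for algebras,'' i.e.\ precisely the factorisation through $A/[A,A]$ that you carry out in detail. The one substantive point --- that commutativity of the target kills the commutator ideal, so every unit-preserving homomorphism $A\rightarrow\Phi(B)$ factors uniquely through $\pi_A$ --- together with the naturality and unit-preservation bookkeeping, is exactly what needed to be supplied, and you supply it.
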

\begin{proof}
This is essentially nothing but a restatement of the first isomorphisms theorem for algebras.  
\end{proof}
Two glue these two ideas together we can use this little category theoretical lemma.  
\begin{lem} $\label{lem:compos}$
$F_1 \adj G^1$ is an adjoint pair between of functors between two categories $\mathfrak{A}$ and $\mathfrak{B}$ and $F_2\adj G^2$ is also an adjoint pair between the categories $\mathfrak{B}$ and $\mathfrak{C}$, then $F_2\circ F_1\adj G^1\circ G^2$ is an adjoint pair between the categories $\mathfrak{A}$ and $\mathfrak{C}$.  
\end{lem}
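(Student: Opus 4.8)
The plan is to work with the hom\nobreakdash-set formulation of adjunction and glue the two given natural bijections together. By ``adjoint pair'' in the sense used above, $F_1\adj G^1$ means $F_1\colon\mathfrak{A}\to\mathfrak{B}$, $G^1\colon\mathfrak{B}\to\mathfrak{A}$ together with a bijection
\[
\alpha_{a,b}\colon \mathrm{Hom}_{\mathfrak{B}}(F_1 a, b) \xrightarrow{\ \sim\ } \mathrm{Hom}_{\mathfrak{A}}(a, G^1 b)
\]
natural in $a\in\mathfrak{A}$ and $b\in\mathfrak{B}$, and likewise $F_2\adj G^2$ gives $F_2\colon\mathfrak{B}\to\mathfrak{C}$, $G^2\colon\mathfrak{C}\to\mathfrak{B}$ with a natural bijection
\[
\beta_{b,c}\colon \mathrm{Hom}_{\mathfrak{C}}(F_2 b, c) \xrightarrow{\ \sim\ } \mathrm{Hom}_{\mathfrak{B}}(b, G^2 c).
\]
Then $F_2\circ F_1\colon\mathfrak{A}\to\mathfrak{C}$ and $G^1\circ G^2\colon\mathfrak{C}\to\mathfrak{A}$, and for objects $a\in\mathfrak{A}$, $c\in\mathfrak{C}$ I would define
\[
\gamma_{a,c} := \alpha_{a,\, G^2 c}\circ \beta_{F_1 a,\, c}\colon \mathrm{Hom}_{\mathfrak{C}}\big((F_2\circ F_1)a,\, c\big)\longrightarrow \mathrm{Hom}_{\mathfrak{A}}\big(a,\, (G^1\circ G^2)c\big),
\]
which makes sense because $\beta_{F_1 a, c}$ lands in $\mathrm{Hom}_{\mathfrak{B}}(F_1 a, G^2 c)$, exactly the source of $\alpha_{a, G^2 c}$, and which is a bijection as a composite of bijections.

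The only remaining work is to check that $\gamma_{a,c}$ is natural in the pair $(a,c)$. Given $u\colon a'\to a$ in $\mathfrak{A}$ and $v\colon c\to c'$ in $\mathfrak{C}$, the naturality square for $\gamma$ factors as the paste of two squares: the naturality square of $\beta$ for the morphisms $F_1 u$ (in $\mathfrak{B}$, using functoriality of $F_1$) and $v$ (in $\mathfrak{C}$), stacked above the naturality square of $\alpha$ for $u$ (in $\mathfrak{A}$) and $G^2 v$ (in $\mathfrak{B}$, using functoriality of $G^2$). Since both component squares commute, so does the outer one. This is a routine diagram chase; the only ``obstacle'' is bookkeeping, namely keeping track of which functor is applied where, since the two adjunctions share the middle category $\mathfrak{B}$.

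As an alternative I could instead pass through units and counits: if $\eta^i,\varepsilon^i$ denote the (co)unit of $F_i\adj G^i$, set $\eta := (G^1\eta^2 F_1)\circ\eta^1\colon \mathrm{id}_{\mathfrak{A}}\to G^1 G^2 F_2 F_1$ and $\varepsilon := \varepsilon^2\circ(F_2\varepsilon^1 G^2)\colon F_2 F_1 G^1 G^2\to \mathrm{id}_{\mathfrak{C}}$, and then verify the two triangle identities for $F_2 F_1\adj G^1 G^2$; each collapses to an identity after two applications of the triangle identities for the component adjunctions. Either route proves the lemma; I would present the hom\nobreakdash-set version, as it is the shortest and matches the phrasing of the preceding lemmas.
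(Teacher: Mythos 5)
Your proposal is correct and follows essentially the same route as the paper: compose the two hom-set bijections after specializing the middle variable to $F_1a$ and $G^2c$. You are in fact more careful than the paper's own proof, which asserts the chain of isomorphisms without addressing naturality in $(a,c)$.
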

\begin{proof}
Let $a$ be an object in $\mathfrak{A}$, $c$ be an object in $\mathfrak{C}$ and b ,$\tilde{b}$ be objects in $\mathfrak{B}$.  

The adjunctions hypothesized give:
$C(F_2(b),c)\cong B(b,G^2(c))$ and $B(F_1(a),\tilde{b}) \cong A(a,G^1(\tilde{b}))$.  
Now setting b:=$F_1(a)$ and $\tilde{b}:=G^2(c)$ it may be concluded:
$A(a,G^1(G^2(c))) \cong B(F_1(a),G^2(c)) \cong C(F_2(F_1(a)),c)$.  

So $F_2\circ F_1$ is left adjoint to $G^1\circ G^2$.  
\end{proof}

Now the proof of the proposition:
\begin{proof}
We simply take, the functors $F_1$ and $F_2$ to be the abelianization and unitization functors respectivly.  Also, we take the functors $G_1$ and $G_2$ in the above lemma to be the forgetful functors $\Phi$ and $\Psi$ respectivly.  

Now the composition of forgetful functors is again a forgetful functor and the composition $-^{ab}\circ -_1$ is by definition the standardization functor, if the base ring is commutative.  Therefore, the result follows by the last lemma.  
\end{proof}

\subsubsection{Another note as to why the basering needed to be commutative}
As a general note, the reason we had to assume the base ring was commutative is that the functor $\tilde{i}$, induced by the inclusion of $Z(R)$ into $R$ is a left adjoing functor, and generally has not right adjoint.  
In the case that $Z(R)\cong R$ is an isomrophism and so the functor $\tilde{i}:_{Z(R)}\mathfrak{Alg} \rightarrow _R\mathfrak{Alg}$ is an equivalence of categories, and so in particualr it has a right adjoint \textit{(namely the its inverse)} we have lemma $\autoref{lem:compos}$ and the uniquenes of adjoints imply the natural isomorphisms between of $^{ab} \circ ^1\circ \tilde{i}$ with $^{ab} \circ ^1$.

\subsection{A geometric adjunction}
As is usually the case in algebraic geometry we transform an algebraic result into a geometric one.  We \textit{"invert"} the two adjunct functors studied briefly above as notice that they still form an adjunction.  

Implicitly above we were using this crucially important functor, which may be defined in general, regarless if the basering $R$ is commutative or not.  

\begin{defn}
\textbf{Pollution Functor}

The functor $\mathfrak{P(-)}: [_{Z(R)}Alg:Sets] \rightarrow [\mathfrak{_RAlg}:Sets] $ taking a functor $P$ in $[RAlg:Sets]$ to the functor $P\circ -^{\varsigma}$ is called the \textbf{Pollution functor} and $\mathfrak{P}(P)$ is called the erradication of $P$.  
\end{defn}
\begin{lem}
The polution functor is indeed a functor. 
\end{lem}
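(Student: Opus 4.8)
The plan is to unwind the definition of the Pollution functor $\mathfrak{P}$ and check the two functor axioms (preservation of identities and of composition) directly, reducing everything to the fact proved in Lemma~\ref{lem:precomp} that precomposition with a fixed functor is itself functorial. Concretely, $\mathfrak{P}$ is nothing but the inverse functor $(-^{\varsigma})^{\star}$ associated to the standardization functor $-^{\varsigma}:\mathfrak{_RAlg}\to {}_{Z(R)}Alg$ in the sense of the definition immediately preceding this lemma, so the bulk of the work is really just matching notation.

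First I would make explicit what $\mathfrak{P}$ does on morphisms: given a natural transformation $\eta:P\Rightarrow Q$ between functors $P,Q\in[{}_{Z(R)}Alg:Sets]$, the transformation $\mathfrak{P}(\eta):\mathfrak{P}(P)\Rightarrow\mathfrak{P}(Q)$ is the whiskering of $\eta$ by $-^{\varsigma}$, i.e. its component at an $R$-algebra $A$ is $\mathfrak{P}(\eta)_A := \eta_{A^{\varsigma}}:P(A^{\varsigma})\to Q(A^{\varsigma})$. Then I would verify that this assignment lands where it should: for each morphism $f:A\to B$ in $\mathfrak{_RAlg}$ the naturality square of $\mathfrak{P}(\eta)$ with respect to $f$ is exactly the naturality square of $\eta$ with respect to $f^{\varsigma}$, which commutes because $\eta$ is natural. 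Hence $\mathfrak{P}(\eta)$ is a genuine natural transformation, so $\mathfrak{P}$ at least sends morphisms to morphisms.

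Next I would check the two functor axioms. For identities, $\mathfrak{P}(1_P)$ has component $(1_P)_{A^{\varsigma}} = 1_{P(A^{\varsigma})} = 1_{\mathfrak{P}(P)(A)}$ at every $A$, so $\mathfrak{P}(1_P)=1_{\mathfrak{P}(P)}$. For composition, given $\eta:P\Rightarrow Q$ and $\theta:Q\Rightarrow S$, the component of $\mathfrak{P}(\theta\circ\eta)$ at $A$ is $(\theta\circ\eta)_{A^{\varsigma}} = \theta_{A^{\varsigma}}\circ\eta_{A^{\varsigma}} = \mathfrak{P}(\theta)_A\circ\mathfrak{P}(\eta)_A$, which is precisely the component of $\mathfrak{P}(\theta)\circ\mathfrak{P}(\eta)$; since this holds componentwise it holds as an identity of natural transformations. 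That $\mathfrak{P}$ sends each object $P$ to an object $\mathfrak{P}(P)=P\circ -^{\varsigma}$ of $[\mathfrak{_RAlg}:Sets]$ is immediate from Lemma~\ref{lem:precomp}, since $-^{\varsigma}$ is a functor (it is the composite $-^{ab}\circ -^1\circ\tilde{i}$ of the faithful functors built in the preliminaries). This exhausts the verification.

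There is no real obstacle here; the only thing to be a little careful about is the bookkeeping of variance — $-^{\varsigma}$ is a covariant functor $\mathfrak{_RAlg}\to{}_{Z(R)}Alg$, so precomposition does not flip any arrows and $\mathfrak{P}$ is straightforwardly covariant on functor categories — together with the fact (already recorded after Lemma~\ref{lem:precomp}) that this whole construction is the instance $F=-^{\varsigma}$ of the general ``inverse functor'' machinery, so one could equally well finish the proof in one line by simply invoking that general fact.
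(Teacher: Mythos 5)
Your proposal is correct and follows essentially the same route as the paper, which simply observes that the pollution functor is the instance $F = -^{\varsigma}$ of the precomposition construction of Lemma~\ref{lem:precomp} and cites that lemma. Your explicit verification of the whiskering formula and the two functor axioms is just an unpacking of that one-line argument, as you yourself note at the end.
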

\begin{proof}
This is a direct consequence of lemma $\autoref{lem:precomp}$.  
\end{proof}

Now er return to our setting of $R$ being commutative.  

\begin{defn}
\textbf{True Essence Functor}

The functor $\mathscr{T(-)}: [\mathfrak{_RAlg}:Sets] \rightarrow [_RAlg:Sets]$ taking a functor $P$ in $[\mathfrak{_RAlg}:Sets]$ to the functor $P\circ \Phi \circ \Psi$ is called the \textbf{true essence functor} and $\mathscr{T}(P)$ is called the \textbf{true essence} of $P$.  
\end{defn}

\begin{lem}
The true essence functor is a functor. 
\end{lem}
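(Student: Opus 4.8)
The plan is to recognize $\mathscr{T}(-)$ as nothing more than a special case of the precomposition construction of \autoref{lem:precomp}, precisely as was done a moment ago for the pollution functor. First I would note that $\Psi\colon {}_RAlg \to \mathfrak{_RAlg}^1$ is a functor and that the forgetful functor $\Phi$ landing back in $\mathfrak{_RAlg}$ (via $\mathfrak{_RAlg}^1$) is a functor, so their composite $\Phi\circ\Psi\colon {}_RAlg \to \mathfrak{_RAlg}$ is again a functor, being a composition of functors. This pins down the source and target categories, which is the only datum one really needs to invoke the general lemma.

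Next I would apply \autoref{lem:precomp} with $\mathfrak{C} = {}_RAlg$, $\mathfrak{D} = \mathfrak{_RAlg}$ and $F = \Phi\circ\Psi$. The lemma then produces a functor $(\Phi\circ\Psi)^{\star}\colon [\mathfrak{_RAlg}:Sets] \to [{}_RAlg:Sets]$ which sends a functor $P$ to $P\circ\Phi\circ\Psi$ and a natural transformation $\phi\colon P \Rightarrow Q$ to the natural transformation whose component at an object $A$ of ${}_RAlg$ is $\phi_{\Phi\Psi(A)}$. Comparing with the definition of $\mathscr{T}$, this is exactly the assignment $\mathscr{T}$ performs on objects and on morphisms of $[\mathfrak{_RAlg}:Sets]$, so $\mathscr{T} = (\Phi\circ\Psi)^{\star}$ and hence $\mathscr{T}$ is a functor.

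The only step that needs any checking, and it is entirely routine, is verifying that the action of $\mathscr{T}$ on the morphisms of $[\mathfrak{_RAlg}:Sets]$ — that is, on natural transformations — agrees with the whiskering by $\Phi\circ\Psi$ furnished by the inverse-functor construction; this is immediate once one unwinds the definition of the induced functor $F^{\star}$ given above. I do not expect any genuine obstacle here: once the composite forgetful functor $\Phi\circ\Psi$ is seen to have source ${}_RAlg$ and target $\mathfrak{_RAlg}$, functoriality of $\mathscr{T}$ is inherited verbatim from functoriality of precomposition, so the proof is essentially a one-line appeal to \autoref{lem:precomp}.
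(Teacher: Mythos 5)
Your proof is correct and is exactly the paper's argument: the paper likewise disposes of this by a direct appeal to Lemma \autoref{lem:precomp}, observing that $\mathscr{T}$ is precomposition with the composite forgetful functor $\Phi\circ\Psi$. Your additional unwinding of the action on natural transformations is routine and consistent with the inverse-functor construction.
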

\begin{proof}
This is a direct consequence of lemma $\autoref{lem:precomp}$.  
\end{proof}

Before tackling the main result of this subsection we present a category theoretical lemma and a defnition that will streamline the proof:

\begin{lem}
If $F: \mathfrak{C} \rightarrow \mathfrak{D}$ is the left adjoint to a functor $G$.  Then the inverse functors $F^{\star}$ is right adjoint to the inverse functor $G^{\star}$.   
\end{lem}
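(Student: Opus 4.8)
The statement to prove is: if $F\colon \mathfrak{C}\to\mathfrak{D}$ is left adjoint to $G\colon\mathfrak{D}\to\mathfrak{C}$, then the inverse functor $F^{\star}\colon[\mathfrak{D}:Sets]\to[\mathfrak{C}:Sets]$ is right adjoint to $G^{\star}\colon[\mathfrak{C}:Sets]\to[\mathfrak{D}:Sets]$. The natural strategy is to produce the adjunction via a natural bijection of hom-sets. So first I would fix functors $X\in[\mathfrak{C}:Sets]$ and $Y\in[\mathfrak{D}:Sets]$, and aim to exhibit a bijection
\[
\mathrm{Nat}(G^{\star}X,\,Y)\;\cong\;\mathrm{Nat}(X,\,F^{\star}Y),
\]
natural in both arguments. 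Unwinding the definition of the inverse functor from the excerpt, $G^{\star}X = X\circ G$ and $F^{\star}Y = Y\circ F$, so I must relate natural transformations $X\circ G\Rightarrow Y$ with natural transformations $X\Rightarrow Y\circ F$.

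**Key steps.** The adjunction $F\adj G$ supplies a unit $\eta\colon 1_{\mathfrak{C}}\Rightarrow G\circ F$ and a counit $\varepsilon\colon F\circ G\Rightarrow 1_{\mathfrak{D}}$ satisfying the triangle identities. Given a natural transformation $\alpha\colon X\circ G\Rightarrow Y$, I would define $\hat\alpha\colon X\Rightarrow Y\circ F$ by whiskering: at an object $c\in\mathfrak{C}$, set $\hat\alpha_c := \alpha_{Fc}\circ X(\eta_c)\colon X(c)\to X(GFc)\to Y(Fc)$. Conversely, given $\beta\colon X\Rightarrow Y\circ F$, define $\check\beta\colon X\circ G\Rightarrow Y$ at $d\in\mathfrak{D}$ by $\check\beta_d := Y(\varepsilon_d)\circ \beta_{Gd}\colon X(Gd)\to Y(FGd)\to Y(d)$. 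Then I would check these two assignments are mutually inverse — this is a diagram chase using exactly the two triangle identities for $F\adj G$, together with naturality of $\alpha$, $\beta$, $\eta$, $\varepsilon$. Finally I would verify naturality of the bijection in $X$ and in $Y$, which is again a routine check from naturality of the components involved. Alternatively — and perhaps cleaner to write — one observes that $F\adj G$ means precomposition functors satisfy $(-\circ F)\adj(-\circ G)$ on functor categories is false in general; the correct statement is that $F^\star$ and $G^\star$ form an adjunction in the order claimed, and this can be derived abstractly from the $2$-functoriality of $[-:Sets]$ (a $2$-functor sends adjunctions to adjunctions), applied to the adjunction $F\adj G$ in $\mathrm{Cat}$, noting that $[-:Sets] = \mathrm{Cat}(-,Sets)$ is contravariant, hence reverses the direction of the adjunction — giving $G^\star \adj F^\star$.

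**Main obstacle.** There is no deep obstacle; the only real subtlety is bookkeeping the variance. Because $[-:Sets]$ is contravariant on the $1$-cells, applying it to $F\adj G$ flips both the direction of the functors and which one is the left adjoint, and it is easy to state the conclusion backwards. So the step I would be most careful with is pinning down that it is $G^{\star}$ that is \emph{left} adjoint to $F^{\star}$ — matching exactly the claim in the statement — rather than the reverse; I would double-check this against the concrete hom-set bijection above rather than trust the abstract slogan. A secondary minor point is that for the hom-"sets" $\mathrm{Nat}(-,-)$ to genuinely be sets one wants $\mathfrak{C},\mathfrak{D}$ small (or locally small with a size bound), which holds in all the cases the paper applies this to, since $-^{\varsigma}$, $\Phi$, $\Psi$ are the relevant $F$'s here.
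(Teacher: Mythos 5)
Your proposal is correct and takes essentially the same route as the paper: both obtain $G^{\star}\dashv F^{\star}$ by whiskering a presheaf with the unit and counit of $F\dashv G$. If anything, yours is the more complete version — the paper only writes down the induced (co)unit candidates (with the unit/counit labels swapped and one arrow reversed) and never checks the triangle identities, whereas you give the explicit hom-set bijection and verify it is mutually inverse.
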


\begin{proof}
Let $Q \in [\mathfrak{D}:Sets]$,$A \in \mathfrak{D}$ $P \in [\mathfrak{C}:Sets]$ and $B \in \mathfrak{C}$.  

If $F: \mathfrak{C} \rightarrow \mathfrak{D}$ is left adjoint to $G$, then there are unit and counit natural transformations $\eta: F \circ G \rightarrow 1_{\mathfrak{D}}$ and $\epsilon: G\circ F \rightarrow 1_{\mathfrak{C}}$ respectively.  

These induce natural ismorphisms $\epsilon^{\star}: G^{\star}\circ F^{\star} (Q)(A) = Q ((F\circ G) (A)) \rightarrow Q(1_{\mathfrak{D}}(A)) = Q(A) \cong 1_{[\mathfrak{D}:Sets]} (Q(A))$.  Therefore, the counit has induces a unit.  

Dually, we have $\eta^{\star}: F^{\star}\circ G^{\star} (P)(B) = P ((G\circ F) (B)) \rightarrow P(1_{\mathfrak{C}}(B)) = P(B) \cong 1_{[\mathfrak{C}:Sets]} (P(B))$ and similarly, the unit has induced a counit.

Therefore if $F$ is left adjoint to $G$, then $G^{\star}$ is left adjoint to $F^{\star}$.  
\end{proof}

\begin{thrm}
\textbf{First Adjunction Theorem} $\label{first adjunction theorem}$

The pollution functor $\mathfrak{P}$ is right adjoint to the true essence functor $\mathscr{T}$.  
\end{thrm}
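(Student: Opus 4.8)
The plan is to obtain this adjunction formally from two facts already in hand: the proposition that the standardization functor $-^{\varsigma}$ is left adjoint to the forgetful functor $\mathscr{F}=\Phi\circ\Psi\colon {}_RAlg\to{}_R\mathfrak{Alg}$ (legitimate here because $R$ is assumed commutative, so $Z(R)\cong R$ and $-^{\varsigma}$ may be identified with $-^{ab}\circ -_1$, as noted in the remark following the first algebraic adjunction), together with the lemma asserting that passing to inverse functors sends an adjunction $F\dashv G$ to the adjunction $G^{\star}\dashv F^{\star}$ on the associated presheaf categories.

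First I would unwind the two functors in the statement into the language of inverse functors. By definition $\mathfrak{P}(P)=P\circ -^{\varsigma}$, so $\mathfrak{P}$ is precisely the inverse functor $(-^{\varsigma})^{\star}\colon [{}_{Z(R)}Alg:Sets]\to[{}_R\mathfrak{Alg}:Sets]$ of the standardization functor. Likewise $\mathscr{T}(P)=P\circ\Phi\circ\Psi=P\circ\mathscr{F}$, so $\mathscr{T}$ is exactly the inverse functor $\mathscr{F}^{\star}\colon [{}_R\mathfrak{Alg}:Sets]\to[{}_RAlg:Sets]$ of the forgetful functor. That both of these are honest functors is \autoref{lem:precomp} (equivalently the two lemmas identifying the pollution and true essence functors as functors).

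Then I would feed the adjunction $-^{\varsigma}\dashv\mathscr{F}$ into the inverse-functor lemma with $F:=-^{\varsigma}$ and $G:=\mathscr{F}$. Its conclusion is that $\mathscr{F}^{\star}$ is left adjoint to $(-^{\varsigma})^{\star}$; rewriting via the identifications above, this says $\mathscr{T}$ is left adjoint to $\mathfrak{P}$, i.e. $\mathfrak{P}$ is right adjoint to $\mathscr{T}$, as claimed. Concretely, the required natural bijection is $\mathrm{Hom}_{[{}_RAlg:Sets]}(\mathscr{T}(X),P)\cong\mathrm{Hom}_{[{}_R\mathfrak{Alg}:Sets]}(X,\mathfrak{P}(P))$ for $X$ a presheaf on ${}_R\mathfrak{Alg}$ and $P$ a presheaf on ${}_RAlg$, obtained by composing $X$ with the unit and counit of $-^{\varsigma}\dashv\mathscr{F}$ exactly as in the proof of the inverse-functor lemma.

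There is essentially no computational obstacle here; the only points that need care are bookkeeping ones. One must check that the identification $-^{\varsigma}\cong -^{ab}\circ -_1$ is being invoked legitimately (it is, since we are in the commutative-basering regime), and one must respect the variance: forming inverse functors is contravariant on natural transformations and swaps the two sides of an adjunction, so it is $\mathscr{F}^{\star}=\mathscr{T}$ that lands on the \emph{left} and $(-^{\varsigma})^{\star}=\mathfrak{P}$ on the \emph{right}, matching the statement. If a fully self-contained argument were wanted, one could instead exhibit the hom-set bijection by hand: a natural transformation $\mathscr{T}(X)\Rightarrow P$ is a family of maps $X(\mathscr{F}(A))\to P(A)$ natural in $A\in{}_RAlg$, a natural transformation $X\Rightarrow\mathfrak{P}(P)$ is a family $X(B)\to P(B^{\varsigma})$ natural in $B\in{}_R\mathfrak{Alg}$, and the adjunction $-^{\varsigma}\dashv\mathscr{F}$ lets one transpose between the two; checking that this transposition is bijective and natural in $X$ and $P$ is routine.
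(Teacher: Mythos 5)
Your proposal is correct and follows essentially the same route as the paper: identify $\mathfrak{P}=(-^{\varsigma})^{\star}$ and $\mathscr{T}=\mathscr{F}^{\star}$, then apply the inverse-functor lemma to the adjunction $-^{\varsigma}\dashv\mathscr{F}$. In fact your bookkeeping is more careful than the paper's own proof, which states the two identifications the wrong way round (calling $\mathscr{T}$ the inverse of the standardization functor and $\mathfrak{P}$ the inverse of the forgetful functor); your version has the variance and the left/right roles correct.
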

\begin{proof}
By definition the true essence functor is just the inverse functor of the standardization functor and the pollution functor is the inverse image of the forgetful functor $\Psi \circ \Phi$.  

Since $-^{\varsigma} \adj (\Psi \circ \Phi)$, then the conclusion follows directly from the lemma.  
\end{proof}

\chapter{Approximations}

We will now completly abuse the existence of this adjunction of functors to approximate every nc. scheme both above and below by commutative ones.  

\section{Outer Approximations}
First we approximate our nc. schemes from the outside, by larger \textit{"schemes"} containing them.  

First we note, that \textit{all} our schemes were simply sheaves, on a certain site.  Therefore, there does exist sheafification functors $\Lambda$ from the categorie $[_R\mathfrak{Alg}:Sets]$ to the category of nc. schemes and $\Lambda_1$ from the categorie $[_R\mathfrak{Alg}:Sets]$ to the category of schemes.  

\begin{cor} $\label{cor:approx1}$
If $f:Z\rightarrow W$ is a morphism of nc. schemes, then there exists a unique morphism of schemes $\hat{f}:X\rightarrow Y$, and unique epis of nc. schemes $\epsilon_1:  \mathfrak{P }(X) \rightarrow Z$ and $\epsilon_0: \mathfrak{P }(Y) \rightarrow W$ completing to the following commutative diagram. 

$
\begin{tikzpicture}
  \matrix (m) [matrix of math nodes,row sep=3em,column sep=4em,minimum width=2em]
  {
     \mathfrak{P}(X) & Z \\
     \mathfrak{P}(Y) & W \\};
  \path[-stealth]
    (m-1-1) edge [dashed] node [left] {$\Lambda (\mathfrak{P}(\hat{f}))$} (m-2-1)
            edge [dashed,->>] node [below] {$\epsilon_0$} (m-1-2)
    (m-2-1) edge [dashed,->>] node [below] {$\epsilon_1$} (m-2-2)
    (m-1-2) edge node [right] {$ f $} (m-2-2);
\end{tikzpicture}
$

Dually, if $g:X\rightarrow Y$ is a morphism of schemes, then there exists a unique morphism of nc. schemes $\hat{g}:X\rightarrow Y$, and unique monics of schemes $\mu_0: \mathscr{T}(Z) \rightarrow X$ and $\mu_1: \mathscr{T}(W) \rightarrow Y$ completing to the following commutative diagram.  

$
\begin{tikzpicture}
  \matrix (m) [matrix of math nodes,row sep=3em,column sep=4em,minimum width=2em]
  {
     \mathscr{T}(Z) & Y \\
     \mathscr{T}(W) & X \\};
  \path[-stealth]
    (m-1-1) edge [dashed] node [left] {$\mathscr{T}(\hat{g})$} (m-2-1)
    (m-1-2) edge [dashed] node [below] {$\mu_0$} (m-1-1)
    (m-2-2) edge [dashed] node [below] {$\mu_1$} (m-2-1)
    (m-1-2) edge node [right] {$ g $} (m-2-2);
\end{tikzpicture}
$
\end{cor}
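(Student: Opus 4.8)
The plan is to obtain everything from the First Adjunction Theorem, $\mathscr{T}\dashv\mathfrak{P}$, together with the two sheafification adjunctions: $\Lambda_1$ left adjoint to the inclusion of schemes into $[{}_RAlg:Sets]$, and $\Lambda$ left adjoint to the inclusion of nc. schemes into $[{}_R\mathfrak{Alg}:Sets]$. Given a morphism $f\colon Z\to W$ of nc. schemes, I would first produce the approximating morphism of schemes as $\hat f:=\Lambda_1\big(\mathscr{T}(f)\big)\colon X\to Y$ with $X:=\Lambda_1\big(\mathscr{T}(Z)\big)$ and $Y:=\Lambda_1\big(\mathscr{T}(W)\big)$; this assignment is functorial by Lemma~\ref{lem:precomp} and functoriality of sheafification. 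The one genuinely non-formal point hiding here is that $\Lambda_1\mathscr{T}(Z)$ really does satisfy the local-affineness clause in the definition of a scheme: this I would extract from the fact that $Z$ is, on all fields, covered by the true essences of affine nc. schemes $Y(A_i)$, that $\mathscr{T}$ carries each $Y(A_i)$ to the corepresentable $\operatorname{Hom}_{{}_RAlg}(A_i^{\varsigma},-)$, and that $\Lambda_1$ turns the resulting affine cover of $\mathscr{T}(Z)$ into an honest scheme cover.

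Next I would read off the comparison morphisms from the unit and counit of these composite adjunctions. Precomposing the unit of $\mathscr{T}\dashv\mathfrak{P}$ at $Z$ with $\mathfrak{P}$ applied to the sheafification map $\mathscr{T}(Z)\to X$, and similarly for $W$, yields canonical morphisms connecting $Z$ with $\mathfrak{P}(X)$ and $W$ with $\mathfrak{P}(Y)$ in $[{}_R\mathfrak{Alg}:Sets]$; after applying $\Lambda$ these are the morphisms $\epsilon_1,\epsilon_0$ of nc. schemes appearing in the first square. Commutativity of that square is then pure formality: it is the naturality square of the unit for the morphism $f$, postcomposed with naturality of the sheafification units, so that $\Lambda(\mathfrak{P}(\hat f))$ closes it up by construction. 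Uniqueness of $\hat f$, $\epsilon_0$, $\epsilon_1$ is equally formal: an adjoint transpose is unique, and the universal property of $\Lambda_1$ (resp. $\Lambda$) pins down the sheafified data.

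The dual statement I would prove by the mirror-image argument carried out in the opposite categories: apply $\mathfrak{P}$ and then $\Lambda$ to $g$ to build the approximating morphism $\hat g$ of nc. schemes, and take $\mu_0,\mu_1$ to be the (sheafified) components of the counit of $\mathscr{T}\dashv\mathfrak{P}$, with commutativity and uniqueness exactly as before. Only the placement of the ``$\exists$ unique'' quantifiers and the variance of the comparison maps change — which is precisely why the second square is drawn with its horizontal arrows reversed relative to the first.

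The step I expect to be the real obstacle is twofold. First, verifying the local-affineness requirements for $\Lambda_1\mathscr{T}(Z)$ and $\Lambda\mathfrak{P}(X)$ is not formal; it leans on how $\mathscr{T}$ and $\mathfrak{P}$ act on representables together with exactness properties of sheafification on the essential-Zarisky topology. Second, upgrading the canonical comparison morphisms to epimorphisms (and the dual ones to monomorphisms): the natural route is a stalkwise check over the $Z(R)$-algebras that are fields, since those are the very test objects appearing in the definitions of nc. scheme and, through $\mathfrak{P}$, of scheme; over such a field $B$ the comparison is induced by the surjection $B^{\varsigma}\to B$ coming from the counit of $-^{\varsigma}\dashv\mathscr{F}$, and surjectivity there should force the epimorphism (the ambient sheaf category being a topos), with the monomorphism claim in the dual following symmetrically from left-exactness of sheafification.
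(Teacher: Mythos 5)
Your construction of $X$, $Y$, $\hat f$ and your appeal to the First Adjunction Theorem plus sheafification is essentially the paper's own route (the paper dresses it up as a (co)monad and a loose appeal to Beck's monadicity theorem, you work directly with units, counits and $\Lambda_1$), and you are right to single out the local-affineness of $\Lambda_1\mathscr{T}(Z)$ and the epi/mono claims as the genuinely non-formal steps, which the paper's proof skips entirely.

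There is, however, one concrete gap, and it is one you inherit from the paper rather than repair: the \emph{direction} of the comparison maps. The First Adjunction Theorem says $\mathscr{T}\dashv\mathfrak{P}$, so the only canonical natural transformation available at a nc.\ scheme $Z$ is the unit $\eta_Z\colon Z\to\mathfrak{P}(\mathscr{T}(Z))$; unwinding the precompositions, its component at an object $A$ of $_R\mathfrak{Alg}$ is the map $Z(A)\to Z(\mathscr{F}(A^{\varsigma}))$ induced by the unit $A\to\mathscr{F}(A^{\varsigma})$ of $-^{\varsigma}\dashv\mathscr{F}$. This points \emph{into} $\mathfrak{P}(X)$, whereas the corollary demands an epimorphism $\epsilon\colon\mathfrak{P}(X)\to Z$ pointing \emph{out of} it. Producing a map in that direction would require a natural algebra morphism $\mathscr{F}(A^{\varsigma})\to A$ for arbitrary non-unital, non-commutative $A$ --- equivalently the opposite adjunction $\mathfrak{P}\dashv\mathscr{T}$, which is precisely what the paper's proof asserts in contradiction with its own First Adjunction Theorem, and which fails: there is no natural map $(A_1)^{ab}\to A$ in general. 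Your proposed rescue --- a pointwise surjectivity check over the $Z(R)$-algebras that are fields, using the counit $B^{\varsigma}\to B$ --- cannot close this gap, because it only supplies components at commutative unital test objects, while the statement requires a natural transformation defined on all of $_R\mathfrak{Alg}$. As written, neither your argument nor the paper's actually constructs the arrow $\epsilon$; either the adjunction must be reversed (and the First Adjunction Theorem re-examined) or the epimorphism must be built by hand, and the dual half of the corollary has the mirror-image problem.
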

\begin{proof}
Since $\mathfrak{P}$ is left adjoint to the functor $\mathscr{T}$, then $M:=\mathscr{T} \circ \mathfrak{P}$ is a monadic functor and $C:= \mathfrak{P}\circ \mathscr{T}$ is a comonadic functor.  

Therefore, if $Z$ nc. scheme, \textit{Beck’s monadicity theorem} implies that Z is a quotient object of $M(Z)$, the univeral property of quotient objects gives the uniqueness of the epimorphism $\epsilon_0': M(Z)\rightarrow Z$ \textit{(and likewise for $\epsilon_1': M(W)\rightarrow W$)}.  

Now our choice of commutative schemes $X$ and $Y$ have already bee impicitly imposed by the constructed monad, that is $X$ must be $\mathscr{T}(Z)$ and $Y$ must be $\mathscr{T}(W)$. Likewise, $\hat{f}$ is forced to be the morphsim $\mathscr{T}(f)$.    

This entire construction is purely functorial, therefore everything is uniquely determined.  

The second claim follows likewise, by the \textit{duality principle}.  
\end{proof}

The construction above was entirely functorial, based of the (co)monad \textit{($\mathfrak{P}\circ \mathscr{T}$ and)} $ \mathscr{T} \circ \mathfrak{P}$.  This universal arrow's utmost importance merits a name, as it is one of the two central constructions of this entire paper.  

\begin{defn}
\textbf{Outer Universal}

For any morphism $f$ of nc. schemes, the universal morphism $\hat{f}$ described in the theorem above is called the \textbf{outer universal} of $f$.  
\end{defn}

The construction of the outer universal $\hat{f}$ of a morphism $f$, allows for the approximation of $f$ by a morphism of strictly larger \textit{(in the quotient object sence)} morphism nc. schemes $\Lambda(\mathfrak{P}(\hat{f}))$ which originates in a unique functorial manner from \textit{the} morphism of schemes $\hat{f}$.  Therefore, we may view $f$ as having properties similar to its corresponding outer universal $\hat{f}$.  

For example, if $\hat{f}$ is itself etale, then we may view $f$ as being \textit{"something that lives inside something which was etale"}.  So we first approximate $f$ from the outside as follows:

\begin{defn}
\textbf{Outer-Unramified}

A morphism $f$ of nc. schemes is said to be \textbf{Outer-Unramified} \textit{if and only if} its outer universal $\hat{f}$, is unramified.  

\end{defn}
Likewise:
\begin{defn}
\textbf{Outer-Smooth}

A morphism $f$ of nc. schemes is said to be \textbf{Outer-Smooth} \textit{if and only if} its outer universal $\hat{f}$, is smooth.  

\end{defn}
And similarly.  
\begin{defn}
\textbf{Outer-Etale}

A morphism $f$ of nc. schemes is said to be \textbf{Outer-Etale} \textit{if and only if} its outer universal $\hat{f}$, is etale.  

\end{defn}

\section{Inner-Approximations}
\subsection{Preliminaries}

We now estimate nc. schemes from the inside by embedding the category of affine schemes into $[_R\mathfrak{Alg}:Sets]$.  We now fuse these concepts with the very general ideas presented in the first part of this paper, to establish powerful notions of smoothness, unramifiedness and etaleness of a nc. scheme \textit{"from the within"} by means of \textit{"commutative"} affine schemes.

\subsection{A second adjunction theorem}
 
For the time being we restirct our attention to the category $_RAlg^1$ of unital $R$-algebras with non-unit preserving $R$-algebra homomorphisms.  This will permit us to get very accurate approximations of our nc. schemes.  However we'll need to fix some terminology so we may comunicate.  

\begin{defn}
\textbf{Unital nc. Scheme}

A \textbf{unital nc. scheme} is a nc. scheme $X$ such that there exists a familly $\{ U_i \}_i$ in $_R\mathfrak{Alg}^{1}$, with the property that for each $R$-algebra B, there are natural bjections on each $X(B) \leftrightarrow \underset{i}{\cup} Y(U_i)(B)$.  
\end{defn}
This is simply a more stringent restriction on the second condition in the dfinitoin of nc. scheme, asking that the \textit{"open cover of the commutative part of $X$"} can be covered by affine nc. schemes who's coordinate ring is a unital $R$-algebra.  

\begin{prop}
The category of unital nc. schemes is a full subcategory of the category of nc. schemes.  
\end{prop}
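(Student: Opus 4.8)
The plan is to unwind the meaning of \emph{full subcategory} and check the (minimal) content it demands. Recall that a full subcategory of a category $\mathfrak{C}$ consists of a subclass of the objects of $\mathfrak{C}$ together with \emph{all} morphisms of $\mathfrak{C}$ between them; so the only things to verify are that the objects I have singled out genuinely are objects of $\mathfrak{C}$, and that the inherited identities and composition make the chosen data into a category. Here $\mathfrak{C}$ is the category of nc. schemes and the subclass is that of unital nc. schemes.

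First I would note that every unital nc. scheme is a nc. scheme. This is immediate from the definition: a unital nc. scheme is literally declared to be a nc. scheme $X$ for which there \emph{in addition} exists a family $\{ U_i \}_i$ in $_R\mathfrak{Alg}^{1}$ inducing natural bijections $X(B) \leftrightarrow \bigcup_i Y(U_i)(B)$ for every $R$-algebra $B$. In particular the underlying functor on the site $_R\mathfrak{Alg}^{op}$ is already a sheaf and already satisfies the weaker covering condition appearing in the definition of nc. scheme; the stronger clause is merely a property the object enjoys, not an additional layer of structure. Hence the underlying object lies in the category of nc. schemes, exactly in the spirit of the earlier consistency statement $\autoref{prop:asns}$.

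Next I would take, for unital nc. schemes $X$ and $Y$, the hom-set to be $Hom(X,Y)$ as computed in the category of nc. schemes --- that is, all morphisms of sheaves on $_R\mathfrak{Alg}^{op}$ between them --- with the composition inherited from that category. Identities $1_X$ are morphisms of nc. schemes; a composite of two morphisms of nc. schemes is again one, with whatever domain and codomain it has in the ambient category; and associativity and unitality of composition are inherited verbatim. This data therefore forms a category, and the evident inclusion functor into the category of nc. schemes is fully faithful by construction.

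I expect essentially no obstacle. Being a full subcategory imposes no closure requirement on the chosen class of objects, so the fact that the covering condition for unital nc. schemes is \emph{stronger} than that for nc. schemes is never invoked --- nothing has to ``preserve'' it. The only point worth flagging, and it is settled by the phrasing of the definition, is that \emph{unital nc. scheme} must be read as \emph{nc. scheme admitting a cover by affine nc. schemes with unital coordinate ring} rather than as an independent notion; granting that, fullness and the category axioms follow at once.
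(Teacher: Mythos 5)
Your proof is correct and follows the same route as the paper's: the definition of a unital nc.\ scheme only adds a property to the objects (admitting a cover by $Y(U_i)$ with $U_i$ unital) and places no condition on morphisms, so taking all nc.-scheme morphisms between unital nc.\ schemes yields a full subcategory by construction. Your write-up merely spells out in detail what the paper states in one line.
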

\begin{proof}
Directly from the defintiion of a nc. scheme as it did not impose any additional conditions on  morphism between unital nc. schemes.  
\end{proof}

\begin{prop}
There is an embeding of the category $(_R\mathfrak{Alg}^1)^{op}$ into the category of unital nc. affine schemes.  
\end{prop}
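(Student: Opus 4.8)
The plan is to obtain the asserted embedding by restricting the contravariant Yoneda embedding $Y\colon \mathfrak{_RAlg}^{op}\to[\mathfrak{_RAlg}:Sets]$, $A\mapsto Hom_{\mathfrak{_RAlg}}(A,-)$, already produced above, to the full subcategory of unital $R$-algebras. First I would observe that, by the conventions fixed in the preliminaries, $\mathfrak{_RAlg}^1$ is a \emph{full} subcategory of $\mathfrak{_RAlg}$; hence $({}_R\mathfrak{Alg}^1)^{op}$ is a full subcategory of $\mathfrak{_RAlg}^{op}$, and the restriction of a fully faithful functor along a full subcategory inclusion is again fully faithful. Thus $Y$ restricted to $({}_R\mathfrak{Alg}^1)^{op}$ is automatically full and faithful, i.e.\ an embedding onto its image, and what remains is to identify that image with the category of unital nc. affine schemes.

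For that I would take a unital $R$-algebra $A$ and check that $Y(A)=Hom_{\mathfrak{_RAlg}}(A,-)$ is a unital nc. affine scheme: it is corepresentable by construction, hence an affine nc. scheme; by Proposition~\ref{prop:asns} every affine nc. scheme is a nc. scheme; and it is a \emph{unital} nc. scheme because the singleton family $\{A\}\subseteq {}_R\mathfrak{Alg}^1$ already witnesses the covering clause — the union $\bigcup_i Y(U_i)(B)$ reduces to $Y(A)(B)$, which agrees with $Y(A)(B)$ tautologically on every $R$-algebra $B$. Conversely, every unital nc. affine scheme is by definition naturally isomorphic to $Y(A)$ for some unital $R$-algebra $A$, so the restricted functor is essentially surjective onto this category. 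Since morphisms of (unital) nc. affine schemes are, by definition, nothing but natural transformations of the underlying functors in $[\mathfrak{_RAlg}:Sets]$, the Hom-set bijections coming from full faithfulness of $Y$ transport verbatim, and the restricted functor is the desired embedding.

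The only delicate point — and it is mild — is the bookkeeping about arrows in the target: one must be sure that passing to unital nc. affine schemes imposes a condition on \emph{objects} only, inherited from $[\mathfrak{_RAlg}:Sets]$, and does not silently restrict to, say, unit-preserving maps or add a sheaf-theoretic constraint on morphisms, for otherwise the comparison of Hom-sets underlying full faithfulness would not be the correct one. Since the definitions of (unital) nc. scheme and affine nc. scheme were phrased entirely as existence conditions on the object, with no extra condition on morphisms, this is immediate, and the proof is complete once it is written out. One may note, incidentally, that by the same token the embedding is in fact an equivalence onto the full subcategory of unital nc. affine schemes.
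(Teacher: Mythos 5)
Your proposal is correct and follows essentially the same route as the paper: restrict the contravariant Yoneda embedding to $({}_R\mathfrak{Alg}^1)^{op}$ and observe that $Y(A)$ is a unital nc. affine scheme whenever $A$ is unital. You simply spell out the full-faithfulness and essential-surjectivity bookkeeping that the paper leaves implicit.
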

\begin{proof}
Consider the contravariant Yoneda embedding of $_R\mathfrak{Alg}^1$ into the category $[_R\mathfrak{Alg}:Sets]$, then $Y(A)$ must be a unital nc. scheme if $A$ was unital.  
\end{proof}

\begin{lem} $\label{lem:exten}$
There is a fully faithful functor from the category of schemes over $R$ to the category $[_RAlg:Sets]$.  Moreover, the evaluation of this functor on a commutative unital associative $R$-algebra gives the same pointset as the initial functor's evaluation thereon.  
\end{lem}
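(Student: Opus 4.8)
The plan is to realise the functor as the classical \emph{functor of points} of Demazure, restricted to the Zariski site of affine $R$-schemes, and then to invoke the standard theorem that a scheme is reconstructed — objects \emph{and} morphisms — from this data. Concretely, I would define $\Xi$ from the category of schemes over $R$ to $[_RAlg:Sets]$ by $\Xi(X)(A):=Hom_{Sch/R}(Spec(A),X)$ on objects (where $Hom_{Sch/R}$ denotes morphisms of $R$-schemes) and by post-composition $\varphi\mapsto g\circ\varphi$ on a morphism $g\colon X\to Y$. Functoriality of $\Xi$ in $X$ is immediate from associativity and unitality of composition in the category of $R$-schemes, and naturality of $\Xi(X)$ in $A$ follows because a homomorphism $A\to A'$ of commutative unital associative $R$-algebras induces a morphism $Spec(A')\to Spec(A)$ over $R$, against which pre-composition commutes with the post-composition defining $\Xi(g)$.

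The substance of the lemma is that $\Xi$ is fully faithful. On the full subcategory of affine $R$-schemes this is just the contravariant Yoneda lemma over the category $_RAlg$, via $Hom_{Sch/R}(Spec(A),Spec(B))\cong Hom_{_RAlg}(B,A)$; the embedding is genuinely full here because the source is the \emph{commutative} category $_RAlg$, so the discrepancy recorded in Lemma~\autoref{lem:presubf} — which arises only against the larger $\mathfrak{_RAlg}$ — does not intervene. To pass from affines to arbitrary schemes, I would use that every scheme over $R$ is glued from affine $R$-schemes along open immersions, so that $\Xi(X)$ is the Zariski sheafification of the colimit of the corresponding representable functors; $\Xi$ sends open immersions to open (in particular monic) subfunctors and carries the gluing data over verbatim, whence any natural transformation $\Xi(X)\to\Xi(Y)$ restricts to a compatible family on an affine open cover of $X$ and descends uniquely to a morphism $X\to Y$. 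Rather than redo the gluing argument I would quote this functor-of-points characterisation of schemes (Demazure; see also the \textbf{stacks project}).

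The ``moreover'' clause is then read directly off the affine case: for a commutative unital associative $R$-algebra $B$ one has, for every such $A$, $\Xi(Spec(B))(A)=Hom_{Sch/R}(Spec(A),Spec(B))\cong Hom_{_RAlg}(B,A)$, which is exactly the point-set assigned by the representable functor that $Spec(B)$ was initially regarded as; the two coincide on the nose. The step I expect to be the genuine obstacle is the reconstruction of \emph{morphisms} from the functor of points — that is, fullness of $\Xi$ — since it requires fixing carefully which category of ``schemes over $R$'' is meant (locally ringed spaces versus Zariski sheaves on affines) and checking that $\Xi$ actually lands among Zariski sheaves and reflects isomorphisms; granted that, full faithfulness is formal.
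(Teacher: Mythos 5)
You have proven a true statement, but not the one this lemma is actually doing work for. The codomain ``$[_RAlg:Sets]$'' in the printed statement is evidently a slip for $[_R\mathfrak{Alg}:Sets]$: the paper's own proof constructs an extension functor $-_!:[_RAlg:Sets]\rightarrow[_R\mathfrak{Alg}:Sets]$, which takes a set-valued functor on \emph{commutative unital} $R$-algebras and extends it to one on all associative $R$-algebras (declaring the value ``empty'' outside $_RAlg$), and the ``moreover'' clause asserts that this extension leaves the point-sets on objects of $_RAlg$ unchanged. Every subsequent invocation of the lemma is of this form --- e.g.\ passing from $Hom_{_RAlg}(A^{ab},-)$ to the subfunctor $Hom_{_RAlg}(A^{ab},-)_!$ of $Y(A)$ inside $[_R\mathfrak{Alg}:Sets]$ --- and your construction, which lands in presheaves on commutative algebras only, cannot be substituted there. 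The missing idea is precisely the step from $_RAlg$ to $_R\mathfrak{Alg}$: you correctly note via Lemma $\autoref{lem:presubf}$ that $Hom_{_RAlg}(A,B)$ and $Hom_{\mathfrak{_RAlg}}(A,B)$ differ, but your argument then stays entirely on the commutative side, so the noncommutative category never enters.

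That said, your instinct that the extension/fullness step is the delicate point is vindicated, just at a different spot than you located it. The paper's $-_!$ is not well defined as written: if $f:A\rightarrow B$ is a morphism of $\mathfrak{_RAlg}$ whose source $A$ lies in $_RAlg$ but which is not itself a morphism of $_RAlg$ (say $B$ is noncommutative, or $f$ fails to preserve the unit), then $P_!(f)$ would have to be a function from the nonempty set $P(A)$ to the empty set, and ``a morphism in the empty subcategory'' is not a morphism of $Sets$. A repair would use the left Kan extension of $P$ along the inclusion $_RAlg\hookrightarrow{}_R\mathfrak{Alg}$, or for representables a direct comparison of $Hom_{_RAlg}(A,-)$ with $Hom_{\mathfrak{_RAlg}}(A,-)$ restricted along the forgetful functor, after which the preservation of point-sets must be re-verified rather than asserted. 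Your classical functor-of-points argument (Yoneda on affines plus Zariski descent for morphisms) is fine for the literal statement as printed, but it is orthogonal to what the paper needs from this lemma.
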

\begin{proof}
Let $X$ be an affine scheme over $R$, by the contravariant Yoneda embedding, we may identify $X$ with the functor $Hom_R(A,-):_RAlg \rightarrow Sets$ for some $R$-algebra $A$.  

Now consider the functor $-_{!}:[_RAlg:Sets] \rightarrow [_R\mathfrak{Alg}:Sets]$ defined as taking a morphism $f$ in $_RAlg$ and giving it the morphism:

$
P_!(f)(\mapsto 
\begin{cases} f & \mbox{ if $f$ is a morphism in $_RAlg$ \textit{(as viewed in $_R\mathfrak{Alg}$)} } \\
"\emptyset" & \mbox{ else } \end{cases}$, in $_R\mathfrak{Alg}$.  
Where by $"\emptyset"$ we mean a morphism is mapped to a morphism in the empty subcategory of $[_R\mathfrak{Alg}:Sets]$, with that technical poitn adressed, thre rest is a straigtforward and usual verfification ensure that $-_!$ is actually a functor.  
\end{proof}

Recallt that subfunctors $G,H$ of a functor $F$ in a category such as $[_R\mathfrak{Alg}:Sets]$ may be partially ordered with $G \leq H$ if and only if for every $R$-algebra $A$ $G(A)$ is a subset of $H(A)$.  With this topos-theroretic notion brought to the forefront of our mind we continue our discussion.  

\begin{thrm}
Every unital nc. scheme $X$ contains a largest subfunctor, in $[_R\mathfrak{Alg}:Set]$ of the form $Y_!$ for some scheme, moreover this subfunctor is unique up to natural isomrophism in $[_R\mathfrak{Alg}:Sets]$.  
\end{thrm}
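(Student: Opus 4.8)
\emph{Strategy.}  Write $\mathscr{F}=\Phi\circ\Psi\colon {}_RAlg\to\mathfrak{_RAlg}$ for the forgetful functor, let $-_{!}$ be the fully faithful functor of \autoref{lem:exten}, and recall that the true essence functor $\mathscr{T}=\mathscr{F}^{\star}$ is the inverse functor of $\mathscr{F}$ of \autoref{lem:precomp}, i.e.\ precomposition with $\mathscr{F}$.  The plan is to exhibit the maximal subfunctor explicitly as $Y_{!}$, where $Y:=\mathscr{T}(X)$, by first checking that this $Y$ is a scheme over $R$ and then showing, using only that $\mathscr{T}$ and $-_{!}$ are pointwise functors on presheaf categories, that $Y_{!}$ dominates every subfunctor of $X$ of the form $Z_{!}$.

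\emph{Step 1: $Y=\mathscr{T}(X)$ is a scheme.}  Because $X$ is a \emph{unital} nc.\ scheme it admits an affine cover $\{Y(U_i)\}_i$ by corepresentable functors with each $U_i$ a unital $R$-algebra.  Restricting along $\mathscr{F}$, the sheaf condition on $X$ for the essential--Zariski topology descends to the ordinary Zariski sheaf condition on $Y$: by the construction of the site \autoref{def:EZar}, essential localizations of a \emph{commutative} unital algebra $B$ are ordinary localizations of $B=B^{\varsigma}$, so essential--Zariski covers of $Spec(B)$ are genuine Zariski covers.  The decisive point is that $\mathscr{T}(Y(U_i))$ is representable in ${}_RAlg$: the adjunction between standardization and the forgetful functor (the algebra-level statement underlying \autoref{first adjunction theorem}) gives natural isomorphisms $Hom_{\mathfrak{_RAlg}}(U_i,A)\cong Hom_{{}_RAlg}(U_i^{\varsigma},A)$ for every commutative unital $R$-algebra $A$, whence $\mathscr{T}(Y(U_i))\cong Spec(U_i^{\varsigma})$, an affine scheme.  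Thus $Y$ is a Zariski sheaf admitting an open affine cover, i.e.\ a scheme, and $Y_{!}$ is defined.

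\emph{Step 2: $Y_{!}$ is the maximum, and uniqueness.}  There is a tautological inclusion of functors $Y_{!}\hookrightarrow X$: on a commutative unital $R$-algebra $A$ it is the identity $Y_{!}(A)=\mathscr{T}(X)(A)=X(A)$, and on every other object it is the inclusion of the empty set; hence $Y_{!}$ is a subfunctor of $X$ of the required shape.  Now let $Z_{!}\hookrightarrow X$ be \emph{any} subfunctor with $Z$ a scheme over $R$.  Applying $\mathscr{T}$, which preserves monomorphisms because it is a precomposition functor (\autoref{lem:precomp}), and using $\mathscr{T}(Z_{!})\cong Z$, yields a monomorphism of schemes $Z\hookrightarrow\mathscr{T}(X)=Y$; applying $-_{!}$, which likewise preserves monomorphisms, and composing with $Y_{!}\hookrightarrow X$ recovers the original $Z_{!}\hookrightarrow X$, since on commutative unital algebras both composites are the inclusion $Z(A)\hookrightarrow X(A)$ and elsewhere both are empty.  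Hence $Z_{!}\le Y_{!}$ in the poset of subfunctors of $X$, so $Y_{!}$ is the largest such.  A maximum of a poset is unique, and equality of subfunctors in $[\mathfrak{_RAlg}:Sets]$ is precisely natural isomorphism over $X$; this yields the asserted uniqueness up to natural isomorphism.

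\emph{Expected main obstacle.}  The delicate point is Step 1 --- that $\mathscr{T}(X)$ really is a scheme.  One must verify carefully that restriction along the forgetful functor sends essential--Zariski sheaves to ordinary Zariski sheaves, i.e.\ that $\mathscr{F}$ is suitably compatible with the two Grothendieck topologies, and that the affine nc.\ cover of $X$ restricts, via the standardization/forgetful adjunction, to an \emph{affine} scheme cover of $\mathscr{T}(X)$; granting this, the remainder is the formal bookkeeping of pointwise functors on presheaf categories.
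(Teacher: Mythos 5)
Your proposal is correct in outline but takes a genuinely different route from the paper's. The paper works affine-locally: for each $Y(U_i)$ in the cover it passes to the abelianization $U_i^{ab}$, argues that the smallest commutative quotient of $U_i$ dualizes under Yoneda to the largest commutative subfunctor of $Y(U_i)$, and then takes $\underset{i}{\cup}Hom_{_RAlg}(U_i^{ab},-)_!$, sheafifying at the end in case the union fails to be a scheme. You instead work globally with the restriction functor $\mathscr{T}$ and get maximality as a near-tautology: any subfunctor of the form $Z_!$ is supported on commutative unital algebras and contained in $X$ there, hence contained in $\mathscr{T}(X)_!$. Your maximality and uniqueness argument is cleaner and more convincing than the paper's piecewise one. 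Be aware, though, that the two constructions do not produce the same object: by the standardization adjunction your affine pieces are $Spec(U_i^{\varsigma})$, which record \emph{all} morphisms $U_i\rightarrow B$ in $\mathfrak{_RAlg}$, including non-unit-preserving ones such as the zero map (cf. \autoref{lem:presubf}), whereas the paper's pieces $Hom_{_RAlg}(U_i^{ab},-)$ record only the unit-preserving ones. Your candidate is therefore strictly larger than the paper's; since the theorem asks for the \emph{largest} such subfunctor, your answer is the honest maximum and the paper's candidate is arguably not maximal at all. This discrepancy is worth stating explicitly rather than leaving implicit.

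The one genuine gap is the one you flag yourself: Step 1, that $\mathscr{T}(X)$ is a scheme. Knowing that $\mathscr{T}(X)\cong\underset{i}{\cup}Spec(U_i^{\varsigma})$ is a Zariski sheaf covered by affine subfunctors does not make it a scheme: one must also know that each $Spec(U_i^{\varsigma})\hookrightarrow\mathscr{T}(X)$ is an open immersion (representable by open immersions on affines), and nothing in the definition of a unital nc. scheme guarantees this. Your sentence ``admits an open affine cover, i.e.\ a scheme'' therefore asserts exactly what needs proof. The paper has the same problem and papers over it by applying the sheafification functor $\Lambda_1$, which neither makes the result a scheme nor obviously preserves the subfunctor relation to $X$; so your proof is no less complete than the paper's, and you have correctly located the real difficulty, but as written this step is an overstatement rather than an argument.
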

\begin{proof}
We begin by the affine case.  If $Y(A)$ is a unital nc. scheme then its coordinate ring is unital and there is an epimorphisms $A \rightarrow A^{ab}$ with the universal property that any other morphism $A \rightarrow B$ must factor through it, making $A^{ab}$ the smallest quotient object of $A$.  Now by the duality of the contravariant Yoneda embedding, there $Y(A^{ab})$ must be the unique largest \textit{(with respect to the aforementioned partial order on subfunctors in $[_R\mathfrak{Alg}:Set]$)} subfunctor of $Y(A)$.  

Lemma $\autoref{lem:presubf}$ implies that for every commutative unital associative $R$-algebra B, $Hom_{_RAlg}(A^{ab},B)$ is the largest subset of $Y(A^{ab})(B)$ corresponding to an affine scheme.  By lemma $\autoref{lem:exten}$, we may embed the affine scheme $Hom_{_RAlg}(A^{ab},B)$ to the category $[_R\mathfrak{Alg}:Sets]$ in a way preserving the pointsets given by its evaluation on  every $R$-algebra.  Therefore, $Hom_{_RAlg}(A^{ab},-)_!$ is the unique smallest affine scheme \textit{(embeded as a subfunctor into $[_R\mathfrak{Alg}:Sets]$)} contained in $Y(A)$.  This takes care of the claim in the affine case.  

Now for the general case.  By definition, if $X$ is a unital nc. scheme then there is a familly of affine nc. schemes $Y(U_i)$ with dual to some unital associative $R$-algebras $U_i$, such that the functor $\underset{i}{\cup} Y(U_i)$ is nautrally isomorhpic to $X$.  Therefore $\underset{i}{\cup} Y(U_i)$ is the largest subfunctor of $X$, and each $Y(U_i)$ are corepresetnable subfunctors of $X$.  By the above discussion there are unique maximal subfunctors $Hom_R(U_i^{ab},-)_!$ corresponding to each $Y(U_i)$.  Therefore, $\underset{i}{\cup} Hom_R(U_i^{ab},-)_!$ must be the largest \textit{(up to natural isomorphism)} subfunctor of $X$.  If $\underset{i}{\cup} Hom_R(U_i^{ab},-)$ is infact a scheme, then we are done, if not then we get the smalles possible scheme containing $\underset{i}{\cup} Hom_R(U_i^{ab},-)$ by applying the sheafification functor $\Lambda_1$ to it and so $\Lambda_1 (\underset{i}{\cup} Hom_R(U_i^{ab},-)_!$ is the unique \textit{(up to natural isomrophism)} largest subfunctor of $X$ which originates from a sheaf.  
\end{proof}

Before proving the last main result, we'll need a little lemma.  
\begin{lem} $\label{lem:split}$  
If $F$ is a subfunctor of $G$ in $[_R\mathfrak{Alg}:Sets]$ then there is a split monomorphism $\mu : F \rightarrow G$.  
\end{lem}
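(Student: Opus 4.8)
The plan is to take $\mu$ to be the canonical inclusion $F \hookrightarrow G$, whose component at an $R$-algebra $A$ is the set-theoretic inclusion $F(A) \subseteq G(A)$, and to produce a retraction $r \colon G \to F$, i.e.\ a natural transformation with $r \circ \mu = \mathrm{id}_F$; a morphism admitting such a retraction is by definition a split monomorphism. Componentwise the retraction costs nothing: each inclusion $F(A) \subseteq G(A)$ is a monomorphism in $Sets$, and, as long as $F(A)$ is inhabited, a monomorphism of sets splits, so the axiom of choice supplies an $r_A \colon G(A) \to F(A)$ with $r_A|_{F(A)} = \mathrm{id}_{F(A)}$. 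The degenerate possibility that $F(A) = \varnothing$ for some $A$ should be disposed of first: since $\mathfrak{_RAlg}$ has a zero object $0$ (the zero algebra, both initial and terminal), the unique arrows $0 \to A$ and $B \to 0$ force $F(0) = \varnothing$ and then $F(B) = \varnothing$ for every $B$, so $F$ is the empty subfunctor and the claim is trivial. Hence the whole substance of the lemma is the promotion of the family $(r_A)_A$ to a \emph{natural} transformation.

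The key steps, in order, would be: (1) rather than choosing the $r_A$ independently, fix a coherent system of them by exploiting the shape of the functors at hand --- in this paper $G$ is always (a union of) corepresentables $Y(C) = \mathrm{Hom}_{\mathfrak{_RAlg}}(C,-)$, each of which carries a distinguished natural section, the ``zero homomorphism'' determined by the zero object of $\mathfrak{_RAlg}$; (2) use this distinguished element as the value of $r_A$ on the complement $G(A)\setminus F(A)$, checking first (again via the zero object) that an inhabited subfunctor of a corepresentable necessarily contains all the zero homomorphisms, so that the recipe does land in $F(A)$; (3) verify the naturality square $r_B \circ G(f) = F(f)\circ r_A$ for every $f\colon A\to B$, which by construction reduces either to $r|_F = \mathrm{id}$ on elements already in $F$, or to the equality of two zero homomorphisms on the rest; (4) conclude that $r$ is natural with $r\circ\mu = \mathrm{id}_F$, so $\mu$ is a split mono.

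The hard part is steps (1)--(3): for a completely arbitrary subfunctor there need be no natural retraction at all --- a pointwise-chosen $r_A$ typically fails $r_B \circ G(f) = F(f)\circ r_A$ as soon as $G(f)$ carries some element of $G(A)\setminus F(A)$ into $F(B)$ --- so the argument cannot be purely formal and must use genuine structure of $[\mathfrak{_RAlg}:Sets]$ and of the functors to which the lemma is actually applied; the zero object of $\mathfrak{_RAlg}$ and its zero sections are the natural tool. If that still does not suffice in the stated generality, the safe fallback I would adopt is to prove the lemma only for subfunctors that are inhabited and stable under the zero sections (which is all that is used in the sequel), or else to replace ``the inclusion'' by a split mono built directly from the comonad $\mathfrak{P}\circ\mathscr{T}$ of the previous chapter, whose counit components are split epimorphisms by Beck's theorem and therefore dualize to the required split monomorphisms.
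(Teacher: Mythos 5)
Your diagnosis is exactly right, and it is worth saying plainly that the paper's own proof is precisely the ``purely formal'' argument you reject: it observes that each inclusion $i_A : F(A) \to G(A)$ is left-invertible in $Sets$ and then simply asserts that the chosen left inverses $j_A$ form a natural family. Naturality is never checked, and for an arbitrary subfunctor it fails: whenever $f : A \to B$ carries some $x \in G(A)\setminus F(A)$ into $F(B)$, the naturality square forces $F(f)(j_A(x)) = G(f)(x)$, a constraint that need not be satisfiable. A minimal counterexample over a two-object index category: $G(A)=\{1,2\}$, $G(B)=\{1',2'\}$ with $G(f)$ a bijection, and $F(A)=\{1\}$, $F(B)=\{1',2'\}$; any retraction must send $2\mapsto 1$, and naturality then demands $2'=G(f)(2)=F(f)(1)=1'$, a contradiction. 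So the lemma is false in the stated generality, and your insistence on using genuine structure of $\mathfrak{_RAlg}$ (the zero algebra and the zero sections of the corepresentables) rather than a pointwise splitting is the correct instinct.

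That said, your own construction still has the same soft spot in the mixed case of your step (3): for $x \in G(A)\setminus F(A)$ with $G(f)(x) \in F(B)$, your recipe gives $r_B(G(f)(x)) = G(f)(x)$ on one side and $F(f)(0_A) = 0_B$ on the other, and these agree only when $G(f)(x)$ is itself a zero homomorphism. Your proposed fallbacks are therefore the honest resolution: either prove the lemma only for the particular inclusions used in the Inner Approximation Theorem (where a retraction can be manufactured from the adjunction $\mathfrak{P}\circ\mathscr{T}$, whose counit components split by Beck's theorem), or add an explicit hypothesis on $F$ guaranteeing that the complement $G(A)\setminus F(A)$ is carried into itself by every $G(f)$. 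Either way, you have correctly located the step that the paper's one-sentence proof passes over, which is the real content of the lemma.
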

\begin{proof}
If $F$ is a subfunctor of $G$, then for every $R$-algebra $A$, there is a natural familly of inclusions $i_A:F(A)\rightarrow G(A)$.  Since inclsuions are left invertible, this gives rise to a natural familly of morphisms $j_A$ each with the property that $j_A\circ i_A = 1_{F(A)}$.  
\end{proof}

\begin{cor} 
\textbf{Inner Approximation Theorem}

If $f:Z\rightarrow W$ is a morphism of nc. schemes, then there exists a unique morphism of schemes $g:X\rightarrow Y$, and unique split monic natural transformations $\mu_0: X_! \rightarrow Z$ and $\mu_1: Y_! \rightarrow W$ completing to the following commutative diagram.  

$
\begin{tikzpicture}
  \matrix (m) [matrix of math nodes,row sep=3em,column sep=4em,minimum width=2em]
  {
     X_! & Z \\
     Y_! & W \\};
  \path[-stealth]
    (m-1-1) edge [dashed] node [left] {$g_!$} (m-2-1)
    (m-1-1) edge [dashed] node [below] {$\mu_0$} (m-1-2)
    (m-2-1) edge [dashed] node [below] {$\mu_1$} (m-2-2)
    (m-1-2) edge node [right] {$ f $} (m-2-2);
\end{tikzpicture}
$
\end{cor}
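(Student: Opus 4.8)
The approach is to promote the preceding theorem --- which furnishes a single unital nc.\ scheme with its largest scheme-shaped subfunctor --- into a functorial statement, and then to extract the square from naturality; I may assume $Z$ and $W$ are unital nc.\ schemes, the argument being the same once the relevant largest subfunctor is in hand. First I would record that the assignment carrying a unital nc.\ scheme $V$ to its largest subfunctor of the form $T_!$ ($T$ a scheme over $R$, embedded by $-_!$ as in \autoref{lem:exten}) is the object part of a coreflector, i.e.\ of a right adjoint to the inclusion of the full subcategory of $[_R\mathfrak{Alg}:Sets]$ on the functors $T_!$ into unital nc.\ schemes; the content of the previous theorem is exactly the required universal property, that every morphism from a scheme-shaped functor into $V$ factors uniquely through the maximal one. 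The counit at $V$ is the canonical subfunctor inclusion $V_{\mathrm c}\hookrightarrow V$, which is a split monomorphism by \autoref{lem:split}. Evaluating at $Z$ and $W$ gives schemes $X$, $Y$ with $X_!=Z_{\mathrm c}$ and $Y_!=W_{\mathrm c}$, together with the split monics $\mu_0\colon X_!\to Z$ and $\mu_1\colon Y_!\to W$.

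Next, functoriality of the coreflector applied to $f\colon Z\to W$ yields a morphism $f_{\mathrm c}\colon X_!\to Y_!$, and naturality of the counit is precisely the identity $\mu_1\circ f_{\mathrm c}=f\circ\mu_0$, so the square commutes. Since $X_!$ and $Y_!$ lie in the essential image of the fully faithful embedding $T\mapsto T_!$ (\autoref{lem:exten} together with the Yoneda lemma), there is a unique morphism of schemes $g\colon X\to Y$ with $g_!=f_{\mathrm c}$, and this is the asserted $g$. Uniqueness of the triple $(g,\mu_0,\mu_1)$ follows from the same universal property: any competitor split monic onto a scheme-shaped subfunctor of $Z$ factors through $\mu_0$ and, being itself maximal by hypothesis, agrees with it up to the isomorphism supplied by the previous theorem, and likewise for $\mu_1$; then $g$ is forced, because $g_!$ is and $-_!$ is faithful.

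The step I expect to be the genuine obstacle is the functoriality claim itself --- equivalently, that $f$ carries $X_!$ into $Y_!$. The difficulty is that the pointwise image $f(\mu_0(X_!))\subseteq W$ need not already be of the form $T_!$ for a scheme $T$, since images of morphisms of schemes are in general only ind-schematic. The remedy is to sheafify: one forms the image subfunctor in $[_R\mathfrak{Alg}:Sets]$, applies $\Lambda_1$, and uses that $\Lambda_1$, being a sheafification, preserves the monomorphism into the sheaf $W$, so the result is a genuine scheme-shaped subfunctor of $W$ through which $f\circ\mu_0$ factors, and maximality then lands it inside $Y_!$. One should also confirm that this is compatible with the extension $-_!$ over the noncommutative algebras, where every scheme-shaped functor is empty, so that the induced $f_{\mathrm c}$ really does have the form $g_!$.
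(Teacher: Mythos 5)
Your proposal is correct in substance and uses the same raw ingredients as the paper's proof (the preceding theorem on the maximal scheme-shaped subfunctor, \autoref{lem:split} for the split monics, and full faithfulness of $-_!$ to descend to a morphism of schemes $g$), but it is organised differently and, importantly, it repairs a step the paper elides. The paper simply defines the left vertical arrow as $\mu_1^{-1}\circ f\circ\mu_0$, where $\mu_1^{-1}$ is a retraction of the split mono $\mu_1$, and asserts that the square commutes. That assertion is not automatic: $\mu_1\circ\mu_1^{-1}$ is an idempotent on $W$, not the identity, so commutativity of the square is equivalent to the claim that $f\circ\mu_0$ already factors through $\mu_1$, i.e.\ that $f$ carries the maximal scheme-shaped subfunctor of $Z$ into that of $W$. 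This is exactly the step you isolate as ``the genuine obstacle,'' and your coreflection framing makes it unavoidable rather than invisible: once the maximal-subfunctor theorem is upgraded to the universal property of a counit, the commuting square is literally naturality of that counit, and the factorization claim is the only thing left to prove. Your proposed argument for it (take the image subfunctor of $f\circ\mu_0$, sheafify, and invoke maximality of $Y_!$) is at least as rigorous as anything in the paper, though you are right to flag that ``the sheafified image is scheme-shaped'' is itself not free --- it is exactly where the schematic-image subtleties live, and the paper offers no help there. In short: your route buys an honest proof of commutativity and a cleaner uniqueness argument (via the counit's universal property and faithfulness of $-_!$), at the cost of having to establish the coreflector's functoriality, which the paper's retraction trick only appears to avoid.
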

\begin{proof}
The monic natural transformations $\mu_0$ and $\mu_1$ are exactly the unique ones describing the unique maximal subfunctor $X_!$ of $Z$ and $Y_!$ of $W$ arising as the images of an affine scheme embeded into $[_R\mathfrak{Alg}:Sets]$ under the functor $_!$.  Now, as noted above in lemma $\autoref{lem:split}$ these monomorphisms $\mu_0$ and $\mu_1$ must be split monos.  

Therefore, in particular $\mu_1$ has a left inverse and so consider the morphism: $\mu_1^{1-}\circ f\circ \mu_0: X_! \rightarrow Y_!$ making the diagram commute.  However, since the functor $-_!$ embeds the category $[_RAlg:Sets]$ into $[_R\mathfrak{Alg}:Sets]$, the morphism
$\mu_1^{1-}\circ f\circ \mu_0$ must be the unique $-_!$-image of a morphism $g$ between the functors $X$ and $Y$.  Now since $X$ and $Y$ are schemes and the category of schemes is a full subcategory of the $[_RAlg:Sets]$ then $g$ must be a morphism of schemes, \textit{(uniquely making the diagram in question commute)}.  
\end{proof}

Since the universal morphism $g$ in the above cosntruction was uniquely determined by $f$, given by a functorial construction, we give that functor a name.  

\begin{defn}
\textbf{Inner Universal}

For any morphism $f$ of nc. schemes, its \textbf{Inner Universal} is the \textit{unique} morphism $\bar{f}$, denoted by $g$ in the \textit{Inner Approximation Theorem}.  
\end{defn}
Any universal property is given by a functor and in particualr the functor $\bar{-}: f \mapsto \bar{f}$ is such a construct.  Since $\bar{f}$ is the uniquely determined morphism essentially coinciding with $f$ on the largest subfunctors of $f$'s source and target, which both arise from \textit{(classica)} schemes we approximate $f$'s properties by $\bar{f}$'s properties \textit{from the inside)}.  

\subsubsection{Inner Approximations}

\begin{defn}
\textbf{Inner-Unramified}

A morphism $f$ of nc. schemes is said to be \textbf{Inner-Unramified} \textit{if and only if} its inner universal $\bar{f}$ is unramified.  

\end{defn}
Likewise:
\begin{defn}
\textbf{Inner-smooth}

A morphism $f$ of nc. schemes is said to be \textbf{Inner-Smooth} \textit{if and only if} its inner universal $\bar{f}$ is smooth.  

\end{defn}
And similarly.  
\begin{defn}
\textbf{Inner-Etale}

A morphism $f$ of nc. schemes is said to be \textbf{Inner-Etale} \textit{if and only if} its inner universal $\bar{f}$ is etale.  

\end{defn}

\section{Total-approximations}
We now combine the ideas of inner and outer approximations to describe a very accurate way of defining the concepts of smoothness, unramifiedness and etalness of morphisms by understanding them both from the \textit{unique} largest morphism of schemes the "contain" and the smallest \textit{unique} morphism of schemes "containing" them.  

\begin{defn}
\textbf{Nc. Unramified Morphism}

A morphism $f$ of nc. schemes is said to be \textbf{Nc. Unramified} \textit{if and only if} it is both \textit{inner-unramified} and \textit{outer-unramified}.  

\end{defn}
Likewise:
\begin{defn}
\textbf{Nc. Smooth Morphism}

A morphism $f$ of nc. schemes is said to be \textbf{Nc. Unramified} \textit{if and only if} it is both \textit{inner-smooth} and \textit{outer-smooth}.  

\end{defn}
And similarly.  
\begin{defn}
\textbf{Nc. Etale Morphism}

A morphism $f$ of nc. schemes is said to be \textbf{Nc. Etale} \textit{if and only if} it is both \textit{inner-etale} and \textit{outer-etale}.  

\end{defn}

\subsubsection{Simple characerisations of Nc. Smoothness}
We now characerise our main interest, which was the defintion of a geometrically intuitive and appropriate notion of smoothness over nc. schemes.  

\begin{thrm}
The following are equivalent:

- $f:W \rightarrow Z$ is a nc. smooth morphism

- Both the sheaves $\Omega_{\bar{Z}|\bar{W}}$ and $\Omega_{\hat{Z}|\hat{W}}$ are locally free

 \textit{(where $\hat{f}:\hat{W}\rightarrow \hat{Z}$ is the outer universal of $f$ and $\bar{f}:\bar{W}\rightarrow \bar{Z}$ is its inner universal)}.

\end{thrm}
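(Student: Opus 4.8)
The plan is to reduce the statement to the classical characterisation of smoothness of a morphism of schemes by local freeness of its sheaf of relative Kähler differentials, and to apply that characterisation separately to the inner universal $\bar f$ and the outer universal $\hat f$ of $f$. First I would unwind the definition of \textbf{Nc. Smooth Morphism}: $f$ is nc. smooth if and only if it is simultaneously inner-smooth and outer-smooth, i.e.\ if and only if both $\bar f:\bar W\to\bar Z$ and $\hat f:\hat W\to\hat Z$ are smooth morphisms of schemes. Thus the theorem is exactly the conjunction of two instances of a single scheme-theoretic fact, namely that a morphism of schemes $g:X\to Y$ is smooth precisely when $\Omega_{X|Y}$ is locally free; so the argument naturally splits into two independent applications of that fact, glued by the ``and'' on both sides of the desired equivalence.

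To supply the underlying scheme-theoretic fact I would invoke Theorem \autoref{thrm:smoothalgclo}. Passing to an affine open cover $\{\operatorname{Spec}A_i\}$ of $X$, smoothness of $g$ is equivalent to each $\Omega_{A_i|B_i}$ being a projective (equivalently locally free) $A_i$-module, which is precisely the assertion that the quasi-coherent sheaf $\Omega_{X|Y}$ is locally free; here we are entitled to the hypotheses of that theorem by the standing assumption of the relevant section that our algebras are Noetherian regular $k$-algebras over a field. Applying this with $g=\bar f$ gives ``$f$ inner-smooth $\iff$ $\Omega_{\bar Z|\bar W}$ locally free'', and with $g=\hat f$ gives ``$f$ outer-smooth $\iff$ $\Omega_{\hat Z|\hat W}$ locally free''; conjoining the two equivalences yields the theorem.

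The main obstacle I anticipate is not the differential-module bookkeeping but the two structural points underneath the clean reduction. First, one must verify that $\hat W,\hat Z$ — obtained by sheafifying the pollution $\mathfrak P(-)$ of the outer universal — are genuine schemes rather than merely sheaves on the site $_R\mathfrak{Alg}^{op}$, so that $\Omega_{\hat Z|\hat W}$ exists as an honest quasi-coherent sheaf; this is exactly what the construction of the outer universal in Corollary \autoref{cor:approx1} is designed to provide, and the \emph{Inner Approximation Theorem} likewise guarantees that $\bar W,\bar Z$ are schemes. Second, one must check that the finiteness and regularity hypotheses needed to invoke Theorem \autoref{thrm:smoothalgclo} really are inherited by $\bar f$ and $\hat f$; since the inner and outer universals are built functorially from $f$ and the paper restricts throughout this chapter to regular Noetherian $k$-algebras, this should go through, but it is the point that deserves careful statement rather than the manipulation of modules of differentials.
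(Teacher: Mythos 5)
Your proposal is correct and follows essentially the same route as the paper: unwind the definition of nc.\ smoothness into the conjunction ``$\bar f$ and $\hat f$ are smooth morphisms of schemes,'' then apply the classical characterisation of smoothness by local freeness of the sheaf of relative differentials to each universal separately. The paper's own proof is exactly this two-line reduction; your additional remarks about verifying that the inner and outer universals really land in the category of schemes (so that the differential sheaves exist) address a point the paper silently assumes, and are a reasonable supplement rather than a divergence.
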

\begin{proof}
By definitoin of nc. smoothness, $f$ is nc. smooth if and only if both $\hat{f}$ and $\bar{f}$ are smooth morphisms of schemes.  Since a morphism of schemes is smooth if and only if its correesponding sheaves of differentials if locally free, we conclude our case.  
\end{proof}

\section{The Consistency Theorem and Examples}
We now conclude with this behemoth, showing that any commutative scheme when viewed as a nc. scheme via the functor $-_!$, enjoys all the same properties it did when viewed simply as a scheme.  In this sense, unital non-commuttive schemes over a commtuative unital associative basering, strictly generalize the properties of their classical counterparts.  

\begin{thrm}
\textbf{Consitency Theorem}
If $f$ is a morphism of commutative schemes, then $\Lambda(f_!)$ is :

- nc. smooth if and only $f$ is smooth.

- nc. unramified if and only if $f$ is unramified.

- nc. etale if and only if $f$ is etale.  
\end{thrm}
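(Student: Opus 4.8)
The plan is to identify both the outer universal and the inner universal of the nc.\ scheme $\Lambda(f_!)$ with the original morphism $f$ of commutative schemes, up to natural isomorphism. Once this is established all three equivalences are immediate: by the definition of \textbf{Nc. Smooth Morphism}, $\Lambda(f_!)$ is nc.\ smooth exactly when both its inner universal $\overline{\Lambda(f_!)}$ and its outer universal $\widehat{\Lambda(f_!)}$ are smooth, and if each of these equals $f$ then this says precisely that $f$ is smooth; the unramified and étale cases are the same argument verbatim, with ``smooth'' replaced throughout by ``unramified'' resp.\ ``étale''.

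First I would treat the outer side. By the proof of the Outer Approximation Theorem (Corollary \autoref{cor:approx1}) the outer universal of a morphism $g$ of nc.\ schemes is forced to be $\mathscr{T}(g)$, so I must compute $\mathscr{T}(\Lambda(f_!))$. Unwinding the definition, $\mathscr{T}(-)$ is precomposition with the inclusion of the commutative unital part, and by the construction of $-_!$ (Lemma \autoref{lem:exten}) one has $f_!(B) = f(B)$ for every commutative unital associative $R$-algebra $B$; hence $\mathscr{T}(f_!) = f$. It then remains to see that sheafification is invisible here: the unit $\eta_{f_!}\colon f_! \to \Lambda(f_!)$ is a local isomorphism for the essential-Zariski topology of Definition \autoref{def:EZar}, and since the standardizations of ordinary Zariski covers are essential-Zariski covers, $\mathscr{T}(\eta_{f_!})$ is a local isomorphism for the Zariski topology on $_RAlg^{op}$; but its source $\mathscr{T}(f_!)=f$ is already a scheme, hence a Zariski sheaf, so $\mathscr{T}(\eta_{f_!})$ is an isomorphism. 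Therefore $\widehat{\Lambda(f_!)} \cong f$.

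Next I would treat the inner side. Write $f\colon X\to Y$. The source of $\Lambda(f_!)$ is $\Lambda(X_!)$, which contains $X_!$ as a subfunctor whose ``extra'' values are supported only on genuinely noncommutative algebras (this is the content of Lemma \autoref{lem:presubf}). Since $X$ is already a scheme and $-_!$ is fully faithful and point-set preserving on commutative algebras (Lemma \autoref{lem:exten}), $X_!$ is itself the largest subfunctor of $\Lambda(X_!)$ of the form $(\text{scheme})_!$, and similarly $Y_!$ inside $\Lambda(Y_!)$. By the Inner Approximation Theorem these maximal subfunctors, together with the split monomorphisms $\mu_0,\mu_1$ of Lemma \autoref{lem:split}, pin down the inner universal uniquely as the unique morphism of schemes $g$ with $g_! = \mu_1^{1-}\circ \Lambda(f_!)\circ \mu_0$; since that composite restricted to the commutative part is exactly $f_!$, full faithfulness of $-_!$ forces $\overline{\Lambda(f_!)} \cong f$.

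Combining the two computations, put $F := \Lambda(f_!)$, so that $\widehat F \cong f$ and $\overline F \cong f$. Then $F$ is nc.\ smooth iff $\widehat F$ and $\overline F$ are both smooth iff $f$ is smooth, and identically for unramified and étale. The step I expect to be the genuine obstacle is the sheafification bookkeeping underlying the two middle paragraphs: one must verify carefully that $\Lambda$ perturbs neither the restriction of $f_!$ to commutative algebras (needed for the outer universal) nor the maximal scheme-shaped subfunctor $X_!\subseteq\Lambda(X_!)$ (needed for the inner universal), which comes down to the compatibility of the essential-Zariski site of Proposition \autoref{prop:site} with the ordinary Zariski topology under the functors $-^{\varsigma}$, $\mathscr{T}$ and $-_!$.
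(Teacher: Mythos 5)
Your proposal is correct and follows essentially the same route as the paper's own proof: identify both the inner universal $\overline{\Lambda(f_!)}$ and the outer universal $\widehat{\Lambda(f_!)}$ with $f$ by exhibiting candidate fillers for the two approximation squares and invoking the uniqueness clauses of the Inner and Outer Approximation Theorems, then conclude directly from the definitions of nc.\ smooth, nc.\ unramified and nc.\ etale. If anything, you are more careful than the paper about the one step it glosses over, namely why the sheafification $\Lambda$ disturbs neither the restriction of $f_!$ to commutative algebras nor the maximal scheme-shaped subfunctor.
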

\begin{proof}
For any morphism of nc. schemes was seen to giveway to a unique commutative diragram:

$
\begin{tikzpicture}
  \matrix (m) [matrix of math nodes,row sep=3em,column sep=4em,minimum width=2em]
  {
     X_! & Z & \mathfrak{P}(X) \\
     Y_! & W & \mathfrak{P}(Y) \\};
  \path[-stealth]
    (m-1-1) edge [dashed] node [left] {$\bar{f}_!$} (m-2-1)
    (m-1-1) edge [dashed] node [below] {$\mu_0$} (m-1-2)
    (m-2-1) edge [dashed] node [below] {$\mu_1$} (m-2-2)
    (m-1-2) edge node [right] {$ f $} (m-2-2)
    (m-1-3) edge [dashed] node [right] {$\mathfrak{P}(\hat{f})$} (m-2-3)
            edge [dashed,->>] node [below] {$\epsilon_0$} (m-1-2)
    (m-2-3) edge [dashed,->>] node [below] {$\epsilon_1$} (m-2-2)
    (m-1-2) edge node [right] {$ f $} (m-2-2);
\end{tikzpicture}
$ 
with $\epsilon_0$ and $\epsilon_1$ epis, $\mu_0$ and $\mu_1$ monics and $g$ and $f$ morphism of commutative schemes.  

Sheafification is exact, therefore applying the functor $\Lambda\circ -_!$ to this diagram with the middle arrow particularised to our interest yeilds:
$
\begin{tikzpicture}
  \matrix (m) [matrix of math nodes,row sep=3em,column sep=4em,minimum width=2em]
  {
     \Lambda(X_!) & \Lambda(A_!) & \Lambda (\mathfrak{P}(X)) \\
     \Lambda(Y_!) & \Lambda(B_!) & \Lambda (\mathfrak{P}(Y)) \\};
  \path[-stealth]
    (m-1-1) edge [dashed] node [left] {$\bar{\Lambda(f)}_!$} (m-2-1)
    (m-1-1) edge [dashed] node [below] {$\Lambda(\mu_0)$} (m-1-2)
    (m-2-1) edge [dashed] node [below] {$\Lambda(\mu_1)$} (m-2-2)
    (m-1-2) edge node [right] {$ \Lambda(f_!) $} (m-2-2)
    (m-1-3) edge [dashed] node [right] {$\Lambda (\mathfrak{P}(\hat{f}))$} (m-2-3)
            edge [dashed,->>] node [below] {$\Lambda(\epsilon_0)$} (m-1-2)
    (m-2-3) edge [dashed,->>] node [below] {$\Lambda(\epsilon_1)$} (m-2-2);
\end{tikzpicture}
$ 

Now observe that the split monics $\Lambda(\mu_0)$ and $\Lambda(\mu_1)$ making the first square commtuate are unique.  So the monics $i: A_! \rightarrow \Lambda(A_!)$ and $i: B_! \rightarrow \Lambda(B_!)$ arising from the sheafification of the functors $A_!$ and $B_!$, together with the morphism $\bar{\Lambda(f)}:=f$ make the first square in the diagram commute.  Therefore by the uniqueness of that diagram we have that $\bar{\Lambda(f)}$ must be exactly what we tested it out to be, tht is simply $f$.  

Now we argule similarly concerning the second square.  Consider the morphism $\mathscr{T}(f)$ as being $\hat{f}$.  Then taking the morphisms $\epsilon_0$ and $\epsilon_1$ to be the $\Lambda$-images of the canonical epimorphism $\mathfrak{P}\circ\mathscr{T}(A_!)\rightarrow A_!$ arising from the monadic functor $\mathfrak{P}\circ\mathscr{T}$ we hav a commutative diagram on the left also.  Therefore, $\hat{f}$ must be describes by the morphism $\mathscr{T}(f)$.  However, $\mathscr{T}(f)$ is just the forgetful functor preapplied to the morphism $f$, so $\mathscr{T}(f)$ can be viewed as simply the morphism itself $f$ put back inside $[_RAlg:Sets]$. Therefore, in this case $\hat{\Lambda(f_!)}$ is just $f$, which is just $\bar{\Lambda(f_!)}$.  

Therefore, if $f$ is smooth \textit{(resp. unramified, resp. etale)} if and only if $\hat{\Lambda(f_!)}$ and $\bar{\Lambda(f_!)}$ are smooth textit{(resp. unramified, resp. etale)}.  Which by definition of nc. smothness \textit{(resp. nc. unramifiedness, resp. nc. etaleness)} is the case if and only if $f$ is is nc. smothness \textit{(resp. nc. unramifiedness, resp. nc. etaleness)}.  
\end{proof}
So, these concepts are a strict generalisation of the classical ones.  Using this geometrically intruitive defintion, we not only have that all commutative schemes moaintain their said properties when transfered into the non-commutative category, but this powerfull approach gives, tremendous universal proerties for alot our computational needs.  

\subsubsection{A Very Important Observation}
Notice that for example the conecpt of nc. smoothness in the proof above did not use any of the specific properties of smoothness, except that its best commutative approximations, that is both its outer and inner universal had this property.  Moreover, we're motivated to adopted this idea of approximating the property of smoothness in the non-commutative categry, since not do classical scheme maintina this property when viewed as nc. schemes via the functor $\Lambda(-_!)$ which preserves pointsets up to sheafification, but many objects which should be be smooth become nc. smooth, such as $Y(R<X,Y>)$.  

This promts the following generealised version of the above theorem, which should provide enough motivation for reconsidering how to view non-commutative geometry.  In the name of accuracy, we first need to be logicians for a moment.  

\begin{defn}
\textbf{$\mathfrak{C}$-independent property}

Let $\mathfrak{C}:= <Obj_{\mathfrak{C}},Mor_{\mathfrak{C}},Source_{\mathfrak{C}},Target_{\mathfrak{C}},\circ_{\mathfrak{C}} >$ be a category.  

A property \underline{\textbf{P}} is said to be $\mathfrak{C}$-independent if it makes no reference to the any maps $Source_{\mathfrak{C}}, Target_{\mathfrak{C}}: Obj_{\mathfrak{C}} \rightarrow Mor_{\mathfrak{C}}$ and the map $\circ_{\mathfrak{C}} : Mor_{\mathfrak{C}} \times Mor_{\mathfrak{C}} \rightarrow \{ ture, false \}$; where $Source_{\mathfrak{C}}$ is the map assigning a morphism in ${\mathfrak{C}}$ the object which is its source,  $target_{\mathfrak{C}}$ is the map assigning a morphism in $\mathfrak{C}$ the object which is its target and $\circ_{\mathfrak{C}}$ assigning to a pair of morphisms  in $\mathfrak{C}$ a truth value \textit{true} if they are composable in $\mathfrak{C}$ or \textit{false} is they are not.  

Moreover, the property \underline{\textbf{P}} must make no reference to the classes $Obj_{\mathfrak{C}}$ and $Mor_{\mathfrak{C}}$ of objects and morphisms of $\mathfrak{C}$, respectivly.  
\end{defn}
We will be interested in properties that make no references to any category.  
\begin{defn}
\textbf{Category independent property}

A property \underline{\textbf{P}} is said to be category-independent if it is $\mathfrak{C}$-independent for \textit{every} catgory $\mathfrak{C}$.  
\end{defn}

\begin{thrm} $\label{MainTheorem}$
\textbf{The Nc. Approximation Theorem}
If $f$ is a morphism of commutative schemes and \underline{\textbf{P}} is a category independent property of that morphism, then both the inner and outer universals of $\Lambda(f_!)$ have the category independent property \underline{\textbf{P}}.  
\end{thrm}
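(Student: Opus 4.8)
The plan is to bootstrap off the \textit{Consistency Theorem}. First I would revisit its proof and isolate its purely diagrammatic content: for a morphism $f$ of commutative schemes, the uniqueness of the monadic diagram built from $\mathscr{T}\circ\mathfrak{P}$ forces the inner universal $\bar{\Lambda(f_!)}$ to be $f$ itself, embedded via $-_!$, while dually the uniqueness of the comonadic diagram built from $\mathfrak{P}\circ\mathscr{T}$ forces the outer universal $\hat{\Lambda(f_!)}$ to be $\mathscr{T}(f)$, which is nothing but $f$ with extra structure forgotten, i.e. $f$ again. Crucially --- as flagged in the observation preceding the statement --- neither identification used anything about smoothness, unramifiedness, or étaleness; they used only the exactness of $\Lambda$ and the universal properties of (co)monadic resolutions. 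So both identities are available to us unconditionally, for an arbitrary $f$.

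Second, I would unpack the hypothesis that $\underline{\textbf{P}}$ is category-independent. By definition this means $\underline{\textbf{P}}$ refers to no $Source$, $Target$ or $\circ$ datum of any category, and to no class $Obj$ or $Mor$ of any category. Consequently the truth value of ``$g$ has $\underline{\textbf{P}}$'' cannot change when a morphism $g$ is carried along the embedding $-_!\colon [_RAlg\colon Sets]\to[_R\mathfrak{Alg}\colon Sets]$ of Lemma~\autoref{lem:exten}, nor along the forgetful passage $\mathscr{T}$ (that is, $\Phi\circ\Psi$): all these functors do is re-house $g$ in a different ambient category, and a category-independent predicate is by fiat blind to the ambient category. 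In particular, since $f$ has $\underline{\textbf{P}}$ as a morphism of commutative schemes, both $f_!$ and $\mathscr{T}(f)$ have $\underline{\textbf{P}}$.

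Finally I would assemble the two ingredients: the inner universal of $\Lambda(f_!)$ is $f$ (embedded by $-_!$) and the outer universal is $f$ (via $\mathscr{T}$), while $f$ enjoys $\underline{\textbf{P}}$ in every categorical guise; hence both universals enjoy $\underline{\textbf{P}}$. The delicate point --- the one I expect to take real care --- is the ``blindness'' step: one must argue precisely that a category-independent property is preserved not merely under the embeddings themselves but under the natural isomorphisms witnessing their full faithfulness and the pointset-preservation of Lemma~\autoref{lem:exten}, so that no hidden invariant can distinguish $f$ from its images. I would discharge this by observing that such a property, lacking access to $Obj$, $Mor$, $Source$, $Target$ and $\circ$, can only ``see'' data that these fully faithful, pointset-preserving functors transport verbatim, which makes the remaining verification routine.
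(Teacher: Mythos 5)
Your proposal is correct and follows essentially the same route as the paper: the paper's own proof is an explicit rephrasing of the Consistency Theorem argument, establishing the identifications $f = \hat{\Lambda(f_!)} = \bar{\Lambda(f_!)}$ via the uniqueness of the (split monic / epic) diagrams and then transferring the category-independent property along these equalities. Your added care about the ``blindness'' of a category-independent predicate to the ambient re-housing functors is a slightly more scrupulous treatment of a step the paper takes for granted, but it is the same argument.
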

\begin{proof}
The proof is a complete rephrasing of the above argument.  

For any morphism of nc. schemes was seen to giveway to a unique commutative diragram:

$
\begin{tikzpicture}
  \matrix (m) [matrix of math nodes,row sep=3em,column sep=4em,minimum width=2em]
  {
     X_! & Z & \Lambda (\mathfrak{P}(X)) \\
     Y_! & W & \Lambda (\mathfrak{P}(Y)) \\};
  \path[-stealth]
    (m-1-1) edge [dashed] node [left] {$\bar{f}_!$} (m-2-1)
    (m-1-1) edge [dashed] node [below] {$\mu_0$} (m-1-2)
    (m-2-1) edge [dashed] node [below] {$\mu_1$} (m-2-2)
    (m-1-2) edge node [right] {$ f $} (m-2-2)
    (m-1-3) edge [dashed] node [right] {$\Lambda (\mathfrak{P}(\hat{f}))$} (m-2-3)
            edge [dashed,->>] node [below] {$\epsilon_0$} (m-1-2)
    (m-2-3) edge [dashed,->>] node [below] {$\epsilon_1$} (m-2-2)
    (m-1-2) edge node [right] {$ f $} (m-2-2);
\end{tikzpicture}
$ 
with $\epsilon_0$ and $\epsilon_1$ epis, $\mu_0$ and $\mu_1$ monics and $g$ and $f$ morphism of commutative schemes.  

We particularise this diagram to our setting, as:

$
\begin{tikzpicture}
  \matrix (m) [matrix of math nodes,row sep=3em,column sep=4em,minimum width=2em]
  {
     X_! & \Lambda(A_!) & \Lambda (\mathfrak{P}(X)) \\
     Y_! & \Lambda(B_!) & \Lambda (\mathfrak{P}(Y)) \\};
  \path[-stealth]
    (m-1-1) edge [dashed] node [left] {$\bar{\Lambda(f)}_!$} (m-2-1)
    (m-1-1) edge [dashed] node [below] {$\mu_0$} (m-1-2)
    (m-2-1) edge [dashed] node [below] {$\mu_1$} (m-2-2)
    (m-1-2) edge node [right] {$ \Lambda(f_!) $} (m-2-2)
    (m-1-3) edge [dashed] node [right] {$\Lambda (\mathfrak{P}(\hat{f}))$} (m-2-3)
            edge [dashed,->>] node [below] {$\epsilon_0$} (m-1-2)
    (m-2-3) edge [dashed,->>] node [below] {$\epsilon_1$} (m-2-2);
\end{tikzpicture}
$ 

Now observe that the split monics $\mu_0$ and $\mu_1$ making the first square commtuate are unique.  So the monics $i: A_! \rightarrow \Lambda(A_!)$ and $i: B_! \rightarrow \Lambda(B_!)$ arising from the sheafification of the functors $A_!$ and $B_!$, together with the morphism $\bar{\Lambda(f)}:=f$ make the first square in the diagram commute.  Therefore by the uniqueness of that diagram we have that $\bar{\Lambda(f)}$ must be exactly what we tested it out to be, tht is simply $f$.  

Now we argule similarly concerning the second square.  Consider the morphism $\mathscr{T}(f)$ as being $\hat{f}$.  Then taking the morphisms $\epsilon_0$ and $\epsilon_1$ to be the $\Lambda$-images of the canonical epimorphism $\mathfrak{P}\circ\mathscr{T}(A_!)\rightarrow A_!$ arising from the monadic functor $\mathfrak{P}\circ\mathscr{T}$ we hav a commutative diagram on the left also.  Therefore, $\hat{f}$ must be describes by the morphism $\mathscr{T}(f)$.  However, $\mathscr{T}(f)$ is just the forgetful functor preapplied to the morphism $f$, so $\mathscr{T}(f)$ can be viewed as simply the morphism itself $f$ put back inside $[_RAlg:Sets]$. Therefore, $\hat{\Lambda(f_!)}$ is just $f$, that is $f=\hat{\Lambda(f_!)}$ which is just $\bar{\Lambda(f_!)}$.  

We therefore have the equalities $f = \hat{\Lambda(f_!)} = \bar{\Lambda(f_!)}$.  Therefore, $f$ has a category independent property \underline{\textbf{P}} \textit{if and only if} both $\hat{\Lambda(f_!)}$ and $\bar{\Lambda(f_!)}$ have the category independent property \underline{\textbf{P}}.  
\end{proof}

So the inner and outer universals of a morhpism of schemes's embedding into the category of nc. scheme share \textit{every single} one of the properties of that morphism.  Moreover, in general since every single morphism of nc. schemes has \textit{unique} inner and outer universals then approximating it from the inside and from the outside it would be utterly reasonable to \textit{extend} a defintiion of a category independent property \underline{\textbf{P}} of a morphism of schemes to the nc. category by stating that a morphism of nc. schemes has a certain category independent property \underline{\textbf{P}} if and only if its inner and outer universals have that category independent property in the category of schemes.  

By now the following should be utterly natural.
\begin{defn}
\textbf{Nc. morphism property}

If \underline{\textbf{P}} is a category independent property of a morphism $f$ of schemes, then we say a morphism $g$ of nc. schemes has the propery \textbf{Nc. \underline{\textbf{P}}} \textit{if and only if} both :

-the inner universal $\bar{g}$ of $g$ has the category independent property \underline{\textbf{P}}

and

-the outer universal $\hat{g}$ of $g$ has the category independent property \underline{\textbf{P}}.  

Any such property "\textbf{Nc. \underline{\textbf{P}}}", is called a \textbf{nc. morphism property.}  
\end{defn}

We think of an nc. morphism property as an approximation of a category independent property property of its analogous property of morphisms of schemes, set within the category of nc. schemes.  

Now any example you like follows directly, lets briefly peek at a nice one.  
\begin{ex}
A morphism $f$ of nc. schemes then, $f$ is nc. proper \textit{if and only if} both $\hat{f}$ and $\bar{f}$ are proper morphisms of schemes.  
\end{ex}

\part{Abstractions}
\chapter{Philosophy}

The idea of this section is not to introduce a concept that should be worked on in years to come in of itself.  Rather, I am of the opinion that the sole purpous of abstraction, is to truely understand the fundamental essence of the problem at hand; that is, to truely scritinise the underlaying machinery which play an actual role in what is happeneing.  Anything else can be therefore regarded as conceptual noise, though interest it is irrelevant to the actual issue being delt with.  

In this spirit, we present an abstraction of the problem of approximating a nc. scheme by a commutative one; simply to understand what is truely happening, and not to suggest that this vast abstraction of the issue's essentials should ever be studied or applied in any way other than philosophically.  

Now in the construction of a nc. scheme we noticed that nc. schems over $R$, built from unital $R$-algebras behaved particularly well, in that they permitted for the outer approximations to be possible.  The necessary property here is that the $R$-algebras by which they were constructed were actually monoids in the monoidal category of $R$-modules.  

Now, the fact that they were built from objects in an abelian category $\mathfrak{R}$, though nice for many homological applications is in itself unncessary for this construction.  Constrastingly their monoidal property and the monoidal property of $_RMod$ is totally fundamental.  Philospophically, a monoid is the natural abstration of an unital associative algebra, and so if were interested in building \textit{"scheme-like"} objects in a meaningful way, it would make sence that the story begin with objects that are conceptually representative of algebras.  Therefore the first step, is to consider the category of monoids arising from a monoidal category $\mathfrak{R}$ and their monoid morphisms.  Nostaligically, we denote this category $\mathfrak{_RAlg}$.  

The second step in our construction of schemes over $R$, was our pulling back of the Zariski topology on $_RAlg^{op}$ to $_R\mathfrak{Alg}^{op}$.  Though it was conceptually helpful, we did not ever actually use the fact that the baserings of $_RAlg^{op}$ and of $_R\mathfrak{Alg}^{op}$ were one and the same.  We very well could have used $R$'s center at the loss of some initial intuition; all we truely needed was the existence of a functor $L_1$ from $_RAlg$ to $_R\mathfrak{Alg}$ which brought the $R_1$-image of objects in $_R\mathfrak{Alg}$ to some corresponding, minimal quotient object under the natural partial ordering on objects in $_R\mathfrak{Alg}$.  For this to happen, it was sufficient that $L_1$ be a forgetful functor, that is $L_1$ be faithful and $R_1$ be its left adjoint; that is $R_1$ be a free functor.  However, what we did ask of the basering of $_RAlg$, was that it was commutative unital and associative, so that we may travel to and from our familiar classical setting, via our functors $L_1$, and $R_1$.  Infact, we notice that when we were approximating representable schemes in a minimal way from outside the induced adjunction between the inverse of the forgetful functor and the inverse of the standardization functor had become necessary.  

Therefore, we ask that there is an adjunction $L_1 \adj R_1$ of functors, with $L_1: _RAlg \rightarrow \mathfrak{_RAlg}$ being faithful.  Moreover, we necessitate that the basering $R$ of the category of $R$-algebras which we draw our approximations of the localization maps and therefore the Zarisky coverings on $\mathfrak{_RAlg}$ be a commutative unital associative ring.  

The third step was to then to look at the category of scheaves on our \textit{"Zarisky-like"} site $_R\mathfrak{A}^{op}$ and then identify our \textit{nc. schemes} as being sheaves which could be covered by nc. analogues to affine schemes over $R$, these were the representable sheaves.  Implicitly we were using a very important trait of $\mathfrak{_RAlg}$, that is that the symbolic bifunctor $Hom_{_R\mathfrak{Alg}}(-,-)$ took values in the category $Sets$.  In short if $Hom_{_R\mathfrak{Alg}}(-,-)$ did not do so, then there would be no such concept of a representable functor in $[_RAlg:Sets]$.  Therefore we implicitly required that the category $\mathfrak{_RAlg}$ be enriched over $Sets$; classically put, we need that $\mathfrak{_RAlg}$ be \textit{locally small}.  

However we may ask something less much weaker.  Our objective was to be able to compare affine schemes with these representable functors, so it would be enough to ask, that these object everntually meet up in the category of sets in a functorial manner.  In other words it seems to me the weakest thing we can ask for that is representative of the issue at hand is that the category $\mathfrak{_RAlg}$ be enriched over some concrete category $<\mathfrak{C},\Phi>$.  Therefore, for any object $A$ in $\mathfrak{_RAlg}$, we may compare the underlaying pointsets of the functor $\Phi(Hom_{\mathfrak{_RAlg}}(A,-))$ with the affine scheme $Hom_{_RAlg}(G(A),-)$ in a meaningful manner, where $\Phi:\mathfrak{C}\rightarrow Sets$ is the forgetful functor describing the concretness of $\mathfrak{C}$.  Moreover, since every forgetful functor from a concrete category to Sets has a left adjoint, $\Psi$ we may always push bring affine schemes back forom the category $[_RAlg:Sets]$ into the category $[\mathfrak{_RAlg}:\mathfrak{C}]$ for comparison via the functor $-_!\circ \Psi$.  So we need to atleast require that the category $\mathfrak{_RAlg}$ be \textit{enriched over some concrete category} $\mathfrak{C}$.  

Now in the fourthstep we provided approximations of our objects via from the outside and noted the $-_!\circ \Psi$-image of schemes over $R$.  Most importantly, noting that the sheafification $\Lambda$ of the $-_!\circ \Psi$ of an scheme over $R$, had all the same category independent properties as it did before the application of the functor $\Lambda \circ -_!\circ \Psi$ to it.  For this to be possible we need the existence of a sheafification functor from the category $[\mathfrak{_RAlg}: \mathfrak{C}]$ to the category of $\mathfrak{C}$-sheaves over the site $\mathfrak{_RAlg}^{op}$; that is we need the category $[\mathfrak{_RAlg}: \mathfrak{C}]$ to satisfy \textit{WISC}.  

Finally, we wanted appriximations of our nc. schemes from the outside by schemes over $R$.  We constructed the inner univeersal implicity by means of the adjuction $i_! \adj i^{-1}$.  In other words, the crutial ingredient was that there exist adjoint functors $L_2 \adj R_2$ with $L_2: [_RAlg:Sets] \rightarrow [\mathfrak{_RAlg}: \mathfrak{C}]$.  Infact we may notice, $L_2$'s source to actually even be $[_RAlg:Sets]$, strictly speaking it is enough for the functor $L_2$'s source to be the category of schemes $Csh_{Spec(R)}$, over the affine scheme $Spec(R)$, where $R$ is as above.  

Therefore, we have our before last necessary assumption, on the meaningful machinery here necessary, that is we require the existence of an \textit{adjunction} $L_2 \adj R_2$, with $L_1$'s source being a category of schemes over a scheme $X$ and its target is the category $[\mathfrak{_RAlg}: \mathfrak{C}]$.  

Now our final requirement for an abstration of a morphism of schemes to be meaningful and approximable from the ourside and from the inside by morphisms of schemes is one thing not mentioned in this paper.  A central underlaying tool, for example in Hochschild (co)homology is that the category of $\mathfrak{R}$-modules is abelian and has enough cyclic objects.  Therefore, for our abstraction of these concepts to be representative we need that the Hochschild and therefore cyclic and other module theoretic tools exist.  So we impose one last thing, we ask that the category $\mathfrak{R}$ on which we built all of our theory, is abelian and has cyclic objects.  That is, our final requirement is that for every object $\mathfrak{RAlg}$ there exist, a faithful cyclic-object preserving functor $C_Z$ from the category $\mathfrak{_RAlg}$ to the category $_ZMod_Z$ of $Z$-bimodules.  

It is easy to see that since the category $_{\mathfrak{R}}\mathfrak{Alg}$ is monoidal, then it must also contain cyclic objects and so our central tools in non-commutative geometry, the Hochschild (co)homology theories transfer over nicely.  

\break

\section{Schemes in General in a representative and meaningful Way}
We now resummarise the entire paper in abstract form.  

Our first building block is a category which behaves in the most crutial ways the category of unital associative algebras over a ring $R$.  
\begin{defn}
\textbf{$\mathfrak{R}$-algebras}
If $\mathfrak{R}$ is a $\mathfrak{C}$-enriched monoidal Grothendieck Category, with tensor product $\otimes_{\mathfrak{R}}$ and $\mathfrak{C}$ is concrete then:

The category of monoids and monoidal morphisms over $\mathfrak{R}$ is called the \textbf{category of $\mathfrak{R}$-algebras} and is denoted $\mathfrak{_RAlg}$.  
\end{defn}

Since our goal is to built accurate approximations of \textit{"scheme-like object"}, we'll need sometinhg to approximate from, so we draw from a category of $R$-algebras.  For this approximation to be in any way accurate, we'll need a very \textit{"weak equivalence"} of categories our $\mathfrak{R}$-algebras and the $R$-algebras over some commutative unital associative ring.  

\begin{defn}
\textbf{$R$-Standardization}

An functor $\varsigma: \mathfrak{_RAlg} \rightarrow _RAlg$, possesing a right adjoint, where $R$ is a commutative unital associative ring, is called an \textbf{$R$-standardization}.  
\end{defn}

Now therefore, we have a way of approximating the dual of an $\mathfrak{R}$ algebra in the largest possible way with respect ot $\varsigma$-from inside.  
\begin{prop}
For every $X$ in $\mathfrak{_RAlg}^{op}$, the object $^{op}\circ \varsigma (X^{op})$'s 

$^{op}\circ G \circ -^{op}$-image is the largest subobject of $X^{op}$, which is the $^{op}\circ G \circ -^{op}$-image of an affine scheme over $R$.  
\end{prop}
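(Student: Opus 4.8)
The plan is to transport the proof of the affine case of the \emph{Inner Approximation Theorem} to the abstract setting, with the adjunction $\varsigma \dashv G$ taking over the role played there by abelianization together with the forgetful functor. Write $A := X^{op} \in \mathfrak{_RAlg}$, abbreviate $F := {}^{op}\circ G \circ -^{op} : {}_RAlg^{op} \to \mathfrak{_RAlg}^{op}$, and let $\eta_A : A \to G(\varsigma(A))$ be the unit of $\varsigma \dashv G$. Dualizing, $\eta_A$ becomes a morphism $\eta_A^{op} : F(({}^{op}\circ\varsigma)(A)) = G(\varsigma(A))^{op} \to X$ in $\mathfrak{_RAlg}^{op}$. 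The first step is to record that $\eta_A$ is an epimorphism in $\mathfrak{_RAlg}$, so that $\eta_A^{op}$ is a monomorphism in $\mathfrak{_RAlg}^{op}$ and $F(({}^{op}\circ\varsigma)(A))$ is genuinely a subobject of $X$. In the motivating situation $\varsigma = -^{\varsigma}$ this is precisely the surjectivity of the canonical map $A_1 \twoheadrightarrow (A_1)^{ab}$ onto the abelianization, which is exactly the fact that drives the concrete proof; I return to it below.

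For maximality, suppose $\mu : F(Y) \hookrightarrow X$ is any subobject of $X$ in $\mathfrak{_RAlg}^{op}$ with $Y \in {}_RAlg^{op}$ an affine scheme over $R$, say $Y = B^{op}$ for a commutative unital associative $R$-algebra $B$, so that $F(Y) = G(B)^{op}$. Dualizing, $\mu^{op} : A \to G(B)$ is an epimorphism in $\mathfrak{_RAlg}$. By the universal property of the unit, $\mu^{op}$ factors uniquely as $A \xrightarrow{\eta_A} G(\varsigma(A)) \xrightarrow{G(h)} G(B)$ for a unique $R$-algebra morphism $h : \varsigma(A) \to B$. Dualizing back, $\mu$ factors through $\eta_A^{op}$ via $F(h^{op})$; since $\eta_A^{op}$ is monic, this says exactly that $F(Y) \le F(({}^{op}\circ\varsigma)(A))$ in the poset of subobjects of $X$. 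As $\mu$ was arbitrary, $F(({}^{op}\circ\varsigma)(A))$ dominates every subobject of $X$ lying in the image of $F$, hence is the largest one; uniqueness up to isomorphism is automatic, a largest element of a poset of subobjects being unique.

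The single load-bearing point, and the main obstacle, is the epi-ness of the unit $\eta_A$: the bare definition of an $R$-standardization only asks that $\varsigma$ possess a right adjoint, and for a general adjunction the unit need be neither mono nor epi. Two remedies present themselves. The clean one is to strengthen the definition of $R$-standardization so as to require the reflection $\varsigma$ to be \emph{epireflective}, i.e.\ to have epimorphic unit; then the two paragraphs above go through verbatim, and this costs nothing, since it holds for $-^{\varsigma}$. Alternatively one notes that in all cases of interest $\varsigma$ factors as a quotient-type reflector followed by a unitization, and verifies epi-ness along that factorization. I would take the first option. A minor, purely bookkeeping obstacle is keeping straight the dualization functors involved --- that $-^{op}$ interchanges monos with epis and is its own quasi-inverse on objects --- and, if one wants the phrase ``$F$-image'' to pick out a genuine object rather than an isomorphism class, recording that $G$, and hence $F$, is faithful; neither point affects the argument.
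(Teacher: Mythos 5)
Your proof takes essentially the same route as the paper's: both arguments run the universal property of the unit $\eta_A : A \to G(\varsigma(A))$ of the adjunction $\varsigma \dashv G$ and then dualize, the paper phrasing this as ``$G\circ\varsigma$ is (co)monadic, so $G(Y)$ is a quotient object of $G\circ\varsigma(X)$.'' The one substantive difference is to your credit: you isolate the load-bearing assumption that $\eta_A$ be an epimorphism --- without which $F(\varsigma(A)^{op})$ is not even a subobject of $X^{op}$ and the factorization argument says nothing about the subobject poset --- whereas the paper's proof silently assumes it through its ``quotient object'' language; your proposed fix (requiring the $R$-standardization to be epireflective, which the motivating functor $-^{\varsigma}$ satisfies) is exactly the right repair.
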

\begin{proof}
We identify the categories of affine schemes over $R$ and $_RAlg^{op}$.  

Since, $\varsigma$ has a right adjoint, which we call $G$, then functor $G \circ \varsigma$ is monadic.  Therefore for any object $X$, the universal property of the monad $G \circ \varsigma (X)$ is that if $Y$ is an object in $R_Alg$, then $G(Y)$ is a quotient object of $G\circ \varsigma (X)$.  That is, $\varsigma (X)$ is the $R$-algebra which induces the smallest quotient object under $X$ after applying $G$.  

By duality we have, $(\varsigma (X))^{op}$, yields the largest possible subobject of $X^{op}$ after applying the functor $^{op}\circ G \circ -^{op}$.  
\end{proof}

The category of affine schemes over $Spec(R)$ is dual to the category of $R$-algebras, we view the dual of $\mathfrak{_RAlg}$ as the category of \textit{"something like affine schemes over something like Spec of $\mathfrak{R}$"}.  

In order to glue these \textit{"affine schemes over something like Spec($\mathfrak{R}$)"} together we'll need a topology on $\mathfrak{_RAlg}^{op}$.  We want the outcome of this glueing to be scheme-like objects, in the closest possible sence, so we'll approximate the Zarisky site as closely as is possible.  We now make use of this weak equivalence; that is of this adjunction to approximate a Zarsiky topology on $\mathfrak{_RAlg}$ which is \textit{exactly} the Zarisky site on \textit{"the largest affine part with respect to $^{op}\circ\varsigma\circ ^{op}$"} of our \textit{"scheme-like objects"}.  

\begin{defn}
\textbf{$\varsigma$-relative Zariski Basis on $\mathfrak{_RAlg}$}

If the category $\mathfrak{_RAlg}$ possese an $R$-standardization, then we define a basis for a covering $Cov(\mathfrak{_RAlg}^{op})$ on $\mathfrak{_RAlg}^{op}$ as follows:

A small familly of morphisms $\{ \phi_i: A_i \rightarrow X \}$ is a covering on $X$ if and only if:

- \underline{\textbf{$\varsigma$-relative Zariski-open immersions axiom:}} 

For ever index $i$ there exist elements $r_i$ in $\varsigma(X^{op})$ and there exist isomorpihsms $\psi_i: \varsigma(A^{op}) \rightarrow (\varsigma(X^{op}))_{r_i}$ such that the composition $\psi_i \circ \varsigma( \phi_i^{op}) : \varsigma(X^{op}) \rightarrow (\varsigma(X^{op}))_{r_i}$ is exactly the localization map of $\varsigma(X^{op})$ at $r_i$.  

- \underline{\textbf{$\varsigma$-relative Zariski-partition of unity axiom:}} 

For every index $i$, there exist elements $f_i$ in $\varsigma(X^{op})$, such that:

$\underset{i}{\sum} f_ir_i =1$.  

Consider the class of all collections of coverings on objects $X$ in $\mathfrak{_RAlg}^{op}$, we denote this $B(\mathfrak{_RAlg}^{op})$.  

\end{defn}

Since the Zarsiky coverings form a bassis for a Grothendieck Topology on $_RAlg^{op}$, it is not difficult to see that:
\begin{prop}
$B(\mathfrak{_RAlg}^{op})$ forms a Basis for a Gorthendieck topology on $\mathfrak{_RAlg}^{op}$.  
\end{prop}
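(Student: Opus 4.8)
The plan is to verify the three axioms that define a basis for a Grothendieck topology — stability under isomorphism, stability under base change, and transitivity — and in each case to deduce the axiom from the corresponding property of the ordinary principal-open Zariski pretopology on ${}_RAlg^{op}$, which the remark preceding the proposition entitles us to assume. The argument will be organized around the functor $S\colon\mathfrak{_RAlg}^{op}\to{}_RAlg^{op}$ induced by the $R$-standardization $\varsigma$; concretely $S$ carries an object $X$ of $\mathfrak{_RAlg}^{op}$ to $\varsigma(X^{op})$, now regarded in ${}_RAlg^{op}$, and a morphism $\phi$ to $\varsigma$ applied to the underlying arrow of $\mathfrak{_RAlg}$. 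The first step is to observe that, by the very definition of $B(\mathfrak{_RAlg}^{op})$, a small family $\{\phi_i\colon A_i\to X\}$ is a covering of $X$ precisely when its image $\{S(\phi_i)\colon S(A_i)\to S(X)\}$ is, on the nose, a standard Zariski cover of $Spec\,\varsigma(X^{op})$: every $S(\phi_i)$ is the inclusion of a principal open $D(r_i)$, and there are elements $f_i$ with $\sum_i f_i r_i = 1$. Everything then rests on three properties of $S$: as a functor it preserves identities, isomorphisms and composites; since $\varsigma$ has a right adjoint $G$ it preserves all colimits, in particular pushouts, so $S$ preserves fibre products — and the fibre products in question exist in $\mathfrak{_RAlg}^{op}$ because $\mathfrak{_RAlg}$, the category of monoids in the cocomplete monoidal Grothendieck category $\mathfrak{R}$, has pushouts; and, classically, in ${}_RAlg^{op}$ the base change and the composite of principal-open immersions are again principal-open immersions, at an explicitly computable single element, with the partition-of-unity relation preserved.

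Granting these, the isomorphism axiom is immediate: if $\phi$ is an isomorphism then so is $S(\phi)$, and an isomorphism of affine schemes is the localization at the unit $1$, with $\sum_i f_i r_i = 1$ witnessed by $r=f=1$; hence $\{\phi\}$ is a covering. For the base-change axiom I would take a covering $\{\phi_i\colon A_i\to X\}$ and an arbitrary morphism $\chi\colon Y\to X$, form the fibre products $A_i\times_X Y$ in $\mathfrak{_RAlg}^{op}$, and use that $S$ preserves these to identify $\{S(A_i\times_X Y)\to S(Y)\}$ with the base change, along the ring map $\varsigma(X^{op})\to\varsigma(Y^{op})$, of the Zariski cover $\{D(r_i)\hookrightarrow Spec\,\varsigma(X^{op})\}$; the classical fact then shows this to be again a principal-open cover, with partition of unity given by the images $\bar{f}_i,\bar{r}_i$, so the pulled-back family is a covering of $Y$. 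For transitivity I would take a covering $\{\phi_i\colon A_i\to X\}$ together with coverings $\{\psi_{ij}\colon A_{ij}\to A_i\}$, use the functoriality identity $S(\phi_i\circ\psi_{ij})=S(\phi_i)\circ S(\psi_{ij})$, note that the composite of the principal-open immersions $D(s_{ij})\hookrightarrow D(r_i)\hookrightarrow Spec\,\varsigma(X^{op})$ is the principal-open immersion at the element of $\varsigma(X^{op})$ obtained by clearing the denominator of $s_{ij}$ against a power of $r_i$, and combine $\sum_i f_i r_i = 1$ with the relations $\sum_j g_{ij}s_{ij}=1$ holding in the localizations to produce, by the standard refinement computation, a partition of unity indexed by the pairs $(i,j)$. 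This yields all three axioms, hence the proposition.

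The part I expect to demand the most care is the bookkeeping in the base-change and transitivity steps. I must make sure the fibre products genuinely exist in $\mathfrak{_RAlg}^{op}$ — equivalently that monoids in $\mathfrak{R}$ admit pushouts, which is where the cocompleteness and monoidality hypotheses on $\mathfrak{R}$ actually enter — and, more delicately, that after base change or composition the morphisms produced are honestly localization maps at a single explicit element, so that the open-immersion clause in the definition of a $\varsigma$-relative Zariski covering is met literally rather than in the weaker form ``the target is some localization''. Nothing here is conceptually new: the whole proof amounts to transporting, along the adjoint functor $\varsigma$ and along the opposite-category duality, the classical assertion that the principal-open covers form a basis for a Grothendieck topology on ${}_RAlg^{op}$, which we are free to invoke.
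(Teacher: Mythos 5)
Your proposal is correct and follows exactly the route the paper intends: the paper offers no written proof beyond the remark that the classical Zariski coverings form a basis for a Grothendieck topology on $_RAlg^{op}$, and your argument is precisely the transport of that fact along the standardization $\varsigma$ (using that $\varsigma$, having a right adjoint, preserves the relevant pushouts, and that base change and composition of principal-open localizations are again principal-open localizations with the partition of unity carried along). In effect you have supplied the verification the paper leaves to the reader, and the points you flag as delicate (existence of fibre products in $\mathfrak{_RAlg}^{op}$ and the on-the-nose single-element localization clause) are indeed the only places where care is needed.
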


\begin{defn}
\textbf{$\varsigma$-relative Zariski Site}

The pair of the category $\mathfrak{_RAlg}^{op}$ and the Grothendieck Topology generated by the $\varsigma$-relative Zariski Basis on $\mathfrak{_RAlg}$, is called the \textbf{$\varsigma$-relative Zariski Site}.  In keeping with classical tradition, we denote this site by $\mathfrak{_RAlg}^{op}_{\varsigma-Zar}$.  
\end{defn}

Now that we have topologized our \textit{"affine schemes-like objects"} in a way that it exactly the Zariski site on their largest $^{op}\circ G\circ^{op}$-relative affine part.  We glue them together, and select the $\varsigma$-relative Zariski sheaves who's largest $^{op}\circ G\circ^{op}$-relative scheme part can be covered by $^{op}\circ G\circ^{op}$-relative affines.  Essentially we're asking that our $^{op}\circ G\circ^{op}$-schemes be \textit{"at $^{op}\circ G\circ^{op}$-relative heart"}.  

First we make a little remark however,
\begin{prop}
For every object $A$ in $\mathfrak{_RAlg}$, the symbolic hom functor $Hom_{\mathfrak{_RAlg}}(A,-)$ defines a contravariant functor from $\mathfrak{_RAlg}$ to $\mathfrak{C}$.  In particular, there exist representable $\mathfrak{C}$-valued presheaves on $\mathfrak{_RAlg}^{op}$.  

Moreover, these representable $\mathfrak{C}$-valued presheaves are infact $\mathfrak{C}$-valued sheaves.
\end{prop}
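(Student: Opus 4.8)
The plan is to split the statement into an essentially formal \emph{presheaf} claim and a genuinely non-trivial \emph{descent} claim, the latter obtained by transporting the subcanonicity of the classical Zariski site along the $R$-standardization $\varsigma$.

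For the presheaf part: since $\mathfrak{R}$ is a $\mathfrak{C}$-enriched monoidal category, the category $\mathfrak{_RAlg}$ of $\mathfrak{R}$-algebras (monoids in $\mathfrak{R}$) inherits a $\mathfrak{C}$-enrichment, the hom-object $Hom_{\mathfrak{_RAlg}}(A,B)$ being carved out of the underlying object $Hom_{\mathfrak{R}}(A,B)$ by the finite-limit conditions encoding compatibility with the multiplications and units (this uses only that $\mathfrak{C}$ carries the relevant finite limits). Consequently, for fixed $A$ the rule $B \mapsto Hom_{\mathfrak{_RAlg}}(A,B)$ takes values in $\mathfrak{C}$, postcomposition by a monoid morphism $B \to B'$ is a morphism of $\mathfrak{C}$, and the unit and associativity axioms of the enrichment say exactly that this rule preserves identities and composition. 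Hence $h_A := Hom_{\mathfrak{_RAlg}}(A,-) \colon \mathfrak{_RAlg} \to \mathfrak{C}$ is a functor, i.e. a $\mathfrak{C}$-valued presheaf on $\mathfrak{_RAlg}^{op}$, and these $h_A$ are by definition the representable ones; in particular such presheaves exist.

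For the sheaf part, fix a $\varsigma$-relative Zariski covering $\{\phi_i \colon A_i \to X\}$ of $X$, so that there are $r_i \in \varsigma(X^{op})$ with isomorphisms $\varsigma(A_i^{op}) \cong (\varsigma(X^{op}))_{r_i}$ compatible with the structure maps, together with a partition of unity $\sum_i f_i r_i = 1$. Because $\varsigma$ has a right adjoint it preserves colimits, so the fibre products $A_{ij} := A_i \times_X A_j$ of the site $\mathfrak{_RAlg}^{op}$ --- which are the pushouts $A_i \sqcup_X A_j$ in $\mathfrak{_RAlg}$ --- satisfy $\varsigma(A_{ij}^{op}) \cong (\varsigma(X^{op}))_{r_i} \otimes_{\varsigma(X^{op})} (\varsigma(X^{op}))_{r_j} \cong (\varsigma(X^{op}))_{r_i r_j}$. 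Now the classical Zariski topology on $_RAlg^{op}$ is subcanonical: affine gluing says that $\varsigma(X^{op})$, with the localization maps, is the equalizer of $\prod_i (\varsigma(X^{op}))_{r_i} \rightrightarrows \prod_{i,j} (\varsigma(X^{op}))_{r_i r_j}$ in $_RAlg$. The descent step is that this equalizer presentation pulls back along $\varsigma$ to the statement that $X$, with the maps underlying the $\phi_i$, is the limit \emph{in} $\mathfrak{_RAlg}$ of the \v{C}ech diagram $\prod_i A_i \rightrightarrows \prod_{i,j} A_{ij}$. Granting this, we are done: $\mathfrak{R}$ being a Grothendieck category, $\mathfrak{_RAlg}$ is complete with limits created from $\mathfrak{R}$, and the representable $h_A$ carries that limit to a limit in $\mathfrak{C}$ (it is the $\mathfrak{C}$-enriched hom, which preserves conical limits); unwound, this limit is precisely the equalizer $h_A(X) \to \prod_i h_A(A_i) \rightrightarrows \prod_{i,j} h_A(A_{ij})$, i.e. the sheaf axiom for $h_A$ at our covering. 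Since $B(\mathfrak{_RAlg}^{op})$ is a basis for the $\varsigma$-relative Zariski topology, it suffices to check the axiom on these coverings, so $h_A$ is a $\mathfrak{C}$-valued sheaf.

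The main obstacle is exactly the descent step. ``Subcanonical'' is not stable under an arbitrary change of site, so one must genuinely use the shape of $\varsigma$. Left-adjointness supplies the easy half --- it gives the fibre products of the site their expected standardizations and forces the canonical comparison morphism from $X$ to the limit of the \v{C}ech diagram in $\mathfrak{_RAlg}$ to become an isomorphism after applying $\varsigma$, since its $\varsigma$-image must agree with the classical gluing isomorphism. Deducing that this comparison morphism was \emph{already} invertible in $\mathfrak{_RAlg}$ requires $\varsigma$ to reflect these particular limits, a property which the concrete standardization $-^{\varsigma}$ of the first part plausibly enjoys and which, in the axiomatic treatment, is most cleanly folded into the definition of an $R$-standardization. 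I would therefore isolate and prove this reflection statement first; granted it, the rest is the bookkeeping indicated above.
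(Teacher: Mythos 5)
Your overall strategy coincides with the paper's: the presheaf half is the ($\mathfrak{C}$-enriched, contravariant) Yoneda observation, and the sheaf half is supposed to follow by transporting the subcanonicity of the classical Zariski site along $\varsigma$. The difference is one of execution. The paper disposes of the second half in a single sentence (``similar arguments to those concerning the classical Zariski site imply that this $\varsigma$-relative Zariski basis is subcanonical''), whereas you actually carry out the reduction: you use the left-adjointness of $\varsigma$ to compute the standardizations of the \v{C}ech fibre products of the site, invoke affine gluing to exhibit $\varsigma(X^{op})$ as the classical equalizer, and then observe that the sheaf axiom for $Hom_{\mathfrak{_RAlg}}(A,-)$ amounts to $X$ being the limit of the \v{C}ech diagram already in $\mathfrak{_RAlg}$. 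Your diagnosis of where this breaks is the valuable part: knowing that $\varsigma$ of the comparison data behaves correctly does not make the comparison map invertible upstairs, because a left adjoint neither preserves nor reflects these equalizers in general, so subcanonicity genuinely fails to be stable under this change of site without an extra conservativity or limit-reflection hypothesis on $\varsigma$. Neither the paper's definition of an $R$-standardization (a functor merely possessing a right adjoint) nor its proof supplies that hypothesis, so the gap you isolate is a gap in the paper's argument as much as in yours; your proposal to fold the reflection property into the definition of an $R$-standardization, or to verify it directly for the concrete functor $-^{ab}\circ -^{1}\circ\tilde{i}$ on essential localizations, is the correct way to make the ``moreover'' clause an actual theorem rather than an assertion.
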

\begin{proof}
The first statements are a direct consequence of the assumption that $\mathfrak{_RAlg}$ is $\mathfrak{C}$-enriched and the enriched Yoneda Lemma \textit{(contravariant statement)}.  

Similar arguments to those concerning the classical Zarisky site, imply that this $\varsigma$-relative Zariski Basis is subcanonical.  
\end{proof}
\begin{prop}
The functor $-^{op}\circ \varsigma \circ Y$ from the category of representable $\mathfrak{C}$-valued presheaves on $\mathfrak{_RAlg}^{op}$ to the category of affine schemes has a right adjoint, where $Y$ is the contravariant Yoneda embedding of $\mathfrak{_RAlg}$, into the category of representable $\mathfrak{C}$-valued presheaves on $\mathfrak{_RAlg}^{op}$. 
\end{prop}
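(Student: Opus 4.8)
The plan is to display the functor in question, write it $\mathscr{A}:=(-^{op})\circ\varsigma\circ Y$, as a composite of three functors of which the outer two are equivalences and the middle one is the $R$-standardization $\varsigma$, and then to invoke the formal fact that a composite of functors each possessing a right adjoint again possesses one, the adjoint of the composite being the composite of the adjoints in the reverse order. In this way the work reduces to identifying the three adjoints and checking that they paste together.

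First I would record the two equivalences. Since $\mathfrak{_RAlg}$ is $\mathfrak{C}$-enriched, the contravariant form of the enriched Yoneda lemma tells us that $Y$ is fully faithful and that its essential image is exactly the full subcategory of representable $\mathfrak{C}$-valued presheaves on $\mathfrak{_RAlg}^{op}$; hence $Y$ corestricts to an equivalence between $\mathfrak{_RAlg}$ and that subcategory, and a quasi-inverse $Y'$ is, as one half of an adjoint equivalence, at once a left and a right adjoint of $Y$. Similarly, under the identification of $({}_RAlg)^{op}$ with the category of affine schemes over $R$, the canonical duality $-^{op}: {}_RAlg \to ({}_RAlg)^{op}$ is a contravariant equivalence, its own quasi-inverse up to the obvious natural isomorphism. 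Thus both outer legs of $\mathscr{A}$ carry adjoints on either side, and the only non-formal ingredient is the middle one: by the very definition of an $R$-standardization, $\varsigma: \mathfrak{_RAlg} \to {}_RAlg$ is equipped with a right adjoint $G: {}_RAlg \to \mathfrak{_RAlg}$.

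Assembling, the right adjoint of $\mathscr{A}$ is obtained by composing $G$ with the two quasi-inverses $Y'$ and $-^{op}$ in the appropriate order, its defining bijection being produced by pasting the contravariant Yoneda isomorphism $Hom(Y(A),Y(B))\cong Hom_{\mathfrak{_RAlg}}(B,A)$, the adjunction isomorphism relating $\varsigma$ and $G$, and the tautological isomorphism furnished by $-^{op}$. The step I expect to be the genuine obstacle is the variance bookkeeping: $Y$ and $-^{op}$ are contravariant while $\varsigma$ is covariant, so $\mathscr{A}$ is covariant, and one must verify that the two order-reversals compose so that the right adjoint $G$ of $\varsigma$ delivers a right — and not a left — adjoint of $\mathscr{A}$, i.e.\ that the chained hom-set isomorphisms leave the variable object on the correct side of each $Hom$. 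Granting that, functoriality of the candidate adjoint and naturality of its unit and counit are routine and run exactly parallel to the proof of the First Adjunction Theorem.
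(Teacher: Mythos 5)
Your proposal is correct and follows essentially the same route as the paper: both decompose $-^{op}\circ\varsigma\circ Y$ into two (contravariant) equivalences sandwiching $\varsigma$, observe that equivalences carry adjoints on both sides and that $\varsigma$ has a right adjoint by the definition of an $R$-standardization, and then invoke the composition-of-adjunctions lemma. Your explicit attention to the variance bookkeeping is a welcome refinement that the paper's one-line argument glosses over.
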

\begin{proof}
Since $-^{op}$ and $Y$ are dualities, therefore in particualr they have right adjoints, by hypothesis $\varsigma$ has a right adjoint, therefore their composition \textit{(as remarked at the very beginnig of this paper)} has a right adjoint.  
\end{proof}
Let us denote the functor

We want our category of \textit{"scheme-like"} objects to contain enough affines, and for every scheme-like objects to admit a cover by \textit{"affine-like schemes"}; in a manner that the largest affine part of our 
schemes are exactly covered by the \textit{affine part} of the \textit{"affine-like cover"} in question.  

Tecal that in the classwical case it was enough for a functor from $R$-algebras to $Sets$ to be conside with a cover of affines on all $R$-algebra which were fields.  We use draw inspiration from this characterisation as follows.  

\begin{defn}
\textbf{$\varsigma$-Prescheme}

The category $\mathfrak{pCsh_R}$ of \textbf{$\varsigma$-Prescheme} is a category equivalent to a full-subcategory $\mathfrak{X}$ of the category of $\mathfrak{C}$-valued sheaves over the site $\mathfrak{_RAlg}^{op}_{\varsigma-Zar}$ such that the following hold:

- \underline{\textbf{$\varsigma$-Affines Existence Axiom}}

$\mathfrak{pCsh_R}$ contains a non-empty fullsubcategory $\mathscr{A}\mathfrak{pCsh_R}$ natrually equivalent to a fullsubcategory of $\mathfrak{X}$ consisting entirely of representable sheaves.  

- \underline{\textbf{Affine insides Axiom}}

For every object $A$ in $\mathscr{A}\mathfrak{pCsh_R}$, it's equivalent representable functor $Hom_{\mathfrak{_RAlg}}(A_i,-)$ has the property that For every $R$-algebra $\mathbb{K}$ that is a field there are the following containments of sets.  

$Hom_{_RAlg}(\varsigma(A), \mathbb{K}) \subseteq Forget(Hom_{\mathfrak{_RAlg}}(A,G(\mathbb{K})))$.  

\textit{(where $Forg$ is the forgetful functor from the concrete category $\mathfrak{C}$ to $Sets$)}. 

- \underline{\textbf{$\varsigma$-Affine Coverage Axiom}}

For every object $X$ in $\mathfrak{pCsh_R}$, there exists a small familly $\{ A_i \}$ of objects in $\mathfrak{_RAlg}$ such that for every object $\mathbb{K}$ in $_RAlg$ which is a field, the following equality holds:

$Forg( X(G(\mathbb{K})))=\underset{i}{\cup} Forg( Hom_{\mathfrak{_RAlg}}(A_i,G(\mathbb{K})))$.  
\end{defn}

\textit{As a general note, the name "Prescheme" seems fitting and has virtually noting to do with the classical notion of a prescheme as found in EGA.  Since the term has mostly fallen out of use it doesn't seem harmful to use it here.  }

\begin{defn}
\textbf{$\varsigma$-Affine Prescheme}

The objects of the category $\mathscr{A}\mathfrak{pCsh_R}$, described above are called \textbf{Affine $\varsigma$}.  
\end{defn}

However, our intensions are not only to simply approximate pointsets.  We wish to appriximate arrows in a totally fundamental way; that is we wished to approximate all our morphisms of these preschemes in a completly unique and unambigouly functorial manner.  To properly exibit our desired aproximatory behavior, we need to buildup a context in which we may easily pickout the objects and morphisms which fit our intentions.  This is the idea has all the while been the posibly most fundamental aspect of the entire framework, on which this entire paper was relies.  
\begin{defn}
\textbf{Algebra-geometric Context}

$
\begin{tikzpicture}
  \matrix (m) [matrix of math nodes,row sep=3em,column sep=4em,minimum width=2em]
  {   Csh_Y &  {[} \mathfrak{_RAlg}:\mathfrak{C} {]} & Csh_X \\
     {}  & \mathfrak{pCsh_R} & {}  \\
     {}  & CDiag(\mathfrak{pCsh_R}) & {} \\};
  \path[-stealth]
    (m-1-1) edge [bend left] node [above] {$Inner$} (m-1-2)
    (m-2-2) edge [bend left] node [below] {$\bar{-}$} (m-1-1)
    (m-1-3) edge [bend right] node [above] {$Outer$} (m-1-2)
    (m-2-2) edge [bend right] node [below] {$\hat{-}$} (m-1-3)
    (m-2-2) edge node [left] {$T$} (m-1-2)
    (m-2-2) edge node [right] {$ Structure $} (m-3-2);
\end{tikzpicture}
$

A \textbf{category of $\varsigma$-schemes} $\mathfrak{A}$ is $7$-tuple $<\mathfrak{pCsh_R}, Structure,Inner,Outer,\bar{-},\hat{-},\mathfrak{i}>$ where:

- $_{\mathfrak{A}}Csh$ is a category of $\varsigma$-preschemes.  

-$Outer: Csh_X \rightarrow [\mathfrak{_RAlg}:\mathfrak{C}]$, $Inner: Csh_Y \rightarrow [\mathfrak{_RAlg}:\mathfrak{C}]$, where $Csh_X$ nd $Csh_Y$ are categories of schemes above the schemes $X$ and above the scheme $Y$ respectivly, 

- $\bar{-}:_{\mathfrak{A}}Csh \rightarrow Csh_X$ 

- $\hat{-}: _{\mathfrak{A}}Csh\rightarrow Csh_Y$

- $\mathfrak{i}: \mathfrak{pCsh_R}\rightarrow [\mathfrak{_RAlg}:\mathfrak{C}]$ is the fully faithfull inclusion of the fullsubcategory $ $

Finally and most importantly,
Let $J$ be the category described by this graph \textit{(with identiy morpihsms ommitted)}
$
\begin{tikzpicture}
  \matrix (m) [matrix of math nodes,row sep=3em,column sep=4em,minimum width=2em]
  {
     1 & 2 & 3 \\
     4 & 5 & 6 \\};
  \path[-stealth]
    (m-1-1) edge node [left] {$ $} (m-2-1)
    (m-1-1) edge node [below] {$ $} (m-1-2)
    (m-2-1) edge node [below] {$ $} (m-2-2)
    (m-1-2) edge node [right] {$  $} (m-2-2)
    (m-1-3) edge node [right] {$ $} (m-2-3)
            edge node [below] {$ $ } (m-1-2)
    (m-2-3) edge node [below] {$ $} (m-2-2);
\end{tikzpicture}
$.  

- $Structure_f$ is a class of $J$-shapped diagrams indexed by the morphisms in $\mathfrak{pCsh_R}$, such that:
$Structure_f:J \rightarrow \mathfrak{pCsh_R}$, yielding the digram:

$
\begin{tikzpicture}
  \matrix (m) [matrix of math nodes,row sep=3em,column sep=4em,minimum width=2em]
  {
     Inner(\bar{A}) & A & Outer(\hat{A}) \\
     Inner(\bar{B}) & B & Outer(\hat{B}) \\};
  \path[-stealth]
    (m-1-1) edge node [left] {$Inner(\bar{f})$} (m-2-1)
    (m-1-1) edge node [below] {$\mu_0$} (m-1-2)
    (m-2-1) edge node [below] {$\mu_1$} (m-2-2)
    (m-1-2) edge node [right] {$ f $} (m-2-2)
    (m-1-3) edge node [right] {$Outer(\hat{f})$} (m-2-3)
            edge [->>] node [below] {$\epsilon_0$} (m-1-2)
    (m-2-3) edge [->>] node [below] {$\epsilon_1$} (m-2-2);
\end{tikzpicture}
$. 
\end{defn}
Before being able to describe the last step we introduce a special essentially already familiar category.

\begin{defn}
\textbf{$C(J,f)$}

Let $f$ be a morphism in $\mathfrak{Csh_R}$.  

The category $C(J,f)$ is the full subcategory of the category of $[J:\mathfrak{Csh_R}]$ of $J$-shaped diagram in $\mathfrak{Csh_R}$, who's objects are commutative diagrams taking the morphism $J$ to a \textit{commutative} diragram in $\mathfrak{pCsh_R}$ of the form:

$
\begin{tikzpicture}
  \matrix (m) [matrix of math nodes,row sep=3em,column sep=4em,minimum width=2em]
  {
     1 & 2 & 3 \\
     4 & 5 & 6 \\};
  \path[-stealth]
    (m-1-1) edge node [left] {$ Inner(g) $} (m-2-1)
    (m-1-1) edge node [below] {$ \mu_0 $} (m-1-2)
    (m-2-1) edge node [above] {$ \mu_1$} (m-2-2)
    (m-1-2) edge node [right] {$ f $} (m-2-2)
    (m-1-3) edge node [right] {$ Outer(h) $} (m-2-3)
            edge node [below] {$ \epsilon_0 $} (m-1-2)
    (m-2-3) edge node [above] {$ \epsilon_1 $} (m-2-2);
\end{tikzpicture}
$
, where $f$ is the aforemoentioned fixed arrow in $\mathfrak{Csh_R}$, with the morphisms $\mu_0$ and $\mu_1$ being monics and the morpihsms $\epsilon_0$ and $\epsilon_1$ are epis.  Moreover, the morphism $Inner(g)$ is the $Inner$-image of some morphism $g$ of schemes over $Y$ and $Outer(h)$ is the $Outer$-image of sme morphism $h$ of schemes over $X$.  
\end{defn}

The essence of our intent in approximating morphisms in \textit{\textbf{the} most accurate possible way by schemes from within and outwith}, is exactly the following final axiom which renders a $\varsigma$-scheme a workable construct.

\begin{defn}
\textbf{Category of $\mathscr{A}$-Schemes}

A \textbf{category of $\mathscr{A}$-schemes} is the \textit{(not necessarily full)} subcategory of the category of $\mathfrak{pCsh_R}$ in the geometric context $\mathfrak{A}$, who's every morphism $f$ satisfies the following crutial axiom:

- \underline{\textbf{$\mathscr{A}$-Approximability Axiom}}

The diagram $Structure(f)$ is an initial object in the category $C(J,f)$.  

We denote this category by $_{\mathscr{A}}\mathfrak{Csh}$ and call its objects \textbf{$\mathscr{A}$-schemes}.  In particualr we call those which are already $\varsigma$-Affine preschemes are now distinguished as, \textbf{$\mathscr{A}$-affine schemes}.  Lastly and most importantly, the morphism in this category are called \textbf{$\mathscr{A}$-morphisms} and the category is denoted by $_{\mathscr{A}Csh}$.  
\end{defn}

So in particular, if the identity morphism of an object fails this criterion, then that object cannot be in the category of $\mathfrak{A}$-schemes.  

Another way or reformulating this is as follows:
\begin{prop}
A morphism $f$ in $\mathfrak{_pCsh}$ satisfies the $\mathscr{A}$-approximation axiom if and only if it satisfies the following univeral property:

Keeping with the above notation let, $\mathfrak{A}$ be an algebro-geometric context, $\mu_0$ and $\mu_1$ are monics and $\epsilon_0$ and $\epsilon_1$ are epis in $\mathfrak{pCsh_R}$.  

If $g:X\rightarrow Y$ is a morphism in $Csh_X$, $h:Z\rightarrow W$ is a morphism in $Csh_Y$, and $\tilde{m_0}$ and $\tilde{m_1}$ are monics in $_{\mathfrak{A}}pCsh$, moreover $\tilde{e_0}$ and $\tilde{e_1}$ are epis in $_{\mathfrak{A}}Csh$ making the following diagram commute:

$
\begin{tikzpicture}
  \matrix (m) [matrix of math nodes,row sep=4em,column sep=2em,minimum width=2em]
  {
    Inner(X) & . & . &. & Outer(Z) \\
    . & Inner(\bar{A}) & A & Outer(\hat{A}) & .\\
    . & Inner(\bar{B}) & B & Outer(\hat{B}) & .\\
    Inner(Y) & . & . &. & Outer(W) \\};
  \path[-stealth]
    (m-2-2) edge node [left] {$Inner(\bar{f})$} (m-3-2)
    (m-2-2) edge node [below] {$\mu_0$} (m-2-3)
    (m-3-2) edge node [above] {$\mu_1$} (m-3-3)
    (m-2-3) edge node [right] {$ f $} (m-3-3)
    (m-2-4) edge node [right] {$Outer(\hat{f})$} (m-3-4)
            edge [->>] node [below] {$\epsilon_0$} (m-2-3)
    (m-3-4) edge [->>] node [above] {$\epsilon_1$} (m-3-3)
    (m-1-1) edge node [left] {$Inner(g)$} (m-4-1)
    (m-1-5) edge node [right] {$Outer(h)$} (m-4-5)
    (m-1-1) edge node [above] {$\tilde{m_0}$} (m-2-3)
    (m-4-1) edge node [below] {$\tilde{m_1}$} (m-3-3)
    (m-1-5) edge node [above] {$\tilde{e_0}$} (m-2-3)
    (m-4-5) edge node [below] {$\tilde{e_1}$} (m-3-3);
\end{tikzpicture}
$

Then there exist unique monics, $M_1$, $M_2$ in $_{\mathfrak{A}}Csh$ and unique epis $E_1$, $E_2$ in $_{\mathfrak{A}}Csh$ making the diagram commute:

$
\begin{tikzpicture}
  \matrix (m) [matrix of math nodes,row sep=4em,column sep=2em,minimum width=2em]
  {
    Inner(X) & . & . &. & Outer(Z) \\
    . & Inner(\bar{A}) & A & Outer(\hat{A}) & .\\
    . & Inner(\bar{B}) & B & Outer(\hat{B}) & .\\
    Inner(Y) & . & . &. & Outer(W) \\};
  \path[-stealth]
    (m-2-2) edge node [left] {$Inner(\bar{f})$} (m-3-2)
    (m-2-2) edge node [below] {$\mu_0$} (m-2-3)
    (m-3-2) edge node [above] {$\mu_1$} (m-3-3)
    (m-2-3) edge node [right] {$ f $} (m-3-3)
    (m-2-4) edge node [right] {$Outer(\hat{f})$} (m-3-4)
            edge [->>] node [below] {$\epsilon_0$} (m-2-3)
    (m-3-4) edge [->>] node [above] {$\epsilon_1$} (m-3-3)
    (m-1-1) edge [dashed] node [below] {$M_0$} (m-2-2)
    (m-4-1) edge [dashed] node [above] {$M_1$} (m-3-2)
    (m-1-5) edge [dashed] node [below] {$E_0$} (m-2-4)
    (m-4-5) edge [dashed] node [above] {$E_1$} (m-3-4)
    (m-1-1) edge node [left] {$Inner(g)$} (m-4-1)
    (m-1-5) edge node [right] {$Outer(h)$} (m-4-5)
    (m-1-1) edge node [above] {$\tilde{m_0}$} (m-2-3)
    (m-4-1) edge node [below] {$\tilde{m_1}$} (m-3-3)
    (m-1-5) edge node [above] {$\tilde{e_0}$} (m-2-3)
    (m-4-5) edge node [below] {$\tilde{e_1}$} (m-3-3);
\end{tikzpicture}
$

\end{prop}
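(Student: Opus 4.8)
The plan is to recognize that this proposition is simply the explicit unfolding of the statement ``$Structure(f)$ is an initial object of $C(J,f)$'', so the proof is in essence a bookkeeping exercise tracing through the definition of $C(J,f)$ and of a morphism therein. First I would recall that an object of $C(J,f)$ is precisely a commuting hexagon whose middle arrow is the fixed morphism $f$, whose left leg is $Inner(g)$ for some morphism $g$ of schemes over $Y$, whose right leg is $Outer(h)$ for some morphism $h$ of schemes over $X$, and whose connecting arrows $\mu_0,\mu_1$ are monic while $\epsilon_0,\epsilon_1$ are epic. A morphism in $C(J,f)$, being a morphism in the diagram category $[J:\mathfrak{Csh_R}]$, is a natural transformation of $J$-shaped diagrams, i.e. a family of morphisms between corresponding vertices making each resulting square commute; the components at the two positions carrying the source and target of $f$ are forced to be identities, since those vertices are the same throughout $C(J,f)$, so such a morphism is determined by its four ``outer'' components.

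Next I would set up the dictionary. Giving the competitor data in the hypothesis of the universal property — the morphisms $g$ in $Csh_X$ and $h$ in $Csh_Y$, the monics $\tilde m_0,\tilde m_1$, the epis $\tilde e_0,\tilde e_1$, together with commutativity of the displayed hexagon — is exactly the same as giving an object $D$ of $C(J,f)$. Likewise, giving the dashed comparison morphisms $M_0,M_1,E_0,E_1$ making the enlarged pasting diagram commute is exactly the same as giving a morphism of $C(J,f)$ relating $D$ and $Structure(f)$: each constraint imposed on the $M_i$ and $E_i$, namely $\mu_0 M_0=\tilde m_0$, $\mu_1 M_1=\tilde m_1$, $\epsilon_0 E_0=\tilde e_0$, $\epsilon_1 E_1=\tilde e_1$ together with compatibility with $Inner(\bar f)$ and $Outer(\hat f)$, is precisely one of the naturality squares of such a transformation, and conversely no further commutativity is demanded. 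Granting this correspondence, the equivalence is immediate: $Structure(f)$ is initial in $C(J,f)$ exactly when for every object $D$ there is one and only one morphism relating $D$ and $Structure(f)$, which is verbatim the asserted property, and uniqueness on one side of the dictionary transfers to the other.

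Two minor points will need attention along the way. First, the comparison morphisms are automatically monic, respectively epic, as stated: $M_i$ is monic because $\tilde m_i=\mu_i\circ M_i$ is monic, and the epicity of $E_i$ is inherited from the factorization of $\tilde e_i$ through $\epsilon_i$, which is a component of the counit of the comonad $\mathfrak{P}\circ\mathscr{T}$ (equivalently, arises by sheafification $\Lambda$, which is exact) — I would spell this out using the (co)monadicity already invoked when $Structure(f)$ was built. Second, one must confirm that ``initial'' in the sense of the $\mathscr{A}$-Approximability Axiom matches the orientation of $M_i,E_i$ drawn in the statement; this is just reading off the variance of morphisms in $C(J,f)$ from its definition as a subcategory of $[J:\mathfrak{Csh_R}]$, together with the observation above that such a morphism is pinned down by its four outer components. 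The only real obstacle is thus this bookkeeping: checking that the list of commutativity constraints in the large pasting diagram is in bijection, with no omissions or repetitions, with the naturality squares of a morphism in $C(J,f)$. Once that dictionary is verified the proposition follows with no additional argument.
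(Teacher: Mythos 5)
Your proposal is correct and follows the same route as the paper, whose entire proof is the single remark that the statement is a ``complete tautology'': you simply unfold the definition of $C(J,f)$ and of initiality of $Structure(f)$ therein, matching competitor data with objects of $C(J,f)$ and the dashed comparison morphisms with the (unique) morphisms to those objects. Your additional care about the monic/epic status of the comparison morphisms and the orientation of initiality is welcome bookkeeping the paper omits, but it does not change the argument.
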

\begin{proof}
Complete tautology.  
\end{proof}

\subsection{Examples}
Lets check some examples and to see that everything is inline:

Since a ring may be viewed as a category, then its dual, that is the affine scheme $Spec(R)$ may be interepreted as the dual category of $R$.  With this little remark in mind we indeed verify that:

\begin{ex}
\textbf{Classical Schemes}

Schemes over $Spec(R)$ are $\mathscr{A}_{Csh_R}$-schemes for the essentially trivial algebro-geometric context:

$\mathscr{A}_{Triv}: <Csh_{Spec(R)},Structure,1_{Csh_{Spec(R)}},1_{Csh_{Spec(R)}},1_{Csh_{Spec(R)}},1_{Csh_{Spec(R)}},i>$.  

Where $i$ is the ususal inclusion of $[_RAlg^{op}_{Zar}:Sets]$, where clearly Sets is concrete and $R$, and so $_RAlg$ are $Sets$-enriched.
\end{ex}

\begin{ex}
\textbf{Classical Group Schemes}

Group-Schemes over $Spec(R)$ are $\mathscr{A}_{GrpCsh_R}$-schemes for the algebro-geometric context:

$\mathscr{A}_{Triv}: <Csh_{Spec(R)},Structure,1_{Csh_{Spec(R)}},1_{Csh_{Spec(R)}},1_{Csh_{Spec(R)}},1_{Csh_{Spec(R)}},i>$.  

Where $i$ is the inclusion of $[_RAlg^{op}_{Zar}:Grp]$, where clearly the category $Grp$ of groups is concrete and $R$ and therefore $_RAlg$ are preadditive, in otherwords they are $Grp$-enriched.
\end{ex}

Part $3$ proved the following which we now state as a thrm in memoriam.

\begin{thrm}
\textbf{Nc. Schemes}

Nc. Schemes over $R$ are $\mathscr{A}_{Nc. Csh_R}$-schemes for the algebro-geometric context:

$\mathscr{Nc}: <\mathfrak{Csh_R},Structure,-_!,\mathfrak{P},\bar{-},\hat{-},i>$.  

Where $i$ is the inclusion of $[_R\mathfrak{Alg}^1$ $^{op}:Sets]$, with $_R\mathfrak{Alg}^1$ being the category of unital associative algebras topologised with the essential Zarsiky Topology.  

Note also that, the categories $Sets$, $R$ and $_RAlg$ are concretes, in otherwords they are $Sets$-enriched.
\end{thrm}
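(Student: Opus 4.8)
The plan is to verify, axiom by axiom, that the seven-tuple $\mathscr{Nc} = \langle \mathfrak{Csh_R}, Structure, -_!, \mathfrak{P}, \bar{-}, \hat{-}, i\rangle$ is an algebro-geometric context and that the category of nc. schemes over $R$ is a category of $\mathscr{A}$-schemes for it. Every ingredient but one has already been built in Parts~1 and~2, so most of the proof is bookkeeping; the single substantive point is the $\mathscr{A}$-Approximability Axiom. First I would set up the ambient data. The category ${}_R\mathfrak{Alg}^1$ is the category of monoids in $({}_RMod,\otimes_R)$, and since $R$ is commutative unital associative, ${}_RMod$ is a $Sets$-enriched (locally small) monoidal Grothendieck category; hence ${}_R\mathfrak{Alg}^1$ is a category of $\mathfrak{R}$-algebras with $\mathfrak{R} = {}_RMod$ and $\mathfrak{C} = Sets$. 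The functor $-^\varsigma$ is an $R$-standardization because the proposition identifying $-^\varsigma$ as left adjoint to the forgetful functor $\mathscr{F}$ supplies the required right adjoint. Finally, the essential-Zariski site of Definition~\ref{def:EZar} is literally the $\varsigma$-relative Zariski site: a family of morphisms into $Y^{op}$ is an essential-Zariski covering exactly when its standardizations are localization maps of $\varsigma(Y^{op})$ at single elements $f_i$ and there are $y_i$ in $\varsigma(Y^{op})$ with $\sum_i y_i f_i = 1$, which is the conjunction of the $\varsigma$-relative Zariski-open-immersions axiom and the $\varsigma$-relative partition-of-unity axiom; Proposition~\ref{prop:site} then says this is a site.

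Next I would check that the category of (unital) nc. schemes is a category of $\varsigma$-preschemes. The $\varsigma$-Affines Existence Axiom holds because affine nc. schemes $Y(A)$ are representable $Sets$-valued sheaves on this site (Proposition~\ref{prop:asns}) and span a full subcategory. The Affine-insides Axiom follows from the adjunction $-^\varsigma \dashv \mathscr{F}$, which gives $\mathrm{Hom}_{{}_RAlg}(A^\varsigma,\mathbb{K}) \cong \mathrm{Hom}_{{}_R\mathfrak{Alg}}(A,\mathscr{F}(\mathbb{K})) = Y(A)(\mathscr{F}(\mathbb{K}))$ for a field $\mathbb{K}$, with Lemma~\ref{lem:presubf} recording that in the commutative case this realises it as the ``commutative part''. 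The $\varsigma$-Affine Coverage Axiom is word-for-word the defining clause of a unital nc. scheme --- the existence of a family $\{Y(U_i)\}$ whose underlying pointsets cover $X$ on every field. The remaining data of the tuple are the functors studied in Chapters~3--4: $-_!$ is the $Inner$-embedding of $[{}_RAlg:Sets]$, $\mathfrak{P}$ the $Outer$-embedding, $\bar{-}$ and $\hat{-}$ the inner- and outer-universal functors, and $i$ the inclusion into $[{}_R\mathfrak{Alg}^{1,op}:Sets]$; that each has the asserted source and target is immediate from its construction.

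The only substantive step is the $\mathscr{A}$-Approximability Axiom: for every morphism $f:Z\to W$ of nc. schemes the diagram $Structure(f)$ must be an \emph{initial} object of $C(J,f)$. Its arrows are the split monics $\mu_0:\bar{Z}_!\to Z$, $\mu_1:\bar{W}_!\to W$ of the Inner Approximation Theorem, the universal epis $\epsilon_0:\mathfrak{P}(\hat Z)\to Z$, $\epsilon_1:\mathfrak{P}(\hat W)\to W$ of Corollary~\ref{cor:approx1}, and the induced morphisms $\bar{f}_!$ and $\mathfrak{P}(\hat f)$. Given any competing commutative $J$-diagram over the fixed $f$, whose monics are the $-_!$-image of a scheme morphism and whose epis are the $\mathfrak{P}$-image of a scheme morphism, the left column is handled by the maximality of $\bar{Z}_!$ and $\bar{W}_!$ among scheme-subfunctors (so the competitor's monics factor uniquely through $\mu_0,\mu_1$, and uniqueness of $\bar f$ then determines the mediating arrow on that column), and the right column by the universal property of the outer universal --- the canonical epimorphism out of the (co)monad built from $\mathfrak{P}$ and $\mathscr{T}$, as set up in Corollary~\ref{cor:approx1} --- which determines the mediating arrow there. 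Commutativity down the middle column is automatic since $f$ is held fixed, and uniqueness of the whole morphism of $J$-diagrams follows from the two uniquenesses just used. This is precisely the Inner and Outer Approximation theorems packaged into one statement, and the underlying computation is the one already carried out in the Nc.\ Approximation Theorem~\ref{MainTheorem}; here we need only the universal properties, not the identification of the universals with $f$ itself.

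The step I expect to be the main obstacle is this outer half: pinning down, cleanly, that $\epsilon_0,\epsilon_1$ are \emph{initial} among $\mathfrak{P}$-of-scheme epimorphisms onto $Z,W$ (the appeal to Beck's (co)monadicity in Corollary~\ref{cor:approx1} is stated rather informally, and the variance of the adjunction $\mathscr{T}\dashv\mathfrak{P}$ has to be tracked with care), and then checking that this ``universal quotient'' property on the right and the ``largest subobject'' property on the left are compatible tightly enough that $Structure(f)$ is genuinely initial in $C(J,f)$ rather than merely weakly initial. Once that is nailed down, the remaining axioms reduce to unwinding definitions and citing Parts~1--2.
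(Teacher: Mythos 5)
Your proposal is correct and takes essentially the same approach as the paper: the paper's entire proof of this theorem is the single sentence ``The entire Part 2 and parts of Part 1 of this text,'' and your write-up is precisely a detailed unpacking of that citation, matching each axiom of the algebro-geometric context to the corresponding construction (essential-Zariski site, unital nc. schemes, the Inner and Outer Approximation theorems). Your identification of the $\mathscr{A}$-Approximability Axiom --- in particular the \emph{initiality} of $Structure(f)$ in $C(J,f)$ rather than mere existence and uniqueness of the approximating arrows --- as the one point the cited results do not explicitly verify is accurate, and your treatment of it is more careful than anything in the paper itself.
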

\begin{proof}
The entire Part $2$ and parts of Part $1$ of this text.  
\end{proof}
Since this is the central construct of this paper, lets name it.  

\begin{defn}
\textbf{Nc. context}

The algebro-gemoetric context $\mathscr{Nc}$, described above is called the \textbf{Nc. Conotext}.  
\end{defn}

From the above we need to only remark that the categories $R$ and therefore $_RAlg$ are also preadditive, in otherwords they are $Grp$-enriched to immediatly conclude the existence of Nc. group schemes.  

\begin{cor}
\textbf{Nc. Group Schemes}

Nc. Schemes over $R$ are $\mathscr{A}_{Nc. Csh_R}$-schemes for the algebro-geometric context:

$\mathscr{A}_{Nc. Csh}: <\mathfrak{Csh_R},Structure,-_!,\mathfrak{P},\bar{-},\hat{-},i>$.  

Where $i$ is the inclusion of $[_R\mathfrak{Alg}^1$ $^{op}:Grp]$, with $_R\mathfrak{Alg}^1$ being the category of unital associative algebras topologised with the essential Zarsiky Topology.  

Note also that, the category Sets is concrete and $R$ therefore $_RAlg$ are also preadditive, in otherwords they are $Grp$-enriched.
\end{cor}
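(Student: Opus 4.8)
The plan is to deduce this directly from the \textbf{Nc. Schemes} theorem above, by replacing the enriching category $Sets$ with $Grp$ throughout the nc. context $\mathscr{Nc}$. Every piece of that construction --- the $\varsigma$-relative Zariski site $\mathfrak{_RAlg}^{op}_{\varsigma-Zar}$, the category $\mathfrak{pCsh_R}$ of $\varsigma$-preschemes, the functors $-_!$, $\mathfrak{P}$, $\bar{-}$, $\hat{-}$, and the $\mathscr{A}$-Approximability Axiom --- was phrased in terms of an \emph{arbitrary} concrete enriching category $<\mathfrak{C},\Phi>$; and none of the functorial data (the standardization $\varsigma$, its right adjoint, the pollution functor, the inner and outer universals) ever referred to the specific choice $\mathfrak{C}=Sets$. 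It therefore suffices to exhibit the data needed to instantiate the framework with $\mathfrak{C}=Grp$ and then quote the argument of the Nc. Schemes theorem verbatim.

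First I would check that $Grp$ is a concrete category: the forgetful functor $Forg:Grp\rightarrow Sets$ is faithful and admits the free-group functor as a left adjoint, so $<Grp,Forg>$ is concrete and every ``$Forg$''-clause invoked in the definition of a $\varsigma$-prescheme and of an algebro-geometric context is meaningful. Next I would record that a ring $R$, viewed as a one-object category, is preadditive, so that $_RAlg$ and likewise $_R\mathfrak{Alg}^1$ --- equipped with the essential Zariski topology of $\autoref{def:EZar}$ --- are enriched over $Grp$; consequently the representable presheaves $Hom_{_R\mathfrak{Alg}^1}(A,-)$ take values in $Grp$, and, by the enriched contravariant Yoneda lemma together with the subcanonicity of the essential Zariski basis, they are $Grp$-valued sheaves on $\mathfrak{_RAlg}^{op}_{\varsigma-Zar}$.

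With these two observations, the whole of Part~2 carries over word for word with $Grp$ in place of $Sets$. The $\varsigma$-Affines Existence, Affine insides, and $\varsigma$-Affine Coverage axioms are all stated through the underlying-set functor $Forg$, hence they constrain only point-sets, which are unchanged; the outer and inner approximation theorems ($\autoref{cor:approx1}$ and the Inner Approximation Theorem) rest solely on the adjunction relating $\mathfrak{P}$ and $\mathscr{T}$ and on the (co)monad it generates, which are untouched; and the diagram $Structure(f)$ remains an initial object of $C(J,f)$ for every morphism $f$ exactly as in $\autoref{MainTheorem}$, since its proof used only the chain of equalities $f=\hat{\Lambda(f_!)}=\bar{\Lambda(f_!)}$. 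Hence $\mathscr{A}_{Nc.Csh}=<\mathfrak{Csh_R},Structure,-_!,\mathfrak{P},\bar{-},\hat{-},i>$, with $i$ the inclusion of the functor category $[_R\mathfrak{Alg}^1\,{}^{op}:Grp]$, is a bona fide algebro-geometric context, and the nc. group schemes over $R$ --- being the group objects among nc. schemes, equivalently the $Grp$-valued sheaves on $\mathfrak{_RAlg}^{op}_{\varsigma-Zar}$ that admit a cover by affine nc. schemes --- are precisely its $\mathscr{A}$-schemes.

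The main obstacle, and the only step that demands genuine care rather than bookkeeping, is confirming that $_R\mathfrak{Alg}^1$ really is $Grp$-enriched \emph{compatibly with composition}: one has to verify that the pointwise group operation on the hom-sets makes composition of unital algebra homomorphisms biadditive, given that a naive pointwise sum of two such homomorphisms need not again be one. I would settle this by transporting the preadditive structure from $R$ through its action on the target algebra --- exactly as is done for $_RAlg$ --- after which everything else is the purely mechanical transport of the $Sets$-enriched construction of $\mathscr{Nc}$ to the $Grp$-enriched setting.
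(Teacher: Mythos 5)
Your overall strategy is exactly the paper's: the corollary is justified there by a single remark that ``$R$ and therefore $_RAlg$ are also preadditive, in other words they are $Grp$-enriched,'' after which the Nc.\ Schemes theorem is instantiated with $\mathfrak{C}=Grp$ in place of $Sets$. Your reconstruction of that instantiation --- checking concreteness of $Grp$, observing that the prescheme axioms only constrain underlying point-sets via the forgetful functor, and noting that the inner/outer approximation machinery depends only on the adjunction between $\mathfrak{P}$ and $\mathscr{T}$ --- is faithful to what the paper intends.

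However, the one step you single out as ``the only step that demands genuine care'' is in fact a genuine gap, and your proposed resolution does not close it. The hom-sets of $_R\mathfrak{Alg}^1$ (or of $_RAlg$) do not carry a group structure under pointwise addition: if $f,g:A\rightarrow B$ are unital algebra homomorphisms, then $(f+g)(1_A)=2\cdot 1_B\neq 1_B$ in general, and $(f+g)(xy)=f(x)f(y)+g(x)g(y)$ need not equal $(f(x)+g(x))(f(y)+g(y))$, so $f+g$ is not a homomorphism. Preadditivity of the one-object category $R$ (equivalently, of $_RMod$) does not transport to the category of algebras, because the forgetful functor $_RAlg\rightarrow{_RMod}$ is not full and its image is not closed under sums of morphisms; ``transporting the preadditive structure through the action on the target algebra'' has no content here. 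So the claimed $Grp$-enrichment of $_R\mathfrak{Alg}^1$, which is the entire substance of the corollary beyond the already-proved $Sets$ case, is unestablished --- both in your proposal and in the paper, which merely asserts it. To repair the statement one would need either a different enrichment (e.g.\ restricting to a category whose hom-objects genuinely are groups, such as group objects in the functor category rather than $Grp$-valued representables) or an argument that the representables $Hom_{_R\mathfrak{Alg}^1}(A,-)$ factor through $Grp$ for some other reason, e.g.\ when $A$ carries a Hopf-type comultiplication; neither is supplied.
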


If we infact wish, we may particularise this example further to reap homological algebraic gain:

\begin{cor}
\textbf{Nc. Abelian Group Schemes}

Nc. Schemes over $R$ are $\mathscr{A}_{Nc. Csh_R}$-schemes for the algebro-geometric context:

$\mathscr{A}_{Nc. Csh}: <\mathfrak{Csh_R},Structure,-_!,\mathfrak{P},\bar{-},\hat{-},i>$.  

Where $i$ is the inclusion of $[_R\mathfrak{Alg}^1$ $^{op}:Ab]$, with $_R\mathfrak{Alg}^1$ being the category of unital associative algebras topologised with the essential Zarsiky Topology.  

Note also that, the category Sets is concrete and $R$ therefore $_RAlg$ are also preadditive, in otherwords they actualy more that $Grp$-enriched, they are infact enriched over abelian groups.
\end{cor}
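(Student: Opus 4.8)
The plan is to unpack the definition of a category of $\mathscr{A}$-schemes and match each of its clauses against a construction already established in Parts~1 and~2. Two things must be verified: that the tuple $\mathscr{Nc}=<\mathfrak{Csh_R},Structure,-_!,\mathfrak{P},\bar{-},\hat{-},i>$ is a legitimate algebro-geometric context, and that every morphism of (unital) nc.\ schemes satisfies the $\mathscr{A}$-Approximability Axiom relative to it. Nothing genuinely new is proved; the work is bookkeeping plus one compatibility check.

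First I would exhibit the category of unital nc.\ schemes over $_R\mathfrak{Alg}^1$ as a $\varsigma$-prescheme category for $\varsigma=-^{\varsigma}$. The $\varsigma$-Affines Existence Axiom is supplied by \autoref{prop:asns} together with the contravariant Yoneda embedding of $(_R\mathfrak{Alg}^1)^{op}$ into the unital nc.\ affine schemes; the $\varsigma$-Affine Coverage Axiom is literally the defining clause of an nc.\ scheme, which demands a family $\{Y(A_i)\}$ agreeing with $X$ after standardization on all $Z(R)$-algebras that are fields; and the Affine Insides Axiom is \autoref{lem:presubf}, exhibiting $\mathrm{Hom}_{_RAlg}(A^{ab},\mathbb{K})\subseteq Y(A)(\mathbb{K})$. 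One then checks that the site $_R\mathfrak{Alg}^{op}$ of \autoref{prop:site} is exactly the $\varsigma$-relative Zariski site: the open-immersions clause of \autoref{def:EZar} is the $\varsigma$-relative Zariski-open immersions axiom and the relation $\sum_i y_if_i=1$ there is the $\varsigma$-relative Zariski-partition of unity axiom, word for word. Finally $-^{\varsigma}$ is an $R$-standardization because it is left adjoint to the forgetful functor $\mathscr{F}$ and hence has a right adjoint, which is precisely what the First Adjunction Theorem~\autoref{first adjunction theorem} packages as the pair $\mathscr{T}\dashv\mathfrak{P}$; and the $\mathfrak{C}$-enriched monoidal Grothendieck hypothesis is met trivially with $\mathfrak{C}=Sets$, since $\mathfrak{R}={}_RMod$ is a $Sets$-enriched monoidal Grothendieck category. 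The remaining slots are then read off: $Inner=-_!$ is the fully faithful embedding of schemes over $R$ of \autoref{lem:exten}, $Outer=\mathfrak{P}$ is the pollution functor, $\bar{-}$ and $\hat{-}$ are the inner- and outer-universal functors extracted from the two approximation theorems, $i$ is the fully faithful inclusion, and $Structure(f)$ is the fused $J$-shaped diagram obtained by pasting the outer square of \autoref{cor:approx1} to the inner square of the Inner Approximation Theorem along their common middle edge $f\colon Z\to W$.

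The crux is the $\mathscr{A}$-Approximability Axiom: for every nc.\ scheme morphism $f$, the diagram $Structure(f)$ must be an initial object of $C(J,f)$. Here the two halves are handled separately and then glued. For the outer half, \autoref{cor:approx1} forces $\hat{f}=\mathscr{T}(f)$ with target objects $\mathscr{T}(Z),\mathscr{T}(W)$, and produces the unique epimorphisms $\epsilon_0,\epsilon_1$ as the counit components of the comonad $\mathfrak{P}\circ\mathscr{T}$ via Beck's (co)monadicity and the universal property of cofree coalgebras, so any competing outer leg in a $C(J,f)$-object factors uniquely through $\epsilon_i$. For the inner half, the largest-scheme-subfunctor theorem and \autoref{lem:split} give $X_!,Y_!$ as the unique maximal subfunctors of $Z,W$ of the form $(-)_!$ of a scheme, with $\mu_0,\mu_1$ the associated unique split monos and $\bar{f}$ the unique $(-)_!$-preimage, so any competing inner mono leg factors uniquely through $\mu_i$. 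Assembling these two unique factorizations yields the unique morphism of $J$-diagrams from $Structure(f)$ to an arbitrary object of $C(J,f)$, which is initiality. I expect the one genuinely delicate point to be checking that the monadic factorization on the $\epsilon$-side and the subfunctor factorization on the $\mu$-side are compatible at the shared objects $Z$ and $W$, and that the assembled comparison is itself a morphism in $C(J,f)$ — i.e.\ that the two one-sided universal properties, living on opposite sides of the square, do not conflict. Once that compatibility is in hand, initiality is immediate, the $Sets$-enriched (and, for the corollaries, preadditive) remarks are trivial, and the theorem follows exactly as asserted, with Part~2 together with the adjunction and site constructions of Part~1 constituting the full verification of the $\mathscr{Nc}$ axioms.
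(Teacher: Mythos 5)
Your proposal is correct and follows essentially the same route as the paper: the paper proves the parent \emph{Nc.\ Schemes} theorem by pointing back to the approximation machinery of Part~2 (exactly the inner/outer universal arguments you reconstruct), and then obtains this corollary purely by the remark that the relevant hom-objects are enriched over abelian groups rather than just sets, which is the same one-line preadditivity observation you defer to at the end. The only difference is one of detail, not of method --- you spell out the verification of the prescheme, site, and approximability axioms that the paper compresses into the citation ``the entire Part~2 and parts of Part~1.''
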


Since abelian groups are groups, which are groupoids we may generalize the above to obtain:

\begin{cor}
\textbf{Nc. Group Schemes}

Nc. Schemes over $R$ are $\mathscr{A}_{Nc. Csh_R}$-schemes for the algebro-geometric context:

$\mathscr{A}_{Nc. Csh}: <\mathfrak{Csh_R},Structure,-_!,\mathfrak{P},\bar{-},\hat{-},i>$.  

Where $i$ is the inclusion of $[_R\mathfrak{Alg}^1$ $^{op}:Grpd]$, with $_R\mathfrak{Alg}^1$ being the category of unital associative algebras topologised with the essential Zarsiky Topology.  

Where $Grpd$ is the category of Groupoids and morphisms of Groupoids.  
\end{cor}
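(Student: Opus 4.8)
The plan is to check that the only two data that change relative to the \emph{Nc.\ Schemes} theorem — the enriching concrete category and the inclusion $i$ — still meet the requirements of an \textbf{Algebra-geometric Context}, and then to observe that the $\mathscr{A}$-Approximability Axiom has already been verified in Part $2$. First I would establish that $Grpd$ is concrete: the functor $U:Grpd\rightarrow Sets$ sending a groupoid to its underlying set of morphisms is faithful, since a functor between groupoids is determined by its action on morphisms (objects being recovered as their identities), so two morphisms of groupoids that agree after $U$ coincide. Hence $\langle Grpd,U\rangle$ is a concrete category and, as for every forgetful functor out of a concrete category, $U$ has a left adjoint $\Psi:Sets\rightarrow Grpd$, namely the discrete-groupoid functor, which is fully faithful, strong monoidal for the cartesian structures, and satisfies $U\circ\Psi\cong 1_{Sets}$.

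Next I would verify that $_R\mathfrak{Alg}^1$, topologised by the essential-Zariski covering of \autoref{def:EZar}, is $Grpd$-enriched. Being locally small it is $Sets$-enriched, and post-composing each hom-set with $\Psi$ upgrades this to a $Grpd$-enrichment: because $\Psi$ is strong monoidal the composition morphisms and the identity-picking morphisms transport verbatim, so every enriched coherence holds automatically. One could instead, following the remark preceding the statement, view each abelian group supplied by the previous corollary as a one-object groupoid; but then the monoidal comparison for composition must be taken against the cartesian product of groupoids rather than against $\otimes_{\mathbb{Z}}$, which is precisely why the discrete-groupoid presentation is the one I would actually carry through. With this enrichment the category of $Grpd$-valued sheaves on $(_R\mathfrak{Alg}^1)^{op}_{\varsigma-Zar}$ contains the $Sets$-valued sheaves — the nc.\ schemes of Part $2$ — as the full subcategory of discrete-groupoid-valued ones.

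Finally, feeding concreteness and $Grpd$-enrichment into the context $\langle\mathfrak{Csh_R},Structure,-_!,\mathfrak{P},\bar{-},\hat{-},i\rangle$ with $i$ now the inclusion of $[(_R\mathfrak{Alg}^1)^{op}:Grpd]$: the Pollution and True-Essence functors are still mere precomposition functors and remain adjoint by the argument of the \emph{First Adjunction Theorem}, while the $\varsigma$-Affines Existence, Affine Insides and $\varsigma$-Affine Coverage axioms refer only to the underlying point-sets $\mathrm{Forget}\circ(-)$, so by $U\circ\Psi\cong 1_{Sets}$ they reduce word for word to their already-proven $Sets$-level versions; the $\mathscr{A}$-Approximability Axiom — initiality of $Structure(f)$ in $C(J,f)$ — is exactly the Inner and Outer Approximation Theorems (cf.\ \autoref{cor:approx1}) together with the Consistency Theorem and \autoref{MainTheorem}. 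The main obstacle is the second step: one must be careful that the $Grpd$-enrichment invoked is the one whose underlying $Sets$-enrichment is the original local smallness of $_R\mathfrak{Alg}^1$, so that $-_!\circ\Psi$ still has $\mathfrak{P}$ for its adjoint and all four axioms genuinely collapse onto the $Sets$ case; once this compatibility is pinned down the corollary is immediate from Part $2$.
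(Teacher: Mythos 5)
Your proposal is correct in substance but takes a genuinely different route from the paper. The paper offers essentially a one-line justification: having asserted in the previous corollaries that $_R\mathfrak{Alg}$ is preadditive (hence $Ab$-enriched, hence $Grp$-enriched), it passes to $Grpd$ by the chain ``abelian groups are groups, which are groupoids,'' i.e.\ by delooping each hom-group to a one-object groupoid, and then appeals to the Nc.\ Schemes theorem exactly as you do in your final step. You instead discard the group structure entirely and enrich via the discrete-groupoid functor $\Psi$ applied to the underlying $Sets$-enrichment. Your route buys two things the paper's does not: first, since $\Psi$ is strong monoidal for the cartesian structures and $U\circ\Psi\cong 1_{Sets}$, every point-set axiom (Affines Existence, Affine Insides, Coverage, and the Approximability Axiom) collapses \emph{verbatim} onto the already-proven $Sets$-level statements, whereas the delooping route leaves a residue to check; second, and more importantly, you correctly flag that delooping forces the composition law to be a groupoid functor out of the \emph{cartesian} product of one-object groupoids, i.e.\ a group homomorphism out of the direct product of hom-groups --- which bilinearity of composition in a preadditive category does not supply (bilinearity gives a map out of $\otimes_{\mathbb{Z}}$, not out of $\times$). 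So the paper's stated justification has a monoidal-compatibility gap that your discrete-groupoid presentation sidesteps. Given that the paper supplies no proof environment for this corollary at all, your version is the more defensible argument; the only caveat is that your final paragraph still leans on the paper's own (sketchy) verification of the Approximability Axiom in Part~2, which you inherit rather than repair.
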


\section{Abstract Approximation Theorem}
WE'll need this which we sketch only.  
\begin{prop}
In an algebro-geometric context $\mathscr{A}$, the category $[\mathfrak{_RAlg}L\mathfrak{C}]$ satisfies $WISC$.  
\end{prop}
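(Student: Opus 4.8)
The plan is to use the concreteness of $\mathfrak{C}$ to reduce WISC for $[\mathfrak{_RAlg}:\mathfrak{C}]$ to the already-standard fact that a category of $Sets$-valued functors satisfies WISC, and then to pull that back through a postcomposition adjunction. First I would invoke the faithful forgetful functor $\Phi\colon\mathfrak{C}\rightarrow Sets$ witnessing the concreteness of $\mathfrak{C}$; as recorded earlier in the text, $\Phi$ admits a left adjoint $\Psi$. Postcomposing with $\Phi$ and with $\Psi$ yields functors $\Phi_{*}\colon[\mathfrak{_RAlg}:\mathfrak{C}]\rightarrow[\mathfrak{_RAlg}:Sets]$ and $\Psi_{*}\colon[\mathfrak{_RAlg}:Sets]\rightarrow[\mathfrak{_RAlg}:\mathfrak{C}]$ with $\Psi_{*}\adj\Phi_{*}$, and $\Phi_{*}$ is again faithful, since faithfulness of a postcomposition functor is detected objectwise from that of $\Phi$.

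Next I would record WISC for $[\mathfrak{_RAlg}:Sets]$. Epimorphisms of $Sets$-valued functors are precisely the pointwise surjections, so given a functor $X$ and an epimorphism $p\colon Y\rightarrow X$, one selects by the axiom of choice, for each object $a$ and each element of $X(a)$, a single $p$-preimage, and passes to the subfunctor of $Y$ generated by these choices; its values are bounded in cardinality by data depending only on $X$ and on the hom-sets of the site, so only a set of isomorphism types of such "generated" covers of $X$ occurs, and that set is weakly initial among all covers of $X$. This is the usual proof that presheaf categories satisfy WISC, and I would cite it in that form.

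Finally I would transfer WISC along $\Psi_{*}\adj\Phi_{*}$. Given $X$ in $[\mathfrak{_RAlg}:\mathfrak{C}]$, choose a weakly initial set $\{e_{i}\colon P_{i}\rightarrow\Phi_{*}X\}$ of covers of $\Phi_{*}X$ and form the transposes $\widehat{e_{i}}=\varepsilon_{X}\circ\Psi_{*}(e_{i})\colon\Psi_{*}P_{i}\rightarrow X$, where $\varepsilon$ is the counit of the adjunction. Each $\widehat{e_{i}}$ is an epimorphism: $\Psi_{*}$ is a left adjoint, hence preserves epimorphisms, and a faithful right adjoint such as $\Phi_{*}$ has epimorphic counit components, so $\widehat{e_{i}}$ is a composite of epimorphisms. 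For weak initiality, given a cover $g\colon Z\rightarrow X$, I would argue that $\Phi_{*}g$ is again a cover — here one uses that the covering maps relevant to the $\varsigma$-relative Zariski topology are, by construction, detected on underlying $Sets$-valued data, so $\Phi_{*}$ carries them to covers — refine $\Phi_{*}g$ through some $e_{i}$ downstairs, and transpose the refining morphism back up to exhibit $\widehat{e_{i}}$ factoring through $g$. Assembling these gives a weakly initial set of covers of $X$, which is WISC.

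The main obstacle is exactly that last step: a \emph{plain} epimorphism in $[\mathfrak{_RAlg}:\mathfrak{C}]$ need not be preserved by $\Phi_{*}$, since concreteness does not force $\Phi$ to preserve epis (for instance $\mathbb{Z}\rightarrow\mathbb{Q}$ is epi among rings but not surjective). I would circumvent this by taking the ambient class of "covers" to be the morphisms whose underlying $\Phi$-images are pointwise surjective — which is all that the sheafification application needs, as the sheaf conditions over the $\varsigma$-relative Zariski site only see underlying sets — so that $\Phi_{*}$ preserves covers by fiat; alternatively one can note $\Phi_{*}$ is conservative and run a Beck-style comparison. A secondary obstacle is the size bookkeeping in the $[\mathfrak{_RAlg}:Sets]$ step, which I would contain by working over the essentially small subsite actually used in defining $\varsigma$-preschemes, or else by fixing a Grothendieck universe at the outset.
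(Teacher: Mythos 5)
Your proposal follows essentially the same route as the paper's own (explicitly labelled) sketch: reduce to WISC for $Sets$-valued (pre)sheaves over the underlying commutative Zariski-type site, then transfer along post-composition with the forgetful functor $\mathfrak{C}\rightarrow Sets$. Your version is considerably more careful than the paper's one-sentence argument --- in particular, your observation that $\Phi_{*}$ need not preserve plain epimorphisms, together with your fix of taking the covers to be the morphisms that are pointwise surjective on underlying sets, addresses a genuine gap that the paper silently passes over.
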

\begin{proof}
By construction the covering on $\mathfrak{_RAlg}^{op}$ is essentially identical to the Zarisk covering on $_RAlg^{op}$, since the category of scheves on the later satifies WISC, so must the category of scheves of the former, after post-composing with the forgetful functor $\mathfrak{C}\rightarrow Sets$.  

\end{proof}

In the proof of the the nc. approximation theorem, we notice that no specific properties of NC where used, therefore we may completly translate the result into the setting of an arbitrary algebro-geometric context; the proof following \textit{mutatis mutandis}.  

\textit{(Keeping in line with the above established notation)}

\begin{thrm} $\label{AbsThrm}$
\textbf{Abstract Approximation Theorem}
If $f$ is a morphism of commutative schemes both over the schemes $X$ and $Y$ and \underline{\textbf{P}} is a category independent property of that morphism, then both $\Lambda(Free_{\star} \circ Inner(f))$ and $\Lambda(Free_{\star} \circ Outer(f))$ have the category independent property \underline{\textbf{P}}.  

\textit{(where $Free$ is the left adjoint to the forgetful functor from $\mathfrak{C}$ to sets, and $Free_{\star}:[S^{op}:Sets]\rightarrow [S^{op}:\mathfrak{C}]$ is the post-composition with it).  }
\end{thrm}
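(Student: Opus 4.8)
The plan is to run the proof of the Nc. Approximation Theorem (Theorem~\ref{MainTheorem}) essentially word for word inside the abstract algebro-geometric context $\mathscr{A}=\langle\mathfrak{pCsh_R},Structure,Inner,Outer,\bar{-},\hat{-},\mathfrak{i}\rangle$, exploiting the observation that the concrete argument never used anything particular to the non-commutative setting: it used only the two adjunctions packaged into $\mathscr{A}$ (the abstract stand-ins for $-^{\varsigma}\adj\mathscr{F}$ and for the inner-universal adjunction $-_!\adj-^{-1}$), the existence and exactness of a sheafification functor $\Lambda$ on $[\mathfrak{_RAlg}:\mathfrak{C}]$, and — most importantly — the $\mathscr{A}$-Approximability Axiom, which in the abstract setting does by decree what the concrete proof did through an ad hoc uniqueness-of-subfunctor and (co)monad computation. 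Existence and exactness of $\Lambda$ is precisely the preceding proposition, that $[\mathfrak{_RAlg}:\mathfrak{C}]$ satisfies WISC, post-composed with the forgetful functor $\mathfrak{C}\to Sets$ whenever one must compare against $Sets$-valued sheaves.

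Concretely, first I would fix $g:=\Lambda(Free_\star\circ Inner(f))$, a morphism of $\varsigma$-preschemes, and write out $Structure(g)$: the $J$-shaped diagram with monos $\mu_0,\mu_1$, epis $\epsilon_0,\epsilon_1$, left-hand column $Inner(\bar{g})$ for a scheme morphism $\bar{g}$, and right-hand column $Outer(\hat{g})$ for a scheme morphism $\hat{g}$. Second, I would exhibit the tautological object of $C(J,g)$: put $f$ itself into both the inner and outer scheme-morphism slots (legitimate, since $f$ is assumed a morphism of commutative schemes over both $X$ and $Y$), take $\mu_0,\mu_1$ to be the sheafification-unit components $Free_\star\circ Inner(f)\to\Lambda(Free_\star\circ Inner(f))$, and take $\epsilon_0,\epsilon_1$ to be the counit components of the comonad on $[\mathfrak{_RAlg}:\mathfrak{C}]$ induced by the $\mathscr{A}$-adjunction (the abstract analogue of $\mathfrak{P}\circ\mathscr{T}$); the triangle identities together with exactness of $\Lambda$ show that this diagram commutes and carries the required mono/epi decorations, hence genuinely lies in $C(J,g)$. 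Third, invoke the $\mathscr{A}$-Approximability Axiom: $Structure(g)$ is \emph{initial} in $C(J,g)$, so the unique morphism of $J$-diagrams from it to the tautological object, compared against the identity and with the help of the fact that $Inner$ and $Outer$ are embeddings (so identifications descend from $[\mathfrak{_RAlg}:\mathfrak{C}]$ to the scheme categories), forces $\bar{g}\cong f$ and $\hat{g}\cong f$ in their respective scheme categories. The identical argument applied to $\Lambda(Free_\star\circ Outer(f))$, with the roles of $Inner$ and $Outer$ exchanged in the competitor, yields the same for its inner and outer universals.

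Finally, since $\underline{\textbf{P}}$ is category-independent it refers to none of the source, target, composition or membership data of any ambient category, hence is invariant under the isomorphisms just produced and transports along $Inner$ and $Outer$; therefore the inner and outer universals of both $\Lambda(Free_\star\circ Inner(f))$ and $\Lambda(Free_\star\circ Outer(f))$ are isomorphic to $f$ and inherit $\underline{\textbf{P}}$, which — by the agreed convention that a morphism of $\varsigma$-schemes has $\underline{\textbf{P}}$ exactly when both its commutative approximations do — is the assertion of the theorem. The main obstacle is not the formal skeleton, which is a line-by-line transcription of Theorem~\ref{MainTheorem}, but Step two: one must be sure that $\Lambda$ really exists and is exact at this generality (WISC via the preceding proposition, together with subcanonicity of the $\varsigma$-relative Zariski site so that representable $\mathfrak{C}$-valued sheaves survive $Free_\star$ and sheafification) and, more delicately, that sheafifying the $Free_\star$-image of a scheme does not disturb the mono/epi structure of the $J$-diagram — otherwise the competitor fails to lie in $C(J,g)$ and the Approximability Axiom cannot be applied.
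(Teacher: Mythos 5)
Your proposal is correct and follows essentially the same route as the paper: the paper's entire ``proof'' is the one-line remark that the argument for the Nc.\ Approximation Theorem (Theorem~\ref{MainTheorem}) transfers \emph{mutatis mutandis} to an arbitrary algebro-geometric context, and what you have written is precisely that translation carried out explicitly --- exhibiting the tautological competitor diagram built from $f$ and invoking the $\mathscr{A}$-Approximability Axiom as the abstract packaging of the uniqueness-of-(split) monics/epis argument used in the concrete case. Your closing caveats about the existence and exactness of $\Lambda$ and the preservation of the mono/epi decorations are exactly the points the paper glosses over, so if anything your write-up is more complete than the source.
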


\begin{ex}
A familiar example, is that a morphism of group schemes is smooth if and only if it is smooth when viewed as a morphism of schemes.  
\end{ex}

With that food for thought we end this philosophical portion of the paper.  
\subsubsection*{ }
\textit{Thank you very much for your time, I cincearly hope you enjoyed reading this atleast as much as I enjoyed working on it.  }

\appendix
\pdfbookmark[1]{References}{References}
\chapter{References}
\section*{Monographs}


The Principal refernce has been:

-\textbf{Abrams, L, Weibel C.}.  \textit{Cotensor products of modules}. Arxiv. (1999), Web.  

-\textbf{Arapura, D.}.  \textit{Algebraic Geometry over the complex numbers}.  Springer, 2012. Print.  

-\textbf{Borel, Armand}.  \textit{Linear Alebraic Groups}.  New Jersey: Princeton University Press  (1991). Web.  

-\textbf{Broer, Abraham}.  \textit{Introduction to Commutative Algebra}.  Montreal: Université de Montréal (2013). Web.  

-\textbf{Cameron, Peter J.}.  \textit{Notes on Classical Groups}.  London: School of Mathematical Sciences (2000). Web.  

-\textbf{Cartan, H. and Eilenberg, S.  }.  \textit{Homological algebra}.  New Jersey.  Princeton University Press, 1956. Print.    

-\textbf{Cibotaru, D.}.  \textit{Sheaf Cohomology}.  Indiana.  Notre Dame University, 2005. Web.    

-\textbf{J. Cuntz, D. Quillen}.  \textit{Algebra extensions and nonsingularity}, AMS (1995), Web. 

-\textbf{Cuntz, J, Skandalis, G, Tsygan, B.}.  \textit{Cyclic Homology in \\
Non-Commutative Geometry}, Springer (2004), Print. 

-\textbf{Demazure, M., Gabriel P}.  \textit{Introduction to Algebraic Geometryand Algebraic Groups}. Ecole Polytechnique, France (1980), Print.  

-\textbf{Eilenberg, Samuel; Mac Lane, Saunders}.  \textit{On the groups of H($Pi$,n). I}.  Annals of Mathematics. (1953).  Print.  

-\textbf{Gelfand, Manin}.  \textit{Methods of Homological Algebra, 2nd ed.}.  Springer, 2000.  Print.  

-\textbf{Godement, R.} \textit{Topologie algébrique et théorie des faisceaux}.  Paris, Hermann, 1973.   Print.  

-\textbf{Grothendieck, A.}.  \textit{Éléments de géométrie algébrique (I-III)}. Springer, 1971.  Print. \\ -\textbf{Grothendieck, A.}.  \textit{Sur quelques points d'algèbre homologique}.  The Tohoku Mathematical Journal, 1957.  Print.  

-\textbf{Guccione, J. A \textit{and} Cuccione J. J.}.  \textit{The theorem of excision for Hoschild and cyclic homology}.  Journal of Pure and Applied Algebra 106.  Buenos Ares. Argentina, 1996.  Web.  

-\textbf{Harari, David}.  \textit{Schemes}.  Beijing: Tsinghua University (2005). Print.  

-\textbf{Hartshorne, Robin}.  \textit{Algebraic Geometry}.  New-York: Springer (1977). Print.  

-\textbf{G. Hochschild}.  \textit{On the cohomology groups of an associative algebra}, J. Amer. Math. Soc. (1945), Web. 

-\textbf{Humphreys, James E.}.  \textit{Introduction to Lie Algebras and Representation Theory}. New York: Springer-Verlag, (1972). Print.  

-\textbf{F. Ischebeck}.  \textit{Eine Dualitat zwischen den Funktoren Ext und Tor}, J. Algebra 11 (1969), Web. 

-\textbf{Jantzen, Jens Carsten}.  \textit{Representations of Algebraic Groups}.  London: Academic Press Inc. (London) LTD. (1987). Print.  

-\textbf{Johnson S.}.  \textit{Summary: Analytic Nullstellensatz Pt 1.}.  Vancoover. 2012. Web.

-\textbf{Kleshchev, Alexander}.  \textit{Lectures on Algebraic Groups}.  Oregon: University of Oregon (2008). Web.  

-\textbf{Knapp, Anthony W.}.  \textit{Lie Groups Beyond an Introduction - 2nd Edition}.  Boston: Birkhäuser (2004). Print.  

-\textbf{J.C. McConnell, J.C. Robson}.  \textit{Noncommutative Noetherian rings}, Ann. of Math. (1945), Web.

-\textbf{Milne, James}.  \textit{Basic Theory of Affine Group Schemes (AGS)}.  (2012) Web.  

-\textbf{AL-Takhman, Khaled}.  \textit{Equivalences of Comodule Categories for
Coalgebras over Rings}, (2001), Web.  

-\textbf{T.Y. Lam}.  \textit{Lectures on modules and rings}, Springer-Verlag (1999), Web. 

-\textbf{Loday, J.L.}.  \textit{Cyclic Homology}.  Berlin Heidenberg. Springer-Verlang.  (1998).  Print.  

-\textbf{D. Quillen}.  \textit{Algebra cochains and \\
cyclic cohomology}, Inst. Hautes Etudes Sci. Publ. Math. 68 (1988), Web. 

-\textbf{Royden H., Fitzpatrick P.}.  \textit{Real Analysis - 4th Edition}.  Boston: Pearson Hall (2010). Print.  

-\textbf{W. F. Shelter}.  \textit{Smooth algebras}, J. Algebra 103, (1986), Web. 

-\textbf{Springer, Tony A.}.  \textit{Linear Algebraic Groups}.  Budapestlaan: Modern Birkhäuser (2008). Print.  

-\textbf{M. van den Bergh}.  \textit{A relation between Hochschild homology and cohomology for Gorenstein rings}. Proc. Amer. Math. Soc. 126 (5) (1998), 1345-1348. Erratum: Proc. Amer. Math. Soc. 130 (9) (2002), 2809-2810 (electronic).  Web. 

-\textbf{C. Weibel}.  \textit{An introduction to homological algebra}, Cambridge University Press (1995), Web. 

-\textbf{Wodzicki, M. }.  \textit{Excision in Cyclic Homology and in Rational Algebraic K-theory}. Annals of Mathematics 129 (1989), 591-639.  Web.  

-\textbf{C. Weibel}.  \textit{An introduction to homological algebra}, Cambridge University Press (1995), Web. 

\tiny{\tiny{ICXRNIKA}}

\chapter{An example: A new class of geometric objects}
Now that nc. schemes and their proper properties have been motivated, we introduce a new type of geomtrtic object which is \textit{neither} a scheme \textit{(as viewed in $[_R\mathfrak{Alg}:Sets]$)} nor a nc. scheme.  Infact it is a non-trivial mixture of the two, fused together.  

I introduce these objects, since they take substantial advantage of the geometric objects in the category $[_R\mathfrak{Alg}:Sets]$, pieced together both from, purely nc. geometric and \textit{"commutative"} geometric objects.  

First however we recall a common bifunctor from $[_R\mathfrak{Alg}:Sets]$ and $[_R\mathfrak{Alg}:Sets]$ to $[_R\mathfrak{Alg}:Sets]$.  

\begin{defn}
\textbf{Interesection of functors}

If $X$ and $Y$ are functors in $[_R\mathfrak{Alg}:Sets]$, then their interesection $X \cap Y$ is the functor in $[_R\mathfrak{Alg}:Sets]$ defined on $R$-algebra morphisms $f:A \rightarrow B$ as $X \cap Y (f) := X(f)|_{X(A)\cap Y(A)} \cap Y(f)|_{X(A)\cap Y(A)}$. 
\end{defn}
It is not difficult to see that this is indeed a functor since the target category is $Sets$.  
\begin{lem}
If $X$ and $Y$ are functors in $[_R\mathfrak{Alg}:Sets]$, then there exist a unique split monic $i: X\cap Y \rightarrow X$.  With the universal properties: 

-if $f:A\rightarrow X$ is a morphism in $[_R\mathfrak{Alg}:Sets]$ then there exists a unique morphism $j:A \rightarrow X \cup Y$ such that $j\circ i = f$ 

-$g:X\rightarrow B $ is a morphism in $[_R\mathfrak{Alg}:Sets]$ then there exists a unique morphism $h:X\cap Y \rightarrow A$ such that $g\circ i = h$.  
\end{lem}
\begin{proof}
Cosnider the split monic given $i: X\cap Y \rightarrow X$ given by the familly of inclusions of sets $X(A)\cap Y(A)$ into $X(A)$.  Since inclusions are split monic in $Sets$ then $i$ inherits this and is split monic in $[_R\mathfrak{Alg}:Sets]$.  

Since $i$ is \textit{split} monic, the first universal property comes from the uniquness of the inverse left inverse $i^{-1}$ of $i$, setting $j$ to be the therefore uniquely determined morphism $i^{-1}\circ f$.  

Similarly, $i$ is \textit{monic} so the first universal property comes from setting $h$ to be the uniquely determined morphism $f\circ i$.  
\end{proof}
We'll call this universal arrow $i: X \cap Y \rightarrow X$ the inclusion of $X\cap Y$ in $X$.  

We are now ready.  
\begin{defn}
\textbf{Pollock Space}

A \textbf{Pollock Space over $R$} $\mathscr{P}$ is an ordered pair of morphisms $<c,n>$ in $[_R\mathfrak{Alg}:Sets]$ with common codomain, such that $c$ and $n$ is the \textit{pushout} of the inclusions $i: X \cap Z \rightarrow X$ and $j: X\cap Z \rightarrow Z$ \textit{respectivly}; where $X$ is naturally isomorphic to $Y_!$, where $Y$ is some scheme and $Z$ is isomorphic to a nc. scheme.  
\end{defn}
\begin{defn}
\textbf{Commutative and Nc. Parts of a Pollock Space}
If $P:=<c,n>$ is a Pollock space, then $c$ is called the \textbf{commutative part of $P$} and $n$ is called the \textbf{nc. part of $P$}.  
\end{defn}

Directly from the definition we have:
\begin{prop}
If $X$ is a scheme and $Z$ is a nc. scheme then there is a \textit{unique up to natural isomorphism} Pollock space $P:=<c,n>$, such that $X$ and $Z$ are both subfunctors of $P$.  Moreover, if $X$ and $Z$ are both subfunctors of a functor $F$ in $[_R\textit{Alg}:Sets]$, then there exists:

-A unique natural transformation $u:P\rightarrow F$, such that $l = u\circ c$ and $m = v\circ n$, where $l:X \rightarrow F$ and where $m:Z \rightarrow F$ are the unique split monics arising from the subfunctor relations between $X$ and $Z$ with respect to $F$, respectivly.  
\end{prop}
\begin{proof}
Pushouts always exist in an elementary topoi, giving existence, the rest is just a rephrasing of the universal property of a pushout.  
\end{proof}

Another tautological result is the following remark.  

Finally, we give an example of an al ltogether exotic Pollock Scheme.  

\begin{ex}
Let $A$ and $B$ be distinct commutative unital associative $R$-algebras, then the pushout $Y(A)\star Hom_{_RAlg}(B,-)_!$ is a Pollock scheme.  
\end{ex}

Mixed affine spaces:
\begin{ex}
Let $I$ and $J$ be indexing sets with $J$ baing of strictly greater cardinality that $I$. 

Now consider the $R$-algebras $R<X_i>_{i\in I}$ and $R[X_i]_{i\in J}$, then the pushout $Y(R<X_i>_{i\in I})\star Hom_{_RAlg}(R[X_i]_{i\in J},-)_!$ is a Pollock scheme, which is neither a nc. scheme nor a scheme \textit{(as viewed withing $[_R\mathfrak{Alg}:Sets]$ via the functor $-_!$)}.  
\end{ex}

Mixed projectives:
\begin{ex}
Let the basering be the complex field $\mathbb{C}$, and $X_1$ and $X_2$ be distinct projective schemes.  

Then the pushout $\Lambda(\mathfrak{P}(X_1)) \star (X_2)_!$ is a Pollock Space.  Which again, is neither a nc. scheme nor a scheme as viewed in $[_R\mathfrak{Alg}:Sets]$ via the functor $-_!$.  
\end{ex}

\subsection{Interpretations of morphisms of Pollock Spaces}
The universal property of a pushout then implies that a morphism of Pollock spaces must an ordered pair of morphisms of $\phi:=<f_!,g>$ where $f$ is a morphism of schemes and $g$ is a morphism of nc. schemes.  Keeping with our little tradition, we say that $f_!$ is the commutative part of $\phi$ and $g$ is the nc. part of $\phi$.  

We may view a that a morphism of \textit{Pollock spaces} $\phi$ as having an analogue to a category independent property \underline{\textbf{P}}, which we call \textit{Pollock \underline{\textbf{P}}} \textit{if and only if} both, the commutative part of $\phi$ has the category independent property \underline{\textbf{P}} and the nc. part of $\phi$ has the property Nc. \underline{\textbf{P}}.  

\begin{prop}
We would have that a morphism $\phi:=<c:X\rightarrow Y, n: Z\rightarrow W>$ of Pollock spaces is \textit{Pollock smooth} if and only if all the three sheaves of differentials $\Omega_{Y|Z}$, $\Omega_{\bar{W}|\bar{Z}}$ and $\Omega_{\hat{W}|\hat{Z}}$ are locally free.  
\end{prop}
\begin{proof}
This follows directly from the defintions, of nc. smoothness, smoothness and the characterisation as a morphism of schemes $f:X \rightarrow Y$ being smooth if and only if the sheaf of differentials $\Omega_{Y|X}$ is locally free.  
\end{proof}

With this little food fro thought we end this paper.

\chapter{Abstractions} $\label{ch:Abs}$
\subsection*{Foreword}
We now take a very brief transitional repause, in order to abstract the machinery developed in the past section so that it may be applicable to a host of different settings in a multitude of different ways.  

Once this is quick process is completed, we exhibit a very beautiful geometric interpretation of essentially smooth, essentially etale and essentially unramified'significance.

\section{Abstract Concepts}
\begin{defn}
\textbf{F-Essentially Smooth}

IF $F:\mathfrak{C} \rightarrow \mathfrak{D}$ is a functor, then morphism $f$ in $\mathfrak{C}$ is said to be \textbf{essentially $F$-smooth} \textit{if and only if} the morphism $F(f)$ is formally smooth in $\mathfrak{D}$.  

\end{defn}

\begin{defn}
\textbf{F-Essentially Unramified}

IF $F:\mathfrak{C} \rightarrow \mathfrak{D}$ is a functor, then morphism $f$ in $\mathfrak{C}$ is said to be \textbf{essentially $F$-unramified} \textit{if and only if} the morphism $F(f)$ is a formally unramified morphism in $\mathfrak{D}$.  

\end{defn}

\begin{defn}
\textbf{F-Essentially Etale}

IF $F:\mathfrak{C} \rightarrow \mathfrak{D}$ is a functor, then morphism $f$ in $\mathfrak{C}$ is said to be \textbf{essentially $F$-etale} \textit{if and only if} the arrow $F(f)$ is formally etale in arrow $\mathfrak{D}$.  

\end{defn}

\section{An Abstract Theorem}
Theorem $\autoref{thrm:lethrm}$ may be presented in its fully abstract form, demonstrating how the idea of $F$-essential smoothness, $F$-essential etale and $F$-essentially unramified does infact provide a strict generalisation of their respective formal counterparts.  Specifically, the corrolarry presented directly after exposes a way how, for example the concept of essential smoothness can be used to \textit{"push-formawrd"} notions of formall smoothness from a subcategory of a category on which the notion is well behaved to the entire category on which these formal constructs may act unfavorably.  

\begin{thrm} $\autoref{thrm:abstrEt}$
If $F$ is a faithful endofunctor on a category $\mathfrak{C}$, then a morphism $f$ in $\mathfrak{C}$ is:

-$F$-essentially smooth if and only if it is smooth.  

-$F$-essentially unramified if and only if it is unramified.  

-$F$-essentially etale if and only if it is etale.  

\end{thrm}

\begin{proof}
Simliar to the idea of the proof of theorem $\autoref{thrm:lethrm}$.  
\end{proof}

Any inclusion of a category into another, that is anuy forgetful functor provides a straightforward example of this construction.  
\begin{cor}
If $F$ is a forgetfulfunctor, then a morphism $f$ in $Dom(F)$ is:

-$F$-essentially smooth if and only if it is smooth.  

-$F$-essentially unramified if and only if it is unramified.  

-$F$-essentially etale if and only if it is etale.

\end{cor}

\part{Associated Topological Space}
\chapter{Essentially Etale Topos over a nc. scheme}
\subsubsection{Foreword}
Our final objective is to ascribe an actual topological space to a nc. scheme over $\mathbb{C}$.  To manage this, we construct and exploit an adaptation of Grothendeick's Comparison Theorem, a refiniement of our essentially Zariski topology to one which is fppf on the commutative part of a nc. scheme and we concclude by modifying the Nc.-approximability Axiom of the Nc. Geometry context to suit this scenario.  

\section{Essentially fppf Topos}

We proceed similarly to our earlier constructions.  

\begin{defn}
\textbf{Essentially-\textit{fppf} Morphism}

A morphism $\phi$ in the category $\mathfrak{_RAlg^1}^{op}$ is said to be \textbf{Essentially-fppf} if and only if the morphism of affine schemes $(((\phi)^{op})^{\varsigma})^{op}$ is:

- flat and

- of fintie local presentation.  

\end{defn}

Following our geometric intuition, that these morphism cover our objects and behave as an fppf no the \textit{commutative part} of our nc. affine schemes we define an \textit{essentially fppf covering} as follows:

\begin{defn}
\textbf{Essentially-\textit{fppf} Covering}

A small familly of morphisms $\{ \phi_i: Y_i \rightarrow X \}$ is an \textbf{essentially-fppf covering} \textit{if and only if} 

$\underset{i}{\cup} \phi_i(Y_i)^{\varsigma} = X^{\varsigma}$.  
\end{defn}

\begin{defn}
\textbf{Essentially-\textit{fppf} Site}

The pair of the category $\mathfrak{_RAlg^1}^{op}$ and the coverage consisting of all $fppf$-coverings therein is called the \textbf{essentially-fppf site} and is denoted \textbf{$\mathfrak{_RAlg}_{Efppf}$}.  
\end{defn}
We should make one important verififcation.  
\begin{prop}
$\mathfrak{_RAlg}_{Efppf}$ is infact a site.  
\end{prop}

Now that our ingredients are inplace we're ready too cookup something interesting.  

\begin{defn}
\textbf{Category of Essentially-Algebraic Spaces}

The fullsubcategory of the category of $Sets$-valued sheaves on the Site $\mathfrak{_RAlg}_{Efppf}$ of objects $X$ satisfying the following two properties:

- $\Delta (X (-^{\varsigma})): X(-^{\varsigma}) \rightarrow X(-^{\varsigma}) \times X(-^{\varsigma})$ is a representable natural transformation.  

- There exists a representable functor $U\rightarrow X$, from an affine scheme $U$ and a surjective etale morphism of sheaves $U \rightarrow \Lambda_{et} (X(-^{\varsigma}))$ \textit{(where $\Lambda_{et}$ is the sheafification functor for the etale topos)}.  
\end{defn}

\begin{prop}
Nc. schemes are essentially-algebraic spaces.  
\end{prop}

\begin{proof}

The standardization of an nc. affien scheme is an affine scheme, then the diagonal morphism is representable. Therefore, every nc. affine scheme is an essentially algebraic space.  

Since, The standardizxation of a morphism is a zariski-immersion only if it is an fppf-immersion.  In other words every sheaf that can be covered by nc. affine scheme is a sheaf in the \textit{essentially fppf}-topos and is an essentially-algebraic space; in particualr this is so for nc. schemes.   
\end{proof}
    
\end{document}